\documentclass[a4paper]{amsart}

\usepackage[a4paper, margin=2cm]{geometry}
\usepackage[british]{babel}

\usepackage{amsmath}
\usepackage{amssymb}
\usepackage{calc}
\usepackage{enumitem}
\usepackage{graphicx}
\usepackage{tikz-cd}
\usepackage{url}
\usepackage{xcolor}
\usepackage{etoolbox}
\usepackage{tabularx}
\usepackage{mathtools}
\usepackage{xcolor}
\usepackage{hyperref}

\hypersetup{
  colorlinks   = true, 
  urlcolor     = {blue!50!black}, 
  linkcolor    = {blue!50!black}, 
  citecolor   = {red!50!black} 
}

\numberwithin{equation}{section}

\usepackage{cleveref}

\theoremstyle{plain}
\newtheorem{thm}{Theorem}[section]

\newtheorem{prop}[thm]{Proposition}
\newtheorem{lem}[thm]{Lemma}
\newtheorem{fact}[thm]{Fact}
\newtheorem*{lem*}{Lemma}
\newtheorem*{cor*}{Corollary}

\newenvironment{manualthm}[1]{%
  \IfBlankTF{#1}
    {}
    {}%
  \manualthminner
}{\endmanualthminner}

\newenvironment{manualprop}[1]{%
  \IfBlankTF{#1}
    {}
    {}%
  \manualpropinner
}{\endmanualpropinner}

\theoremstyle{definition}
\newtheorem{defn}[thm]{Definition}
\newtheorem{eg}[thm]{Example}
\newtheorem{notn}[thm]{Notation}
\newtheorem{qn}[thm]{Question}
\newtheorem{conj}[thm]{Conjecture}
\newtheorem*{qn*}{Question}

\newtheorem*{term*}{Terminology}

\theoremstyle{remark}
\newtheorem{rem}[thm]{Remark}

\DeclareMathOperator{\Age}{Age}
\DeclareMathOperator{\Ao}{Age_\omega}
\DeclareMathOperator{\Ainf}{Age_{inf}}
\DeclareMathOperator{\ar}{ar}
\DeclareMathOperator{\Aut}{Aut}

\DeclareMathOperator{\cod}{cod}
\DeclareMathOperator{\dom}{dom}
\DeclareMathOperator{\dlim}{\underrightarrow{\mathrm{lim}}\,}
\DeclareMathOperator{\dplus}{d_+}

\DeclareMathOperator{\Fin}{Fin}
\DeclareMathOperator{\FrLim}{FrLim}
\DeclareMathOperator{\id}{id}
\DeclareMathOperator{\im}{im}
\DeclareMathOperator{\Iso}{Iso}

\DeclareMathOperator{\Pfin}{\mathcal{P}_{fin}}
\DeclareMathOperator{\RelStr}{RelStr}
\DeclareMathOperator{\rt}{root}

\DeclareMathOperator{\sw}{sw}

\DeclareMathOperator{\tp}{tp}

\newcommand{\N}{\mathbb{N}}
\newcommand{\Q}{\mathbb{Q}}

\newcommand{\Z}{\mathbb{Z}}
\newcommand{\ic}{\mathbin{\bot}}
\newcommand{\mb}[1]{\mathbb{#1}}
\newcommand{\mc}[1]{\mathcal{#1}}

\newcommand{\oa}[1]{\overrightarrow{#1}}
\newcommand{\all}{\forall\,}
\newcommand{\ex}{\exists\,}
\newcommand{\sub}{\subseteq}

\newcommand{\tld}[1]{\tilde{#1}}
\newcommand{\ext}{\text{ext}}
\newcommand{\fin}{\subseteq_{\text{fin\!}}}
\newcommand{\fg}{\subseteq_{\text{f.g.\!}}}

\newcommand{\lex}{\text{lex}}
\newcommand{\rel}{\text{rel}}

\newcommand{\set}{\text{set}}

\newcommand{\ri}{\circ}
\newcommand{\ra}{\rightarrow}
\newcommand{\rra}{\rightrightarrows}

\newcommand{\la}{\leftarrow}
\newcommand{\symdiff}{\mathbin{\bigtriangleup}}
\newcommand{\Mod}[1]{\ (\mathrm{mod}\ #1)}
\newcommand{\eps}{\varepsilon}

\def\ind{\mathrel{\raise0.2ex\hbox{\ooalign{\hidewidth$\vert$\hidewidth\cr\raise-0.9ex\hbox{$\smile$}}}}}

\usepackage[f]{esvect}

\newcommand{\Fr}{Fra\"{i}ss\'{e} }

\newcommand{\Ka}{Kat\v{e}tov }
\newcommand{\NR}{Ne\v{s}et\v{r}il-R\"{o}dl }

\setlength{\parskip}{0.1cm}
\setlength{\parindent}{0pt}

\pagestyle{plain}

\allowdisplaybreaks

\usetikzlibrary{calc}

\tikzset{curve/.style={settings={#1},to path={(\tikztostart)
    .. controls ($(\tikztostart)!\pv{pos}!(\tikztotarget)!\pv{height}!270:(\tikztotarget)$)
    and ($(\tikztostart)!1-\pv{pos}!(\tikztotarget)!\pv{height}!270:(\tikztotarget)$)
    .. (\tikztotarget)\tikztonodes}},
    settings/.code={\tikzset{quiver/.cd,#1}
        \def\pv##1{\pgfkeysvalueof{/tikz/quiver/##1}}},
    quiver/.cd,pos/.initial=0.35,height/.initial=0}

\tikzset{tail reversed/.code={\pgfsetarrowsstart{tikzcd to}}}
\tikzset{2tail/.code={\pgfsetarrowsstart{Implies[reversed]}}}
\tikzset{2tail reversed/.code={\pgfsetarrowsstart{Implies}}}
\tikzset{no body/.style={/tikz/dash pattern=on 0 off 1mm}}

\makeatletter
\def\author@andify{%
  \nxandlist {\unskip ,\penalty-1 \space\ignorespaces}%
    {\unskip {} \@@and~}%
    {\unskip \penalty-2 \space \@@and~}%
}
\makeatother

\author{Aleksandra Kwiatkowska}
\address{\parbox{\linewidth}{Aleksandra Kwiatkowska\\
Institut f\"{u}r Mathematische Logik, Universit\"{a}t  M\"{u}nster\\ Einsteinstraße 62\\
48149 M\"{u}nster, Germany\\
and \\
Instytut Matematyczny, Uniwersytet Wroc{\l}awski\\
pl.\ Grunwaldzki 2/4 \\ 
50-384 Wroc{\l}aw, Poland
}
}
\email{aleksandra.kwiatkowska@uwr.edu.pl}

\thanks{The first and second authors were funded by the Deutsche Forschungsgemeinschaft (DFG, German Research Foundation) under Germany’s Excellence Strategy EXC 2044–390685587, Mathematics M\"{u}nster: Dynamics–Geometry–Structure and CRC 1442 Geometry: Deformations and Rigidity. The second author is additionally funded by Project 24-12591M of the Czech Science Foundation (GA\v{C}R)}

\author{Rob Sullivan}
\address{\parbox{\linewidth}{Rob Sullivan\\
Computer Science Institute, Charles University\\ Malostransk\'{e} n\'{a}m.\ 25,\\Prague, Czech Republic
}
}
\email{robertsullivan1990+maths@gmail.com}

\author{Jeroen Winkel}
\address{Jeroen Winkel}
\email{winkeljeroen+maths@gmail.com}

\subjclass[2020]{03C15, 20B27, 03C50, 18A22}


\keywords{extending automorphisms, universal automorphism group, canonical amalgamation, independence relation, Kat\v{e}tov functor}

\title{Group-extensive embeddings into Fra\"{i}ss\'{e} structures and stationary weak independence relations}

\date{\today}

\begin{document}

\begin{abstract}
    Let $M$ be a \Fr structure (a countably infinite ultrahomogeneous structure). We call an embedding $f : A \to M$ \emph{group-extensive} if each automorphism of its image extends to an automorphism of $M$, where the extension map respects composition. We say that $M$ has \emph{group-extensible $\omega$-age} if each substructure admits a group-extensive embedding into $M$.
    
    We investigate the relationship between the following two properties: the presence of a stationary weak independence relation (SWIR) on $M$, and group-extensibility of the $\omega$-age of $M$. We show that linearly ordered \Fr structures with a SWIR have group-extensible $\omega$-age, but also we give examples of \Fr structures where only one of the two properties holds. Finally, we consider whether a wide range of examples of \Fr structures have group-extensible $\omega$-age or a finite SWIR expansion, including all countably infinite ultrahomogeneous oriented graphs (with one exception).
\end{abstract}

\maketitle

\section{Introduction}

Recall that a structure $M$ is \emph{ultrahomogeneous} if each isomorphism between finitely generated substructures of $M$ extends to an automorphism of $M$. We call a countably infinite ultrahomogeneous structure $M$ a \emph{\Fr structure} (see \cite{Mac11}, \cite{Hod93} for background). In this paper, we investigate the following two notions for \Fr structures $M$.

\begin{itemize}
    \item \textbf{Group-extensive embeddings.} We say that an embedding $f : A \to M$ is \emph{group-extensive} if there is a group embedding $\theta : \Aut(f(A)) \to \Aut(M)$ with $g \sub \theta(g)$ for all $g \in \Aut(f(A))$ (informally, each automorphism of the image of $A$ extends to an automorphism of $M$ in a manner that preserves group composition). Define the \emph{$\omega$-age} of $M$, written $\Ao(M)$, to be the class of structures which embed into $M$ (we permit infinite structures). If each $A \in \Ao(M)$ admits an group-extensive embedding into $M$, we say that $\Ao(M)$ is \emph{group-extensible}.
    \item \textbf{Stationary weak independence relations (SWIRs).} These generalise the well-known notion of stationary independence relation (SIR) by dropping the assumption of symmetry. A \emph{stationary weak independence relation} is a ternary relation $\ind$ on the set of finitely generated substructures of a \Fr structure $M$, written as $B \ind_A C$ for $A, B, C \fg M$, which satisfies certain axioms: automorphism-invariance, stationarity, existence and monotonicity.
\end{itemize}

We are concerned with the following three main questions:
\begin{qn} \label{qn extensible}
    For which \Fr structures $M$ is $\Ao(M)$ group-extensible?
\end{qn}
\begin{qn} \label{qn SWIR extensible relationship}
    What is the relationship between group-extensibility of $\Ao(M)$ and the presence of a SWIR on $M$?
\end{qn}
\begin{qn} \label{qn SWIR}
    Which \Fr structures have a SWIR? If a \Fr structure does not have a SWIR, does it have an expansion with one?
\end{qn}

We discuss the background and motivation for these questions below.

\textbf{For brevity, throughout this paper we write \emph{$\circ$-extensive} and \emph{$\circ$-extensible} instead of group-extensive and group-extensible}. ($\circ$ is intended to be a mnemonic for the preservation of group composition.) See \Cref{r: BK} for a discussion of terminology for related notions already used in the literature.

\subsection*{Background: positive examples for \texorpdfstring{\Cref{qn extensible}}{Question 1.1} given by the \Ka tower construction} \hfill

\Cref{qn extensible} has been considered before by a number of authors. The first result in the literature in this area is \cite[Theorem 3.1]{Hen71}, where Henson showed that each countably infinite graph admits a \emph{uniquely extensive} embedding into the random graph $\Gamma$: an embedding such that each automorphism of the image extends \emph{uniquely} to an automorphism of $\Gamma$ (see \Cref{s: unique ext}). His argument adapts straightforwardly to give the extensibility of $\Ao(\Gamma)$. Repurposing a construction of \Ka (\cite{Kat88}), Uspenskij showed in \cite{Usp90} that every separable metric space $A$ admits an isometric embedding $f$ into the complete Urysohn space $\mb{U}$ such that there is a continuous group embedding $\Iso(f(A)) \to \Iso(\mb{U})$ where each isometry of $f(A)$ maps to an extension of itself; this argument can be straightforwardly adapted to show that $\Ao(\mb{U}_\Q)$ is $\circ$-extensible, where $\mb{U}_\Q$ is the rational Urysohn space. Jaligot showed in \cite{Jal07} that each countably infinite tournament admits a uniquely extensive embedding into the random tournament. This result stimulated further interest in \Cref{qn extensible} in subsequent years. Bilge and Melleray modified the argument of Uspenskij and \Ka to show that $\Ao(M)$ is $\circ$-extensible for free amalgamation structures $M$ (\cite[Theorem 3.9]{BM13}), and this argument was extended by M\"{u}ller in \cite{Mul16} to give extensibility of $\Ao(M)$ for \Fr structures $M$ with a stationary independence relation (SIR). The construction in each case is similar, and is sometimes called a \emph{\Ka tower construction}.

Kubi\'{s} and Ma\v{s}ulovi\'{c} observe in \cite{KM17} that the \Ka tower construction appearing in various guises in the above papers is actually an instance of a more general functorial construction on categories. The key ingredient that makes the construction work is the following:

\begin{defn}[{\cite[Definition 2.1]{KM17}}] \label{def Ka functor}
    Let $M$ be a \Fr structure. Recall that $\Age(M)$ is the class of finitely generated structures which are embeddable in $M$. (We allow the empty structure.) We write $\mc{A}(M)$, $\mc{A}_\omega(M)$ for the classes of non-empty structures in $\Age(M)$, $\Ao(M)$. Consider $\mc{A}(M)$, $\mc{A}_\omega(M)$ as categories, where in each case the class of morphisms is given by all embeddings between objects.
    
    A \emph{\Ka functor $K$ for $M$} is a functor $K : \mc{A}(M) \to \mc{A}_\omega(M)$ such that there exists a family of embeddings $\eta = (\eta_A : A \to K(A))_{A \in \mc{A}(M)}$ with the following properties:
    \begin{enumerate}[label=(\roman*)]
        \item \label{Ka nat trans} $\eta$ is a natural transformation: that is, for each embedding $f : A \to B$ in $\mc{A}(M)$ we have $K(f) \circ \eta_A = \eta_B \circ f$;
        \item \label{Ka ope} for each one-point extension $\zeta : A \to B$ in $\mc{A}(M)$, there is an embedding $f : B \to K(A)$ with $f \circ \zeta = \eta_A$.
    \end{enumerate}

    Here, a \emph{one-point extension in $\mc{A}(M)$} is an embedding $\zeta : A \to B$ in $\mc{A}(M)$ where there exists $e \in B \setminus \zeta(A)$ such that $B$ is generated by $\zeta(A) \cup \{e\}$.

    (In fact, \cite{KM17} gives a more abstract version of the above, allowing more general classes of morphisms.)
\end{defn}

A \Ka functor can be thought of as giving a functorial amalgam of all one-point extensions of a structure, for each structure in the age, and this functorial amalgam is enough to carry out a categorical version of the \Ka tower construction:

\begin{fact}[{\cite[Corollaries 3.9 and 3.12, rephrased]{KM17}}] \label{Ka implies extensible}
    Let $M$ be a \Fr structure. If $M$ has a \Ka functor, then each $A \in \Ao(M)$ admits a $\circ$-extensive embedding $f : A \to M$ where, in addition, the group embedding $\theta : \Aut(f(A)) \to \Aut(M)$ is an embedding of topological groups.
    
    In particular, if $M$ has a \Ka functor, then $\Ao(M)$ is $\circ$-extensible. 
\end{fact}

In the above \Cref{Ka implies extensible}, the topology on $\Aut(f(A))$ and $\Aut(M)$ is the usual pointwise-convergence topology. We concern ourselves only with group embeddings in this paper, rather than topological group embeddings, but note that each time we prove the existence of a \Ka functor, \Cref{Ka implies extensible} implies that the group embeddings thereby obtained are also embeddings of topological groups.

We give a very brief sketch of the proof idea for \Cref{Ka implies extensible}. \cite[Theorem 2.2]{KM17} shows that, given a \Ka functor $K : \mc{A}(M) \to \mc{A}_\omega(M)$ with an associated natural transformation $\eta = (\eta_A)_{A \in \mc{A}(M)}$, one may extend $K$, $\eta$ to a functor $\hat{K} : \mc{A}_\omega(M) \to \mc{A}_\omega(M)$ and a natural transformation $\hat{\eta} = (\hat{\eta}_A)_{A \in \mc{A}_\omega(M)}$. Subsequently \cite[Theorem 3.2, Theorem 3.3]{KM17} then show that for each $A \in \mc{A}_\omega(M)$, applying $\hat{K}$ $\omega$-many times gives a copy of the \Fr limit $M$, and as this construction is functorial, we immediately obtain \Cref{Ka implies extensible}.

As a consequence of \Cref{Ka implies extensible}, in order to provide positive examples to answer \Cref{qn extensible}, it suffices to produce \Fr structures $M$ which have \Ka functors. In \cite{KM17} a wide variety of examples of \Fr structures with \Ka functors were already given: those with free amalgamation (\cite[Theorem 2.12]{KM17}), the rational Urysohn space, the countable atomless Boolean algebra, the random poset, the random lattice, the random semilattice, the random digraph and the random tournament.

\begin{rem}
    The reader may wonder if in fact all embeddings are $\circ$-extensive. We show that this is not the case for the random graph, denoted by $M$. As $M$ is universal for countable graphs, it contains a graph $A \cup \{v\}$ with $v \notin A$ where $A$ is an infinite anticlique and the set $C = \{u \in A \mid u \sim v\}$ is infinite/coinfinite in $A$. As there are continuum-many infinite/coinfinite subsets of $A$ and $M \setminus A$ is countable, there is an infinite/coinfinite subset $C' \sub A$ such that there is no $w \in M$ with $C' = \{u \in A \mid u \sim w\}$. Then any automorphism of $A$ sending $C$ to $C'$ does not extend to an automorphism of $M$, so we have produced an embedding of a countably infinite anticlique into $M$ which is not $\circ$-extensive.
\end{rem}

\begin{rem} \label{r: BK}
    There are a number of occurrences in the literature of notions related to $\circ$-extensive embeddings, with varying terminology. We survey some of these here in an attempt to clarify terminology for the reader.

    In the recent paper \cite{BK26}, Barto{\v{s}} and Kubi{\'{s}} define that an embedding $e : X \to Y$ is ``extensible" if, for each $h \in \Aut(X)$, there is $\tld{h} \in \Aut(Y)$ with $\tld{h} \circ e = e \circ h$. In our context where $X = A$ and $Y = M$ is a \Fr structure, this means that each automorphism of $e(A)$ extends to an automorphism of $M$, without assuming that group composition is preserved. As such, this is a weaker notion than the notion of \emph{$\circ$-extensive embedding} used in the present paper. In \cite{HKO11}, the authors define that $X \sub Y$ is ``symmetrically embedded" in $Y$ if each automorphism of $X$ extends to an automorphism of $Y$: this is the same as the ``extensible" notion of Barto{\v{s}} and Kubi{\'{s}} for the inclusion $X \hookrightarrow Y$. The same notion of symmetric embedding also appears for the specific case of graphs in \cite{GK11} (where it is unnamed but denoted by $\sub^A$).

    Our notion of \emph{$\circ$-extensive embedding} appears in \cite[Definition 3.1]{KS95} (and in \cite[Section 1.1]{GK11} in the case of graphs), where it is called a ``bi-embedding". (Note that \cite[Definition 3.1]{KS95} has a typo: there should be an $=$ sign before $\Theta$.)

    See \Cref{r: I_n-free BK strictly weaker} for an example of an embedding $e : A \to M$ which is not $\circ$-extensive but is nonetheless ``extensible" in the sense of Barto\v{s} and Kubi\'{s}.
\end{rem}

\subsection*{Background: negative examples for \texorpdfstring{\Cref{qn extensible}}{Question 1.1} and non-universal automorphism groups} \hfill

Jaligot asked a more abstract version of \Cref{qn extensible}:

\begin{qn*}[\cite{Jal07}]
    Does there exist a \Fr structure $M$ for which $\Aut(M)$ is not universal?
\end{qn*}

Here, the automorphism group $\Aut(M)$ of a \Fr structure $M$ is \emph{universal} if for each $A \in \Ao(M)$ there is an abstract group embedding $\Aut(A) \to \Aut(M)$. Note that, a priori, the group embedding need not have any relation to the structure. It is immediate that extensibility of $\Ao(M)$ implies universality of $\Aut(M)$.

Piotr Kowalski observed that for each prime $p$ the \Fr limit of the class of finite fields of characteristic $p$ has non-universal automorphism group (see \cite{Mul16}), but the question remained open for relational classes until the relatively recent paper \cite{KS20} of Kubi\'{s} and Shelah, which gives examples where universality dramatically fails. In particular, the end of \cite[Section 4]{KS20} gives the example of a \Fr structure $M_\text{KS}$ consisting of a red countably infinite set and a blue copy of $(\Q, <)$ with a generic bipartite graph structure between them (in a signature $\{R, B, <, \sim\}$). The automorphism group of $M_\text{KS}$ is torsion-free (it embeds in $\Aut(\Q, <)$), but the class of automorphism groups of structures in $\Age(M_\text{KS})$ contains all finite symmetric groups.

\subsection*{Background: stationary (weak) independence relations} \hfill

A \emph{stationary independence relation (SIR)}, first defined in \cite{TZ13}, is a ternary relation $\ind$ on the set of finitely generated substructures of a \Fr structure $M$, written as $B \ind_A C$ for $A, B, C \fg M$, which satisfies the axioms of automorphism-invariance, stationarity, existence, monotonicity and symmetry (see \Cref{d:swir}). Examples of \Fr structures with SIRs include those with free amalgamation, the rational Urysohn space, the random poset and many metrically homogeneous graphs (for the latter see \cite{ABH25}). SIRs have numerous applications in the study of \Fr structures (see for example \cite{TZ13}, \cite{Mul16}, \cite{ABH25}, \cite{PS17}, \cite{KS19}, \cite{EHKLZ21}), and were originally used in \cite{TZ13} to determine the normal subgroups of the isometry group of the (complete) Urysohn space.

As $\Ao(M)$ is $\circ$-extensible for $M$ with free amalgamation, and as free amalgamation gives an example of a SIR, it is natural to ask whether the presence of a SIR on $M$ is sufficient for extensibility. In \cite{Mul16}, M\"{u}ller showed that this is indeed the case: if a \Fr structure $M$ has a SIR, then $\Ao(M)$ is $\circ$-extensible. Her proof used a \Ka tower construction directly, rather than the abstract machinery of \Ka functors (the papers \cite{KM17}, \cite{Mul16} were more or less contemporaneous). 

Li generalised the definition of SIR in \cite{Li19}, \cite{Li21} to that of a \emph{stationary weak independence relation (SWIR)}, and used SWIRs to show the simplicity of certain automorphism groups of \Fr structures with asymmetric relations by generalising the main theorem of \cite{TZ13}. A SWIR has the same axioms as a SIR minus the assumption of symmetry. (Another generalisation of SIRs to the asymmetric case was given in \cite{CKT21} and similarly used to prove simplicity of certain automorphism groups of \Fr structures.)

The original motivation for this paper was to see if the result of \cite{Mul16} likewise generalises to the case of SWIRs: this formed our initial approach to \Cref{qn SWIR extensible relationship}. Given the wide variety of applications of SIRs or similar notions appearing in the literature, we also regard the presence of a SWIR on a \Fr structure as having its own inherent interest, leading to \Cref{qn SWIR}. In \Cref{SWIRs and SAOs}, we show that the existence of a SWIR on a \Fr structure is equivalent to a certain natural notion of canonical amalgamation on its age, which we call a \emph{standard amalgamation operator}, and so \Cref{qn SWIR} can be viewed as asking how much information is required in an expansion in order to be able to canonically amalgamate elements of the expanded age. A related question is asked in \cite[Conjecture 7.1]{KS19} in the context of $\omega$-categorical structures and with a similar definition of an independence relation called a CIR (canonical independence relation): Kaplan and Simon conjecture that every $\omega$-categorical structure has an $\omega$-categorical expansion with a CIR. We discuss the relationship between SWIRs and CIRs further in \Cref{SWIRs and SAOs}.

\subsection*{Main results and structure of the paper} \hfill

In \Cref{SWIRs and SAOs}, we show that a \Fr structure $M$ has a SWIR if and only if its age has a certain notion of canonical amalgamation, which we call a \emph{standard amalgamation operator}. In \Cref{s: key exs} we consider three key examples. Two of them demonstrate that neither direction of implication in \Cref{qn SWIR extensible relationship} holds:

\begin{manualthm}{\ref{t: no implication SWIR Ka}}
    There is a relational \Fr structure with a \Ka functor but without a (local) SWIR, and there is a relational \Fr structure with a SWIR but without a universal automorphism group (hence its $\omega$-age is not $\circ$-extensible and it has no \Ka functor).
\end{manualthm}

(Note that the \Fr structure $M_\text{KS}$ of Kubi\'{s} and Shelah mentioned above also has a SWIR and a non-universal automorphism group, but we give the first vertex-transitive and primitive examples with these properties.)

The three examples in \Cref{s: key exs} form part of the proof of \Cref{t: big list of exs} below. There is a \textbf{key idea} underlying these examples, as well as the proofs of the main theorems in \Cref{s: order and tourn exps} and most of the examples in \Cref{s: exs}: the definition of a \Ka functor does \emph{not} require that each one-point extension of $A$ occurs uniquely in $K(A)$, and hence when defining a functorial amalgam $K(A)$ of one-point extensions of $A$, we may add multiple copies of each one-point extension labelled with extra information, and use this extra information to guarantee functoriality -- see the discussion at the beginning of \Cref{s: key exs} and also \Cref{ex: Ka for tourn}.

In \Cref{s: order and tourn exps}, we use the equivalence of SWIRs and standard amalgamation operators, together with the machinery of \Ka functors, to give some general positive results for \Cref{qn extensible}. We first quickly recover the result of M\"{u}ller (\cite{Mul16}): specifically, we show that any \Fr structure with a SIR has a \Ka functor (and hence $\circ$-extensible $\omega$-age). We then show:

\begin{manualthm}{\ref{t: order exp}}
    Let $M$ be a linearly ordered \Fr structure with strong amalgamation and a local SWIR. Then $M$ has a \Ka functor, and hence its $\omega$-age is $\circ$-extensible.
\end{manualthm}
\begin{manualthm}{\ref{t: tournament exp}}
    Let $M$ be a relational \Fr structure with strong amalgamation and a local SIR. Then the generic tournament expansion of $M$ has a \Ka functor, and hence has $\circ$-extensible $\omega$-age.
\end{manualthm}

Using \Cref{t: order exp}, we have for example that the generic ordered graph, the generic ordered rational Urysohn space, the generic $n$-linear order and the generic poset with compatible linear order have \Ka functors and hence $\circ$-extensible $\omega$-age -- see \Cref{ex: ordered SWIR strs}. In \Cref{s: peculiar str}, we give an example of a relational \Fr structure $M$ with a tournament relation and a SWIR whose automorphism group is not universal, which shows that the analogous tournament version of \Cref{t: order exp} does not hold.

In \Cref{s: exs}, we consider further examples of \Fr structures, with a particular focus on Cherlin's complete catalogue of countably infinite ultrahomogeneous oriented graphs (\cite{Che98}), and we determine for each example if a \Ka functor is present and if there is a \emph{finite SWIR expansion}: namely, an ultrahomogeneous expansion in a finite language with a SWIR. (One notable example, the oriented graph $P(3)$, remains open.) Specifically:

\begin{manualthm}{\ref{t: big list of exs}}
    Each of the following structures has a \Ka functor (and hence $\circ$-extensible $\omega$-age and universal automorphism group):
    \begin{itemize}
        \item the generic $3$-hypertournament (\ref{ex: 3-ht});
        \item the generic two-graph (\ref{p three key exs});
        \item the generic bipartite tournament and generic $\omega$-partite tournament (\ref{ex: n-partite tournament});
        \item the circular order on $\Q$ (\ref{betweenness and friends});
        \item the dense local order $S(2)$ and the related oriented graphs $S(3)$, $\widehat{\Q}$ (\ref{S(2) and friends});
        \item the generic $\vec{C}_4$-enlarged tournament $\widehat{T^\omega}$ (\ref{ex: C_4-enlarged});
        \item the dense meet-tree (\ref{meet-trees and meet-tree exps});
    \end{itemize}

    The following structures have $\circ$-extensible $\omega$-age, but no \Ka functor:
    \begin{itemize}
        \item the generic $n$-partite tournament for $2 < n < \omega$ (\ref{ex: n-partite tournament});
        \item the betweenness and separation relations on $\Q$ (\ref{betweenness and friends});
    \end{itemize}
    
    The following structures have non-universal automorphism groups (and hence their $\omega$-ages are not $\circ$-extensible and they have no \Ka functor):
    \begin{itemize}
        \item for $n \geq 3$, the generic $n$-anticlique-free oriented graph (\ref{p three key exs});
        \item the semigeneric $\omega$-partite tournament (\ref{ex: semigeneric}).
    \end{itemize}

    Each of the structures listed above has a finite SWIR expansion.
\end{manualthm}

We also show the following structural results:

\begin{manualthm}{\ref{t: products}}
    We have the following regarding products of structures:
    \begin{itemize}
        \item Let $M, N$ be relational \Fr structures with $M$ transitive, and suppose that $M, N$ have \Ka functors. Then the lexicographic product $M[N]^s$ has a \Ka functor (\ref{ex: lex prod}).
        \item Let $M, N$ be relational \Fr structures such that $M$ is transitive and has strong amalgamation, and suppose that $M, N$ have (local) SWIRs. Then the lexicographic product $M[N]^s$ has a (local) SWIR (\ref{ex: lex prod}).
        \item There are relational \Fr structures $M, N$ with \Ka functors such that their free superposition $M \ast N$ does not have a \Ka functor (\ref{ex: free superposition}).
        \item Let $N$ be the generic meet-tree expansion of a transitive \Fr structure $M$ with free amalgamation (satisfying mild non-triviality conditions). Then $\Aut(N)$ is non-universal (\ref{ex: meet-tree exp of free amalg}).
    \end{itemize}
\end{manualthm}

In \Cref{s: unique ext}, we briefly consider \emph{unique} extensibility. Let $M$ be a \Fr structure, and let $\Ainf(M)$ denote the class of infinite structures which are embeddable in $M$. We say that $\Ainf(M)$ is \emph{uniquely extensible} if, for each $A \in \Ainf(M)$, there exists an embedding $f : A \to M$ such that each automorphism of $f(A)$ extends uniquely to an automorphism of $M$ (note that this gives a group embedding $\Aut(f(A)) \to \Aut(M)$). We show:

\begin{manualprop}{\ref{p: ordered random graph uniq ext}}
    Let $M$ be the generic ordered graph. Then $\Ainf(M)$ is uniquely extensible.
\end{manualprop}

The proof follows by a straightforward combination of ideas from \Cref{s: order and tourn exps} with the proof of unique extensibility for the random graph due to Henson (\cite[Theorem 3.1]{Hen71}). Bilge (\cite{Bil12}) extended Henson's result in his PhD thesis to show unique extensibility of $\Ainf(M)$ for transitive $M$ with free amalgamation in a finite relational language (and such that $M$ is not an indiscernible set). We conjecture that $\Ainf(M^<)$ is uniquely extensible for $M^<$ equal to the generic order expansion of a (non-trivial) \Fr structure with free amalgamation: see \Cref{c: LO free}.

\subsection*{Acknowledgements.} We would like to thank Adam Barto\v{s}, David Bradley-Williams, Alessandro Codenotti, Azul Fatalini and Wies\l{}aw Kubi\'{s} for initial discussions, and Itay Kaplan for his talk in M\"{u}nster in September 2023 on the automorphism group of the Rado meet-tree, which indirectly sparked our interest in universality of automorphism groups. The second author would specifically like to thank Wies\l{}aw Kubi\'{s} for an enlightening discussion on $n$-hypertournaments for $n > 3$ which led to \Cref{p: 2timesodd-ht not univ}, and Sam Braunfeld for a discussion regarding the semigeneric tournament. He would also like to thank David Evans for the idea of considering pointwise-stabilisers which led to \Cref{p: T32 SWIR}, the SWIR expansion of the generic $3$-hypertournament, found as an initial example of more general results in a separate project with the third author and Shujie Yang -- the latter of whom we thank for allowing us to include this example here.

\section{SWIRs and standard amalgamation operators} \label{SWIRs and SAOs}

\subsection{Stationary weak independence relations (SWIRs)}

\begin{notn}
    In the below, we write $AB$ to mean the substructure generated by $A \cup B$. We will also sometimes omit parentheses for function arguments when this does not impede clarity, writing $ga$, $gA$ instead of $g(a)$, $g(A)$.
\end{notn}

\begin{defn}
    Let $M$ be a \Fr structure, and let $A, B, B' \fg M$. We write $B \equiv_A B'$ if there exists $f \in \Aut(M)$ with $fB = B'$ and $f|_A = \id_A$, or equivalently if $B, B'$ have the same quantifier-free type over $A$ in some enumeration. (The two definitions are equivalent by ultrahomogeneity.) In the below stationarity axiom, when we write $B \equiv_A B' \Rightarrow B \equiv_{AC} B'$, both automorphisms agree on $AB$.
\end{defn}

\begin{defn}[{\cite[Definition 3.1.1]{Li21}}] \label{d:swir}
    Let $M$ be a \Fr structure. Let $\ind$ be a ternary relation on the set of finitely generated substructures of $M$, written $B \ind_A C$. We say that $\ind$ is a \emph{stationary weak independence relation (SWIR)} on $M$ if it satisfies the following axioms:
    \begin{itemize}
        \item Invariance (Inv): for $g \in \Aut(M)$ we have $B \ind_A C \Rightarrow gB \ind_{gA} gC$;
        \item Existence (Ex): for all $A, B, C \fg M$,
        \begin{itemize}
            \item there exists $B' \fg M$ with $B \equiv_A B'$ such that $B' \ind_A C$;
            \item there exists $C' \fg M$ with $C \equiv_A C'$ such that $B \ind_A C'$;
        \end{itemize}
        \item Stationarity (Sta): for all $A, B, C \fg M$,
        \begin{itemize}
            \item if $B \ind_A C \wedge B' \ind_A C$ and $B \equiv_A B'$, then $B \equiv_{ AC } B'$;
            \item if $B \ind_A C \wedge B \ind_A C'$ and $C \equiv_A C'$, then $C \equiv_{ AB } C'$;
        \end{itemize}
        \item Monotonicity (Mon):
        \begin{itemize}
            \item $ BD  \ind_A C \Rightarrow B \ind_A C \,\wedge\, D \ind_{ AB } C$;
            \item $B \ind_{A}  CD  \Rightarrow B \ind_A C \,\wedge\, B \ind_{ AC } D$;
        \end{itemize}
    \end{itemize}

    We call a ternary relation $\ind$ defined on the set of non-empty finitely generated substructures of $M$ a \emph{local SWIR} if it satisfies the above axioms. We call a SWIR a \emph{global} SWIR when we particularly wish to emphasise that it is also defined over the empty set, but the reader should assume SWIRs are global unless specified otherwise. Note that a global SWIR restricts to a local SWIR. We give several examples in \Cref{s: exs} of structures with local SWIRs which do not have a global SWIR. Note that we defined \Ka functors in \Cref{def Ka functor} to be local.
\end{defn}
\begin{rem}[{\cite[3.1.2(iii)]{Li21}}] \label{lexinv}
    In fact, left-(Ex) and (Inv) imply right-(Ex). Let $A, B, C \fg M$. By left-(Ex), there is $B' \fg M$ with $B \equiv_A B'$ and $B' \ind_A C$. As $M$ is ultrahomogeneous there exists $g \in \Aut(M)$ fixing $A$ pointwise and sending $B'$ to $B$. So by (Inv), we have $B \ind_A gC$. We have $gC \equiv_A C$, showing right-(Ex).
\end{rem}
\begin{rem}
    In \cite[Definition 3.10]{KS19}, Kaplan and Simon give a similar yet distinct definition of a \emph{canonical independence relation (CIR)} in the more general setting of countable structures. Using \Cref{SWIR tr implies mon}, it is straightforward to see that the relationship between the notions of SWIR and CIR for \Fr structures is as follows:
    \begin{center}
        global SWIR $\Leftrightarrow$ (CIR on finitely generated substructures + stationarity over all $A \fg M$). 
    \end{center}
\end{rem}

\begin{defn}[{\cite[Definition 2.1]{TZ13}}]
    Let $M$ be a \Fr structure, and let $\ind$ be a (local) SWIR on $M$. If in addition $\ind$ is symmetric, i.e.\ for all $A, B, C \fg M$ we have that $B \ind_A C \Leftrightarrow C \ind_A B$, then we call $\ind$ a \emph{(local) stationary independence relation} (SIR).
\end{defn}

\begin{eg} \label{ex: structures with SWIRS}
    We give examples of \Fr structures with SWIRs. In each example, the SWIR is given by a certain canonical notion of amalgamation -- see the remainder of this section below.

    \begin{itemize}
        \item A relational \Fr structure $M$ with free amalgamation has a SIR: for $A, B, C \fin M$, define $B \ind_A C$ if $BA, CA$ are freely amalgamated over $A$.
        \item The random poset has a SIR: define $B \ind_A C$ if for $b \in B \setminus A$, $c \in C \setminus A$ such that there is no $a \in A$ with $b < a < c$ or $b > a > c$, we have $b, c$ incomparable.
        \item The rational Urysohn space has a local SIR: define $B \ind_A C$ if for $b \in B \setminus A$, $c \in C \setminus A$, we have $d(b, c) = \min \{d(b, a) + d(c, a) \mid a \in A\}$.
        \item $\Q$ has a SWIR: define $B \ind_A C$ if for $b \in B \setminus A$, $c \in C \setminus A$ such that there is no $a \in A$ with $b < a < c$ or $b > a > c$, we have $b < c$.
        \item The random tournament has a SWIR: define $B \ind_A C$ if for $b \in B \setminus A$, $c \in C \setminus A$ we have $b \ra c$.
        \item Let $n \geq 3$. The same $\ind$ as for the random tournament gives a SWIR for the \Fr limit of the class of finite oriented graphs omitting $n$-anticliques.
        \item If $M_0$, $M_1$ are the \Fr limits of relational strong amalgamation classes $\mc{C}_0$, $\mc{C}_1$ and $M_0$, $M_1$ have SWIRs, then the \Fr limit of the free superposition of $\mc{A}_0$, $\mc{A}_1$ has a SWIR. (This is straightforward. See \Cref{free sup}.)
    \end{itemize}    
\end{eg}

We now prove a number of basic properties of SWIRs. We phrase them for SWIRs, but the same properties likewise hold for local SWIRs assuming that the base is non-empty.

\begin{lem}[Base triviality] \label{base triviality}
    Let $M$ be a \Fr structure with SWIR $\ind$. Then $A \ind_A B$ and $B \ind_A A$ for all $A, B \fg M$.
\end{lem}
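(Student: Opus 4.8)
The plan is to deduce both statements directly from the Existence axiom, using the single observation that the only finitely generated substructure which is $A$-equivalent to $A$ is $A$ itself.

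First I would record that observation: if $A \equiv_A A'$ then $A' = A$. Indeed, by definition there is $g \in \Aut(M)$ with $g(A) = A'$ and $g|_A = \id_A$; since $g$ fixes $A$ pointwise, it fixes the substructure $A$ setwise, so $A' = g(A) = A$. With this in hand, to obtain $A \ind_A B$ I would apply left-(Ex) to the triple in which the base is $A$, the left coordinate is $A$, and the right coordinate is $B$. This yields $A' \fg M$ with $A \equiv_A A'$ and $A' \ind_A B$, and by the observation $A' = A$, giving $A \ind_A B$ at once. The statement $B \ind_A A$ then follows symmetrically by applying right-(Ex) to the triple with base $A$, left coordinate $B$, and right coordinate $A$: this produces $A'$ with $A \equiv_A A'$ and $B \ind_A A'$, and again $A' = A$.

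I do not expect any genuine obstacle here, as the argument is immediate once the Existence axiom is invoked in the correct coordinates; neither Stationarity nor Monotonicity is needed. The only point requiring a moment's care -- and the one on which the whole proof rests -- is the identification $A' = A$ from $A \equiv_A A'$, together with taking the degenerate choice of left (respectively right) coordinate equal to the base when applying Existence.
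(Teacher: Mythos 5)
Your proof is correct, and it takes a mildly but genuinely different route from the paper's. The paper proves $A \ind_A B$ by applying right-(Ex) to perturb the $B$-coordinate, obtaining $B'$ with $B \equiv_A B'$ and $A \ind_A B'$, and then pulling $B'$ back to $B$ via ultrahomogeneity and (Inv). You instead apply left-(Ex) to perturb the $A$-coordinate sitting in the non-base slot, and observe that the perturbation is vacuous: if $A \equiv_A A'$ then the witnessing automorphism fixes $A$ pointwise, so $A' = A$. That observation is sound, and since the axiom is quantified over all $A, B, C \fg M$ there is no obstruction to taking the left coordinate equal to the base. What your version buys is economy: you use only (Ex), whereas the paper additionally invokes ultrahomogeneity and (Inv). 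Both arguments are equally short in practice, and the paper's pattern (perturb, then transport back by an automorphism over the base) is the one reused throughout Section 2 (e.g.\ in \Cref{SWIR mon implies tr} and \Cref{base_up}), so it is worth being comfortable with both.
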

\begin{proof}
    By (Ex) there is $B' \fg M$ with $B \equiv_A B'$ and $A \ind_A B'$. By ultrahomogeneity of $M$ there is $g \in \Aut(M)$ with $\id_A \sub g$ and $gB' = B$, so $A \ind_A B$ by (Inv). The proof of the other statement is similar.
\end{proof}

\begin{lem}[{\cite[3.1.2(ii)]{Li21}}] \label{SWIR mon implies tr}
    Let $M$ be a \Fr structure with SWIR $\ind$. Then we have Transitivity (Tr):
    \begin{align*}
        B \ind_A C \wedge B \ind_{AC} D &\Rightarrow B \ind_A CD; \\
        B \ind_A C \wedge D \ind_{AB} C &\Rightarrow BD \ind_A C.
    \end{align*}
\end{lem}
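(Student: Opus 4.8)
The plan is to prove the first implication $B \ind_A C \wedge B \ind_{AC} D \Rightarrow B \ind_A CD$; the second will then follow by an entirely symmetric argument with the roles of left and right interchanged. The key observation is that (Tr) is exactly the converse of the second clause of (Mon), so the strategy is to manufacture a ``witness'' $B'$ which is \emph{fully} independent, $B' \ind_A CD$, and then argue that $B'$ cannot be distinguished from $B$ over the enlarged base $A\,CD$; transporting $B' \ind_A CD$ back to $B$ by (Inv) will then finish the proof.

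First I would apply left-(Ex) to obtain $B' \fg M$ with $B' \equiv_A B$ (via a fixed isomorphism $\alpha : B \to B'$ over $A$) and $B' \ind_A CD$. Applying (Mon) to $B' \ind_A CD$ yields $B' \ind_A C$ and $B' \ind_{AC} D$. Now $B \ind_A C$ (hypothesis) and $B' \ind_A C$ hold with $B \equiv_A B'$, so left-(Sta) upgrades this to $B \equiv_{AC} B'$, witnessed by some $h \in \Aut(M)$ fixing $AC$ pointwise with $h|_B = \alpha$. Transporting the hypothesis $B \ind_{AC} D$ by $h$ via (Inv) gives $B' \ind_{AC} h(D)$, and since $h$ fixes $AC$ we have $h(D) \equiv_{AC} D$; combining this with $B' \ind_{AC} D$ from the previous step, right-(Sta) yields $D \equiv_{ACB'} h(D)$, witnessed by an automorphism $k$ fixing $ACB'$ pointwise and agreeing with $h$ on $D$.

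The crux is then to assemble $h$ and $k$ into a single automorphism fixing all of $A$, $C$ and $D$ while still carrying $B$ to $B'$. Forming $g := k^{-1} \circ h$ cancels the action on $D$ (since $k$ and $h$ agree there) while preserving the pointwise fixing of $AC$ and the restriction $\alpha$ on $B$ (as $k$ fixes $B'$), giving $g \in \Aut(M)$ with $g|_{ACD} = \id$ and $g|_B = \alpha$, so that $B \equiv_{A\,CD} B'$. Transporting $B' \ind_A CD$ by $g^{-1}$ via (Inv) then delivers $B \ind_A CD$. I expect the only delicate point to be the bookkeeping of isomorphisms: one must check that the two applications of (Sta) are compatible, i.e.\ that $h$ and $k$ can be chosen so that $k^{-1} \circ h$ simultaneously fixes $D$ pointwise and restricts to $\alpha$ on $B$ -- this is precisely where the convention that $B \equiv_A B'$ and $B \equiv_{AC} B'$ refer to the \emph{same} isomorphism is needed. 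The second implication is proved identically after swapping left-(Ex)/right-(Ex), left-(Sta)/right-(Sta) and the two clauses of (Mon), producing a witness $C'$ with $BD \ind_A C'$ and concluding $C \equiv_{A\,BD} C'$.
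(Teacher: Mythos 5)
Your proof is correct and follows the same skeleton as the paper's: use left-(Ex) to produce a witness $B'$ with $B' \ind_A CD$, decompose by (Mon) into $B' \ind_A C$ and $B' \ind_{AC} D$, identify $B$ with $B'$ over the enlarged base by (Sta), and transport back by (Inv). The only divergence is in the last identification step: the paper simply applies left-(Sta) a \emph{second} time, now over the base $AC$ with $D$ playing the role of $C$ (from $B \equiv_{AC} B'$, $B \ind_{AC} D$ and $B' \ind_{AC} D$ one gets $B \equiv_{ACD} B'$ directly, via the same isomorphism $\alpha$), whereas you reach the same conclusion by transporting $B \ind_{AC} D$ along $h$, invoking right-(Sta) on $D$ versus $h(D)$, and composing the automorphisms $h$ and $k$. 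Your composition argument is valid -- $k^{-1}\circ h$ does fix $ACD$ pointwise and restrict to $\alpha$ on $B$ -- but it is an unnecessary detour; the ``delicate bookkeeping'' you flag is exactly what the paper's convention on $\equiv$ (that successive stationarity statements refer to the same isomorphism) is designed to dispose of, and a second left-(Sta) gets you there in one line.
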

\begin{proof}
    We show the first statement. Suppose $B \ind_A C \wedge B \ind_{ AC } D$. By (Ex) there is $B' \fg M$ with $B' \ind_A  CD $ and $B \equiv_A B'$. By (Mon) we have $B' \ind_A C \wedge B' \ind_{ AC } D$. As $B \equiv_A B'$ and $B \ind_A C \wedge B' \ind_A C$, by (Sta) we have $B \equiv_{ AC } B'$. By (Sta) we have $B \equiv_{ ACD } B'$ and by (Inv) we have $B \ind_A CD$.
\end{proof}

\begin{lem}[adapted from {\cite[Lemma 3.2]{Bau16}}] \label{SWIR tr implies mon}
    Let $M$ be a \Fr structure with $\ind$ satisfying (Inv), (Ex), (Sta), (Tr). Then $\ind$ satisfies (Mon), and so is a SWIR.    
\end{lem}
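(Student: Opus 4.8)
The plan is to derive (Mon) from (Tr), which is essentially its converse, by the following three-move strategy: build an \emph{independent} copy of the configuration using (Ex), reassemble it with (Tr), and then transport independence back onto the original configuration along the automorphism supplied by (Sta). I would prove the right-hand statement $B \ind_A CD \Rightarrow B \ind_A C \wedge B \ind_{AC} D$; the left-hand statement $BD \ind_A C \Rightarrow B \ind_A C \wedge D \ind_{AB} C$ then follows by the mirror-image argument, using left-(Ex), left-(Sta) and the second form of (Tr).

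Assume $B \ind_A CD$. First I would apply (Ex) to obtain $g \in \Aut(M)$ fixing $A$ pointwise with $C' := g(C)$ satisfying $B \ind_A C'$, and set $D' := g(D)$, so that $C'D' = g(CD)$ and hence $C'D' \equiv_A CD$. Next, applying (Ex) over the base $AC'$, I would find $D''$ with $D'' \equiv_{AC'} D'$ and $B \ind_{AC'} D''$. Now (Tr) applied to $B \ind_A C'$ and $B \ind_{AC'} D''$ yields $B \ind_A C'D''$. Since $D'' \equiv_{AC'} D'$ implies $C'D'' \equiv_A C'D'$, composing with $C'D' \equiv_A CD$ gives $CD \equiv_A C'D''$, where the composite isomorphism matches the enumeration of $C$ with that of $C'$ and of $D$ with that of $D''$.

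At this point I have $B \ind_A CD$ and $B \ind_A C'D''$ with $CD \equiv_A C'D''$, so (Sta), in its right-hand form with base $A$, provides $h \in \Aut(M)$ fixing $A$ and $B$ pointwise with $h(C') = C$ and $h(D'') = D$. Finally I would apply (Inv) via $h$ to the two independence statements $B \ind_A C'$ and $B \ind_{AC'} D''$ established above, obtaining $B \ind_A C$ and $B \ind_{AC} D$ respectively, which is exactly right-(Mon).

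The main obstacle is bookkeeping rather than conceptual: one must fix enumerations consistently so that the type-equalities $\equiv_A$ between the composite tuples $CD$ and $C'D''$ genuinely respect the decomposition into the $C$-part and the $D$-part, guaranteeing that the single automorphism $h$ produced by (Sta) restricts to $C' \mapsto C$ and $D'' \mapsto D$ simultaneously. Once the flexible-notation convention for $\equiv$ is in force (so that $\equiv_{AC'}$ refines $\equiv_A$ and composites of such isomorphisms behave as expected), transporting both $B \ind_A C'$ and $B \ind_{AC'} D''$ along $h$ is immediate, and the argument closes.
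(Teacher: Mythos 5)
Your proof is correct and follows essentially the same strategy as the paper's: apply (Ex) twice to build an independent copy of the configuration, reassemble it with (Tr), identify it with the original via (Sta), and transport independence back with (Inv). The only real difference is that the paper writes out left-(Mon) by perturbing the \emph{undecomposed} side $C$ (so a single substructure moves and the type-equalities compose trivially, at the cost of one intermediate (Inv) step), whereas you perturb the decomposed side $CD$, which is why you need the extra bookkeeping ensuring $CD \equiv_A C'D''$ respects the $C$/$D$ split --- and that bookkeeping does go through exactly as you describe, since the composite $k \circ g$ fixes $A$ and carries $C \mapsto C'$, $D \mapsto D''$.
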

\begin{proof}
    We show left-(Mon); right-(Mon) is similar. Suppose $BD \ind_A C$. By (Ex) there is $C' \fg M$ with $C \equiv_A C'$ and $B \ind_A C'$, and by (Ex) there is $C'' \fg M$ with $C' \equiv_{AB} C''$ and $D \ind_{AB} C''$. By (Inv) we have $B \ind_A C''$, so $BD \ind_A C''$ by (Tr). Then $C \equiv_{ABD} C''$ by (Sta), hence $B \ind_A C \wedge D \ind_{AB} C$ by (Inv). 
\end{proof}

\begin{rem}
    The previous two lemmas show that it is equivalent to take (Mon) or (Tr) in the definition of SWIR. When checking the SWIR axioms for a particular definition of $\ind$ on a particular structure, we find that (Mon) is usually easier to check. 
\end{rem}

\begin{lem} \label{base_up}
    Let $M$ be a \Fr structure with SWIR $\ind$. Then $B \ind_A C \Rightarrow AB \ind_A C$ and likewise $B \ind_A C \Rightarrow B \ind_A AC$.
\end{lem}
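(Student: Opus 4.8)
The plan is to deduce both implications directly from base triviality (\Cref{base triviality}) and transitivity (\Cref{SWIR mon implies tr}), which are already established, rather than returning to the monotonicity or stationarity axioms by hand. The guiding idea is that the generated substructure $AB$ is the ``join'' of $A$ and $B$ over the base $A$, that $A$ is trivially independent from everything over itself, and that transitivity lets us fold two left-hand (respectively right-hand) factors into a single one.

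For the first implication, suppose $B \ind_A C$. I would apply the second form of transitivity, namely $B \ind_A C \wedge D \ind_{AB} C \Rightarrow BD \ind_A C$, after relabelling: take the first left-hand factor to be $A$ itself, the second left-hand factor to be our $B$, and keep the base $A$ and the right-hand side $C$. The first hypothesis then reads $A \ind_A C$, which holds by base triviality; the second hypothesis reads $B \ind_{AA} C$, which is just $B \ind_A C$ since the substructure generated by $A \cup A$ is $A$. Transitivity then yields $AB \ind_A C$, as required.

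The second implication is entirely symmetric, using the first form of transitivity $B \ind_A C \wedge B \ind_{AC} D \Rightarrow B \ind_A CD$ instead: take the first right-hand factor to be $A$ and the further right-hand factor to be our $C$. Here $B \ind_A A$ holds by base triviality, $B \ind_{AA} C = B \ind_A C$ is the given hypothesis, and the conclusion $B \ind_A AC$ is exactly the statement we want.

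There is essentially no serious obstacle here: the only point requiring any care is the bookkeeping of the substitutions into the two transitivity implications, together with the observation that $AA = A$ as generated substructures, so that the collapsed base $AA$ makes the second transitivity hypothesis coincide with the given relation $B \ind_A C$ on the nose.
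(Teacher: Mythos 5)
Your proof is correct and is in essence the paper's argument: both reduce the claim to an instance of base triviality followed by one application of (Tr). The only difference is cosmetic — the paper rederives the needed instance $AB \ind_{AB} C$ directly from (Ex), ultrahomogeneity and (Inv) and then folds $B$ with $AB$ over the base $A$, whereas you cite \Cref{base triviality} for $A \ind_A C$ and fold $A$ with $B$; your version is marginally more economical since it reuses the already-proven lemma instead of reproving its special case inline.
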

\begin{proof}
    We show the first statement. By (Ex) there is $C' \fg M$ with $C \equiv_{AB} C'$ and $AB \ind_{AB} C'$. There exists $g \in \Aut(M)$ with $\id_{AB} \sub g$ and $gC' = C$. So by (Inv) we have $AB \ind_{AB} C$ and by (Tr) we have $AB \ind_A C$.
\end{proof}

\begin{lem} \label{SWIR strong amalg}
    Let $M$ be a \Fr structure with SWIR $\ind$. Suppose that $M$ has strong amalgamation. Then if $B \ind_A C$, we have that $B \setminus A$, $C \setminus A$ are disjoint sets.
\end{lem}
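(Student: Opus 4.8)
The plan is to reduce to a one-element configuration, exploit self-independence via transitivity to manufacture an independence with a \emph{distinct} copy, and then let stationarity bite; strong amalgamation enters only to supply that distinct copy.

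First I would reduce to a single shared element. Suppose $B \ind_A C$ and, towards a contradiction, that some $e \in (B \setminus A) \cap (C \setminus A)$. By \Cref{base_up} we may replace $B, C$ by $AB, AC$ (which changes neither $B \setminus A$ nor $C \setminus A$), so assume $A \sub B$ and $A \sub C$; then $Ae \sub B$ and $Ae \sub C$. Applying (Mon) on the left (taking $Ae$ and $B$, whose join is $B$) and then on the right yields $Ae \ind_A Ae$. So it suffices to derive a contradiction from $Ae \ind_A Ae$ with $e \notin A$.

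The crux is to transfer this self-independence onto a fresh copy of $e$. Since $A$ is a substructure we have $e \notin \langle A \rangle = A$, so the one-point extension $A \to Ae$ is proper; applying strong amalgamation to two copies of $Ae$ over $A$ and realising the result in $M$ (using ultrahomogeneity to identify one copy with $Ae$ itself) produces $e_1 \equiv_A e$ with $e_1 \neq e$. Now \Cref{base triviality} gives $Ae \ind_{Ae} Ae_1$, and combining this with $Ae \ind_A Ae$ via (Tr) (\Cref{SWIR mon implies tr}) gives $Ae \ind_A Ae\,e_1$; restricting the right-hand side to $Ae_1$ by (Mon) yields $Ae \ind_A Ae_1$. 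On the other hand, applying (Inv) to an $A$-fixing automorphism sending $e$ to $e_1$ turns the hypothesis into $Ae_1 \ind_A Ae_1$. Thus both $Ae$ and $Ae_1$ are left-independent from $Ae_1$ over $A$, and $Ae \equiv_A Ae_1$ via the $A$-fixing isomorphism $e \mapsto e_1$; so left-(Sta) gives $Ae \equiv_{Ae_1} Ae_1$ along this same isomorphism. Hence there is $g \in \Aut(M)$ fixing $Ae_1$ pointwise with $g(e) = e_1$, whence $g(e) = e_1 = g(e_1)$, and injectivity of $g$ forces $e = e_1$, contradicting $e \neq e_1$.

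The main obstacle is pinpointing where strong amalgamation is genuinely needed: the SWIR axioms alone do not preclude $Ae \ind_A Ae$, since the one-element instances of (Ex) and (Sta) are consistent with a degenerate ``diagonal'' independence in which a structure is independent from itself, so the contradiction cannot be purely axiomatic. Strong amalgamation is exactly what provides a second realisation $e_1 \neq e$ of the type of $e$ over $A$, and the transitivity-plus-base-triviality step is the device that moves self-independence onto this genuinely distinct copy, where stationarity applies. One should take care that the isomorphism witnessing $Ae \equiv_A Ae_1$ is the one carried through the stationarity step, so that the concluding injectivity argument is valid.
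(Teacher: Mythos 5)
Your proof is correct, but it takes a genuinely different route from the paper's. The paper argues directly: it forms a strong amalgam $D$ of two copies $B_0, B_1$ of $B$ over $A$, uses (Ex) to place $D$ independently from $C$ over $A$, then (Mon) and (Sta) to conclude $B_0 \equiv_{AC} B_1 \equiv_{AC} B$; since $B_0 \setminus A$ and $B_1 \setminus A$ are disjoint yet have the same quantifier-free type over $C$, neither can meet $C \setminus A$, and the equivalence over $AC$ transfers this to $B$ itself. You instead argue by contradiction: you isolate a single offending element $e$, extract the self-independence $Ae \ind_A Ae$ via (Mon), and only then invoke strong amalgamation -- to duplicate the point $e$ rather than the whole of $B$ -- before combining base triviality, (Tr), (Inv) and (Sta) to manufacture an automorphism fixing $e_1$ and sending $e \mapsto e_1$. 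Both proofs use strong amalgamation for the same purpose (a genuinely distinct copy on which stationarity can act) and both are careful that the isomorphism in the conclusion of (Sta) is the given one, which your final injectivity step requires. The paper's version is shorter and needs fewer of the derived lemmas; yours has the merit of making explicit that the only thing to rule out is the degenerate self-independence $Ae \ind_A Ae$ for $e \notin A$, which cleanly localises where strong amalgamation is indispensable. Each step of your argument checks out against the axioms as stated in the paper (in particular your uses of left- and right-(Mon) with $Ae \sub B$ and $Ae \sub C$, and of right-(Tr) with base $A(Ae) = Ae$).
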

\begin{proof}
    By \Cref{base_up} we have $ AB  \ind_A  AC $, so we may assume $A \sub B, C$. Let $D$ be a strong amalgam of two copies $B_0, B_1$ of $B$ over $A$. By (Ex) we may assume $D \ind_A C$. By (Mon) we have $B_0 \ind_A C \wedge B_1 \ind_A C$, and by (Sta) we have $B_0 \equiv_{ AC } B_1 \equiv_{ AC } B$. As $B_0 \setminus A, B_1 \setminus A$ are disjoint and have the same quantifier-free type over $C$, no element of $B_0 \setminus A$ or $B_1 \setminus A$ is equal to any element of $C \setminus A$. Hence $B \setminus A$, $C \setminus A$ are disjoint.
\end{proof}

\subsection{Standard amalgamation operators (SAOs)}

We now define \emph{standard amalgamation operators (SAOs)}, and in the next section we will show their equivalence with SWIRs. (We use the terminology ``standard amalgamation" rather than ``canonical amalgamation" to avoid confusion with the canonical independence relations (CIRs) of Kaplan and Simon in \cite{KS19}.) 

\begin{rem}
    \cite[Section 4.5]{ABH25} sketched the equivalence of SIRs and another version of canonical amalgamation -- we generalise this by dropping the symmetry assumption, and we give a different (but equivalent) definition of canonical amalgam which we believe is easier to check in examples. We also give more details in proofs, making the commutative diagrams explicit.
\end{rem}

\begin{notn} \label{emb notn}
    Let $M$ be a \Fr structure. Let $f_i : A_i \to B_i$, $i = 0, 1$, be embeddings in $\Age(M)$. We write $f : (A_0, A_1) \to (B_0, B_1)$ for the pair $f = (f_0, f_1)$. If $A_0 = A_1 = A$, we write $f : A \to (B_0, B_1)$ and $\dom(f) = A$, and if $B_0 = B_1 = B$ we write $f : (A_0, A_1) \to B$ and $\cod(f) = B$. Let $f : (A_0, A_1) \to (B_0, B_1)$, $f' : (A'_0, A'_1) \to (B'_0, B'_1)$ be pairs of embeddings. An embedding $i : f \to f'$ is defined to be a pair $(i_A, i_B)$ where $i_A : (A_0, A_1) \to (A'_0, A'_1)$, $i_B : (B_0, B_1) \to (B'_0, B'_1)$ and $i_B \circ f = f' \circ i_A$.
\end{notn}

Before giving the formal definition of a SAO, we provide an informal description. For each pair of embeddings $e : A \to (B, C)$ in $\Age(M)$, a standard amalgamation operator $\otimes$ gives an amalgam $B \otimes_A C$ of $e$ such that:

\begin{itemize}
        \item Minimality: $B \otimes_A C$ is generated by the union of the images of $B$ and $C$;
        \item Invariance: given pairs of embeddings $e : A \to (B, C)$, $e' : A' \to (B', C')$ and an isomorphism $i : e \to e'$, we have $B \otimes_A C \cong B' \otimes_A C'$ via an isomorphism that respects $i$;
        \item Transitivity: let $e : A \to (B, C)$ be a pair of embeddings. Then:
        \begin{itemize}
            \item for each embedding $b : B \to B'$ we have $B' \otimes_B (B \otimes_A C) \cong B' \otimes_A C$ via an embedding-respecting isomorphism;
            \item for each embedding $c : C \to C'$ we have $(B \otimes_A C) \otimes_C C' \cong B \otimes_A C'$ via an embedding-respecting isomorphism.
        \end{itemize}
\end{itemize}

We will see in \Cref{SAO mon and assoc} that any SAO is monotonic (given embeddings $B \to B'$, $C \to C'$ we get an embedding $B \otimes_A C \to B' \otimes_A C'$) and associative ($(B \otimes_A C) \otimes_{A'} D \cong B \otimes_A (C \otimes_{A'} D)$).

We also call a SAO \emph{local} if it is only defined for non-empty $A$.

In the formal definition below, we explicitly state which embeddings commute, for the sake of completeness. We then return to a less explicit presentation in the sequel so as to avoid cumbersome notation.

\begin{defn}
    Let $M$ be a \Fr structure. An \emph{amalgamation operator} for $M$ is a map $\otimes$ from the class of pairs of embeddings in $\Age(M)$ with common domain to the class of pairs of embeddings in $\Age(M)$ with common codomain, such that for each pair $(e, f) : A \to (B, C)$, its image $e \otimes f$ is a pair $((e \otimes f)_B, (e \otimes f)_C) : (B, C) \to \cod(e \otimes f)$, and we have $(e \otimes f)_B \circ e = (e \otimes f)_C \circ f$. We write $B \otimes_A C = \cod(e \otimes f)$ for brevity.

    We say that $\otimes$ is a \emph{strong} amalgamation operator if for each pair $(e, f) : A \to (B, C)$ we have \[(e \otimes f)_B(B) \cap (e \otimes f)_C(C) = ((e \otimes f)_B \circ e)(A).\]
\end{defn}

In the below definition, we use the notation for pairs of embeddings from \Cref{emb notn}.

\begin{defn}
    Let $M$ be a \Fr structure with amalgamation operator $\otimes$. We say that $\otimes$ is a \emph{standard amalgamation operator (SAO)} if:
    \begin{itemize}
        \item Minimality: for each pair $(e, f) : A \to (B, C)$, we have that $\cod(e \otimes f)$ is generated by the union of the images of $B$, $C$ in $e \otimes f$;
        \item Invariance (Inv): given pairs of embeddings $(e, f) : A \to (B, C)$, $(e', f') : A' \to (B', C')$ and an isomorphism $i : (e, f) \to (e', f')$, there exists an isomorphism $j : B \otimes_A C \to B' \otimes_{A'} C'$ such that $j \circ (e \otimes f) = (e' \otimes f') \circ i$, as in the below diagram:
        \[\begin{tikzcd}[row sep = tiny, cramped]
	       && B \\
	       \\
	       A &&&& {B \otimes_A C} \\
	       \\
	       && C \\
	       \\
	       && {B'} \\
	       \\
	       {A'} &&&& {B' \otimes_{A'} C'} \\
	       \\
	       && {C'}
	       \arrow[from=1-3, to=3-5]
	       \arrow["{i_B}"'{pos=0.39}, curve={height=24pt}, from=1-3, to=7-3]
	       \arrow["e", from=3-1, to=1-3]
	       \arrow["f"', from=3-1, to=5-3]
	       \arrow["{i_A}"', curve={height=24pt}, from=3-1, to=9-1]
	       \arrow["j", dashed, from=3-5, to=9-5]
	       \arrow[from=5-3, to=3-5]
	       \arrow["{i_C}"'{pos=0.61}, curve={height=24pt}, from=5-3, to=11-3]
	       \arrow[from=7-3, to=9-5]
	       \arrow["e'", from=9-1, to=7-3]
	       \arrow["f'"', from=9-1, to=11-3]
	       \arrow[from=11-3, to=9-5]
        \end{tikzcd}\]
        
        \item Transitivity (Tr): given a pair of embeddings $(e, f) : A \to (B, C)$: 
        \begin{itemize}
            \item for each embedding $b : B \to B'$ there is an isomorphism $i : B' \otimes_B (B \otimes_A C) \to B' \otimes_A C$ such that the left diagram commutes:
            \item for each embedding $c : C \to C'$ there is an isomorphism $j : (B \otimes_A C) \otimes_C C' \to B \otimes_A C'$ such that the right diagram commutes: 
        \end{itemize}
        \[\begin{tikzcd}[sep = tiny, cramped]
	       && {B} && {B'} \\
	       &&& {B' \otimes_B (B \otimes_A C)} \\
	       A && {B \otimes_A C} && {B' \otimes_A C} \\ && {\vphantom{B' \otimes_B (B \otimes_A C)}} &&  \\
	       && {C}
	       \arrow["b", from=1-3, to=1-5]
	       \arrow[from=1-3, to=3-3]
	       \arrow[from=1-5, to=3-5]
	       \arrow[from=1-5, to=2-4]
	       \arrow["i"', dashed, from=2-4, to=3-5]
	       \arrow["e", from=3-1, to=1-3]
	       \arrow["f"', from=3-1, to=5-3]
	       \arrow[from=3-3, to=2-4]
	       \arrow[from=5-3, to=3-3]
	       \arrow[from=5-3, to=3-5]
        \end{tikzcd} \qquad \qquad 
        \begin{tikzcd}[sep = tiny, cramped]
	       && {B} \\
	       &&& {\vphantom{(B \otimes_A C) \otimes_C C'}} \\
	       A && {B \otimes_A C} && {B \otimes_A C'} \\
	       &&& {(B \otimes_A C) \otimes_C C'} \\
	       && {C} && {C'}
	       \arrow[from=1-3, to=3-3]
	       \arrow[from=1-3, to=3-5]
	       \arrow["e", from=3-1, to=1-3]
	       \arrow["f"', from=3-1, to=5-3]
	       \arrow[from=3-3, to=4-4]
	       \arrow["j", dashed, from=4-4, to=3-5]
	       \arrow[from=5-3, to=3-3]
	       \arrow["c"', from=5-3, to=5-5]
	       \arrow[from=5-5, to=3-5]
	       \arrow[from=5-5, to=4-4]
        \end{tikzcd}\]
    \end{itemize}
\end{defn}

\subsection{Equivalence of SWIRs and SAOs} \label{s: SWIR SAO equivalence}

We now show that a SAO induces a SWIR, and vice versa.

\begin{prop} \label{SAO induces SWIR}
    Let $M$ be a \Fr structure with SAO $\otimes$. We define $\ind$ as follows: for $A, B, C \fg M$, define $B \ind_A C$ if there is an isomorphism $ABC \to AB \otimes_A AC$ such that the following diagram commutes (where the hooked arrows in the left diamond are inclusions and the arrows to the right form the standard amalgam):

    \[\begin{tikzcd}[sep=small, cramped]
	& AB \\
	A && ABC && {AB \otimes_A AC} \\
	& AC
	\arrow[from=2-3, to=2-5]
	\arrow[hook, from=2-1, to=1-2]
	\arrow[hook, from=2-1, to=3-2]
	\arrow[hook, from=3-2, to=2-3]
	\arrow[hook, from=1-2, to=2-3]
	\arrow[from=1-2, to=2-5]
	\arrow[from=3-2, to=2-5]
        \end{tikzcd}\]

        Then $\ind$ is a SWIR on $M$.
\end{prop}
\begin{proof}
    ($\ind$-Inv): suppose $B \ind_A C$. Let $g \in \Aut(M)$. By ($\otimes$-Inv) we have an isomorphism $AB \otimes_A AC \to gAB \otimes_{gA} gAC$, so composing the isomorphisms $gABC \to ABC \to AB \otimes_A AC \to gAB \otimes_{gA} gAC$ we have $gAB \ind_{gA} gBC$.

    ($\ind$-Ex): we show left-(Ex), with right-(Ex) similar. Let $A, B, C \fg M$. We use the extension property of $M$ to embed $AB \otimes_A AC$ over $AC$, giving $AB'C \sub M$, where $B'$ is the image of $B$ under the embedding. Then $B \equiv_A B'$, and by ($\otimes$-Inv) we have $AB' \otimes_A AC \cong AB \otimes_A AC$, so $B' \ind_A C$ as required.

    ($\ind$-Sta): we show left-(Sta). By assumption $B \equiv_A B'$, so by ($\otimes$-Inv) we have $AB \otimes_A AC \cong AB' \otimes_A AC$, and as $B \ind_A C$ and $B' \ind_A C$, we have $ABC \cong AB'C$ via an isomorphism extending $\id_{AC}$, so $B \equiv_{ AC } B'$.

    By \Cref{SWIR tr implies mon}, it now suffices to show ($\ind$-Tr). We show left-($\ind$-Tr), with right-($\ind$-Tr) similar. Suppose $B \ind_A C \wedge B \ind_{AC} D$, so $ABC \cong AB \otimes_A AC$ and $ABCD \cong ABC \otimes_{AC} ACD$. By ($\otimes$-Tr) we have $ABCD \cong AB \otimes_A ACD$, so $B \ind_A CD$.
\end{proof}

\begin{prop} \label{SWIR induces SAO}
    Let $M$ be a \Fr structure with SWIR $\ind$. Define an amalgamation operator $\otimes$ for $M$ as follows. Given a pair of embeddings $(e, f) : A \to (B, C)$, let $(\tld{e}, \tld{f}) : (B, C) \to D$ be an amalgam of $(e, f)$ with $D \sub M$. By ($\ind$-Ex) there is $B' \fg M$ and an isomorphism $g : \tld{e}(B) \to B'$ extending $\id_{\tld{e}e(A)}$ with $B' \ind_{\tld{e}e(A)} \tld{f}(C)$. Let $e \otimes f = (g\tld{e}, \tld{f})$ with $\cod(e \otimes f) = \langle B' \cup \tld{f}(C) \rangle$. Then $\otimes$ is a SAO.
\end{prop}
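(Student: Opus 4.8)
The plan is to first dispose of the easy requirements and then isolate the single fact that does all the work: a \emph{uniqueness} statement for standard amalgams. That $\otimes$ is a genuine amalgamation operator is immediate: since $g$ fixes $\tilde{e}e(A)$ pointwise we get $(g\tilde{e})\circ e = \tilde{e}\circ e = \tilde{f}\circ f$, and minimality holds by construction, as $\cod(e\otimes f)=\langle B'\cup\tilde{f}(C)\rangle$ is generated by the two legs $g\tilde{e}$ and $\tilde{f}$; both codomains are finitely generated substructures of $M$, hence lie in $\Age(M)$.

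The engine of the proof will be the following uniqueness lemma, proved from (Inv), (Sta) and ultrahomogeneity: if $\hat{A}\fg M$ and $X_0,Y_0,X_1,Y_1\fg M$ all contain $\hat{A}$ with $X_0\ind_{\hat{A}}Y_0$ and $X_1\ind_{\hat{A}}Y_1$, and $\mu_X\colon X_0\to X_1$, $\mu_Y\colon Y_0\to Y_1$ are isomorphisms each restricting to $\id_{\hat{A}}$, then there is an isomorphism $j\colon\langle X_0\cup Y_0\rangle\to\langle X_1\cup Y_1\rangle$ with $j|_{X_0}=\mu_X$ and $j|_{Y_0}=\mu_Y$. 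To prove it I would extend $\mu_X$ to some $\sigma\in\Aut(M)$, so that $\sigma(X_0)=X_1$ and $X_1\ind_{\hat{A}}\sigma(Y_0)$ by (Inv); then $\nu:=\mu_Y\circ(\sigma|_{Y_0})^{-1}\colon\sigma(Y_0)\to Y_1$ fixes $\hat{A}$ and witnesses $\sigma(Y_0)\equiv_{\hat{A}}Y_1$, so by right-(Sta) — crucially, for the \emph{same} isomorphism $\nu$, as permitted by the convention on $\equiv$ introduced above — it extends to $\tau\in\Aut(M)$ fixing $\hat{A}X_1$ pointwise. The restriction of $\tau\sigma$ to $\langle X_0\cup Y_0\rangle$ is the desired $j$: it agrees with $\mu_X$ on $X_0$ because $\tau$ fixes $X_1$, and with $\mu_Y$ on $Y_0$ by the choice of $\nu$. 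This lemma is exactly the assertion that the standard amalgam is determined, up to leg-respecting isomorphism, by its two legs and the independence requirement, and the delicate point — having $j$ respect \emph{both} legs at once — is what forces the two-step ``move one leg by an automorphism, correct the other by stationarity'' argument.

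For ($\otimes$-Inv), given an isomorphism $i=(i_A,i_B,i_C)\colon(e,f)\to(e',f')$ I would first transport bases: the maps $\mu_B:=(g'\tilde{e}')\circ i_B\circ(g\tilde{e})^{-1}$ and $\mu_C:=\tilde{f}'\circ i_C\circ\tilde{f}^{-1}$ restrict to a common isomorphism $\phi_A\colon \tilde{e}e(A)\to \tilde{e}'e'(A')$, which I extend to $\alpha\in\Aut(M)$. Replacing the first amalgam by its $\alpha$-image puts both amalgams over the common base $\tilde{e}'e'(A')$ with matching leg-isomorphisms $\mu_B\circ\alpha^{-1}$ and $\mu_C\circ\alpha^{-1}$ fixing that base, and the uniqueness lemma then yields the required $j$ with $j\circ(e\otimes f)=(e'\otimes f')\circ i$.

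For ($\otimes$-Tr) I would realise everything inside $M$ and reduce the iterated amalgam to a single application of the uniqueness lemma using monotonicity and \Cref{base_up}. For left-(Tr), take $A\sub C\sub M$ and a copy $B_1'$ of $B'$ with $B_1'\ind_A C$, so that $B'\otimes_A C=\langle B_1'\cup C\rangle$; writing $B_1\sub B_1'$ for the copy of $B$, left-(Mon) gives $B_1\ind_A C$ (so $\langle B_1\cup C\rangle$ is a copy of $B\otimes_A C$) together with $B_1'\ind_{B_1}C$, whence $B_1'\ind_{B_1}\langle B_1\cup C\rangle$ by \Cref{base_up}. Thus $B'\otimes_A C$ is itself a minimal amalgam of $B'$ and $B\otimes_A C$ over $B$ realising the independence defining $B'\otimes_B(B\otimes_A C)$, and the uniqueness lemma produces a leg-respecting isomorphism $i\colon B'\otimes_B(B\otimes_A C)\to B'\otimes_A C$; since it respects the $B'$- and $(B\otimes_A C)$-legs, it in particular respects the $B'$- and $C$-legs required by the diagram. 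Right-(Tr) is symmetric, using right-(Mon) in place of left-(Mon) together with the fact that the operator makes the left factor independent from the right. I expect the main obstacle throughout to be purely organisational: tracking the various copies and leg-identifications carefully enough that each invocation of the uniqueness lemma is fed isomorphisms that genuinely fix the intended base and match the intended legs.
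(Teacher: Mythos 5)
Your proposal is correct and takes essentially the same route as the paper: realise the amalgams inside $M$, show that a single realisation satisfies both independence conditions using ($\ind$-Mon)/($\ind$-Tr) together with \Cref{base_up}, and conclude via stationarity --- which your uniqueness lemma packages explicitly where the paper leaves it implicit in the phrase ``follows easily from ($\ind$-Inv), ($\ind$-Sta)''. The only cosmetic difference is the direction of the transitivity argument: the paper realises the iterated amalgam and derives $\tld{B}' \ind_A C$ via ($\ind$-Mon) and ($\ind$-Tr), whereas you realise the single amalgam and derive $B_1' \ind_{B_1} \langle B_1 \cup C\rangle$ via ($\ind$-Mon) and \Cref{base_up}; both are valid.
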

\begin{proof}
    Minimality is by definition. ($\otimes$-Inv): this follows easily from ($\ind$-Inv), ($\ind$-Sta). We now show left-($\otimes$-Tr), with right-($\otimes$-Tr) similar. By ($\otimes$-Inv) it suffices to consider $A, B, C \fg M$ with $B \otimes_A C = ABC$. We have $B \ind_A C$ by definition of $\otimes$. Let $b : B \to B'$ be an embedding. Then by the extension property for $M$ there is $\tld{B}' \sub M$ and an isomorphism $i : B' \to \tld{B}'$ with $i \circ b = \id_B$ such that $\tld{B}'C \cong B' \otimes_B BC$, and by ($\ind$-Inv) we have $\tld{B}' \ind_B BC$. By ($\ind$-Mon) we have $\tld{B}' \ind_B C$ and thus $\tld{B}' \ind_A C$ by ($\ind$-Tr). So $B' \otimes_B (B \otimes_A C) \cong B' \otimes_A C$.
\end{proof}

\subsection{Properties of SAOs; amalgamation along linear orders}

\begin{lem} \label{SAO mon and assoc}
    Let $M$ be a \Fr structure with SAO $\otimes$. Then $\otimes$ has the following properties:

    \begin{enumerate}[label=(\roman*)]
        \item \label{SAO mon} Monotonicity (Mon): given a pair of embeddings $(e, f) : A \to (B, C)$:
        \begin{itemize}
            \item for each embedding $b : B \to B'$, there is a unique embedding $i : B \otimes_A C \to B' \otimes_A C$ such that the left diagram below commutes (in particular over $B$ and over $C$):
            \item for each embedding $c : C \to C'$, there is a unique embedding $j : B \otimes_A C \to B \otimes_A C'$ such that the right diagram below commutes (in particular over $B$ and over $C$):
        \end{itemize}
        \[\begin{tikzcd}[sep = small, cramped]
	       && {B} && {B'} \\
	       &&& \\
	       A && {B \otimes_A C} && {B' \otimes_A C} \\ && &&  \\
	       && {C}
	       \arrow["b", from=1-3, to=1-5]
	       \arrow[from=1-3, to=3-3]
	       \arrow[from=1-5, to=3-5]
	       \arrow["i", dashed, from=3-3, to=3-5]
	       \arrow["e", from=3-1, to=1-3]
	       \arrow["f"', from=3-1, to=5-3]
	       \arrow[from=5-3, to=3-3]
	       \arrow[from=5-3, to=3-5]
        \end{tikzcd} \qquad \qquad 
        \begin{tikzcd}[sep = small, cramped]
	       && {B} \\
	       &&& \\
	       A && {B \otimes_A C} && {B \otimes_A C'} \\
	       &&& \\
	       && {C} && {C'}
	       \arrow[from=1-3, to=3-3]
	       \arrow[from=1-3, to=3-5]
	       \arrow["e", from=3-1, to=1-3]
	       \arrow["f"', from=3-1, to=5-3]
	       \arrow["j"', dashed, from=3-3, to=3-5]
	       \arrow[from=5-3, to=3-3]
	       \arrow["c"', from=5-3, to=5-5]
	       \arrow[from=5-5, to=3-5]
        \end{tikzcd}\]
        \item \label{SAO assoc} Associativity (Assoc): given pairs of embeddings $(e, f) : A \to (B, C)$, $(g, h) : A' \to (C, D)$, let $p = ((e \otimes f)_C \circ g) \otimes h$ and $q = e \otimes ((g \otimes h)_C \circ f)$. Then there exists a unique isomorphism $j : \cod(p) \to \cod(q)$ such that the following diagram commutes over $B$, $C$, $D$:
        
        \[\begin{tikzcd}[cramped, row sep = small]
	& {B} \\
	{A} && {B \otimes_A C} && {(B \otimes_A C) \otimes_{A'} D} \\
	& {C} \\
	{A'} && {C \otimes_{A'} D} && {B \otimes_A (C \otimes_{A'} D)} \\
	& {D}
	\arrow["q_B"{pos=0.4}, curve={height=-34pt}, from=1-2, to=4-5]
	\arrow["p_D"'{pos=0.4}, curve={height=34pt}, from=5-2, to=2-5]
	\arrow["e", from=2-1, to=1-2]
	\arrow["f"', from=2-1, to=3-2]
	\arrow["g", from=4-1, to=3-2]
	\arrow["h"', from=4-1, to=5-2]
	\arrow[from=1-2, to=2-3]
	\arrow[from=3-2, to=2-3]
	\arrow[from=3-2, to=4-3]
	\arrow[from=5-2, to=4-3]
	\arrow["p_{(B \otimes_A C)}"', from=2-3, to=2-5]
	\arrow["q_{(C \otimes_{A'} D)}", from=4-3, to=4-5]
	\arrow["j", dashed, from=2-5, to=4-5]
        \end{tikzcd}\]
    \end{enumerate}

    (Informally:
    \begin{itemize}
        \item (Mon): for $A \to (B, C)$, $B \to B'$, $C \to C'$ there is a unique embedding $B \otimes_A C \to B' \otimes_A C'$;
        \item (Assoc): for $A \to (B, C)$, $A' \to (C, D)$ there is a unique isomorphism $(B \otimes_A C) \otimes_{A'} D \to B \otimes_A (C \otimes_{A'} D)$.)
    \end{itemize}
\end{lem}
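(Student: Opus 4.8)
The plan is to derive both properties from Transitivity (Tr), with Minimality supplying all uniqueness statements. Throughout, the existence of the comparison maps is a formal consequence of (Tr), while the commutation conditions are obtained by chasing the transitivity diagrams, using that each of these commutes over all of its vertices.

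For \ref{SAO mon}, I would prove the left-hand diagram, the right-hand one being symmetric with right-(Tr) in place of left-(Tr). Given $(e,f) : A \to (B,C)$ and $b : B \to B'$, apply left-(Tr) to obtain the isomorphism $i : B' \otimes_B (B \otimes_A C) \to B' \otimes_A C$. Let $\iota : B \otimes_A C \to B' \otimes_B (B \otimes_A C)$ be the $(B \otimes_A C)$-side of the outer amalgam, and set $j = i \circ \iota$. Commutativity of the required triangle over $C$ is exactly the ``over $C$'' clause of left-(Tr). For commutativity over $B$, I would first use the amalgam identity in $B' \otimes_B (B \otimes_A C)$ (the maps from $B'$ and from $B \otimes_A C$ agree on the image of $B$, so $\iota$ restricted to $B$ factors through $b$ and the $B'$-side), and then the ``over $B'$'' clause of left-(Tr) to conclude $j|_B = (\text{inclusion } B' \to B' \otimes_A C) \circ b$. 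Uniqueness of $j$ is immediate from Minimality: $B \otimes_A C$ is generated by the images of $B$ and $C$, on which $j$ is prescribed, and an embedding is determined by its values on a generating set. Composing the left and right versions then yields the combined embedding $B \otimes_A C \to B' \otimes_A C'$ of the informal statement.

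For \ref{SAO assoc}, I would realise each of the two threefold amalgams as two successive applications of $\otimes$ and link them through the common intermediate object $(B \otimes_A C) \otimes_C (C \otimes_{A'} D)$. First, apply left-(Tr) to the pair $(g,h) : A' \to (C,D)$ with the embedding $(e \otimes f)_C : C \to B \otimes_A C$; this produces an isomorphism $(B \otimes_A C) \otimes_C (C \otimes_{A'} D) \to (B \otimes_A C) \otimes_{A'} D = \cod(p)$. Next, apply right-(Tr) to the pair $(e,f) : A \to (B,C)$ with the embedding $(g \otimes h)_C : C \to C \otimes_{A'} D$; this produces an isomorphism $(B \otimes_A C) \otimes_C (C \otimes_{A'} D) \to B \otimes_A (C \otimes_{A'} D) = \cod(q)$. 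Composing the inverse of the first with the second gives the desired $j : \cod(p) \to \cod(q)$. Commutativity of $j$ over $B$, $C$ and $D$ is then read off by tracking the structure maps through the two transitivity diagrams, again invoking that each commutes over all its vertices. Uniqueness follows from Minimality applied twice: $B \otimes_A C$ is generated by the images of $B$ and $C$, hence $\cod(p)$ is generated by the images of $B$, $C$, $D$, on which $j$ is prescribed.

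I expect the main obstacle to lie in the associativity argument, specifically in verifying that the single composite isomorphism simultaneously respects the embeddings of $B$, $C$ \emph{and} $D$. This requires identifying the intermediate amalgam $(B \otimes_A C) \otimes_C (C \otimes_{A'} D)$ correctly and matching the structure maps arising from the two separate instances of (Tr); this is the only genuinely delicate diagram chase. By contrast, the existence of all the maps is purely formal, and every uniqueness claim reduces cleanly to Minimality.
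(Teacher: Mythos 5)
Your proposal is correct and follows essentially the same route as the paper's proof, which simply states that (Mon) is immediate from (Tr), that (Assoc) follows from the chain $(B \otimes_A C) \otimes_{A'} D \cong (B \otimes_A C) \otimes_C (C \otimes_{A'} D) \cong B \otimes_A (C \otimes_{A'} D)$ via the same intermediate object you use, and that all uniqueness claims follow from minimality. You have merely filled in the diagram chases the paper leaves to the reader, and your details check out.
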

\begin{proof}
    In both parts, uniqueness follows by $\otimes$-minimality. \ref{SAO mon}: immediate from (Tr). \ref{SAO assoc}: by (Tr) we have $(B \otimes_A C) \otimes_{A'} D \cong (B \otimes_A C) \otimes_C (C \otimes_{A'} D) \cong B \otimes_A (C \otimes_{A'} D)$, and this isomorphism commutes with the relevant embeddings (we leave the verification of this to the reader).
\end{proof}

\begin{lem} \label{SAO base triviality}
    Let $M$ be a \Fr structure with SAO $\otimes$. Then for each embedding $\zeta : A \to B$ in $\Age(M)$ we have $B \otimes_A A \cong B$ and $A \otimes_A B \cong B$ via unique diagram-respecting isomorphisms.
\end{lem}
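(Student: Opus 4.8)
The plan is to prove $B \otimes_A A \cong B$ by applying the amalgamation operator to the pair $(\zeta, \id_A) : A \to (B, A)$ and showing that the $B$-leg of the resulting amalgam is already surjective, using only $\otimes$-minimality. Write $\zeta \otimes \id_A = ((\zeta \otimes \id_A)_B, (\zeta \otimes \id_A)_A)$ for the two legs into $B \otimes_A A = \cod(\zeta \otimes \id_A)$; by the defining compatibility of an amalgamation operator we have $(\zeta \otimes \id_A)_B \circ \zeta = (\zeta \otimes \id_A)_A \circ \id_A = (\zeta \otimes \id_A)_A$.

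The key observation is that the image of the $A$-leg is contained in the image of the $B$-leg: indeed, by the displayed identity, $(\zeta \otimes \id_A)_A(A) = (\zeta \otimes \id_A)_B(\zeta(A)) \sub (\zeta \otimes \id_A)_B(B)$, so the union of the images of the two legs equals the image of the $B$-leg alone. Since $(\zeta \otimes \id_A)_B$ is an embedding, its image $(\zeta \otimes \id_A)_B(B)$ is a substructure of $B \otimes_A A$, and therefore coincides with the substructure it generates. By $\otimes$-minimality, $B \otimes_A A$ is generated by this union, i.e.\ by $(\zeta \otimes \id_A)_B(B)$, so $(\zeta \otimes \id_A)_B$ is onto and hence an isomorphism $B \isoto B \otimes_A A$. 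Setting $j = ((\zeta \otimes \id_A)_B)^{-1}$ gives the desired isomorphism $B \otimes_A A \isoto B$.

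It remains to check that $j$ respects the diagram and is unique. By construction $j \circ (\zeta \otimes \id_A)_B = \id_B$, and composing the displayed identity on the left with $j$ gives $j \circ (\zeta \otimes \id_A)_A = \zeta$, so $j$ is diagram-respecting. For uniqueness, any diagram-respecting isomorphism is forced to be the inverse of the $B$-leg on $(\zeta \otimes \id_A)_B(B)$, and since this set generates $B \otimes_A A$ it must agree with $j$ everywhere. The statement $A \otimes_A B \cong B$ then follows by the symmetric argument applied to the pair $(\id_A, \zeta) : A \to (A, B)$, with the role of the surjective leg played by the $B$-leg on the right.

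The only point requiring care — and the crux of the argument — is the passage from ``generated by the union of the images'' to ``equal to the image of the $B$-leg'', which relies on the image of an embedding being a (finitely generated) substructure; this is what lets minimality collapse the amalgam onto a single leg even when $M$ is not relational and ``generated by'' involves closing under function symbols. I do not expect to need (Inv) or (Tr) at all.
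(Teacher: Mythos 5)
Your proof is correct and is exactly the spelled-out version of the paper's argument, which simply says the lemma ``follows immediately by minimality of $\otimes$''. Your key step --- that the compatibility identity forces the image of the $A$-leg inside the image of the $B$-leg, so minimality makes the $B$-leg surjective --- is precisely what the paper is gesturing at.
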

\begin{proof}
    This follows immediately by minimality of $\otimes$.
\end{proof}

\begin{lem} \label{M strong implies canon strong}
    Let $M$ be a \Fr structure with SAO $\otimes$. Suppose that $M$ has strong amalgamation. Then $\otimes$ is a strong amalgamation operator.
\end{lem}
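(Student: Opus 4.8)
The plan is to reduce to the already-established \Cref{SWIR strong amalg} by passing through the SWIR induced by $\otimes$. By \Cref{SAO induces SWIR}, the SAO $\otimes$ gives rise to a SWIR $\ind$ on $M$, where for $A, B, C \fg M$ the relation $B \ind_A C$ holds precisely when $ABC$ is isomorphic to $AB \otimes_A AC$ via a diagram-respecting isomorphism. Since $M$ has strong amalgamation, \Cref{SWIR strong amalg} guarantees that $B \ind_A C$ forces $B \setminus A$ and $C \setminus A$ to be disjoint. It therefore suffices to show, for an arbitrary pair $(e,f) : A \to (B,C)$, that the two factors of a concrete realisation of $B \otimes_A C$ inside $M$ stand in the relation $\ind$ over their common base; disjointness of the factors is exactly the strong amalgamation operator condition.

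First I would realise the amalgam in $M$. Since $B \otimes_A C \in \Age(M)$, fix an embedding $\iota : B \otimes_A C \to M$ and put $\hat{A} = \iota\left( ((e\otimes f)_B \circ e)(A) \right)$, $\hat{B} = \iota\left( (e\otimes f)_B(B) \right)$ and $\hat{C} = \iota\left( (e\otimes f)_C(C) \right)$. Using $(e\otimes f)_B \circ e = (e\otimes f)_C \circ f$ these are finitely generated substructures of $M$ with $\hat{A} \sub \hat{B}$ and $\hat{A} \sub \hat{C}$, and by $\otimes$-minimality we have $\hat{B}\hat{C} = \iota(B \otimes_A C)$. Next I would check that $\hat{B} \ind_{\hat{A}} \hat{C}$. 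The restrictions of $\iota$ to the images of $A$, $B$, $C$ constitute an isomorphism of pairs $i : (e,f) \to (\hat{A} \hookrightarrow \hat{B},\ \hat{A} \hookrightarrow \hat{C})$, so by ($\otimes$-Inv) there is a diagram-respecting isomorphism $B \otimes_A C \to \hat{B} \otimes_{\hat{A}} \hat{C}$. Composing with $\iota^{-1}$ and noting $\hat{A}\hat{B} = \hat{B}$, $\hat{A}\hat{C} = \hat{C}$ and $\hat{A}\hat{B}\hat{C} = \hat{B}\hat{C}$, this yields a diagram-respecting isomorphism $\hat{A}\hat{B}\hat{C} \to \hat{A}\hat{B} \otimes_{\hat{A}} \hat{A}\hat{C}$, which is exactly the condition defining $\hat{B} \ind_{\hat{A}} \hat{C}$.

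Finally, \Cref{SWIR strong amalg} gives that $\hat{B} \setminus \hat{A}$ and $\hat{C} \setminus \hat{A}$ are disjoint, i.e.\ $\hat{B} \cap \hat{C} = \hat{A}$. Transporting this back along $\iota^{-1}$ gives $(e\otimes f)_B(B) \cap (e\otimes f)_C(C) = ((e\otimes f)_B \circ e)(A)$, which is precisely the strong amalgamation operator condition. The only step requiring care is the middle one, namely verifying that the concretely realised images genuinely satisfy the induced SWIR relation; I expect this to be routine invariance-plus-minimality bookkeeping rather than a genuine obstacle, since the equivalence machinery of \Cref{s: SWIR SAO equivalence} does all the substantive work.
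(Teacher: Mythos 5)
Your proposal is correct and takes essentially the approach the paper itself indicates: the paper's proof of \Cref{M strong implies canon strong} is left to the reader with the explicit remark that ``alternatively, one can use \Cref{SAO induces SWIR}'', which is precisely the route you follow (induce the SWIR, apply \Cref{SWIR strong amalg}, and transport the disjointness back to the amalgamation operator via invariance and minimality). The bookkeeping in your middle step checks out, so nothing further is needed.
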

\begin{proof}
    This is similar to \Cref{SWIR strong amalg}, and we leave the details to the reader. Alternatively, one can use \Cref{SAO induces SWIR}. 
\end{proof}

\begin{defn} \label{d: n-term amalg}
    Let $M$ be a \Fr structure with SAO $\otimes$. Let $A \in \Age(M)$ and let $n > 2$. For $i < n$ let $\zeta_i : A \to E_i$ be an embedding in $\Age(M)$. We inductively define the \emph{$\otimes$-amalgam of $\zeta_0, \cdots, \zeta_{n-1}$ over $A$}, written $\zeta_0 \otimes \cdots \otimes \zeta_{n-1}$, by $\zeta_0 \otimes \cdots \otimes \zeta_{n-1} := (\zeta_0 \otimes \cdots \otimes \zeta_{n-2}) \otimes \zeta_{n-1}$. We also write $E_0 \otimes_A \cdots \otimes_A E_{n-1}$ for this. Note that by associativity of $\otimes$ (see \Cref{SAO mon and assoc}), we obtain embedding-respecting-isomorphisms between any two bracketings of this $n$-object amalgam, and so it is harmless to drop parentheses in the notation.

    Let $(I, <_\lambda)$ be a finite linear order, and let $\mc{Z} = (\zeta_i \mid i \in I)$ be a family of embeddings in $\Age(M)$ with common domain $A$. We write $\bigotimes^\lambda_A \mc{Z} = \zeta_{i_0} \otimes \cdots \otimes \zeta_{i_{n-1}}$, where $i_0 <_\lambda \cdots <_\lambda i_{n-1}$ is an enumeration of $I$ in strictly increasing $<_\lambda$-order, and call $\bigotimes^\lambda_A \mc{Z}$ the \emph{amalgam of $\mc{Z}$ along $\lambda$}.
\end{defn}

The following \Cref{l: direct limit} is folklore; its proof is straightforward and thus omitted.
\begin{lem} \label{l: direct limit}
    Let $M$ be a \Fr structure. Let $(D, <)$ be a countable directed set, and let $(f_{ij} : C_i \to C_j \mid i, j \in D, i \leq j)$ be a $D$-directed system of embeddings in $\Age(M)$. Then the direct limit $\dlim (C_i)_{i \in D}$ exists and is an element of $\Ao(M)$.
\end{lem}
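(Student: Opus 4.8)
The plan is to realise $\dlim(C_i)_{i\in D}$ explicitly as the set-theoretic colimit of the underlying sets, equip it with the induced $\mc{L}$-structure, check that the canonical maps into it are embeddings, and then deduce membership in $\Ao(M)$ from the fact that $M$, being ultrahomogeneous, is the \Fr limit of $\Age(M)$ and hence universal for countable structures whose age lies in $\Age(M)$. First I would form $\bigsqcup_{i\in D} C_i$ and, for $x\in C_i$, $y\in C_j$, declare $x\sim y$ when $f_{ik}(x)=f_{jk}(y)$ for some $k\geq i,j$. Directedness of $(D,<)$ makes $\sim$ an equivalence relation (transitivity uses a common upper bound of three indices together with the coherence $f_{jk}\circ f_{ij}=f_{ik}$ of the directed system), and I set $C_\infty$ to be the quotient, with canonical maps $\phi_i\colon C_i\to C_\infty$, $x\mapsto[x]$. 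The $\mc{L}$-structure on $C_\infty$ is defined by pulling any finite tuple of classes back to representatives in a single $C_k$ (possible by directedness) and using the structure there; this is well-defined precisely because the $f_{ij}$ are embeddings and so preserve and reflect relations and function values. A routine check then shows each $\phi_i$ is injective (from injectivity of the $f_{ij}$ and directedness) and an embedding, and that $\phi_j\circ f_{ij}=\phi_i$, so $C_\infty$ with $(\phi_i)_{i\in D}$ satisfies the universal property of the direct limit.

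The substantive point is that $C_\infty\in\Ao(M)$. Since $D$ is countable and each $C_i$ is countable, $C_\infty$ is countable. I claim $\Age(C_\infty)\subseteq\Age(M)$: if $B\fg C_\infty$ is generated by $b_1,\dots,b_n$, write $b_\ell=\phi_{i_\ell}(x_\ell)$ and use directedness to pick $k\geq i_1,\dots,i_n$, so that all generators lie in the substructure $\phi_k(C_k)$, which is isomorphic to $C_k$ via the embedding $\phi_k$. Hence $B$ is a finitely generated substructure of $\phi_k(C_k)\cong C_k\in\Age(M)$, and since $\Age(M)$ is hereditary (closed under finitely generated substructures), $B\in\Age(M)$. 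Finally, as $M$ is ultrahomogeneous it is the \Fr limit of $\Age(M)$, and the \Fr limit is universal for countable structures whose age is contained in $\Age(M)$ (see \cite{Hod93}, \cite{Mac11}); therefore $C_\infty$ embeds into $M$, giving $C_\infty\in\Ao(M)$.

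I do not expect a serious obstacle here, since the lemma is in essence the standard universality of \Fr limits applied to a directed colimit. The only points that genuinely require care are the well-definedness of the induced structure on $C_\infty$ and the verification that the $\phi_i$ \emph{reflect} (not merely preserve) the $\mc{L}$-structure, both of which rest on the hypothesis that the transition maps are embeddings rather than arbitrary homomorphisms; once $\Age(C_\infty)\subseteq\Age(M)$ is established, the embeddability into $M$ is immediate.
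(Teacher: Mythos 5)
Your proposal is correct and in substance matches the paper's argument: the paper likewise treats the construction of the colimit as standard, and obtains membership in $\Ao(M)$ by extracting a cofinal increasing sequence from the countable directed set and repeatedly applying the extension property, which is precisely the proof of the universality statement you cite. The only difference is presentational — you route through the explicit verification that $\Age(\dlim C_i) \sub \Age(M)$ and then invoke universality of the \Fr limit as a black box, whereas the paper inlines that argument by reducing the directed system to a chain; both are fine.
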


\begin{lem}
    Let $M$ be a \Fr structure with SAO $\otimes$. Let $(I, <_\lambda)$ be a countable linear order, and let $\mc{Z} = (\zeta_i : A \to E_i)_{i \in I}$ be a family of embeddings in $\Age(M)$ with common domain $A$. For $J \fin I$ write $\mc{Z}_J = (\zeta_j)_{j \in J}$ and let $<_{\lambda_J}$ be the restriction of $<_\lambda$ to $J$.

    Then for $J \sub J' \fin I$, there exists a unique diagram-respecting embedding $\gamma_{J, J'} : \bigotimes^{\lambda_J}_A \mc{Z}_J \to \bigotimes^{\lambda_{J'}}_A \mc{Z}_{J'}$. Thus the embeddings $(\gamma_{J, J'})_{J \sub J' \fin I}$ form a $\Pfin(I)$-directed system, the direct limit of which we denote by $\bigotimes^\lambda_A \mc{Z}$.
\end{lem}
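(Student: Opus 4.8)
The plan is to establish the existence and uniqueness of each $\gamma_{J,J'}$ separately, deducing uniqueness from $\otimes$-minimality and existence by reducing to the insertion of a single index, and then to read off the directed-system identities and the limit claim formally from uniqueness.

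First I would record what ``diagram-respecting'' means: the iterated amalgam $\bigotimes^{\lambda_J}_A \mc{Z}_J$ comes equipped with canonical embeddings $\iota^J_j : E_j \to \bigotimes^{\lambda_J}_A \mc{Z}_J$ for each $j \in J$, obtained by composing the structure embeddings produced by $\otimes$ at each stage of the tower, and $\gamma_{J,J'}$ is \emph{diagram-respecting} if $\gamma_{J,J'} \circ \iota^J_j = \iota^{J'}_j$ for all $j \in J$. By $\otimes$-minimality and an easy induction on $|J|$, the object $\bigotimes^{\lambda_J}_A \mc{Z}_J$ is generated by the union of the images $\iota^J_j(E_j)$, $j \in J$. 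Hence any diagram-respecting embedding is determined on a generating set, so if it exists it is unique; this disposes of uniqueness at once.

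For existence I would first treat $J' = J \cup \{k\}$ with $k \notin J$, and then compose. Splitting $J$ into the prefix $J_- = \{j \in J : j <_\lambda k\}$ and suffix $J_+ = \{j \in J : k <_\lambda j\}$, associativity (\Cref{SAO mon and assoc}\ref{SAO assoc}) gives $\bigotimes^{\lambda_J}_A \mc{Z}_J \cong P \otimes_A S$ and $\bigotimes^{\lambda_{J'}}_A \mc{Z}_{J'} \cong P \otimes_A (E_k \otimes_A S)$ via diagram-respecting isomorphisms, where $P = \bigotimes^\lambda_A \mc{Z}_{J_-}$ and $S = \bigotimes^\lambda_A \mc{Z}_{J_+}$ (with the convention, justified by \Cref{SAO base triviality}, that an empty amalgam is $A$). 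Using base triviality $A \otimes_A S \cong S$ together with monotonicity (\Cref{SAO mon and assoc}\ref{SAO mon}) along $\zeta_k : A \to E_k$, one obtains a canonical embedding $S \cong A \otimes_A S \to E_k \otimes_A S$; applying monotonicity once more in the right coordinate with $P$ fixed then gives an embedding $P \otimes_A S \to P \otimes_A (E_k \otimes_A S)$. Composing with the two associativity isomorphisms yields $\gamma_{J,J'}$, and one checks that each constituent map commutes with the relevant canonical embeddings, so the composite is diagram-respecting.

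For general $J \sub J'$, I would add the finitely many elements of $J' \setminus J$ one at a time; each single insertion is diagram-respecting, and a composite of diagram-respecting embeddings is again diagram-respecting, so by the uniqueness just proved the result does not depend on the order of insertion. The identities $\gamma_{J,J} = \id$ and $\gamma_{J',J''} \circ \gamma_{J,J'} = \gamma_{J,J''}$ for $J \sub J' \sub J''$ then follow immediately from uniqueness, since in each case both sides are diagram-respecting embeddings with the same domain and codomain; thus $(\gamma_{J,J'})_{J \sub J' \fin I}$ is a $\Pfin(I)$-directed system. Since $I$ is countable, $(\Pfin(I), \sub)$ is a countable directed set, and the preceding lemma on direct limits of directed systems yields $\bigotimes^\lambda_A \mc{Z} \in \Ao(M)$.

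The main obstacle I expect is the bookkeeping in the existence step: verifying that the single-insertion embedding, assembled from base triviality, monotonicity and two associativity isomorphisms, simultaneously respects \emph{all} the canonical embeddings $\iota^{J'}_j$ for $j$ in the prefix, in the suffix, and for $j = k$. Once diagram-respect is confirmed for a single insertion, uniqueness does the remaining work and everything else is formal.
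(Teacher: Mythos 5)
Your proposal is correct and follows essentially the same route as the paper's proof: uniqueness via $\otimes$-minimality, reduction to a single insertion $J' = J \cup \{k\}$, and existence by inserting a trivial factor $A$ at the right position (base triviality, associativity, invariance) and then applying monotonicity along $\zeta_k : A \to E_k$. Your prefix/suffix grouping $P \otimes_A S$ is just a slightly different bookkeeping of the same associativity rearrangement the paper performs on the full chain, and your explicit verification of the directed-system identities from uniqueness is a welcome addition that the paper leaves implicit.
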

\begin{proof}
    Uniqueness of $\gamma_{J, J'}$ will follow by $\otimes$-minimality, so it suffices to show existence. By induction it suffices to consider the case $J' = J \cup \{i\}$. Let $j_0 <_\lambda \cdots <_\lambda j_{l-1} <_\lambda i <_\lambda j_l <_\lambda \cdots <_\lambda j_{n-1}$ be an enumeration of $J'$ in strictly increasing $<_\lambda$-order, with $j_s \in J$ for $s < n$. (We omit the case where $i$ is the greatest or least element for ease of presentation, but the proof is entirely analogous.)

    Then by \Cref{SAO base triviality}, (Assoc) and (Inv) we have that there is a diagram-respecting isomorphism \[\textstyle{\bigotimes^{\lambda_J}_A \mc{Z}_J} = E_{j_0} \otimes_A \cdots \otimes_A E_{j_{l-1}} \otimes_A E_{j_l} \otimes_A \cdots \otimes_A E_{j_{n-1}} \cong E_{j_0} \otimes_A \cdots \otimes_A E_{j_{l-1}} \otimes_A A \otimes_A E_{j_l} \otimes_A \cdots \otimes_A E_{j_{n-1}}.\] By (Mon), via the embedding $\zeta_i : A \to E_i$ we obtain a diagram-respecting embedding \[E_{j_0} \otimes_A \cdots \otimes_A E_{j_{l-1}} \otimes_A A \otimes_A E_{j_l} \otimes_A \cdots \otimes_A E_{j_{n-1}} \to E_{j_0} \otimes_A \cdots \otimes_A E_{j_{l-1}} \otimes_A E_i \otimes_A E_{j_l} \otimes_A \cdots \otimes_A E_{j_{n-1}} = \textstyle{\bigotimes^{\lambda_{J'}}_A \mc{Z}_{J'}}.\]
\end{proof}
\begin{defn}
    Let $M$ be a \Fr structure with symmetric SAO $\otimes$. Let $\mc{Z} = (\zeta_i : A \to E_i)_{i \in I}$ be a family of embeddings in $\Age(M)$ with common domain $A$, indexed by a countable set $I$. For any two countable linear orders $<_\lambda, <_\mu$ on $I$ and for each $J \fin I$, by the symmetry of $\otimes$ we have a unique diagram-respecting isomorphism $\bigotimes_A^{\lambda_J} \mc{Z}_J \to \bigotimes_A^{\mu_J} \mc{Z}_J$ (here we use the notation of the above lemma), and so there is a unique diagram-respecting isomorphism $\bigotimes_A^\lambda \mc{Z} \to \bigotimes_A^\mu \mc{Z}$. We choose a representative of this isomorphism class and drop the linear order from the notation, writing $\bigotimes_A \mc{Z}$.
\end{defn}

\section{Three key examples} \label{s: key exs}

In this section, we show the following result, giving an answer to \Cref{qn SWIR extensible relationship}:

\begin{thm} \label{t: no implication SWIR Ka}
    There is a relational \Fr structure with a \Ka functor but without a (local) SWIR, and there is a relational \Fr structure with a SWIR but without a universal automorphism group (hence its $\omega$-age is not $\circ$-extensible and it has no \Ka functor).
\end{thm}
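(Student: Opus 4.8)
The plan is to produce the two required structures separately. For the implication ``\Ka functor $\not\Rightarrow$ SWIR'' I would take $M_1$ to be the generic two-graph (the \Fr limit of finite two-graphs: finite sets with a ternary relation $T$ such that every $4$-set contains an even number of $T$-triples), and for ``SWIR $\not\Rightarrow$ universality'' I would take $M_2$ to be the generic $n$-anticlique-free oriented graph for a fixed $n \ge 3$ (the \Fr limit of finite oriented graphs with no independent set of size $n$). Both are relational, and $M_2$ is the vertex-transitive and primitive example advertised in the introduction.

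To build a \Ka functor on $M_1$ I would use the \textbf{key idea} flagged above: the definition does not require each one-point extension of $A$ to occur uniquely in $K(A)$, so I take $K(A)$ to be a functorial amalgam built from (possibly several) labelled copies of each one-point extension of $A$, using the labels to force functoriality, and then verify conditions \ref{Ka nat trans} and \ref{Ka ope} of \Cref{def Ka functor}; extensibility of $\Ao(M_1)$ then follows from \Cref{Ka implies extensible}. To show $M_1$ has no (local) SWIR I would work through the equivalence of SWIRs and SAOs (\Cref{SAO induces SWIR}, \Cref{SWIR induces SAO}). The crucial computation is that for $A \fg M_1$ with at least two points and $b,c$ in generic position there are exactly \emph{two} coherent two-graphs on $A \cup \{b,c\}$ extending the given structures on $Ab$ and $Ac$: the $4$-set coherence condition pins down the values $T(a,b,c)$, $a \in A$, up to a single global flip over $\mathbb{F}_2$, and the two resulting amalgams are generically non-isomorphic over $A$. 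A SAO must select one of them canonically, and I would derive a contradiction by showing that no selection can be simultaneously $\otimes$-invariant and $\otimes$-associative: the family of these $\mathbb{F}_2$-flips over a configuration with several one-point extensions carries a nontrivial cocycle (reflecting that the switching class underlying a two-graph has no canonical representative), so there is no globally consistent invariant choice.

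For $M_2$, the SWIR is the orientation relation of \Cref{ex: structures with SWIRS}: set $B \ind_A C$ when $b \ra c$ for every $b \in B \setminus A$ and $c \in C \setminus A$. This introduces no new independent pairs, so it preserves $n$-anticlique-freeness, and the axioms are checked directly (or via the induced SAO). Vertex-transitivity is immediate from ultrahomogeneity. For primitivity I would verify that no nontrivial $\Aut(M_2)$-invariant equivalence relation exists: every $\emptyset$-invariant binary relation is a boolean combination of the orientation, so it suffices to note that the non-adjacency relation is not transitive in $M_2$ --- for $n \ge 3$ there are independent pairs but no forced independent triples, so by genericity one finds $x,y,z$ with $x,y$ and $y,z$ non-adjacent yet $x \ra z$.

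The heart of the argument, and the step I expect to be the main obstacle, is non-universality of $\Aut(M_2)$. The naive torsion obstruction fails: an independent pair can be swapped, and a back-and-forth expanding $M_2$ by a generic involution (whose reduct still realises every one-point extension, hence is again $M_2$) shows $\Z/2 \hookrightarrow \Aut(M_2)$; more generally the finite symmetric groups occurring on finite anticliques embed. Since universality concerns all of $\Ao(M_2)$, \emph{including infinite structures}, I would instead locate an infinite $A \in \Ao(M_2)$ whose automorphism group admits no abstract embedding into $\Aut(M_2)$. The mechanism I would aim for is Moon-type: the orientation forces any sufficiently rich symmetry to act on a large set of pairwise non-adjacent vertices, which would produce a forbidden $n$-anticlique. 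Pinning down exactly which infinite $A$ and which feature of $\Aut(A)$ is incompatible with $n$-anticlique-freeness --- and ruling out \emph{every} abstract group embedding, not merely the structure-respecting ones --- is the delicate part. This also explains the threshold $n \ge 3$ and contrasts with the random tournament, whose $\omega$-age carries no such symmetries and which therefore has universal automorphism group despite also admitting a SWIR.
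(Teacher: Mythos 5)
You have picked exactly the two witnesses the paper uses (the generic two-graph, and the generic $I_n$-free oriented graph for $n\geq 3$), and your SWIR for $M_2$ and your non-existence argument for a SWIR on $M_1$ are essentially right: the paper's version of the latter is even simpler than your cocycle sketch --- with $A=\{a,a'\}$, $B_0=A\cup\{b_0\}$ carrying the single $3$-edge $b_0aa'$ and $B_1=A\cup\{b_1\}$ carrying none, the parity condition on $\{a,a',b_0,b_1\}$ forces exactly one of $b_0b_1a$, $b_0b_1a'$ to be a $3$-edge in any amalgam, and the automorphism $a\leftrightarrow a'$ of this configuration already kills invariance; no associativity is needed. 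However, there are two genuine gaps. First, you do not actually construct the \Ka functor for the generic two-graph; you only restate the general ``labelled copies'' heuristic. This is the hardest part of that half of the theorem: the paper's construction (\Cref{p: Ka for two-graph}) passes through the switching machinery ($\gamma_u$ and $\tau$), takes one labelled copy $(A,e_u)$ of each one-point extension for each $u\in A$ built from a free amalgam of the graphs $\gamma_u((A,e))$, defines the cross-terms by explicit $\bmod\ 2$ formulas, and then verifies the even-parity condition on every type of $4$-set. Without some such explicit definition and verification, this half of the theorem is not proved.

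Second, and more seriously, your strategy for non-universality of $\Aut(M_2)$ points in the wrong direction. You assert that the obstruction must live on an \emph{infinite} member of $\Ao(M_2)$ because ``the finite symmetric groups occurring on finite anticliques embed''; but the paper's obstruction is already finite and is not a symmetric group. The finite $I_n$-free oriented graph $A=\{a_{i,j}\mid i<2,\ j<n-1\}$ (columns are $2$-element anticliques, all edges directed from column $j$ to column $j'$ for $j<j'$) has $\Aut(A)\supseteq(\Z/2\Z)^{n-1}$, generated by the $n-1$ commuting column swaps, yet $\Aut(M_2)$ contains no $n-1$ pairwise-commuting involutions: the fixed-point set of any involution is $I_{n-2}$-free (\Cref{l: tau fixed pts}), finitely many $I_{n-2}$-free sets cannot cover $M_2$ by the vertex-Ramsey/Folkman property of the complementary $K_n$-free graph (\Cref{l: anticlique in cover}), so some $v$ has $v,\tau_0(v),\dots,\tau_{n-2}(v)$ pairwise distinct, and the involution identities force these $n$ points to be an anticlique --- a contradiction. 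Your proposed ``Moon-type'' mechanism on an unspecified infinite $A$ is left entirely open in your write-up, and as stated it would not be needed: the delicate step you flag is resolved by this finite, purely group-theoretic argument.
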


This follows from parts \ref{pp two-g} and \ref{pp omit n-antis} of \Cref{p three key exs} below, which collects the results we show in this section for three important examples.

\begin{prop} \label{p three key exs} \hfill
    \begin{enumerate}[label=(\roman*)]
        \item \label{pp 3-ht} The generic $3$-hypertournament has a \Ka functor.
        \item \label{pp two-g} The generic two-graph does not have a local SWIR, but it has a \Ka functor (and hence $\circ$-extensible $\omega$-age).
        \item \label{pp omit n-antis} Let $n \geq 3$. The generic $n$-anticlique-free oriented graph has a SWIR, but its automorphism group is not universal (in particular, the structure does not have a \Ka functor).
    \end{enumerate}    
\end{prop}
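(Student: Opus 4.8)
The three parts are essentially independent, so I would treat them separately. Throughout I write $M$ for the relevant \Fr structure and use freely that, by \Cref{Ka implies extensible}, a \Ka functor yields extensible $\omega$-age and hence a universal automorphism group. For part (iii) I first verify the SWIR claimed in \Cref{ex: structures with SWIRS}: for $M$ the \Frn{} limit of finite oriented graphs omitting $n$-anticliques, define $B \ind_A C$ iff $b \ra c$ for all $b \in B \setminus A$, $c \in C \setminus A$, exactly as for the random tournament. The one genuinely new point is that the associated amalgam stays in the class: every cross-pair carries an arc, so any anticlique must avoid $B \setminus A$ or $C \setminus A$ and hence sits inside $B$ or inside $C$, which already omit $n$-anticliques. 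Given this closure, (Inv), (Ex), (Sta), (Mon) go through as for the tournament, the key being that fixing all cross-arcs to point from $B$ to $C$ pins down the type of $B$ over $AC$.

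The non-universality in part (iii) is where the real work lies, and I expect it to be the main obstacle. The plan is to exhibit $A \in \Ao(M)$ whose automorphism group does not embed into $\Aut(M)$, via a group-theoretic obstruction. First, any involution $g \in \Aut(M)$ has all of its $2$-orbits non-adjacent (swapping an arc $u \ra v$ would force the forbidden $v \ra u$); consequently any elementary abelian $2$-subgroup $E \leq \Aut(M)$ has every orbit an anticlique, since for distinct $y = ax$, $z = bx$ in an orbit the involution $ba \in E$ swaps $y$ and $z$. Hence every $E$-orbit has size $\leq n-1$ and every point-stabiliser $E_x$ has index $\leq n-1$ in $E$. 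Now take $E = \prod_{\N} \Z/2$, an $\mb{F}_2$-vector space of dimension $2^{\aleph_0}$: a faithful action on the countable set $M$ would give $\bigcap_{x \in M} E_x = 1$ with each $E_x$ of codimension $\leq \log_2(n-1)$, yet a countable intersection of finite-codimension subspaces has codimension $\leq \aleph_0 < 2^{\aleph_0}$, so is nontrivial --- a contradiction. Thus $\prod_{\N}\Z/2$ does not embed into $\Aut(M)$. It then suffices to find $A \in \Ao(M)$ with $\prod_{\N}\Z/2 \leq \Aut(A)$: take any infinite tournament and blow up each vertex into a non-adjacent \emph{twin pair} (two vertices with identical arcs to everything else and no arc between them). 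As the quotient is a tournament, the only non-adjacent pairs are the twin pairs, so anticliques have size $\leq 2 < n$ and $A \in \Ao(M)$ for every $n \geq 3$; independently swapping the vertices of each twin pair realises $\prod_{\N}\Z/2$ inside $\Aut(A)$, so $\Aut(A)$ does not embed into $\Aut(M)$.

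For part (i) I would build a \Ka functor directly, using the paper's key idea of labelled copies. For a finite $3$-hypertournament $A$, let $K(A)$ consist of $A$ together with, for each one-point extension type $\tau$ over $A$ and each label $q$ from a fixed countable linear order, a new vertex $p_{\tau,q}$, with $\eta_A$ the inclusion. Triples with two vertices in $A$ and one new vertex are oriented by $\tau$; crucially, triples containing at least two new vertices are oriented by a fixed rule depending only on the labels (and a fixed ordering of types), never on $A$. The one-point extension axiom of \Cref{def Ka functor} is then immediate, sending the extending point to any $p_{\tau,q}$ of the correct type. For functoriality an embedding $f : A \to B$ should send $p_{\tau,q}$ to $p_{f_*\tau,q}$, where $f_*\tau$ is a canonical, functorially chosen extension of $\tau$ to a type over $B$; since every triple internal to the image of $K(A)$ either lies over $f(A)$ or involves two label-carrying vertices, $K(f)$ is an embedding and $K(gf) = K(g)K(f)$. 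The only delicate point is arranging the operator $\tau \mapsto f_*\tau$ to be strictly functorial, which the label-only rule for the remaining triples is designed to permit.

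For part (ii), non-existence of a local SWIR I would prove by working over a \emph{two-element} base $\{p,q\}$ and combining the evenness-on-$4$-sets axiom with the automorphism swapping $p$ and $q$. Suppose $\ind$ is a local SWIR. For one-point extensions $b,c$ of $\{p,q\}$ with $b \ind_{pq} c$, stationarity makes the coherences $\alpha = [\{p,b,c\}\in\Delta]$ and $\beta = [\{q,b,c\}\in\Delta]$ of the independent amalgam depend only on the types $t_b = [\{p,q,b\}\in\Delta]$ and $t_c = [\{p,q,c\}\in\Delta]$. The involution swapping $p \leftrightarrow q$ fixes $\{p,q\}$ setwise, so by (Inv) it carries the independent configuration to another with the same types, forcing $\alpha = \beta$; but evenness of $\{p,q,b,c\}$ gives $t_b + t_c + \alpha + \beta \equiv 0 \pmod 2$, hence $t_b = t_c$, contradicting the existence of $b,c$ with $t_b \neq t_c$. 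For the \Ka functor I would again use labelled copies, noting that one-point extensions of a two-graph $A$ correspond to graphs in the switching class of $A$; here the additional check beyond part (i) is that the label-only rule for coherences of triples with two or three new vertices must respect the evenness condition on all $4$-sets, and arranging this compatibly is the main technical point for part (ii).
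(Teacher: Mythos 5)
Your treatment of part (iii) contains a genuine error at the decisive step. The claim that a countable intersection of finite-codimension subspaces of $E = \prod_{\N} \Z/2$ has codimension at most $\aleph_0$ and is therefore nontrivial is false: the coordinate hyperplanes $W_i = \{v \mid v_i = 0\}$ each have codimension $1$, yet $\bigcap_i W_i = 0$. Equivalently, $\prod_{\N}\Z/2$ \emph{does} act faithfully on a countable set with every orbit of size $2$ (the $i$-th factor swaps the $i$-th pair), so there is no purely group-theoretic obstruction of the kind you propose; your correct observation that every orbit of an elementary abelian $2$-group is an anticlique of size $\leq n-1$ does not by itself prevent a faithful action. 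The missing ingredient is combinatorial: the paper shows that the fixed-point set of any involution is $I_{n-2}$-free (\Cref{l: tau fixed pts}) and then invokes the vertex-Ramsey property of the generic $K_n$-free graph (Folkman's theorem, \Cref{l: anticlique in cover}) to conclude that finitely many such sets cannot cover $M$; this produces a single point $v$ moved simultaneously by $n-1$ pairwise-commuting involutions $\tau_i$ and by all products $\tau_i\tau_j$, whence $\{v, \tau_0(v), \dots, \tau_{n-2}(v)\}$ is an $n$-anticlique. Without some such Ramsey input your argument cannot be repaired. (Your verification of the SWIR in part (iii), and your witness $A$ with many commuting involutions, are fine.)

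Parts (i) and (ii) also defer the essential content. In (i), labelling copies of each extension by a type $\tau$ together with an abstract label $q$ from a fixed countable linear order does not yield a functor: the whole difficulty is that a one-point extension type over $A$ has no canonical, composition-respecting extension to a type over $B$ (indeed $f_*\tau$ must be invariant under every automorphism of $B$ fixing $f(A)$ pointwise, which rules out arbitrary choices --- this is exactly how the paper \emph{disproves} the existence of a \Ka functor for the $n$-partite tournament in \Cref{p: Ka npartite}). The paper's labels are structure-derived tuples $(a, \bar{s}, \bar{t})$ of elements of $A$ itself, chosen so that the label completely determines the extension over any $B \supseteq A$ and so that $f$ acts on labels; the orientations between new vertices are then defined via tournament relations induced by the hypertournament structure, not via an external linear order. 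Your part (ii) argument against a local SWIR is correct and is essentially the paper's (two-point base, the swap involution, and the evenness condition on $4$-sets), but the \Ka functor for the two-graph --- which in the paper requires the switching/$\gamma_u$ machinery, an explicit list of cross-hyperedge rules, and a case-by-case verification of the evenness condition --- is left entirely as ``the main technical point''.
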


We discuss the generic $3$-hypertournament in \Cref{ex: 3-ht}, the generic two-graph in \Cref{ex: two-graph} and the generic $n$-anticlique-free graph in \Cref{ex: n-anticlique-free}; we define each structure in the corresponding section. 

We also show that each structure in the above proposition has a finite SWIR expansion (see \Cref{t: big list of exs}). Note that we do not find a SWIR for the generic $3$-hypertournament itself: the existence of a SWIR for this structure is still open.

The three examples in this section illustrate a \textbf{key idea} which runs through much of this paper: when constructing a \Ka functor, one can add multiple copies of each one-point extension labelled with extra information, and use this extra information (often compared lexicographically) to guarantee a functorial amalgam (conceptually ``canonical" in some sense). In the authors' experience, if this approach does not seem to work, in the process of attempting it one generally comes up with a proof of non-universality of the automorphism group.

Despite some effort, we were not able to produce a general theorem using this idea to cover all the examples we present here, as each example seems to require some specific technical adjustments to the general technique. However, we believe that the key idea should be generally applicable. We informally illustrate the idea on a simple example below, \Cref{ex: Ka for tourn}.

\begin{defn} \label{ope notation}
    Let $M$ be a \Fr structure, and let $A \in \Age(M)$. We define that two one-point extensions $\zeta : A \to B$, $\zeta' : A \to B'$ are \emph{left-isomorphic} if there exists an isomorphism $i : B \to B'$ with $\zeta' = i \circ \zeta$ (we usually omit the prefix ``left-" and just say \emph{isomorphic}). We let $\mc{E}_A$ denote a set of representatives of the isomorphism classes of one-point extensions of $A$.

    Let $\zeta : A \to B$ be a one-point extension, and let $e \in B \setminus \zeta(A)$ with $B = \langle \zeta(A), e \rangle$. For ease of presentation, instead of writing $(\zeta, e)$ for the one-point extension $\zeta$ with distinguished generator $e$, when working with particular examples of relational \Fr structures we will often just write $(A, e)$, conflating the embedding and its image, and we will usually assume that the embedding is set inclusion.
\end{defn}

\begin{notn} \label{sqbrackets relation notn}
    Let $\mc{L} = \{R\}$ with $R$ an $n$-ary relation symbol. Let $A$ be an $\mc{L}$-structure. For $\bar{a} \in A^n$, we write
    \[
    [\bar{a}] =
    \begin{cases}
        0 & \bar{a} \notin R^A;\\
        1 & \bar{a} \in R^A.
    \end{cases}
    \]
    Given a set $A$, we also use this notation to define the relation $R^A$ on $A$: we write $[\bar{a}] = 1$ to mean that we specify $\bar{a} \in R^A$. We will often use this notation to define the relation on an $n$-tuple via the relation on another $n$-tuple (for example, in a tournament, instead of saying that we define $ab$ to have the same orientation as $cd$, we write $[ab] := [cd]$).
\end{notn}

\begin{eg} \label{ex: Ka for tourn}
    Let $M$ be the random tournament. We show that $M$ has a \Ka functor. This result appears in \cite[Example 2.10]{KM17}: its proof inspired much of the present paper. We give the proof from a different viewpoint more amenable to generalisation. We use the following notation: given a tournament $T$ and $v \in T$, we write $v^+ = \{u \in T \mid v \ra u\}$, $v^- = \{u \in T \mid v \la u\}$ and call $v^+$, $v^-$ the \emph{out-neighbourhood} and \emph{in-neighbourhood} of $v$ respectively.

    Let $A$ be a finite tournament. Given two extensions $(A, e)$, $(A, e')$, to define $K(A)$ we must define the orientation of the edge $e e'$, and we need to do this in a functorial way: for each embedding $A \to B$, we must send the extensions of $A$ to extensions of $B$ in such a way that the orientation of $e e'$ is preserved in the image. We do this as follows. To define $K(A)$: for each $(A, e) \in \mc{E}_A$ and each enumeration $\bar{v}$ of $e^-$ in $(A, e)$, place in $K(A)$ a labelled copy $(A, e_{\bar{v}})$ of $(A, e)$ over $A$. (Here the tournament structure is the same, but the extension vertex has an additional label. Note that the enumeration $\bar{v}$ used for the label is arbitrary, and is not defined using the tournament structure of the extension.) We then compare $\bar{v}, \bar{v}'$ lexicographically to define the orientation between labelled extension vertices $e^{}_{\bar{v}}$, $e'_{\bar{v}'}$:
    \begin{itemize}
        \item if $|\bar{v}| < |\bar{v}'|$, define $e^{}_{\bar{v}} \ra e'_{\bar{v}'}$;
        \item if $|\bar{v}| = |\bar{v}'|$, then letting $i$ be least such that $v^{}_i \neq v'_i$, define $[e^{}_{\bar{v}} e'_{\bar{v}'}] = [v^{}_i v'_i]$. (See \Cref{sqbrackets relation notn}.)
    \end{itemize}

    Let $f : A \to B$ be an embedding of finite tournaments. We define $K(f) \supseteq f$ by sending each $e_{\bar{v}} \in K(A) \setminus A$ to the labelled extension vertex in $K(B)$ which has enumerated in-neighbourhood $f(\bar{v})$. It is then immediate that $K(f)$ is a tournament embedding and that $K$ is a \Ka functor.
\end{eg}
\begin{rem}
    Let $M$ be the random tournament. Jaligot showed in \cite{Jal07} that $\Ainf(M)$ is uniquely extensible, by a different method inspired by \cite[Lemma 2.1]{MW92}.
\end{rem}

\subsection{The generic \texorpdfstring{$3$}{3}-hypertournament} \label{ex: 3-ht}

We now show that, with a little more effort, a similar idea to that in \Cref{ex: Ka for tourn} gives a \Ka functor for the $3$-hypertournament.

\begin{defn} \label{d: n-ht}
    Let $n \in \N$, $n \geq 2$. An \emph{$n$-hypertournament} $(T, R)$ is a set $T$ together with an $n$-ary relation $R$ satisfying the condition that each $n$-element substructure of $(T, R)$ has automorphism group equal to the alternating group $A_n$. For example, a $2$-hypertournament is a tournament in the usual sense (an oriented complete graph), and a $3$-hypertournament is a ternary structure where each triple of points $abc$ has exactly one of two possible cyclic orientations: for all $a, b, c \in T$ we have precisely one of $R(a, b, c)$ or $R(b, a, c)$, and also $R(a, b, c) \Rightarrow R(b, c, a) \wedge R(c, a, b)$. If $R(a, b, c)$ holds, we say that $(a, b, c)$ is the \emph{$3$-orientation} of $abc$.

    Let $k \in \N_+$, and let $\bar{n} \in \N^k$ with $n_i \geq 2$ for all $i < k$. An \emph{$\bar{n}$-hypertournament} $(T, (R_i)_{i < k})$ is a set $T$ together with, for each $i$, a relation $R_i$ of arity $n_i$ such that $(T, R_i)$ is an $n_i$-hypertournament. We usually denote an $\bar{n}$-hypertournament $(T, (R_i)_{i < k})$ just by $T$. Let $\mc{T}_{\bar{n}}$ denote the class of finite $\bar{n}$-hypertournaments. It is straightforward to see that $\mc{T}_{\bar{n}}$ has strong amalgamation; denote its \Fr limit by $\mb{T}_{\bar{n}}$.
\end{defn}

\begin{notn}
    For $T$ a tournament, we write the binary relation as $a \ra b$. For $T$ a $3$-hypertournament, we write the ternary relation as $\oa{abc}$.
\end{notn}

We now define several tournament relations that enable us to construct a \Ka functor for $\mb{T}_3$.

\begin{defn} \label{lex tournament}
    Let $V$ be a set. Let $V^{\leq \omega}$ denote the set of sequences of elements of $V$ with sequence length $\leq \omega$, and let $V^{< \omega}$ denote the set of finite sequences of elements of $V$. Let $\ra$ be a tournament relation on $V$. We define the \emph{lexicographic tournament relation} (briefly, lex-tournament relation) $\ra_\lex$ on $V^{\leq \omega}$ induced by $\ra$ as follows. For distinct $\bar{a}, \bar{a}' \in V^{\leq \omega}$:
    \begin{itemize}
        \item if $|\bar{a}| < |\bar{a}'|$, define $\bar{a} \ra_\lex \bar{a}'$;
        \item in the case $|\bar{a}| = |\bar{a}'|$, letting $i$ be least such that $a^{}_i \neq a'_i$, if $a^{}_i \ra a'_i$ define $\bar{a} \ra_\lex \bar{a}'$.
    \end{itemize}
    We also write $(V^{< \omega}, \ra_\lex)$ for the subtournament of $(V^{\leq \omega}, \ra_\lex)$ induced on $V^{< \omega}$, and define $(V^n, \ra_\lex)$ for $n < \omega$ similarly.
\end{defn}

\begin{defn} \label{tourns induced by 3-ht}
    Let $A$ be a $3$-hypertournament and let $a \in A$. Let $V_a = A \setminus \{a\}$. We define a tournament $(V_a, \ra_a)$ as follows: for distinct $u, v \in V_a$, we set $u \ra_a v$ if $(a, u, v) \in R_A$. We then have a lex-tournament relation on $V_a^{<\omega}$ as in \Cref{lex tournament}, which we also denote by $\ra_a$. Let $E_a^{} \sub V_a^2$ be the set of edges of $(V_a^{}, \ra_a^{})$. The lex-tournament relation $\ra_a^{}$ on $V_a^{<\omega}$ induces a tournament relation on $E_a$, which we denote by $\rra_a$. We again have a lex-tournament relation on $E_a^{<\omega}$ induced by $(E_a, \rra_a)$, which we also denote by $\rra_a$.
\end{defn}

\begin{prop} \label{p: 3-ht}
    The generic $3$-hypertournament $\mb{T}_3$ has a \Ka functor.
\end{prop}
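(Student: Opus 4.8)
The plan is to construct a \Ka functor $K$ for $\mb{T}_3$ directly, imitating the strategy of \Cref{ex: Ka for tourn} but now building up through two layers of lex-tournament structure as set up in \Cref{tourns induced by 3-ht}. Recall that for a finite $3$-hypertournament $A$ and a point $a$, the ternary relation restricted to triples through $a$ induces a tournament $(V_a, \ra_a)$ on $V_a = A \setminus \{a\}$, and iterating lexicographically gives the auxiliary tournaments $(E_a, \rra_a)$. The key observation driving the construction is that to specify the $3$-orientation of a triple $e\,e'\,x$ (where $e,e'$ are extension vertices and $x \in A$), or of a triple $e\,e'\,e''$ of three extension vertices, it suffices to specify, for each base vertex $a$, the orientation in $(V_a,\ra_a)$ of the relevant pair, and consistency across the various $a$ must be arranged so that the resulting ternary relation genuinely defines a $3$-hypertournament.

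First I would set up the labelling. For each one-point extension $(A,e) \in \mc{E}_A$, each point $e$ has, for every $a \in A$, a position in the tournament $(V_a \cup \{e\}, \ra_a)$; I would label a copy of $(A,e)$ by a tuple of enumerations recording, for each $a \in A$, an arbitrary enumeration $\bar{v}^{(a)}$ of the $\ra_a$-in-neighbourhood of $e$ among the vertices of $A$. Placing into $K(A)$ one labelled copy of $(A,e)$ for each such tuple of enumerations is exactly the ``multiple copies with extra information'' idea flagged as the \textbf{key idea} of \Cref{s: key exs}. Then to define the $3$-orientation of a triple involving two extension vertices $e_{\bar{v}}, e'_{\bar{v}'}$ and a base point $a$, I would read off the orientation of the pair from the lex-tournament $\ra_a$ applied to the label enumerations $\bar{v}^{(a)}, \bar{v}'^{(a)}$ (comparing lengths first, then lexicographically, exactly as in \Cref{lex tournament}); for triples of three extension vertices I would use the second-layer relation $\rra_a$ on $E_a$ to break ties. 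The functor on morphisms is forced: an embedding $f : A \to B$ sends each $e_{\bar{v}}$ to the labelled vertex of $K(B)$ whose enumerations are $f(\bar{v}^{(a)})$ pushed forward appropriately, and naturality and the one-point-extension property \ref{Ka ope} then follow as in the tournament case.

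The main obstacle — and the reason this needs ``a little more effort'' than \Cref{ex: Ka for tourn} — is verifying that the ternary relation I define on $K(A)$ is actually a consistent $3$-hypertournament, i.e.\ that the cyclic-orientation axioms hold for every triple, in particular the compatibility condition $R(a,b,c) \Rightarrow R(b,c,a) \wedge R(c,a,b)$ and exactly-one-of-two-orientations. The subtlety is that a single triple $e\,e'\,x$ is ``seen'' from the perspective of several induced tournaments (one for each of its points that lies in $A$, plus the interactions between the two extension vertices), and these separate prescriptions must cohere into one well-defined $3$-orientation. This is precisely why the construction is organised so that the orientation of a pair in $(V_a, \ra_a)$ is itself derived lexicographically and then lifted to $(E_a, \rra_a)$: the two-layer lexicographic comparison is engineered to guarantee that the various local prescriptions never conflict. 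I would verify this consistency by a careful case analysis on how many of the three points are extension vertices, checking in each case that the tuple of label-enumerations together with the lex/double-lex rules assigns a unique legal $3$-orientation; once consistency is established, that $K(A) \in \mc{A}_\omega(\mb{T}_3)$ (i.e.\ that it embeds in $\mb{T}_3$) follows since $\mc{T}_3$ is a strong amalgamation class and $K(A)$ is a finite $3$-hypertournament, and functoriality plus \ref{Ka ope} are then routine.

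Finally, once $K$ and the natural transformation $\eta = (\eta_A : A \hookrightarrow K(A))$ are in place, the conclusion that $\mb{T}_3$ has a \Ka functor is immediate from the definition, and extensibility of $\Ao(\mb{T}_3)$ follows by \Cref{Ka implies extensible}. I expect the length of the actual write-up to be dominated entirely by the consistency check described above; everything else is a direct transcription of the tournament argument with the single-layer lexicographic comparison replaced by the layered $\ra_a$/$\rra_a$ comparison.
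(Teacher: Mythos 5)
There is a genuine gap, and it comes from misidentifying where the difficulty lies. You expect the write-up to be dominated by a ``consistency check'' that the ternary relation on $K(A)$ is a legal $3$-hypertournament, but that check is vacuous: the axioms of a $3$-hypertournament are purely per-triple (each $3$-subset carries exactly one of the two cyclic orientations), with no coherence condition linking different triples, so \emph{any} assignment of a cyclic orientation to each triple yields a structure in $\mc{A}_\omega(\mb{T}_3)$. In particular a triple $e\,e'\,x$ with $x \in A$ is seen from only one induced tournament, namely $\ra_x$, so the conflict you worry about does not arise. The actual work, around which the paper's proof is organised, is \emph{functoriality}: the orientation of every triple involving extension vertices must be computed from data that an embedding $f : A \to B$ transports, and $K(f)$ itself must be a well-defined map into $K(B)$.

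On that score your labelling scheme breaks down in two places. First, you label each copy of $(A,e)$ by a tuple of enumerations $(\bar v^{(a)})_{a \in A}$ indexed by \emph{all} of $A$, so for a triple of three extension vertices there is no distinguished base vertex $a$ at which to perform the $\rra_a$ comparison (and nothing for $\rra_a$ to compare, since your labels carry no edge-set data playing the role of the paper's $T_a = \{(u,v) \in E_a \mid [euv] \neq [auv]\}$); any way of singling out an $a$ would have to be invariant, which is exactly what is unavailable. The paper instead makes a single base vertex $a$ part of the label $(a, \bar s, \bar t)$ and splits into cases according to whether the base vertices of the extension vertices coincide, falling back on the structure of $A$ itself (the orientations $[v a_0 a_1]$, $[a_0 a_1 a_2]$) when they are distinct. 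Second, your $K(f)$ is not well defined: a label in $K(B)$ is a tuple indexed by all of $B$, and pushing forward $(\bar v^{(a)})_{a \in A}$ prescribes only the coordinates over $f(A)$; the orientations $[e' c c']$ for $c, c' \in B \setminus f(A)$, and the enumerations at those coordinates, are left undetermined, so no particular labelled vertex of $K(B)$ is singled out. The paper's inclusion of $\bar t$ is exactly what makes the label $(a, \bar s, \bar t)$ determine the extension completely, so that $e_{f(a), f(\bar s), f(\bar t)}$ is a well-defined image. As it stands your construction of $K$ on morphisms, which is the entire content of the proposition, does not go through.
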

\begin{proof}
    Let $A \in \mc{A}(\mb{T}_3)$. For each $(A, e) \in \mc{E}_A$, we put labelled copies of $(A, e)$ over $A$ in $K(A)$ as follows. Let $a \in A$. Recall the notation $V_a, E_a$ from \Cref{tourns induced by 3-ht}. Let 
    \[
        S_a = \{v \in V_a \mid [eav] = 1\},\quad
        T_a = \{(u, v) \in E_a \mid [euv] \neq [auv]\}.
    \]
    For each $a \in A$, each enumeration $\bar{s}$ of $S_a$ and each enumeration $\bar{t}$ of $T_a$, add to $K(A)$ a labelled copy $(A, e_{a, \bar{s}, \bar{t}})$ of $(A, e)$ over $A$. Note that the label $(a, \bar{s}, \bar{t})$ completely specifies the $3$-hypertournament structure up to isomorphism. Let $e_i = e_{a_i, \bar{s}_i, \bar{t}_i}$, $i = 0, 1$ be labelled extension vertices in $K(A)$, and let $v \in A$. We define the $3$-orientation on $v e_0 e_1$ as follows.
    \begin{itemize}
        \item If $v, a_0, a_1$ are distinct, let $[v e_0 e_1] = [v a_0 a_1]$.
        \item If $a_0, a_1$ are distinct and $v = a_i$ for some $i$, let $[v e_i e_{1 - i}] = 1$.
        \item Otherwise $a_0 = a_1$. Let $a = a_0$. We compare $(\bar{s}_0, \bar{t}_0), (\bar{s}_1, \bar{t}_1)$ lexicographically via $(\ra_a, \rra_a)$ (that is, first compare $\bar{s}_0, \bar{s}_1$ via $\ra_a$ and then if equal compare $\bar{t}_0, \bar{t}_1$ via $\rra_a$), and let $i$ be such that $(\bar{s}_i, \bar{t}_i)$ is the start-vertex. Then set $[v e_i e_{1-i}] = 1$.
    \end{itemize}

    Let $e_i = e_{a_i, \bar{s}_i, \bar{t}_i}$, $i \in \Z/3\Z$, be labelled extension vertices in $K(A)$. We define the $3$-orientation on $e_0 e_1 e_2$ as follows.
    \begin{itemize}
        \item If $a_0, a_1, a_2$ are distinct, let $[e_0 e_1 e_2] = [a_0 a_1 a_2]$.
        \item If there is $i \in \Z/3\Z$ with $a_i = a_{i+1}$ and $a_{i+1} \neq a_{i+2}$, let $a = a_i = a_{i+1}$. Compare $(\bar{s}_i, \bar{t}_i), (\bar{s}_{i+1}, \bar{t}_{i+1})$ via $(\ra_a, \rra_a)$, and let $j, k$ be the indices of the start- and end-vertex respectively. Then set $[e_j e_k e_{i+2}] = 1$.
        \item The remaining case is $a_0 = a_1 = a_2$. Let $a = a_0$. We use $(\ra_a, \rra_a)$ to compare $(\bar{s}_i, \bar{t}_i)$, $i \in \Z/3\Z$: this defines a tournament relation $\ra$ on $e_0 e_1 e_2$. Let $i, j, k$ be such that $e_i \ra e_j \ra e_k$, and set $[e_i e_j e_k] = 1$.
    \end{itemize}

    This completes the description of $K(A)$. Let $f : A \to B$ be an embedding in $\mc{A}(\mb{T}_3)$. To define $K(f) : K(A) \to K(B)$, we specify that $K(f)$ extends $f$ and sends each labelled extension vertex $e_{a, \bar{s}, \bar{t}}$ in $K(A)$ to the labelled extension vertex $e_{f(a), f(\bar{s}), f(\bar{t})}$ in $K(B)$. It is then straightforward to check that $K$ is a \Ka functor.
\end{proof}

We now show that there is a finite SWIR expansion of $\mb{T}_3$.

\begin{prop} \label{p: T32 SWIR}
    The generic $3, 2$-hypertournament $\mb{T}_{3, 2}$ is a SWIR expansion of $\mb{T}_3$.
\end{prop}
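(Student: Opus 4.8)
The structure $\mb{T}_{3,2}$ is, by definition, the \Fr limit of the class of finite $(3,2)$-hypertournaments; since such a structure is just a $3$-hypertournament $R_1$ together with an independent tournament $R_2$, the class $\mc{T}_{3,2}$ is the free superposition of $\mc{T}_3$ and the class of finite tournaments, and $\mb{T}_{3,2}$ is the free superposition of $\mb{T}_3$ and the random tournament $\mb{T}_2$. In particular its $\{R_1\}$-reduct is $\mb{T}_3$, so $\mb{T}_{3,2}$ is a finite-language expansion of $\mb{T}_3$, ultrahomogeneous by construction. It therefore remains to exhibit a SWIR. The \textbf{key observation} is that the relation $R_2 = \ra$ canonically resolves $3$-orientations: any three distinct vertices $x, y, z$ carry a tournament, and this tournament singles out one of the two cyclic orientations of $\{x,y,z\}$ -- the direction of the cycle when the triangle is a $3$-cycle, and the source--middle--sink order when it is transitive. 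Write $c(x,y,z)$ for this canonical $3$-orientation. Thus, although $R_1$ and $R_2$ are independent in $\mb{T}_{3,2}$, the extra relation $R_2$ supplies exactly the information needed to amalgamate the $3$-hypertournament part canonically.

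With this in hand, for $A, B, C \fg \mb{T}_{3,2}$ I define $B \ind_A C$ to hold precisely when: first, $b \ra c$ for every $b \in B \setminus A$ and $c \in C \setminus A$ (the SWIR of $\mb{T}_2$); and second, every \emph{crossing} triple -- one meeting both $B \setminus A$ and $C \setminus A$, its third vertex in $A$, in $B \setminus A$, or in $C \setminus A$ -- has its $R_1$-orientation equal to $c$ as determined by $R_2$. The point is that for any such crossing triple, each of its three edges lies inside $AB$, inside $AC$, or is a crossing edge, so its tournament is entirely forced by $B$, $C$ and the first condition; hence $c$ applies and the amalgam is completely specified by $A, B, C$. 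It is routine to check that the resulting $\{R_1,R_2\}$-structure on $A \cup (B\setminus A)\cup(C\setminus A)$ is a genuine $(3,2)$-hypertournament and hence lies in the age of $\mb{T}_{3,2}$.

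I would then verify the axioms of \Cref{d:swir}. Invariance is immediate, since $\ind$ is defined purely from $R_1, R_2$. For Existence, given $A, B, C$ I form the canonical amalgam above, embed it into $\mb{T}_{3,2}$ by the extension property, and read off $B' \equiv_A B$ with $B' \ind_A C$ (the right-hand version following, e.g., via \Cref{lexinv}). Stationarity asserts that the amalgam has a unique isomorphism type over $AC$ once the type of $B$ over $A$ is fixed; this is immediate, as the crossing edges and crossing triples are forced. For Monotonicity I check left-(Mon): assuming $BD \ind_A C$, the conjunct $B \ind_A C$ is a restriction, and $D \ind_{AB} C$ holds because every crossing edge or triple for base $AB$ is also a crossing configuration for base $A$ (a vertex of $B \setminus A$ serving as a base point for $AB$ simply becomes a second non-base vertex on the $BD$-side over $A$), so the required relations are inherited; right-(Mon) is analogous. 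Since the defining conditions are asymmetric in $B$ and $C$, this is a genuine (global) SWIR, not a SIR.

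I expect the \textbf{main obstacle} to be conceptual rather than technical: identifying the resolution rule $c$ -- recognising that a tournament on three vertices determines a cyclic orientation -- is the crux, and is precisely what lets the single relation $R_2$ furnish a canonical amalgamation of the $3$-hypertournament where none is known on $\mb{T}_3$ alone. The remaining care is bookkeeping in Monotonicity, tracking which vertices lie in the base as it grows from $A$ to $AB$; invoking \Cref{SWIR strong amalg} (valid as $\mb{T}_{3,2}$ has strong amalgamation) to ensure $B \setminus A$ and $C \setminus A$ are disjoint keeps this clean. Alternatively one could verify (Tr) and apply \Cref{SWIR tr implies mon}, but (Mon) seems more transparent here.
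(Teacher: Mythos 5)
Your proposal is correct and matches the paper's proof: the paper defines exactly the same relation (all crossing tournament edges directed from $B\setminus A$ to $C\setminus A$, and each crossing triple given the $3$-orientation read off from its tournament -- source--middle--sink in the transitive case, cycle direction in the cyclic case), listed case by case rather than via your uniform rule $c$, and likewise leaves the axiom checks as routine. Your packaging of the case analysis into a single "canonical orientation determined by $R_2$" is a nice way to see why monotonicity is automatic, but it is the same argument.
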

\begin{proof}
    For $A, B, C \fin \mb{T}_{3,2}$, writing $\tld{B} = B \setminus A$, $\tld{C} = C \setminus A$, we define $B \ind_A C$ if:
    \begin{itemize}
        \item for $b \in \tld{B}$, $c \in \tld{C}$ we have $b \neq c$ and $b \ra c$;
        \item for $b, b' \in \tld{B}$, $c \in \tld{C}$: $b \ra b'$ implies $\oa{bb'c}$;
        \item for $b \in \tld{B}$, $c, c' \in \tld{C}$: $c \ra c'$ implies $\oa{bcc'}$;
        \item for $a \in A$, $b \in \tld{B}$, $c \in \tld{C}$:
        
            $b \ra a \ra c$ implies $\oa{bac}$, $b \la a \la c$ implies $\oa{bca}$, and $(b \ra a \la c \vee b \la a \ra c)$ implies $\oa{abc}$.
        
    \end{itemize}

    It is straightforward to verify that $\ind$ is a SWIR; we leave this to the reader.
\end{proof}
\begin{rem}
    In a separate project, the second and third authors together with Shujie Yang have generalised the above to all $n \geq 2$, giving a finite SWIR expansion of the generic $n$-hypertournament. They then use this to show that $\Aut(\mb{T}_n)$ is simple.
\end{rem}

We end this subsection by observing that it is not the case that all generic $n$-hypertournaments have universal automorphism group:
\begin{prop} \label{p: 2timesodd-ht not univ}
    Let $k \geq 1$ and let $n = 2(2k + 1)$. The automorphism group of the generic $n$-hypertournament $\mb{T}_n$ is not universal.
\end{prop}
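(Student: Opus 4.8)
The plan is to exhibit a structure $A$ in the age of $\mb{T}_n$ whose automorphism group contains an element of order $2$, and then to show that $\Aut(\mb{T}_n)$ itself has no element of order $2$, so that no abstract group embedding $\Aut(A) \to \Aut(\mb{T}_n)$ can exist (an injective homomorphism would have to send the order-$2$ element to an order-$2$ element). The natural candidate for $A$ is a single $n$-element hypertournament: it lies in $\Age(\mb{T}_n) \subseteq \Ao(\mb{T}_n)$, and by the very definition of an $n$-hypertournament its automorphism group is $A_n$. Since $n = 2(2k+1) \geq 6$, the group $A_n$ contains the involution $(1\,2)(3\,4)$. Thus the whole proposition reduces to proving that $\Aut(\mb{T}_n)$ contains no involution.

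The key observation is a parity constraint. If $g \in \Aut(\mb{T}_n)$ and $S$ is any $g$-invariant subset of size $n$, then $g|_S$ is an automorphism of the $n$-point substructure $(S, R|_S)$, whose automorphism group is $A_n$ by definition; hence $g|_S$ must be an \emph{even} permutation of $S$. The entire argument then consists of choosing, for a hypothetical involution, an invariant $n$-set on which it restricts to an \emph{odd} permutation.

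Concretely, I would suppose $g \in \Aut(\mb{T}_n)$ is a nontrivial involution, with $b \geq 1$ transpositions in its cycle decomposition and all remaining points fixed, and split into two cases. If $b \geq n/2 = 2k+1$, take $S$ to be the union of any $2k+1$ of these transpositions: then $S$ is $g$-invariant of size $n$, and $g|_S$ is a product of $2k+1$ disjoint transpositions, hence odd, contradicting the parity constraint. If instead $1 \leq b \leq 2k$, then $g$ fixes infinitely many points (the domain being countably infinite), so I may choose one transposition together with $n - 2 = 4k$ fixed points to form a $g$-invariant set $S$ of size $n$ on which $g$ restricts to a single transposition, again odd, a contradiction. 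Either way no involution exists, and the proposition follows.

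The only genuine subtlety, and the place where the hypothesis $n \equiv 2 \pmod 4$ is used essentially, is the first case: one needs $n/2 = 2k+1$ to be odd so that a product of $n/2$ transpositions is an odd permutation. For $n \equiv 0 \pmod 4$ this product would instead be even and the argument would collapse — indeed such $\mb{T}_n$ plausibly do admit fixed-point-free involutions — so the parity of $n/2$ is exactly the feature being exploited. The casework on the number of transpositions is otherwise routine once one notes that any nontrivial involution always supplies at least one invariant $n$-set realizing an odd restriction.
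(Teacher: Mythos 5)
Your proposal is correct and follows essentially the same route as the paper: reduce to showing $\Aut(\mb{T}_n)$ has no involutions, then use the fact that any $\tau$-invariant $n$-set forces $\tau$ to restrict to an element of $A_n$, deriving a contradiction from the two configurations ``one transposition plus $n-2$ fixed points'' and ``$2k+1$ disjoint transpositions''. The only (immaterial) differences are that the paper witnesses the involution in the age with the two-element structure $\{u,v\}$ rather than an $n$-element hypertournament, and organises the argument as ``$\tau$ moves infinitely many points, hence a disjoint $U, \tau(U)$ of size $2k+1$ exists'' instead of your explicit case split on the number of transpositions.
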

\begin{proof}
    By taking $A = \{u, v\}$ and the involution $u \leftrightarrow v$, we see that it is enough to show that $\Aut(\mb{T}_n)$ has no (non-identity) involutions. Suppose for a contradiction that $\tau \in \Aut(\mb{T}_n)$ is an involution, and let $v \in \mb{T}_n$ with $\tau(v) \neq v$. By considering the $n$-edge with vertices $v, \tau(v)$ and any other $n-2$ vertices of $\mb{T}_n$, we see that at least one of these vertices is moved by $\tau$ (as otherwise $\tau$ would be an odd-sign automorphism of the substructure on these $n$ vertices), and so $\tau$ moves infinitely many vertices. Let $U \sub \mb{T}_n$ be such that $|U| = 2k + 1$ and $|U \cup \tau(U)| = n$. Then $\tau$ is an odd-sign automorphism of the substructure on $U \cup \tau(U)$, contradiction.
\end{proof}

\subsection{The generic two-graph} \label{ex: two-graph}

We now give an example with no local SWIR but where the idea of \Cref{ex: Ka for tourn} can still be adapted to give a \Ka functor. Let $\mc{T}$ be the class of finite $3$-hypergraphs $T$ satisfying the condition that the substructure induced on any four vertices of $T$ has an even number of $3$-edges. The elements of $\mc{T}$ are known as finite \emph{two-graphs}. Two-graphs have been extensively studied: see \cite{Sei91}. We first briefly show that $\mc{T}$ is an amalgamation class, following \cite{Sei91} and \cite{AL95}. To do this, we define the \emph{switch} of a graph.

\begin{defn}
    Let $\mc{G}$ denote the class of finite graphs. For $\Gamma \in \mc{G}$ and $U \sub \Gamma$, we define $\sw(\Gamma, U)$, the \emph{switch of $\Gamma$ at $U$}, to be the graph with vertex set $V(\Gamma)$ obtained from $\Gamma$ by switching edges with one vertex in $U$ and one vertex in $U^c$ to non-edges, and vice versa. That is, for $u \in U$, $v \in U^c$, we have $uv \in E(\sw(\Gamma, U))$ iff $uv \notin E(\Gamma)$. (All edges and non-edges whose vertices lie entirely in $U$ or entirely in $U^c$ are left unchanged.)

    It is immediate that $\sw(\Gamma, U) = \sw(\Gamma, U^c)$ and that switching at $U$ is self-inverse. It is straightforward to see that if $\Gamma' = \sw(\Gamma, U)$ and $\Gamma'' = \sw(\Gamma', U')$, then $\Gamma'' = \sw(\Gamma, U \symdiff U')$, and so switching partitions $\mc{G}$ into equivalence classes, which we refer to as \emph{switching-classes}.
\end{defn}

Recall \Cref{sqbrackets relation notn} -- we use this in the below definition.

\begin{defn}
    Let $\Gamma \in \mc{G}$. We define $\tau(\Gamma)$ to be the $3$-hypergraph with vertex set $V = V(\Gamma)$, where for any three vertices $a, b, c \in V$, we add an edge $abc \in E(\tau(\Gamma))$ if $\Gamma$ has an odd number of edges on $\{a, b, c\}$, so $[abc]_{\tau(\Gamma)} \equiv [ab]_\Gamma + [ac]_\Gamma + [bc]_\Gamma \Mod{2}$. For $W \sub V$ with $|W| = 4$, as each pair of vertices is contained in exactly two triples of vertices, we have
    $\sum_{U \in W^{(3)}} [U]_{\tau(\Gamma)} \equiv \sum_{V \in W^{(2)}} 2[V]_\Gamma \equiv 0 \Mod{2}$ and so $\tau(\Gamma)$ is a two-graph. Thus $\tau$ is a map $\mc{G} \to \mc{T}$.
\end{defn}

\begin{lem}[{\cite[Lemma 3.9]{Sei91}}]
    For $\Gamma, \Gamma' \in \mc{G}$, we have $\tau(\Gamma) = \tau(\Gamma')$ iff $\Gamma, \Gamma'$ lie in the same switching-class.
\end{lem}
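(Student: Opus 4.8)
The plan is to pass to $\mathbb{F}_2$ and reduce both directions to a single statement about the symmetric-difference graph of $\Gamma$ and $\Gamma'$. Since $\tau$ preserves vertex sets, $\tau(\Gamma) = \tau(\Gamma')$ already forces $V(\Gamma) = V(\Gamma')=:V$, so I may define a graph $\Delta$ on $V$ by $[ab]_\Delta \equiv [ab]_\Gamma + [ab]_{\Gamma'} \Mod{2}$. By the defining formula for $\tau$, for every triple $\{a,b,c\}$ one has $[abc]_{\tau(\Gamma)} + [abc]_{\tau(\Gamma')} \equiv [ab]_\Delta + [ac]_\Delta + [bc]_\Delta \Mod{2}$, and hence $\tau(\Gamma) = \tau(\Gamma')$ holds if and only if every triple of $\Delta$ spans an even number of edges. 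The whole lemma thus becomes: $\Delta$ has all triples even if and only if $\Delta$ is a cut graph $K_{U, U^c}$, since the symmetric-difference graph of $\Gamma$ and $\sw(\Gamma, U)$ is exactly $K_{U, U^c}$ (and any two graphs in a switching-class differ by a single switch, as $\sw(\sw(\Gamma, U), U') = \sw(\Gamma, U \symdiff U')$).

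For the easy direction ($\Leftarrow$) it then suffices to observe that a cut graph has all triples even. Concretely, for a triple $\{a,b,c\}$ let $k = |\{a,b,c\} \cap U|$; the number of pairs crossing $U$ inside the triple is $k(3-k)$, which is even for every $k \in \{0,1,2,3\}$. So switching at $U$ flips an even number of the three pairs of any triple, leaving each triple-parity unchanged, i.e.\ $\tau(\sw(\Gamma, U)) = \tau(\Gamma)$.

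The substance is the converse ($\Rightarrow$), where I must show that ``every triple even'' forces $\Delta$ to be a cut. The key structural lemma is that the relation $R$ on $V$ defined by ``$a = b$ or $ab \notin E(\Delta)$'' is an equivalence relation with \emph{at most two} classes. Reflexivity and symmetry are immediate; for transitivity, if $a,b,c$ are distinct with $ab, bc \notin E(\Delta)$, then the triple $\{a,b,c\}$ already has even edge-count coming from the two absent pairs, so evenness forces $ac \notin E(\Delta)$. Thus $R$ partitions $V$ into classes with no internal edges and all edges present between distinct classes, so $\Delta$ is complete multipartite. Finally, if there were three distinct classes, a transversal triple would span three edges (odd), contradicting evenness; hence there are at most two classes, and $\Delta = K_{U, U^c}$ where $U$ is one class (taking $U = \emptyset$ when $\Delta$ is edgeless). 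Comparing symmetric-difference graphs then yields $\Gamma' = \sw(\Gamma, U)$, placing $\Gamma, \Gamma'$ in the same switching-class.

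The only genuinely non-routine step is this structural lemma in the converse direction — in particular the observation that evenness rules out three or more parts (so complete multipartite collapses to a cut); everything else is a parity bookkeeping that the $\mathbb{F}_2$ reformulation makes transparent.
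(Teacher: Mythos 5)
Your proof is correct, and the forward direction takes a genuinely different route from the paper's. The paper's (sketched) argument for $\Rightarrow$ is an explicit one-vertex construction: fix any $v \in \Gamma$, set $U = \{u \mid uv \in E(\Gamma) \symdiff E(\Gamma')\}$, and check directly that $\Gamma' = \sw(\Gamma, U)$ --- the verification for a pair $a, b \neq v$ amounts to applying the triple-parity identity to $\{v, a, b\}$, which forces $[ab]_\Gamma + [ab]_{\Gamma'} \equiv [a \in U] + [b \in U] \Mod{2}$. You instead classify the symmetric-difference graph $\Delta$ abstractly: triple-evenness makes non-adjacency an equivalence relation, so $\Delta$ is complete multipartite, and evenness kills any transversal of three parts, leaving a cut graph $K_{U, U^c}$. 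Both are short and sound; the paper's buys an immediate explicit witness for $U$ (the discrepancy set at a single vertex), while yours isolates the cleaner structural fact that the triple-even graphs are exactly the cut graphs, which makes the equivalence feel less like a coincidence of bookkeeping. The backward directions coincide (switching flips an even number of pairs in any triple). One small point worth keeping explicit in your write-up: reducing "same switching-class" to "differ by a single switch" uses $\sw(\sw(\Gamma, U), U') = \sw(\Gamma, U \symdiff U')$, which you correctly cite and which the paper records when defining switching-classes.
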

\begin{proof}[Sketch proof]
    $\Rightarrow:$ Take $v \in \Gamma$. Let $U = \{u \in \Gamma \mid uv \in E(\Gamma) \symdiff E(\Gamma')\}$. Then $\Gamma' = \sw(\Gamma, U)$. $\Leftarrow:$ Switching does not affect the parity of the number of edges of any vertex triple.
\end{proof}

We now give a right inverse for $\tau$.

\begin{defn}
    Let $T \in \mc{T}$, and let $u \in T$. We define the graph $\gamma_u(T)$ to have vertex set $V(T)$ and edge set
    $E(\gamma_u(T)) = \{vw \in V(T)^2 \mid uvw \in E(T)\}$.
    (Note that $u$ is not contained in any edge of $\gamma_u(T)$.) It is easy to check that $\tau(\gamma_u(T)) = T$.
\end{defn}

\begin{lem}[{\cite[Proposition 1]{AL95}}]
    The class $\mc{T}$ of finite two-graphs has the amalgamation property.
\end{lem}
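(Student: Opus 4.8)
The plan is to transfer amalgamation from the class $\mc{G}$ of finite graphs, which has free amalgamation, to $\mc{T}$ using the maps $\tau$ and $\gamma_u$ already introduced. Two properties of these maps drive the argument. First, I would note that $\tau$ commutes with passage to induced substructures: since $[abc]_{\tau(\Gamma)}$ depends only on the edges of $\Gamma$ within $\{a,b,c\}$, for any $W \sub V(\Gamma)$ we have $\tau(\Gamma)|_W = \tau(\Gamma|_W)$. Second, I would use the right-inverse property $\tau(\gamma_u(T)) = T$ established above.

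Given two-graph embeddings $T_0 \to T_1$ and $T_0 \to T_2$, I would first identify along them so that $T_0 \sub T_1$, $T_0 \sub T_2$ and $V(T_1) \cap V(T_2) = V(T_0)$. Assuming $T_0 \neq \emptyset$, fix a vertex $u \in T_0$ and form the graphs $\gamma_u(T_1)$ and $\gamma_u(T_2)$. The key observation is that, because $u$ lies in the common base, these two graphs agree on $V(T_0)$: for $v, w \in V(T_0)$ we have $vw \in E(\gamma_u(T_i))$ iff $uvw \in E(T_i)$ iff $uvw \in E(T_0)$, since $T_0$ is an induced substructure of $T_i$ and $u,v,w \in T_0$; hence $\gamma_u(T_i)|_{V(T_0)} = \gamma_u(T_0)$ for $i = 1, 2$.

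Next I would amalgamate $\gamma_u(T_1)$ and $\gamma_u(T_2)$ over $\gamma_u(T_0)$ in $\mc{G}$ (for instance by free amalgamation), obtaining a graph $\Delta$ on the vertex set $V(T_1) \cup V(T_2)$ in which each $\gamma_u(T_i)$ sits as an induced subgraph, and then set $T_3 := \tau(\Delta) \in \mc{T}$. Combining the two properties of $\tau$, for $i = 1, 2$ I obtain $T_3|_{V(T_i)} = \tau(\Delta|_{V(T_i)}) = \tau(\gamma_u(T_i)) = T_i$, so the inclusions $T_i \hookrightarrow T_3$ are embeddings of two-graphs agreeing on $T_0$, giving the desired amalgam. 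The case $T_0 = \emptyset$ is handled identically, taking $\Delta$ to be the disjoint union of $\gamma_{u_1}(T_1)$ and $\gamma_{u_2}(T_2)$ for arbitrarily chosen $u_i \in T_i$, which yields joint embedding.

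I expect no serious obstacle once the two properties of $\tau$ are in hand; the only step needing care is choosing the graph preimages compatibly, which is precisely why $u$ is taken in the common base $T_0$. A different base vertex for each of $T_1$ and $T_2$ would not force the preimage graphs to agree on $V(T_0)$, and the graph amalgam would then fail to restrict back to $T_1$ and $T_2$ under $\tau$.
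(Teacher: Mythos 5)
Your proposal is correct and follows exactly the paper's argument: pass to graph preimages via $\gamma_u$ with $u$ chosen in the common base, freely amalgamate in $\mc{G}$, and apply $\tau$, using $\tau\circ\gamma_u = \id$ and compatibility of $\tau$ with induced substructures. You simply spell out the verifications that the paper leaves implicit.
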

\begin{proof}
    Let $A, B_0, B_1 \in \mc{T}$ with $A = B_0 \cap B_1$. If $A \neq \varnothing$, take $a \in A$. Then $\tau(\gamma_a(B_0) \sqcup_{\gamma_a(A)} \gamma_a(B_1))$ is an amalgam of $B_0, B_1$ over $A$, where $\sqcup_{\gamma_a(A)}$ denotes the free amalgam over $\gamma_a(A)$. If $A = \varnothing$, then take $b_0 \in B_0$, $b_1 \in B_1$, and then $\tau(\gamma_{b_0}(B_0) \sqcup \gamma_{b_1}(B_1))$ is an amalgam of $B_0, B_1$ over $\varnothing$.
\end{proof}

We call the \Fr limit of $\mc{T}$ the \emph{generic two-graph}.

\begin{prop}
    The generic two-graph does not have a local SWIR.
\end{prop}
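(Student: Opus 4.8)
The plan is to show that the generic two-graph $T$ admits no local SWIR by exploiting the highly symmetric, non-rigid structure of two-graphs, which conflicts with the stationarity axiom. The key leverage point is that a SWIR is automorphism-invariant (Inv) and stationary (Sta), so if I can find two finitely generated configurations that are isomorphic over a common base $A$ but are \emph{not} conjugate by an automorphism fixing some larger set, while $\ind$ is forced to hold in both cases, I will derive a contradiction. First I would fix a small base $A$ and analyse the one-point extensions of $A$ in the generic two-graph, using the correspondence $\tau$ and $\gamma_u$ between two-graphs and switching-classes of graphs established above: a point $e$ extending $A$ is determined, relative to any fixed $a \in A$, by the graph-neighbourhood it induces in $\gamma_a$, but only up to the switching ambiguity.

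**The core obstruction.**

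The heart of the argument should be that in a two-graph there is no intrinsic notion of ``adjacency'' between two new points $e, e'$ over $A$ that is invariant under $\Aut(T)$ yet pins down a unique amalgam. Concretely, I would take $A$ to be a fixed finite two-graph (perhaps a single point or a small configuration), and consider two one-point extensions $(A,e)$ and $(A,e')$ realising the \emph{same} type over $A$ (i.e.\ $e \equiv_A e'$). Existence (Ex) forces a realisation $e''$ with $e'' \ind_A e'$ and $e'' \equiv_A e$; the two-graph structure on $\{e'', e'\}$ together with a chosen $a \in A$ is governed by whether $e'' e' a \in E(T)$. The crucial point is that because of the switching symmetry --- an automorphism of $T$ can switch the graph $\gamma_a$ on a set $U$ not containing $a$ --- there exist automorphisms of $T$ fixing $A$ pointwise that nonetheless \emph{flip} the $3$-edge relation on $\{a, e'', e'\}$ while preserving everything relevant. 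I would make this precise by producing an explicit automorphism (via a switch at a suitable set meeting $\{e'',e'\}$ in one point) that fixes $A$ and the type of each of $e'', e'$ over $A$ individually, but reverses the edge $e''e'a$. This directly violates stationarity: stationarity demands that the amalgam $e'' \ind_A e'$ be determined up to an isomorphism fixing $A \cup \{e'\}$, yet the switching automorphism exhibits two genuinely different amalgams both satisfying $\ind$, or conversely shows that no invariant choice of the edge $e''e'$ is possible.

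**Carrying out the contradiction.**

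More carefully, I would argue as follows. By (Ex) and base-triviality type reasoning (using \Cref{base triviality} and \Cref{base_up}) reduce to $A \sub B, C$ with $B = A e_0$, $C = A e_1$ two one-point extensions. By (Ex) there is $B' = A e_0'$ with $e_0 \equiv_A e_0'$ and $e_0' \ind_A e_1$; the structure of $B'C$ is then determined up to isomorphism over $A e_1$ by (Sta). Now I exhibit a switching automorphism $\sigma$ of $T$: take $a \in A$, and switch $\gamma_a(T)$ at the single vertex $\{e_0'\}$ (extending to an automorphism of $T$ fixing $A$ since switching at a singleton in $U^c$ relative to $a$ can be realised inside $\Aut(T)$ by ultrahomogeneity after checking it preserves the two-graph). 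This $\sigma$ fixes $A$ pointwise, fixes $\tp(e_0'/A)$ (switching at a singleton not affecting edges within $A$), yet changes $[a\, e_0'\, e_1]$, hence produces $\sigma(e_0') \not\equiv_{A e_1} e_0'$ while $\sigma(e_0') \equiv_A e_0'$ and, by (Inv), $\sigma(e_0') \ind_A e_1$. This contradicts left-(Sta). The main obstacle I anticipate is verifying that the desired switching map genuinely extends to an automorphism of the \emph{generic} two-graph fixing $A$ --- i.e.\ that the local switch I want corresponds to a global element of $\Aut(T)$ and fixes the type over $A$ in the precise sense stationarity requires; this requires carefully tracking which vertices get switched and confirming the resulting $3$-hypergraph is the same two-graph, which is where the combinatorics of $\tau$, $\gamma_u$ and the even-edge condition on $4$-sets does the real work.
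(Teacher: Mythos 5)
There is a genuine gap at the heart of your argument: the ``switching automorphism'' you want cannot exist. Switching the graph $\gamma_a(T)$ at a set $U$ produces a different graph in the same switching class, and since $\tau(\sw(\Gamma,U)) = \tau(\Gamma)$ this leaves the two-graph $T$ itself completely unchanged; as a map on vertices it is the identity. More generally, \emph{any} automorphism $\sigma$ of the two-graph preserves the $3$-hyperedge relation by definition, so if $\sigma$ fixes $a$, $e_0'$ and $e_1$ it cannot ``reverse the edge $e_0'e_1a$''; if it fixes $A\cup\{e_1\}$ pointwise and moves $e_0'$, then $\sigma$ itself witnesses $\sigma(e_0')\equiv_{Ae_1}e_0'$, which is the conclusion of stationarity rather than its negation; and if it moves $e_1$ as well, then (Inv) only yields $\sigma(e_0')\ind_A\sigma(e_1)$, not $\sigma(e_0')\ind_A e_1$. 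So the configuration you aim for --- $\sigma(e_0')\equiv_A e_0'$ and $\sigma(e_0')\ind_A e_1$ but $\sigma(e_0')\not\equiv_{Ae_1}e_0'$ --- cannot be produced this way. A second problem is your choice of data: you take two one-point extensions realising the \emph{same} type over $A$. Over a singleton base the three-point configuration $\{a,e_0,e_1\}$ is subject to no parity constraint, and over a two-point base with both extensions hyperedge-free the four-set condition allows zero new hyperedges, so in either case one can invariantly decree that no new hyperedge appears; nothing forces an asymmetric choice.

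The paper's proof locates the obstruction precisely where your setup misses it: the two extensions must have \emph{different} types over a two-point base. Take $A=\{a,a'\}$, $B_0=A\cup\{b_0\}$ with the single $3$-edge $b_0aa'$, and $B_1=A\cup\{b_1\}$ with no $3$-edges. The parity condition on the four-set $\{a,a',b_0,b_1\}$ forces exactly one of $b_0b_1a$, $b_0b_1a'$ to be a $3$-edge in any amalgam $B_0\otimes_A B_1$, and the automorphism $a\leftrightarrow a'$ of $A$, which extends to both $B_0$ and $B_1$, swaps these two candidates, so no invariant choice exists. Your instinct that the switching ambiguity is the root cause is reasonable as motivation, but to turn it into a proof you need the parity condition to force a hyperedge into one of two positions exchanged by an automorphism of the input data --- not an automorphism of $T$ that flips a hyperedge on fixed vertices, which is a contradiction in terms.
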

\begin{proof}
    Suppose for a contradiction that the generic two-graph has a local SWIR. Then $\mc{T}$ has a local SAO $\otimes$. Let $A = \{a, a'\}$, $B_0 = A \cup \{b_0\}$, $B_1 = A \cup \{b_1\}$, where $B_0$ has exactly one $3$-edge $b_0aa'$ and $B_1$ has no $3$-edges. We have $A, B_0, B_1 \in \mc{T}$. Then $B_0 \otimes_A B_1 \in \mc{T}$ must have exactly one additional $3$-edge $b_0b_1a$ or $b_0b_1a'$, violating (Inv) via the automorphism $a \leftrightarrow a'$ of $A$.
\end{proof}

It is however immediate that there is an expansion of the generic two-graph with a SIR: as the generic two-graph is a reduct of the random graph, we may consider the expansion of the random graph by the induced generic two-graph, which has a SIR induced by the SIR for the random graph.

\begin{prop} \label{p: Ka for two-graph}
    The generic two-graph has a \Ka functor.
\end{prop}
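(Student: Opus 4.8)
The plan is to build $K$ by realizing, for \emph{each} base vertex $a \in A$ simultaneously, all one-point extensions of $A$ ``relative to $a$'', and to read the hypergraph structure on the new vertices directly off the two-graph $A$. The point is that, in contrast with \Cref{ex: Ka for tourn}, a two-graph admits a canonical ``free'' amalgam between new vertices, so no lexicographic labels are needed; keeping a copy of each extension for every base is exactly what makes the resulting construction functorial.

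First I would record the correspondence between one-point extensions of the two-graph $A$ and subsets of $A$. Fix $a \in A$ and set $V_a = A \setminus \{a\}$. Given a one-point extension $(A,e)$, put $S = \{v \in V_a \mid [ave] = 1\}$; the $4$-vertex parity condition applied to $\{a,x,y,e\}$ forces $[xye] = [axy]_A + [x \in S] + [y \in S] \pmod 2$ for all $x,y \in V_a$ (here $[x \in S]$ is $1$ if $x \in S$ and $0$ otherwise). Conversely any $S \subseteq V_a$ defines an extension, the remaining parity conditions being inherited from those of $A$. Thus, for fixed $a$, the one-point extensions of $A$ are in bijection with subsets $S \subseteq V_a$.

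I would then put $K(A) = A \cup \{e_{a,S} \mid a \in A,\ S \subseteq V_a\}$, a finite two-graph since $A$ is finite, where $e_{a,S}$ realizes the extension above, and define the $3$-edges on triples meeting the new vertices by formulas (all sums mod $2$) expressed purely through $3$-edges of $A$ and the sets $S$, using the convention $[xyz]_A = 0$ whenever two of $x,y,z$ coincide:
\begin{itemize}
    \item $[\,x\,y\,e_{a,S}\,] = [axy]_A + [x \in S] + [y \in S]$ for $x,y \in A$;
    \item $[\,x\,e_{a_0,S_0}\,e_{a_1,S_1}\,] = [a_0 a_1 x]_A + [x \in S_0] + [x \in S_1]$ for $x \in A$;
    \item $[\,e_{a_0,S_0}\,e_{a_1,S_1}\,e_{a_2,S_2}\,] = [a_0 a_1 a_2]_A$.
\end{itemize}
Each formula is symmetric in the new vertices, so this defines a $3$-hypergraph. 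For an embedding $f : A \to B$ I would let $K(f)$ extend $f$ and send $e_{a,S} \mapsto e_{f(a),\,f(S)}$; since every formula above involves only $3$-edges of $A$ and membership in the $S$'s, all preserved by $f$, it is immediate that $K(f)$ is an embedding and that $K$ is a functor. Taking $\eta_A : A \hookrightarrow K(A)$ to be the inclusion, naturality is clear, and the one-point extension property of \Cref{def Ka functor}\ref{Ka ope} holds because $e_{a,S}$ realizes over $A$ the extension with data $S$ over any base $a$.

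The main obstacle, and the only real content, is checking that $K(A)$ is genuinely a two-graph, i.e.\ that every $4$-subset carries an even number of $3$-edges. The subsets with at most one new vertex reduce to the two-graph condition for $A$ and for the individual extensions. For two new vertices the required cancellation follows from the identity $[a_0 a_1 x]_A + [a_0 a_1 x']_A = [a_0 x x']_A + [a_1 x x']_A$ (the $4$-subset condition on $\{a_0,a_1,x,x'\}$ in $A$); for three new vertices one uses $[a_0 a_1 x]_A + [a_0 a_2 x]_A + [a_1 a_2 x]_A = [a_0 a_1 a_2]_A$ (the condition on $\{a_0,a_1,a_2,x\}$), which simultaneously shows that the three-new-vertex value is independent of the auxiliary vertex $x$ and equals the displayed formula; and for four new vertices the parity reduces to the condition on the four bases in $A$. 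The one place requiring care is verifying that all of this survives degenerate collisions among the bases $a_i$, which is handled uniformly by the convention $[xyz]_A = 0$ on repeated tuples.
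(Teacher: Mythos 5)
Your proposal is correct and is essentially the paper's own construction: your new vertices $e_{a,S}$ (one copy of each one-point extension for every base vertex $a$) and your three mod-$2$ formulas for the added hyperedges coincide exactly with the paper's $K(A)$, which builds the same object via the switching operators $\gamma_u$ and $\tau$ and free graph amalgamation over each base, then glues with the same cross-edge rules. The verification by parity-checking all $4$-subsets and the morphism action $e_{a,S} \mapsto e_{f(a), f(S)}$ also match the paper's argument, so the only difference is presentational.
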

\begin{proof}
    Let $A \in \mc{T}$. For each $u \in A$, let $T_u$ be a two-graph containing $A$ obtained by applying $\tau$ to a free amalgam over $\gamma_u(A)$ of all $\gamma_u((A, e))$ for $(A, e) \in \mc{E}_A$. Note that as $\tau$ is a left inverse of $\gamma_u$, each $T_u$ contains a copy $(A, e_u)$ of each $(A, e) \in \mc{E}_A$. We take the $T_u$ to intersect only in $A$, and let $T_A$ be the $3$-hypergraph union over $A$ of the $T_u$, $u \in A$. We define $K(A)$ by adding further $3$-hyperedges to $T_A$ as follows. 
    
    In the below, Latin letters denote elements of $A$ and Greek letters with subscripts denote extension vertices, so for example $\alpha_u$ denotes an extension vertex in $T_u \setminus A$. Distinct letters denote distinct vertices (so for example $\alpha_u \neq \beta_u$, and when we write $\alpha_u \beta_v w$ we assume $w \neq u, v$). We use \Cref{sqbrackets relation notn} for $3$-hyperedges of $T_A$ and to define $3$-hyperedges of $K(A)$, and addition is (mod $2$).

    We extend $T_A$ to $K(A)$ via the following rules:
    \begin{align*}
        [\alpha_u \beta_v u] &:= [\beta_v u v]; &
        [\alpha_u \beta_v w] &:= [uvw] + [\alpha_u uw] + [\beta_v vw];\\
        [\alpha_u \beta_u \gamma_v] &:= [\alpha_u \beta_u u]; &
        [\alpha_u \beta_v \gamma_w] &:= [uvw].
    \end{align*}
    Note that each $T_u$ is a substructure of $K(A)$; the above rules only add $3$-hyperedges including vertices from at least two distinct $T_u, T_v$. Also note that we always have $[\alpha_u \beta_u u] = 0$ by definition; we write the definition as above so as to clarify the below algebraic manipulations.

    We now check that $K(A)$ is a two-graph. Below, we use the following notation: for $|W| = 4$, we define $[[W]] = \sum_{U \in W^{(3)}} [U]$. We now check that for all $W \sub K(A)$, $|W| = 4$, we have $[[W]] \equiv 0 \Mod{2}$. If $W \sub T_u$ for some $u \in A$ then this is immediate, as $T_u$ is a two-graph and a substructure of $K(A)$. We check the remaining cases:
    \begin{align*}
    [[\alpha_u \beta_v u v]] &= 2[\beta_v u v] + 2[\alpha_u u v] \equiv 0 \Mod{2}; &
    [[\alpha_u \beta_v u x]] &= [[\beta_v u v x]] \equiv 0 \Mod{2};\\
    [[\alpha_u \beta_v x y]] &= [[uvxy]] \equiv 0 \Mod{2}; &
    [[\alpha_u \beta_u \gamma_v u]] &= 2[\alpha_u \beta_u u] + 2[\gamma_v u v] \equiv 0 \Mod{2};\\
    [[\alpha_u \beta_u \gamma_v v]] &= [[\alpha_u \beta_u u v]] \equiv 0 \Mod{2}; &
    [[\alpha_u \beta_u \gamma_v x]] &= [[\alpha_u \beta_u u x]] \equiv 0 \Mod{2};\\
    [[\alpha_u \beta_v \gamma_w u]] &= 2[uvw] + 2[\beta_v u v] + 2[\gamma_w u w] \equiv 0 \Mod{2}; &
    [[\alpha_u \beta_v \gamma_w x]] &= [[uvwx]] \equiv 0 \Mod{2};\\
    [[\alpha_u \beta_u \gamma_u \delta_v]] &= [[\alpha_u \beta_u \gamma_u u]] \equiv 0 \Mod{2};
    & [[\alpha_u \beta_u \gamma_v \delta_v]] &= 2[\alpha_u \beta_u u] + 2[\gamma_v \delta_v v] \equiv 0 \Mod{2};\\
    [[\alpha_u \beta_u \gamma_v \delta_w]] &= 2[\alpha_u \beta_u u] + 2[uvw] \equiv 0 \Mod{2}; &
    [[\alpha_u \beta_v \gamma_w \delta_x]] &= [[uvwx]] \equiv 0 \Mod{2}.
    \end{align*}

    Now let $f : A \to B$ be an embedding in $\mc{T}$. We define $K(f) \supseteq f$ as follows. Let $(A, e_u)$ be a labelled extension in $K(A)$. Let $W$ denote the free amalgam of $\gamma_u(A) \hookrightarrow \gamma_u(A, e_u)$ and $\gamma_u(f) : \gamma_u(A) \to \gamma_{f(u)}(B)$, which we take as extending $\gamma_{f(u)}(B)$ by an extension vertex $e'_{f(u)}$. We define $K(f)(e_u)$ to be the extension vertex $e'_{f(u)} \in K(B)$ of the labelled extension $(B, e'_{f(u)})$ isomorphic to $\tau(W)$.

    To show that $K$ respects composition, it suffices to show that $\gamma^{}_{f(u)}((B, e'_{f(u)})) = W$. Let $v \in B \setminus f(A)$. Then as $f(u)$ is isolated in $\gamma_{f(u)}(B)$, there is no edge $f(u)v$ in $\gamma_{f(u)}(B)$, and similarly there is no edge $ue_u$ in $\gamma_u(A, e_u)$. So in $W$ we have that $f(u)ve'_{f(u)}$ is an anticlique, and so $f(u)ve'_{f(u)}$ is not a $3$-hyperedge of $(B, e'_{f(u)}) \cong \tau(W)$. Therefore $\gamma^{}_{f(u)}((B, e'_{f(u)})) = W$ as required.
\end{proof}

\subsection{SWIR but non-universal automorphism group: \texorpdfstring{$n$-anticlique}{n-anticlique}-free oriented graphs} \label{ex: n-anticlique-free}

We now give a particularly straightforward example of a \Fr structure with a SWIR but without a universal automorphism group. Let $n \geq 3$. We let $I_n$ denote an anticlique of size $n$. Let $\mc{C}$ be the \emph{class of finite $n$-anticlique-free oriented graphs}: that is, the class of finite oriented graphs $A$ such that $I_n$ does not embed into $A$ as a substructure (as an induced subgraph, in graph theory terminology). For $A, B, C \in \mc{C}$ with $B \cap C = A$, we define $B \otimes_A C$ to be the structure on $B \cup C$ extending $B, C$ by adding directed edges $(b, c)$ for each $b \in B \setminus A$, $c \in C \setminus A$. It is straightforward to check that $\otimes$ is a SAO. Let $M = \FrLim(\mc{C})$. We call $M$ the \emph{generic $n$-anticlique-free oriented graph}.

We now show that $\Aut(M)$ is not universal (and therefore $M$ has no \Ka functor). We will show that $\Aut(M)$ does not contain $n-1$ pairwise-commuting involutions -- it is straightforward to produce $A \in \mc{C}$ such that $\Aut(A)$ contains such a set of involutions (for example, take $A = \{a_{i, j} \mid i < 2, j < n-1\}$ with $a_{i, j} \ra a_{i', j'}$ for all $i, i'$ and $j < j'$, and the involutions $\tau_j : a_{0, j} \leftrightarrow a_{1, j}$ for $j < n-1$), and thus this suffices to show that $\Aut(M)$ is not universal. (Note that when we use the term involution, we exclude the identity.)

\begin{lem} \label{l: tau fixed pts}
    For each $\tau \in \Aut(M)$, let $F_\tau$ denote the set of fixed points of $\tau$. Let $\tau \in \Aut(M)$ be an involution. Then $F_\tau$ is $I_{n-2}$-free.
\end{lem}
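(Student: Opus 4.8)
The plan is to argue by contradiction: suppose $F_\tau$ contains an anticlique $\{x_1, \ldots, x_{n-2}\}$, and manufacture an anticlique of size $n$ inside $M$, contradicting the fact that $M = \FrLim(\mc{C})$ is $I_n$-free. The engine of the argument is a pair of elementary observations about a non-identity involution $\tau$ of an oriented graph. First, if $\tau(z) \neq z$ then $z$ and $\tau(z)$ are non-adjacent: an edge $z \ra \tau(z)$ would be sent by $\tau$ to $\tau(z) \ra z$, and both cannot hold in an oriented graph. Second, if $y \in F_\tau$ then $z$ is adjacent to $y$ with a given orientation precisely when $\tau(z)$ is, since $\tau$ fixes $y$.

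First I would use $\tau \neq \id$ to fix a pair $v$ and $w := \tau(v)$ with $w \neq v$. By the observations, $v$ and $w$ are non-adjacent and have identical relations to each $x_i$. I would also record that $v$ (hence $w$) must be adjacent to some $x_i$, since otherwise $\{x_1, \ldots, x_{n-2}, v, w\}$ would already be an $I_n$ in $M$.

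The crux is to produce a \emph{moved} point that avoids the anticlique entirely. I would invoke the extension property of $M$ to realise a vertex $u$ over $\{x_1, \ldots, x_{n-2}, v, w\}$ with $u \ra v$, with $u$ non-adjacent to $w$, and with $u$ non-adjacent to every $x_i$. Such a $u$ exists provided the corresponding one-point extension lies in $\mc{C}$, i.e.\ is $I_n$-free. This is the main (though short) computation: since the extension has $n+1$ vertices, every candidate $n$-element anticlique omits exactly one vertex, and one checks each carries an edge. Any subset containing both $u$ and $v$ contains the edge $u \ra v$ (this covers omitting $w$ or omitting some $x_j$); the subset omitting $v$ equals $\{x_1, \ldots, x_{n-2}, w, u\}$ and contains an edge from $w$ to some $x_i$; and the subset omitting $u$ is a substructure of $M$, hence $I_n$-free.

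Finally I would verify that $u$ is moved by $\tau$: if $\tau(u) = u$, then applying $\tau$ to $u \ra v$ yields $u \ra w$, contradicting the non-adjacency of $u$ and $w$. Hence by the first observation $u$ and $\tau(u)$ are non-adjacent; by the second observation $\tau(u)$ is non-adjacent to every (fixed) $x_i$; and $u$ is non-adjacent to every $x_i$ by construction. Therefore $\{x_1, \ldots, x_{n-2}, u, \tau(u)\}$ is an anticlique of size $n$ in $M$, the desired contradiction. I expect the only genuine obstacle to be the check that the one-point extension realising $u$ remains $I_n$-free; everything else is a direct application of $\tau$-invariance together with the orientation constraint.
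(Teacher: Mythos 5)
Your proof is correct and follows essentially the same route as the paper's: argue by contradiction, use the extension property to realise a new vertex with prescribed relations to the anticlique and to a moved pair $\{v,\tau(v)\}$, deduce that the new vertex is itself moved, and obtain an $I_n$ from it, its $\tau$-image, and the anticlique. The only difference is cosmetic: the paper chooses the new vertex $e$ with $e \ra v$ and $\tau(v) \ra e$, which makes both the $I_n$-freeness of the one-point extension and the fact that $e$ is moved immediate, whereas your choice ($u \ra v$, $u$ non-adjacent to $w$) requires the extra observation that $w$ is adjacent to some $x_i$ --- which you correctly supply.
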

\begin{proof}
    Suppose not, and let $U \sub F_\tau$, $U \cong I_{n-2}$. Let $b \in M$ with $\tau(b) \neq b$. By the extension property of $M$, there exists $e \in M \setminus (U \cup \{b, \tau(b)\})$ with $e \ra b, \tau(b) \ra e$ and no edge between $e$ and $U$. So $\tau(e) \neq e$, and as $\tau$ fixes $U$, we have that there is no edge between $\tau(e)$ and $U$. As $\tau^2 = \id_M$, there is no edge between $e$ and $\tau(e)$. But then $e, \tau(e), U$ form an anticlique of size $n$ -- contradiction.
\end{proof}
\begin{lem} \label{l: anticlique in cover}
    Let $F_0, \cdots, F_{k-1}$ be a finite cover of $M$. Let $r < n$. Then some $F_j$ contains $I_r$.
\end{lem}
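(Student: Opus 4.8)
The plan is to reduce the whole statement to a single finite embedding problem and then invoke a Ramsey-type fact. Since $M = \FrLim(\mc{C})$, every finite $I_n$-free oriented graph lies in $\Age(M)$ and hence embeds into $M$; moreover the forbidden configuration $I_n$ is a pure anticlique, so \emph{any} orientation of \emph{any} finite graph $H$ whose underlying graph has no independent set of size $n$ yields a member of $\mc{C}$. Thus it suffices to produce a single finite graph $H$ with $\alpha(H) \le n-1$ having the following partition property: for every colouring $c : V(H) \to \{0, \dots, k-1\}$, some colour class contains an independent set of size $r$. Indeed, fix such an $H$, orient its edges arbitrarily, choose an embedding $H \hookrightarrow M$, and define $c(v) = \min\{ j : v \in F_j \}$ on the image (this is well-defined as the $F_j$ cover $M$). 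The partition property then produces an independent $r$-set lying inside a single $F_j$, i.e.\ a copy of $I_r$ contained in some $F_j$, as required.

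The existence of such an $H$ is precisely the assertion that a suitable \emph{vertex Folkman graph} exists. Passing to the complement $G = \ov{H}$, an independent set of $H$ becomes a clique of $G$, so the two requirements translate into $\omega(G) \le n-1$ together with the vertex-partition arrow $G \to (K_r)_k$ (every $k$-colouring of $V(G)$ admits a monochromatic $K_r$). It is classical that, for $k$ colours, a $K_n$-free graph $G$ with this arrow property exists \emph{precisely} when $n > r$. This matches our hypothesis $r < n$ exactly, and the inequality is what supplies the crucial room: one needs $G$ to contain $K_r$ (so $\omega(G) \ge r$) while keeping $\omega(G) < n$, and it is this gap that makes the construction possible.

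The main work, and the only genuine obstacle, is the construction of $H$ (equivalently $G$); the rest is bookkeeping. I expect the delicate point to be maintaining the bound $\alpha(H) \le n-1$ throughout, and this is exactly where $r < n$ must be used. It is worth recording why the naive candidates fail, as this pinpoints the difficulty: a complete multipartite graph has clique number equal to its number of parts, so keeping $\omega \le n-1$ forces too few parts for a pigeonhole argument; and a tree of iterated anti-neighbourhoods fails because one may colour by depth, avoiding any monochromatic anticlique when $r \le k$. In each attempt the independence number and the number of colours trade off against one another, so one cannot simultaneously force a monochromatic $I_r$ and keep $\alpha(H) < n$ without a genuine Folkman-type construction. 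Accordingly I would either cite the existence of vertex Folkman graphs directly, or establish it self-containedly by induction on the number of colours $k$ (the base case $k=1$ being $H = I_r$, and the inductive step amalgamating copies of the graph for $k-1$ colours while controlling the independence number so that it never reaches $n$).
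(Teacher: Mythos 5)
Your proposal is correct and rests on exactly the same key ingredient as the paper: Folkman's theorem on vertex Folkman graphs ($K_n$-free graphs $G$ with $G \to (K_r)_k$ for $r < n$), applied after passing to the complement of the underlying unoriented graph. The only cosmetic difference is that the paper invokes the vertex-Ramsey property of the infinite generic $K_n$-free graph (reducing first to disjoint $F_j$), whereas you embed a single finite Folkman graph into $M$ and colour by $\min\{j : v \in F_j\}$; both are the same argument.
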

\begin{proof}
    It suffices to consider the case where the $F_i$ are disjoint. The $F_i$ then give a vertex-colouring of $M$. As the complement of the unoriented graph reduct of $M$ is the generic $K_n$-free graph, the lemma then follows from the vertex-Ramsey property for the generic $K_n$-free graph, which gives that any vertex-colouring of the generic $K_n$-free graph contains a monochromatic copy of $K_r$. (The vertex-Ramsey property for the generic $K_n$-free graph is Folkman's theorem, originally in \cite{Fol70}; a clear exposition following \cite{Kom86} is \cite[Theorem 12.3]{Pro13}. Alternatively, we may use the \NR theorem.)
\end{proof}

\begin{prop} \label{p: I_n-free not univ}
    $\Aut(M)$ is not universal.
\end{prop}

\begin{proof}
    As noted above, it suffices to show that $\Aut(M)$ does not contain $n - 1$ pairwise-commuting involutions. 
    
    Suppose for a contradiction that $\Aut(M)$ contains $n - 1$ pairwise-commuting involutions $\{\tau_i \mid i < n - 1\}$. Then for distinct $i, j$ each $\tau_i \tau_j$ is also an involution. By \Cref{l: tau fixed pts}, each $F_{\tau_i}$ and each $F_{\tau_i \tau_j}$ is $I_{n-2}$-free, so by \Cref{l: anticlique in cover} the $F_{\tau_i}$ and $F_{\tau_i \tau_j}$ do not cover $M$, and hence there is $v \in M$ with $v, \tau_0(v), \cdots, \tau_{n-2}(v)$ distinct. As each $\tau_i$ and $\tau_i \tau_j$ is an involution, we have that $v, \tau_0(v), \cdots, \tau_{n-2}(v)$ is an $n$-vertex anticlique -- contradiction.
\end{proof}

\begin{rem} \label{r: I_n-free BK strictly weaker}
    As remarked above \Cref{l: tau fixed pts}, there is $A \fin M$ such that $\Aut(A)$ contains $n-1$ pairwise-commuting involutions. By \Cref{p: I_n-free not univ}, the inclusion $A \hookrightarrow M$ is not $\circ$-extensive, but each automorphism of $A$ does extend to an automorphism of $M$, as $A$ is finite and $M$ is ultrahomogeneous. This example shows that our notion of $\circ$-extensive embedding is distinct from the notion of ``extensible embedding" of Bart\v{o}s and Kubi\'{s} -- see \Cref{r: BK}.
\end{rem}

\section{Independence relations, ordered structures and tournament structures} \label{s: order and tourn exps}

In Sections \ref{s: strs with SIR}--\ref{s: SIR tourn free superpos} we now give positive results for \Cref{qn extensible}, giving the existence of \Ka functors for a range of \Fr structures.

\subsection{Structures with a SIR} \label{s: strs with SIR}

Recall that a SIR is defined to be a symmetric SWIR. The main theorem of \cite{Mul16}, Theorem 4.9, states that any \Fr structure with a local SIR has a universal automorphism group. In this subsection, we show that this result follows quickly using the machinery of standard amalgamation operators and \Ka functors. (In \cite[Lemmas 4.4, 4.5]{Mul16} one already sees the idea of canonical amalgamation -- the novelty here is to observe that the general \Ka functor framework enables a quick proof.)

\begin{prop}[see {\cite[Theorem 4.9]{Mul16}}] \label{p: quick Mue}
    Let $M$ be a \Fr structure with strong amalgamation and a local SIR. Then $M$ has a \Ka functor, and hence has $\circ$-extensible $\omega$-age and a universal automorphism group.
\end{prop}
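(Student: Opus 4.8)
The plan is to construct a \Ka functor directly from the standard amalgamation operator attached to the SIR, exploiting that a SIR is symmetric. First I would invoke the local analogue of \Cref{SWIR induces SAO} to obtain from the local SIR a local SAO $\otimes$ on $M$; since the SIR is symmetric, $\otimes$ is a symmetric SAO, so by the discussion of amalgamation along linear orders the amalgam $\bigotimes_A \mc{Z}$ of a countable family $\mc{Z}$ of embeddings with common domain $A$ is independent of the chosen linear order, and hence canonically defined up to a unique diagram-respecting isomorphism. Moreover, by \Cref{M strong implies canon strong} the hypothesis of strong amalgamation guarantees that $\otimes$ is a strong amalgamation operator. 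For each $A \in \mc{A}(M)$ I would then set $K(A) := \bigotimes_A \mc{Z}_A$, where $\mc{Z}_A = (\zeta_e : A \to E_e)_{e \in \mc{E}_A}$ ranges over representatives of the (countably many) isomorphism classes of one-point extensions of $A$, and let $\eta_A : A \to K(A)$ be the canonical embedding of the base. As $K(A)$ is a direct limit of members of $\Age(M)$, it lies in $\mc{A}_\omega(M)$.

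The two defining properties of the associated natural transformation are then quick. For the one-point extension property \ref{Ka ope}, any one-point extension $\zeta : A \to B$ is isomorphic over $A$ to some factor $E_e$ of the amalgam, and the factor embeddings of $\bigotimes_A \mc{Z}_A$ supply an embedding $f : B \to K(A)$ with $f \circ \zeta = \eta_A$. Naturality \ref{Ka nat trans} will hold by construction of $K$ on morphisms, described next.

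The heart of the argument is defining $K$ on an embedding $f : A \to B$ and verifying functoriality. For $e \in \mc{E}_A$ put $B_e := B \otimes_A E_e$. By strong amalgamation $B_e$ is a one-point extension of $B$ with new vertex $\bar e$, and the assignment $e \mapsto B_e$ is injective on isomorphism classes: if $\phi : B_e \to B_{e'}$ fixes $B$ pointwise then, since $E_e \cong \langle A, \bar e\rangle$ and $E_{e'} \cong \langle A, \bar e'\rangle$ sit inside $B_e, B_{e'}$ as the substructures generated by $A$ together with the new vertex, $\phi$ restricts to an isomorphism $E_e \to E_{e'}$ over $A$, forcing $e = e'$. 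Using transitivity and associativity of $\otimes$ one rewrites $B \otimes_A K(A) = B \otimes_A \bigotimes_A \mc{Z}_A \cong \bigotimes_B (B_e)_{e \in \mc{E}_A}$, which by the injectivity just noted is a sub-amalgam of $K(B) = \bigotimes_B \mc{Z}_B$. I would then define $K(f)$ as the composite of the second-factor embedding $K(A) \to B \otimes_A K(A)$ with this isomorphism and the inclusion into $K(B)$; tracing the base through the amalgam gives $K(f) \circ \eta_A = \eta_B \circ f$, which secures both \ref{Ka nat trans} and the correct behaviour on $A$.

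Finally, functoriality: $K(\id_A)$ is the identity because $A \otimes_A E_e \cong E_e$ by \Cref{SAO base triviality}, while $K(g \circ f) = K(g) \circ K(f)$ reduces to the statement that re-basing one-point extensions of $A$ first along $f : A \to B$ and then along $g : B \to C$ agrees with re-basing directly along $g \circ f$. I expect this coherence check -- that the transitivity and associativity isomorphisms of $\otimes$ compose correctly -- to be the main obstacle, although it is exactly the kind of diagram-chase already packaged by \Cref{SAO mon and assoc}. Once $K$ is shown to be a \Ka functor, \Cref{Ka implies extensible} yields extensibility of $\Ao(M)$, and extensibility gives universality of $\Aut(M)$, completing the proof.
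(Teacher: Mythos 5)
Your proposal is correct and follows essentially the same route as the paper: take the symmetric local SAO induced by the SIR, set $K(A) = \bigotimes_A \mc{E}_A$, and define $K(f)$ by re-basing each one-point extension $E$ to $E \otimes_A B$ via transitivity and symmetry of $\otimes$ — your map $e \mapsto B_e$ is exactly the paper's $\dot\theta$. The only caveat is that $B \otimes_A K(A)$ is not literally defined (the SAO lives on finitely generated structures), so, as you anticipate, one must work with finite sub-amalgams and pass to the direct limit via coherence of the induced embeddings, which is precisely how the paper phrases the construction.
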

\begin{proof}
    Let $\otimes$ be the symmetric local SAO for $M$ induced by the local SIR on $M$. For each $A \in \Age(M)$, let $K(A) = \bigotimes_A \mc{E}_A$. (As $\otimes$ is symmetric, we do not need to specify a linear order along which to perform the amalgam.)
    
    Let $\theta : A \to B$ be an embedding in $\Age(M)$. We first define a map $\dot\theta : \mc{E}_A \to \mc{E}_B$: for $\zeta : A \to E$ in $\mc{E}_A$, let $\dot\theta(\zeta) : B \to E'$ be the unique element of $\mc{E}_B$ with an isomorphism $E' \to E \otimes_A B$ making the below diagram commute:
    \[\begin{tikzcd}[sep = small, cramped]
	&&& {E \otimes_A B} \\
	E && {E'} \\
	\\
	A && B
	\arrow["(\zeta \otimes \theta)_E", curve={height=-14pt}, from=2-1, to=1-4]
	\arrow["", from=2-1, to=2-3]
	\arrow["\cong", from=2-3, to=1-4]
	\arrow["\zeta^{}"{pos=0.4}, from=4-1, to=2-1]
	\arrow["\theta"', from=4-1, to=4-3]
	\arrow["(\zeta \otimes \theta)_B"', curve={height=14pt}, from=4-3, to=1-4]
	\arrow["{\dot\theta(\zeta)}"'{pos=0.43}, from=4-3, to=2-3]
    \end{tikzcd}\]
    
    To define an embedding $K(\theta) : K(A) \to K(B)$, we define coherent embeddings $\eps_{\mc{F}} : \bigotimes_A \mc{F} \to K(B)$ for each $\mc{F} \fin \mc{E}_A$, and then induce $K(\theta)$ via the universal property of the direct limit.

    Let $\mc{F} = \{\zeta_i : A \to E_i \mid i < n\} \fin \mc{E}_A$. By (Tr) we have \[(E_0 \otimes_A B) \otimes_B \cdots \otimes_B (E_{n-1} \otimes_A B) \cong (E_0 \otimes_A \cdots \otimes_A E_{n-1}) \otimes_A B,\] so there is a unique diagram-respecting embedding $\bigotimes_A \mc{F} \to \bigotimes_B \dot\theta(\mc{F})$ (here we use symmetry of $\otimes$), and we define $\eps_{\mc{F}}$ by composing this with the direct-limit embedding $\bigotimes_B \dot\theta(\mc{F}) \to K(B)$. Uniqueness of the above diagram-respecting embedding gives coherence of the family $\{\eps_{\mc{F}} \mid \mc{F} \fin \mc{E}_A\}$, so we obtain an embedding $K(\theta) : K(A) \to K(B)$ by the universal property of the direct limit. It is then straightforward to check that $K$ is a \Ka functor, and we leave this to the reader.
\end{proof}

\subsection{Free superpositions}

\begin{defn}[{\cite[Definition 3.21]{Bod15}}] \label{free sup}
    Let $\mc{C}_0$, $\mc{C}_1$ be relational \Fr classes in languages $\mc{L}_0$, $\mc{L}_1$ with disjoint signatures, and suppose that $\mc{C}_0$, $\mc{C}_1$ have strong amalgamation. Let $\mc{L} = \mc{L}_0 \cup \mc{L}_1$. We define the \emph{free superposition} $\mc{C}_0 \ast \mc{C}_1$ to be the class of finite $\mc{L}$-structures $A$ such that for $i = 0, 1$, the $\mc{L}_i$-reduct $A|_{\mc{L}_i}$ is in $\mc{C}_i$.

    Let $M_i = \FrLim(\mc{C}_i)$ for $i = 0, 1$. It is straightforward to see that $\mc{C}_0 \ast \mc{C}_1$ also has strong amalgamation, and we denote its \Fr limit by $M_0 \ast M_1$ and call $M_0 \ast M_1$ the \emph{free superposition} of $M_0$ and $M_1$. It is also straightforward to see that $(M_0 \ast M_1)|_{\mc{L}_i} \cong M_i$ for $i = 0, 1$, and so we will assume $(M_0 \ast M_1)|_{\mc{L}_i} = M_i$ when working with free superpositions.
\end{defn}

\begin{defn} \label{d: gen order exp}
    Let $\mc{C}$ be a relational \Fr class with strong amalgamation. Let $\mc{LO}$ denote the class of finite linear orders, and let $\mc{T}$ denote the class of finite tournaments. We call the \Fr limit of $\mc{C} \ast \mc{LO}$ the \emph{generic order expansion} of $\FrLim(\mc{C})$, and we call the \Fr limit of $\mc{C} \ast \mc{T}$ the \emph{generic tournament expansion} of $\FrLim(\mc{C})$.
\end{defn}

\subsection{Ordered structures with a SWIR} \label{s: ordered strs with SWIR}

In this section we prove the following:

\begin{thm} \label{t: order exp}
    Let $M$ be a linearly ordered \Fr structure with strong amalgamation and a local SWIR. Then $M$ has a \Ka functor, and hence its $\omega$-age is $\circ$-extensible.
\end{thm}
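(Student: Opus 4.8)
The plan is to construct a \Ka functor $K$ for $M$ and then invoke \Cref{Ka implies extensible}. The starting point is the same as in the symmetric case \Cref{p: quick Mue}: the local SWIR on $M$ induces a local SAO $\otimes$ by \Cref{SWIR induces SAO}, and we wish to set $K(A)$ to be an amalgam of all one-point extensions of $A$. The essential new difficulty is that $\otimes$ is no longer symmetric, so the amalgam $\bigotimes^{\lambda}_A$ of one-point extensions now depends on the linear order $\lambda$ along which it is formed (see \Cref{d: n-term amalg} and the surrounding lemmas); thus we must choose, functorially in $A$, a linear order on the set of extensions being amalgamated. The two features of the ordered setting that make this possible are that a finite linear order is \emph{rigid}, so $\Aut(A)$ is trivial and there are no internal symmetries obstructing an asymmetric amalgam, and that every embedding preserves the canonical increasing enumeration of $A$.

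For the construction I would follow the labelling idea of \Cref{ex: Ka for tourn}. Each one-point extension $\zeta : A \to E$ determines an \emph{order-cut} of $A$ (which elements of $A$ lie below and above the new vertex) together with the quantifier-free type of the new vertex over $A$ in the remaining relations. I would place in $K(A)$ labelled copies of each one-point extension, the label recording this type as an enumeration of the relevant data sitting \emph{inside} $A$ (generalising the enumerated in-neighbourhood of \Cref{ex: Ka for tourn}). I then linearly order the labelled extension vertices by the following rule: compare first by order-cut, using the linear order of $M$; and within a common cut, compare the labels lexicographically using the order on $A$. Writing $\lambda_A$ for this order, set $K(A) = \bigotimes^{\lambda_A}_A$ of the labelled extensions. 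On an embedding $\theta : A \to B$, define $K(\theta) \supseteq \theta$ exactly as in \Cref{p: quick Mue}, sending each labelled extension to the $\otimes$-canonical pushforward $\dot\theta(\zeta)$ relabelled by $\theta$; the embedding $K(\theta)$ is then assembled from the induced maps $\bigotimes^{\lambda_A}_A \mc{F} \to \bigotimes^{\lambda_B}_B \dot\theta(\mc{F})$ via the universal property of the direct limit.

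The verification then splits into two parts. The non-order structure between extension vertices is handled verbatim as in \Cref{p: quick Mue}: by transitivity of $\otimes$ one has $E_{\dot\theta\zeta} \otimes_B E_{\dot\theta\zeta'} \cong (E_\zeta \otimes_A E_{\zeta'}) \otimes_A B$, so the pairwise standard amalgams are preserved by $K(\theta)$, and strong amalgamation (via \Cref{M strong implies canon strong}) guarantees the extension vertices remain distinct. The order structure is preserved provided $\lambda$ is functorial, i.e.\ that $\dot\theta$ is order-preserving: order-cuts are preserved because $\theta$ is an order-embedding (and, since the $\otimes$-canonical placement is leftmost in each cut region, extensions sharing a cut of $A$ stay in a common cut of $B$), while the within-cut comparison is preserved because, once a label is carried to $\theta(\text{label}) \subseteq \theta(A)$, it reads only the order on $\theta(A)$, which $\theta$ respects. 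Granting this, $K(\theta)$ is an embedding, $K$ respects composition, the inclusions $\eta_A : A \hookrightarrow K(A)$ form the required natural transformation, and property \ref{Ka ope} of \Cref{def Ka functor} holds since every one-point extension occurs as a labelled copy.

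The main obstacle is precisely the functoriality of the within-cut order. The danger is that the $\otimes$-canonical pushforward $\dot\theta(\zeta)$ endows the new vertex with relations to the new elements $B \setminus \theta(A)$ that depend on $\zeta$; were the within-cut comparison to read the full type over $B$, these extra relations could fall \emph{before} the distinguishing data in the increasing enumeration of $B$ and so invert the comparison. Encoding the type as a label carried \emph{inside} $A$ — mirroring the fact that in \Cref{ex: Ka for tourn} the in-neighbourhood of the pushforward stays within $\theta(A)$ — is exactly what forces the comparison to reference only $\theta(A)$ and thereby sidesteps this interference. Pinning down such a pushforward-stable label for an arbitrary SWIR, using strong amalgamation and stationarity, is the delicate step; once it is in place the remaining checks are routine manipulations of the SAO diagrams as in \Cref{p: quick Mue}.
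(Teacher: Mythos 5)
Your architecture coincides with the paper's: induce a local SAO $\otimes$ from the SWIR, amalgamate labelled copies of the one-point extensions of $A$ along a functorially chosen linear order, and assemble $K(\theta)$ from the canonical pushforwards $\dot\theta$ via (Tr) and the universal property of the direct limit. However, the step you yourself flag as ``the delicate step'' --- producing a pushforward-stable label together with an invariant linear order on the labelled extensions --- is precisely the content of the proof, and the two concrete suggestions you offer for it do not work as stated. First, for a general language the quantifier-free type of the new vertex over $A$ is not ``data sitting inside $A$'': unlike the in-neighbourhood in \Cref{ex: Ka for tourn}, it is not canonically a tuple of elements of $A$, so ``compare the labels lexicographically using the order on $A$'' is not yet a definition. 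Second, taking the order-cut as the primary comparison key requires that two extensions sharing a cut of $A$ have pushforwards sharing a cut of $B$; your parenthetical justification (``the $\otimes$-canonical placement is leftmost in each cut region'') is not something an arbitrary SWIR provides --- the position of $\theta_\zeta(e)$ relative to the new points of $B$ inside the cut is dictated by stationarity from the full type of $e$ over $A$, not just from its cut, so you would have to rule out that two extensions tied on the first key in $A$ become separated by it in $B$ in an order conflicting with the second key.

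The paper's resolution (\Cref{str order}) is to label an extension $\zeta : A \to E$ by a triple $(\zeta, S, e)$ where $S \fin A$ is a \emph{support}, meaning $E \cong \langle \zeta(S), e \rangle \otimes_{\langle S \rangle} A$; every extension admits at least the support $S = A$ by \Cref{SAO base triviality}, and each valid pair $(S, e)$ contributes a separate labelled copy (the ``multiple copies'' trick). The structural order $<_{\sigma_A}$ compares first the supports $S, S'$ in the power-set order induced by $(A, <)$, and then the abstract structures $\langle S, e \rangle$, $\langle S', e' \rangle$ via a fixed enumeration of terms and the relational order of \Cref{rel order}; it never references the position of $e$ inside any amalgam, which is what sidesteps the interference you worry about. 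Stability under $\dot\theta$ is then exactly (Tr): $E \otimes_A B \cong \langle \zeta(S), e \rangle \otimes_{\langle S \rangle} B$, so the pushforward has support $\theta(S)$ carrying the identical structure on $\langle \theta(S), \theta_\zeta(e) \rangle$, and since $\theta$ preserves $<$ it preserves both comparison keys. With your label replaced by this one, the remainder of your argument goes through as you describe.
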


Note that here we do not assume that $M$ is a generic order expansion of another \Fr structure -- we only assume that $M$ has a linear order. For an example of an ordered \Fr structure which is not a generic order expansion, see \Cref{ex: ordered SWIR strs}\ref{LO extending PO} below.

\begin{eg} \label{ex: ordered SWIR strs}
    It is straightforward to see that the following structures are ordered structures with a local SWIR, and so by \Cref{t: order exp} they have \Ka functors:
    \begin{enumerate}[label=(\roman*)]
        \item the generic ordered graph;
        \item the generic ordered rational Urysohn space;
        \item the generic $n$-linear order (the free superposition of $n$ instances of $(\Q, <)$);
        \item \label{LO extending PO} the \Fr limit of the class of finite linearly ordered posets where the linear order extends the partial order.
    \end{enumerate}
\end{eg}

We give several definitions needed for the proof of \Cref{t: order exp}.

\begin{defn} \label{lex order}
    Let $V$ be a set. Let $V^{\leq \omega}$ denote the set of sequences of elements of $V$ with sequence length $\leq \omega$. Let $V^{< \omega}$ denote the set of finite sequences of elements of $V$. Let $<$ be a linear order on $V$. We define the \emph{lexicographic order} $\prec$ on $V^{\leq \omega}$ induced by $<$ as follows. For distinct $\bar{a}, \bar{a}' \in V^{\leq \omega}$:
    \begin{itemize}
        \item if $|\bar{a}| < |\bar{a}'|$, define $\bar{a} \prec \bar{a}'$;
        \item in the case $|\bar{a}| = |\bar{a}'|$, letting $i$ be least such that $a^{}_i \neq a'_i$, if $a^{}_i < a'_i$ define $\bar{a} \prec \bar{a}'$.
    \end{itemize}

    We also write $(V^{< \omega}, \prec)$ for the suborder of $(V^{\leq \omega}, \prec)$ induced on $V^{< \omega}$. It is straightforward to check that if $(V, <)$ is a well-order, then so is $(V^{< \omega}, \prec)$. (Note that $(V^{\leq \omega}, \prec)$ is not a well-order in the non-trivial case $|V| \geq 2$.)
\end{defn}

\begin{defn}
    Let $(V, <)$ be a linear order. In the case that $(V, <)$ is a well-order, we define the \emph{power-set order} $<_\set$ on $2^V$ induced by $<$ as follows: for $f, f' \in 2^V$, define $f <_\set f'$ if $f(\min\{v \in V \mid f(v) \neq f'(v)\}) = 0$, where the minimum is taken in the order $<$ on $V$. Note that $<_\set$ is a linear order, but need not be a well-order in general.

    Let $\Fin(V) = \{f \in 2^V \mid f(v) = 1 \text{ for only finitely many } v \in V\}$. In general, even if $(V, <)$ is not a well-order, we may define $<_\set$ on $\Fin(V)$ as above. ($\Fin(V)$ corresponds to the set of finite subsets of $V$, and we pass between subsets and characteristic functions without comment.)
\end{defn}

\begin{defn} \label{rel order}
    Let $(W, <)$ be a well-order, and let $\prec$ be the lexicographic order on $W^{< \omega}$ induced by $<$. As observed above, we have that $\prec$ is a well-order. Let $\prec_\set$ be the power-set order on $2^{W^{< \omega}}$ induced by $\prec$.
    
    Let $\mc{L} = \{R_i \mid i < I\}$ be a first-order relational language with $I \leq \omega$, and let $\RelStr(W)$ be the set of $\mc{L}$-structures on $W$, viewed as a subset of $\prod_{i < I} 2^{W^{\ar(R_i)}} \sub {(2^{W^{< \omega}})}^{\leq \omega}$. Let $\ll$ be the lexicographic order on ${(2^{W^{< \omega}})}^{\leq \omega}$ induced by $\prec_\set$. We define the \emph{relational order} $<_\rel$ on $\RelStr(W)$ to be the suborder of $\ll$ induced on $\RelStr(W)$.
\end{defn}

\begin{defn} \label{str order}
    Let $\mc{L}$ be a first-order language containing a binary relation symbol $\leq$, where $\mc{L}$ may also include other relation, function and constant symbols. We call an $\mc{L}$-structure $M$ an \emph{ordered $\mc{L}$-structure} if $\leq^M$ is a linear order.

    Let $M$ be an ordered \Fr $\mc{L}$-structure with strong amalgamation and a local SWIR $\ind$. Let $\otimes$ be the local SAO induced by $\ind$. For each $A \in \mc{A}(M)$, define a countable set $\mc{F}_A$ of labelled one-point extensions as follows: for each $\zeta : A \to E$ in $\mc{E}_A$, each $S \fin A$ and each $e \in E \setminus \zeta(A)$ such that $E \cong \langle \zeta(S), e \rangle \otimes_{\langle S \rangle} A$, where the isomorphism makes the below diagram commute, put the labelled extension $(\zeta, S, e)$ in $\mc{F}_A$.
    \[\begin{tikzcd}[sep=small, cramped]
	&&& {\langle \zeta(S), e \rangle \otimes_{\langle S \rangle} A} \\
	{\langle \zeta(S), e \rangle} && E \\
	\\
	S && A
	\arrow[curve={height=-5pt}, from=2-1, to=1-4]
	\arrow[hook, from=2-1, to=2-3]
	\arrow["\cong"'{pos=0.3}, from=2-3, to=1-4]
	\arrow["\zeta", from=4-1, to=2-1, yshift=0.1em]
	\arrow[hook, from=4-1, to=4-3]
	\arrow[curve={height=5pt}, from=4-3, to=1-4]
	\arrow["\zeta"', from=4-3, to=2-3, shorten=0.11em]
    \end{tikzcd}\]
    For each $A \in \mc{A}(M)$ we define a linear order $<_{\sigma_A}$ on $\mc{F}_A$, which we call the \emph{structural order}. First, let $x_0, x_1, \cdots$ be an enumeration of the variables of $\mc{L}$, and let $t_0, t_1, \cdots$ be an enumeration of the terms of $\mc{L}$. We will use the same enumerations of variables and terms when defining $<_{\sigma_A}$ for each $A \in \mc{A}(M)$. Let $(\zeta, S, e), (\zeta', S', e') \in \mc{F}_A$ be distinct, and let $E = \cod(\zeta)$, $E' = \cod(\zeta')$. Compare $(\zeta, S, e), (\zeta', S', e')$ in $<_{\sigma_A}$ as follows. 
    \begin{itemize}
        \item First compare $S$, $S' \in \Fin(A)$ using the order $<_\set$ induced by the order $<$ of the structure $A$.
        \item Next, if $S = S'$, let $n = |S|$ and let $\bar{s}$ be the enumeration of $S$ in strictly increasing $<$-order. Let $V$ be the set of terms of $\mc{L}$ with variables amongst $x_0, \cdots, x_n$. Let $C = \langle \zeta(S), e \rangle \sub E$ and $C' = \langle \zeta'(S), e' \rangle \sub E'$. Define an equivalence relation $\sim$ on $V$ by $t(x_0, \cdots, x_n) \sim u(x_0, \cdots, x_n)$ if $t^C(\zeta(\bar{s}), e) = u^C(\zeta(\bar{s}), e)$, and define $\sim'$ similarly (with $\zeta'$, $e'$ and $C'$). The enumeration of the set of terms of $\mc{L}$ induces an enumeration $v_0(\bar{x}), v_1(\bar{x}), \cdots$ of $V$, and hence using \Cref{rel order} we may compare $\sim$, $\sim'$ in the relation order $<_\rel$ induced by the enumeration on $V$.
        \item Finally, if $\sim$, $\sim'$ are equal, then letting $\tld{\mc{L}}$ be the reduct of $\mc{L}$ to its function and constant symbols, we have that the map $\zeta(\bar{s}) \mapsto \zeta'(\bar{s})$, $e \mapsto e'$ extends to an $\tld{\mc{L}}$-isomorphism $C|_{\tld{\mc{L}}} \to C' |_{\tld{\mc{L}}}$. We may thus identify $C|_{\tld{\mc{L}}}$, $C'|_{\tld{\mc{L}}}$ via the $\tld{\mc{L}}$-isomorphism and compare $C$, $C'$ by considering these as two $\mc{L}$-structures on the same domain $D$, where $C|_{\tld{\mc{L}}} = C'|_{\tld{\mc{L}}} = \langle S, e \rangle$, but where the relational reducts of $C, C'$ differ. We enumerate $D$ using the enumeration of $V$: for each $d \in D$, let $\xi(d)$ be the least $i$ with $d = v_i(\bar{s}, e)$. We then have that $(D, \xi)$ is a well-order (with order type $\omega$), and we use the $\xi$-induced relational order $<_\rel$ from \Cref{rel order} to compare the relational reducts of $C, C'$. This completes the definition of $<_{\sigma_A}$.
    \end{itemize}
\end{defn}

\begin{defn} \label{order labelled extension map}
    Let $M$ be an ordered \Fr structure with strong amalgamation and a local SWIR $\ind$. Let $\otimes$ be the local SAO induced by $\ind$. Let $\theta : A \to B$ be an embedding in $\mc{A}(M)$. We define a map $\dot\theta : \mc{F}_A \to \mc{F}_B$, the \emph{labelled extension map induced by $\theta$}, as follows.
    
    Let $\gamma = (\zeta, S, e) \in \mc{F}_A$ and let $E = \cod(\zeta)$. Let $\zeta' : B \to E'$ be the unique element of $\mc{E}_B$ with an embedding-respecting isomorphism to $E \otimes_A B$, the codomain of $\zeta \otimes \theta$, and let $\theta_\zeta : E \to E'$ be the embedding corresponding under the isomorphism to $(\zeta \otimes \theta)_E$ -- see the below diagram. We define $\dot\theta(\gamma) = (\zeta', \theta(S), \theta_\zeta(e))$. 

    \[\begin{tikzcd}[sep = small, cramped]
	&&& {E \otimes_A B} \\
	E && {E'} \\
	\\
	A && B
	\arrow["{(\zeta \otimes \theta)_E}", curve={height=-14pt}, from=2-1, to=1-4]
	\arrow["{\theta_\zeta}", from=2-1, to=2-3]
	\arrow["\cong", from=2-3, to=1-4]
	\arrow["\zeta^{}"{pos=0.4}, from=4-1, to=2-1]
	\arrow["\theta"', from=4-1, to=4-3]
	\arrow["(\zeta \otimes \theta)_B"', curve={height=14pt}, from=4-3, to=1-4]
	\arrow["{\zeta'}"'{pos=0.43}, from=4-3, to=2-3]
    \end{tikzcd}\]
    
    We have $E \otimes_A B \cong (\langle \zeta(S), e \rangle \otimes_{\langle S \rangle} A) \otimes_A B \cong \langle \zeta(S), e \rangle \otimes_{\langle S \rangle} B$ by (Tr), and \[\langle \zeta(S), e \rangle \otimes_{\langle S \rangle} B \cong \langle \theta_\zeta(\zeta(S)), \theta_\zeta(e) \rangle \otimes_{\langle \theta(S) \rangle} B = \langle \zeta'(\theta(S)), \theta_\zeta(e) \rangle \otimes_{\langle \theta(S) \rangle} B\] by (Inv) and the definition of $\zeta'$, so $\dot\theta(\gamma) \in \mc{F}_B$. By the above and the fact that $\theta$, $\theta_\zeta$ are embeddings, we have that for all $\gamma_0 = (\zeta_0, S_0, e_0), \gamma_1 = (\zeta_1, S_1, e_1) \in \mc{F}_A$, if $\gamma_0 <_{\sigma_A} \gamma_1$ then $\dot\theta(\gamma_0) <_{\sigma_B} \dot\theta(\gamma_1)$ -- that is, $\dot\theta$ is a $<_\sigma$-order-embedding $\mc{F}_A \to \mc{F}_B$.
\end{defn}

\begin{proof}[Proof of \Cref{t: order exp}]
    Let $\otimes$ be the local SAO induced by the local SWIR on $M$.

    Let $A \in \mc{A}(M)$. Define $K(A) = \bigotimes_A^{\sigma_A} \mc{F}_A$, where $\bigotimes_A^{\sigma_A} \mc{F}_A$ is the standard amalgam over $A$ of $\mc{F}_A$ along the structure order $<_{\sigma_A}$. (Here we mildly abuse notation: formally we take the amalgam along $\sigma_A$ of the family of extensions $(\zeta_\gamma)_{\gamma \in \mc{F}_A}$, where for each $\gamma \in \mc{F}_A$ we write $\zeta_\gamma$ for the embedding given by the first coordinate of $\gamma$.)

    The one-point extension property \ref{Ka ope} in the definition of \Ka functor is clear. Let $\theta : A \to B$ be an embedding in $\mc{A}(M)$. We define an embedding $K(\theta) : K(A) \to K(B)$ by defining coherent embeddings $\eps_{\tld{\mc{F}}} : \bigotimes_A^{\sigma_A|_{\tld{\mc{F}}}} \tld{\mc{F}} \to K(B)$ for each $\tld{\mc{F}} \fin \mc{F}_A$ and then inducing $K(\theta)$ from the family $\{\eps_{\tld{\mc{F}}} \mid \tld{\mc{F}} \fin \mc{F}_A\}$ using the universal property of the direct limit $K(A)$.

    Let $\tld{\mc{F}} = \{\gamma_i \mid i < n\} \fin \mc{F}_A$ with $\gamma_i = (\zeta_i, S_i, e_i)$, $E_i = \cod(\zeta_i)$ for all $i < n$ and $\gamma_0 <_{\sigma_A} \cdots <_{\sigma_A} \gamma_{n-1}$. For each $\gamma_i$ we have $\dot\theta(\gamma_i) \in \mc{F}_B$, and we have $\dot\theta(\gamma_0) <_{\sigma_B} \cdots <_{\sigma_B} \dot\theta(\gamma_{n-1})$. By (Tr) we have \[(E_0 \otimes_A B) \otimes_B \cdots \otimes_B (E_{n-1} \otimes_A B) \cong (E_0 \otimes_A \cdots \otimes_A E_{n-1}) \otimes_A B, \tag{$\ast$}\] so there is a unique diagram-respecting embedding $\bigotimes_A^{\sigma_A|_{\tld{\mc{F}}}} \tld{\mc{F}} \to \bigotimes_B^{\sigma_B|_{\dot\theta(\tld{\mc{F}})}} \dot\theta(\tld{\mc{F}})$, and we define $\eps_{\tld{\mc{F}}}$ by composing this with the direct-limit embedding $\bigotimes_B^{\sigma_B|_{\dot\theta(\tld{\mc{F}})}} \dot\theta(\tld{\mc{F}}) \to K(B)$. Uniqueness of the diagram-respecting embedding $\bigotimes_A^{\sigma_A|_{\tld{\mc{F}}}} \tld{\mc{F}} \to \bigotimes_B^{\sigma_B|_{\dot\theta(\tld{\mc{F}})}} \dot\theta(\tld{\mc{F}})$ gives coherence of the family $\{\eps_{\tld{\mc{F}}} \mid \tld{\mc{F}} \fin \mc{F}_A\}$, and so by the universal property of the direct limit applied to this family we obtain an embedding $K(\theta) : K(A) \to K(B)$. Property \ref{Ka nat trans} in the definition of \Ka functor is immediate from the construction, and the verification that $K$ is a functor is straightforward diagram-chasing -- we leave this to the reader.
\end{proof}

\subsection{Tournament expansions} \label{s: SIR tourn free superpos}

We show the following:

\begin{thm} \label{t: tournament exp}
    Let $M$ be a relational \Fr structure with strong amalgamation and a local SIR. Then the generic tournament expansion has a \Ka functor, and hence has $\circ$-extensible $\omega$-age.
\end{thm}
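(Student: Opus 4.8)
The plan is to combine the symmetric-SAO construction for structures with a SIR (\Cref{p: quick Mue}) with the lexicographic-tournament labelling used for the random tournament (\Cref{ex: Ka for tourn}), following the key idea of adding labelled copies of one-point extensions. Write $N$ for the generic tournament expansion, so that $\Age(N) = \mc{C} \ast \mc{T}$ with $\mc{C} = \Age(M)$ and $\mc{T}$ the class of finite tournaments (\Cref{d: gen order exp}); by \Cref{free sup} the class $\mc{C} \ast \mc{T}$ again has strong amalgamation. Let $\otimes$ be the symmetric local SAO induced by the SIR on $M$, which is a strong amalgamation operator by \Cref{M strong implies canon strong}. For $A \in \mc{A}(N)$ write $A_0 = A|_{\mc{L}_M}$ for its $M$-reduct (the reduct to the language of $M$); since $N$ is a free superposition, each one-point $N$-extension of $A$ is determined by a pair $(p, S_t)$, where $p \in \mc{E}_{A_0}$ is an $M$-type over $A_0$ and $S_t \sub A$ is the tournament in-neighbourhood of the new vertex.

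For the objects I would mimic the labels of \Cref{t: order exp}: for each extension $(p, S_t)$, each $S_m \sub A_0$ over which $p$ is a standard amalgam (i.e.\ $p$ is realised by $\langle S_m, e \rangle \otimes_{S_m} A_0$), each enumeration $\bar v$ of $S_t$ and each enumeration $\bar w$ of $S_m$, place in $K(A)$ a labelled copy $e_{(p, \bar v, \bar w)}$ of the new vertex. The $M$-reduct of $K(A)$ is the symmetric amalgam $\bigotimes_{A_0}$ of the $M$-reducts of all these extensions, for which no order is needed exactly as in \Cref{p: quick Mue}; the orientation between $e_{(p,\bar v, \bar w)}$ and $A$ is fixed by $S_t$. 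The only remaining data is the orientation between two extension vertices, which I would define by a three-stage lexicographic comparison: first compare $\bar v, \bar v'$ by the lex-tournament induced by the tournament on $A$ (as in \Cref{ex: Ka for tourn}); if $\bar v = \bar v'$, compare $\bar w, \bar w'$ by the same lex-tournament; and if also $\bar w = \bar w'$, compare the quantifier-free $M$-types of the two new vertices over the common support, enumerated by $\bar w$, using the relational order $<_\rel$ of \Cref{rel order}. As $M$ is relational the last stage is a genuine linear order, so the rule gives a total antisymmetric relation, i.e.\ a tournament; a short check shows distinct labelled copies are separated at one of the three stages, so there are no ties. Since $K(A)|_{\mc{L}_M} \in \mc{C}$ and the orientation is a tournament, $K(A) \in \mc{C} \ast \mc{T}$, it is countable and hence embeds in $N$, and the one-point extension property \ref{Ka ope} holds because every $(p, S_t)$ occurs.

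On morphisms, for an embedding $\theta : A \to B$ I would send $e_{(p, \bar v, \bar w)}$ to $e_{(\dot\theta_0 p,\, \theta \bar v,\, \theta \bar w)}$, where $\dot\theta_0 : \mc{E}_{A_0} \to \mc{E}_{B_0}$ is the functorial map on $M$-types from \Cref{p: quick Mue}. To see this assembles into an embedding $K(\theta) : K(A) \to K(B)$ I would: (i) recover the $M$-reduct map $K(A)|_{\mc{L}_M} \to K(B)|_{\mc{L}_M}$ from (Tr) and the universal property of the direct limit, verbatim as in \Cref{p: quick Mue}; (ii) check the orientation to $B$ is respected, using the functorial convention that the new vertices of $B$ lie in the out-neighbourhood of the image, so that its in-neighbourhood is exactly $\theta S_t$; and (iii) check the orientation between extension vertices is preserved. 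Functoriality of $\theta \mapsto K(\theta)$ then follows from functoriality of $\dot\theta_0$ and the obvious compatibility of relabelling, and naturality of $\eta$ (property \ref{Ka nat trans}) is immediate.

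The hard part is (iii): making the orientation among extension vertices simultaneously total and functorial. Two one-point extensions may share a tournament in-neighbourhood while carrying different $M$-types, so the in-neighbourhood enumeration $\bar v$ --- which suffices in \Cref{ex: Ka for tourn} --- does not break all ties, and because $M$ carries no linear order there is no canonical way to compare the competing $M$-types directly. The resolution is to break the remaining ties using the $\otimes$-support: its enumeration $\bar w$ is comparable through the tournament on $A$, which $\theta$ preserves, while the residual comparison of restricted types over a common support is handled by $<_\rel$. The key point to verify is that all three stages are invariant under $\dot\theta_0$: by (Tr) the map $\dot\theta_0$ carries a support $S_m$ of $p$ to a support $\theta S_m$ of $\dot\theta_0 p$ and preserves the restricted $M$-type, so $\theta$ transports the entire comparison faithfully. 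Establishing that these comparisons are at once total, antisymmetric and $\dot\theta_0$-invariant, so that they glue into a functorial tournament, is the technical heart of the argument; the remaining verifications (the $M$-part via \Cref{p: quick Mue}, and that $K$ is a functor) are routine diagram-chasing.
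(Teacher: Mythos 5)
Your proposal is correct and follows essentially the same route as the paper: the paper's proof (Definitions \ref{tourn on exts}--\ref{tourn labelled extension map}) likewise builds $K(A)$ as the symmetric $\otimes^\ri$-amalgam of labelled copies of one-point extensions and orients extension vertices by a lexicographic comparison of label enumerations via the tournament on $A$, with ties broken by the relational order $<_\rel$, and then defines $K(\theta)$ via $\dot\theta$ and the universal property of the direct limit exactly as in \Cref{p: quick Mue}. The only (cosmetic) difference is bookkeeping: the paper uses a single label set $S \sub A$ that simultaneously contains the in-neighbourhood and serves as an $\otimes$-support for the superposition SAO, where you split it into $S_t$ and $S_m$ and add a third comparison stage -- both versions separate all distinct labels and are preserved by embeddings.
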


\begin{defn} \label{tourn on exts}
    Let $\mc{L}^\ri$ be a relational language, and let $M^\ri$ be a \Fr $\mc{L}^\ri$-structure with strong amalgamation and a local SIR. Let $\mc{L} = \mc{L}^\ri \cup \{\ra\}$ be the language of the generic tournament expansion $M$. (Here we change notation from the statement of the theorem, as indicating the reduct rather than the expansion will give a cleaner presentation.) For $A \in \mc{A}(M)$, we write $A^\ri$ for the $\mc{L}^\ri$-reduct of $A$ and $\ra_A$ for the tournament relation of $A$. Let $\otimes^\ri$ be the symmetric local SAO for $M^\ri$ induced by the local SIR on $M^\ri$, and let $\otimes$ be the local SAO for $M$ given by the free superposition of $\otimes^\ri$ and the standard tournament amalgamation operator.
    
    Let $A \in \mc{A}(M)$. We define a set $\mc{F}_A$ of labelled one-point extensions of $A$. For each $\zeta : A \to E$ in $\mc{E}_A$, and each ordering $\bar{s}$ of each $S \sub A$ such that $E \cong (\zeta(S), e) \otimes_S A$ via a diagram-respecting isomorphism (here $e$ is the extension vertex), put the labelled extension $(\zeta, \bar{s})$ in $\mc{F}_A$.
    
    We define a tournament relation $\twoheadrightarrow_A$ on $\mc{F}_A$. For distinct $(\zeta, \bar{s})$, $(\zeta', \bar{s}')$ in $\mc{F}_A$, we define $\twoheadrightarrow_A$ as follows. Let $C = (\zeta(S), e)$, $C' = (\zeta'(S'), e')$.

    \begin{itemize}
        \item If $\bar{s} \ra_\lex \bar{s}'$ in the lexicographic tournament relation on $A^{\leq \omega}$ induced by $\ra_A$, define $(\zeta, \bar{s}) \twoheadrightarrow_A (\zeta', \bar{s}')$.
        \item In the case $\bar{s} = \bar{s}'$: for notational convenience we identify the domains of $C$, $C'$, denoting the common domain by $D$. Let $n = |\bar{s}|$ and let $<$ be the order $s_0 < \cdots < s_{n-1} < e$ on $D$. We then compare $C, C'$ in the relational order $<_\rel$ induced by $<$: if $C <_\rel C'$ we define $(\zeta, \bar{s}) \twoheadrightarrow_A (\zeta', \bar{s}')$. (See \Cref{rel order} for the definition of the relational order.)
    \end{itemize}
\end{defn}

\begin{defn} \label{tourn labelled extension map}
    We use the notation of the previous definition. Let $\theta : A \to B$ be an embedding in $\mc{A}(M)$. We define a map $\dot\theta : \mc{F}_A \to \mc{F}_B$ similarly to \Cref{order labelled extension map}, as follows. For $\gamma = (\zeta, \bar{s}) \in \mc{F}_A$, let $E = \cod(\zeta)$ and let $\zeta' : B \to E'$ be the unique element of $\mc{E}_B$ with an embedding-respecting isomorphism to $E \otimes_A B$, the codomain of $\zeta \otimes \theta$. Define $\dot\theta(\gamma) = (\zeta', \theta(\bar{s}))$. By an analogous argument to that in \Cref{order labelled extension map}, we have $\dot\theta(\gamma) \in \mc{F}_B$, and also similarly if $\gamma_0 \twoheadrightarrow_A \gamma_1$ then $\dot\theta(\gamma_0) \twoheadrightarrow_B \dot\theta(\gamma_1)$.
\end{defn}

\begin{proof}[Proof of \Cref{t: tournament exp}]
    We use the notation of the previous two definitions. Let $A \in \mc{A}(M)$. To define $K(A)$, we first define an $\mc{L}^\ri$-structure $K(A)^\ri = \bigotimes^\ri_A \mc{F}^\ri_A$, where $\mc{F}^\ri_A$ is the set of $\mc{L}^\ri$-reducts of the elements of $\mc{F}_A$. (Recall that $\bigotimes^\ri$ is symmetric, so we do not need to specify a linear order along which to perform the amalgamation. We also abuse notation similarly to the proof of \Cref{t: order exp}: formally we take the amalgam of the first coordinates of elements of $\mc{F}^\ri_A$.) We then expand $K(A)^\ri$ to an $\mc{L}$-structure as follows. For each $\gamma = (\zeta, \bar{s}) \in \mc{F}_A$, let $\rho_\gamma : \cod(\zeta) \to K(A)^\ri$ be the $\mc{L}^\ri$-embedding into the amalgam and let $e_\gamma \in \cod(\zeta)$ be the extension vertex of $\zeta$. For $\gamma \in \mc{F}_A$ we define the tournament structure on $\im(\rho_\gamma)$ so that $\rho$ is an $\mc{L}$-embedding, and for $\gamma, \gamma' \in \mc{F}_A$ with $\gamma \twoheadrightarrow \gamma'$ we define  $\rho_\gamma(e_\gamma) \ra \rho_{\gamma'}(e_{\gamma'})$. 

    Let $\theta : A \to B$ be an embedding in $\mc{A}(M)$. As in the proofs of \Cref{p: quick Mue} and \Cref{t: order exp}, for each $\tld{\mc{F}} \fin \mc{F}_A$ there is a unique diagram-preserving $\mc{L}^\ri$-embedding $\otimes^\ri_A \mc{F}_A^\ri \to \otimes^\ri_B \dot\theta(\mc{F}_A^\ri)$, and so via the universal property of the direct limit we obtain an $\mc{L}^\ri$-embedding $K(\theta)^\ri : K(A)^\ri \to K(B)^\ri$. As $\dot\theta$ preserves $\twoheadrightarrow$, in fact $K(\theta)^\ri$ is an $\mc{L}$-embedding $K(A) \to K(B)$, and we define $K(\theta)$ to be this embedding. We leave it to the reader to check property \ref{Ka nat trans} in the definition of a \Ka functor, as well as the functoriality of $K$.
\end{proof}

\subsection{A peculiar structure with a tournament and SWIR} \label{s: peculiar str}

In this section, we show that the tournament analogue of \Cref{t: order exp} does not hold: that is, we show there is a \Fr structure with a tournament relation and a SWIR which does not have a universal automorphism group.

Let $\mc{L} = \{E, \rightarrow, D\}$ be a relational language with $E, \rightarrow$ binary and $D$ ternary, and let $\mc{C}$ be the class of finite $\mc{L}$-structures $A$ where $E^A$ is a graph relation, $\ra^A$ is a tournament relation and $A$ satisfies the following:
\begin{align*}
    (\all x, y, z) &(D xyz \Rightarrow D xzy), \\
    (\all x, y, z) &(D xyz \Rightarrow \neg D yzx \wedge \neg D zxy), \\
    (\all u, x, y, z) &(E ux \wedge E uy \wedge E uz \Rightarrow D xyz \vee D yzx \vee D zxy).
\end{align*}

If $(u, v, w) \in D^A$, we say that $u$ \emph{dominates} $v, w$.

It is straightforward to see that $\mc{C}$ is an amalgamation class; let $M = \FrLim(\mc{C})$. For $A, B, C \fin M$, we define $B \ind_A C$ if the following hold:
\begin{enumerate}[label=(\roman*)]
    \item there are no graph edges between $B \setminus A$, $C \setminus A$;
    \item $b \ra c$ for $b \in B \setminus A$, $c \in C \setminus A$;
    \item for $b \in B \setminus A$, $c \in C \setminus A$, $u \in A \cup B \cup C$ such that there exists $a \in A$ with $Eab \wedge Eac \wedge Eau$: if the tournament relation on $bcu$ is a linear order, then the largest element $D$-dominates the other two, and if the tournament relation on $bcu$ is a directed $3$-cycle, then $Dubc$ (note that by (ii) we have $u \in A$);
    \item for $b \in B \setminus A$, $c \in C \setminus A$, $u \in A \cup B \cup C$ such that there is no $a \in A$ with $Eab \wedge Eac \wedge Eau$, we have that there is no $D$-relation on $\{b, c, u\}$.
\end{enumerate}

It is straightforward to check that $\ind$ is a SWIR (taking particular care with base monotonicity), so we have that $M$ is a structure with a tournament relation and a SWIR. We now show that $\Aut(M)$ is non-universal.

\begin{lem} \label{l: peculiar str}
    Let $\tau \in \Aut(M)$ have order $3$. Then $\tau$ has no fixed points.
\end{lem}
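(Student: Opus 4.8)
The plan is to argue by contradiction: suppose $\tau \in \Aut(M)$ has order $3$ and fixes some point $a$. The goal is to locate a $\tau$-orbit $\{x, \tau x, \tau^2 x\}$ of size three all of whose members are $E$-adjacent to $a$. Once we have this, the third defining axiom of $\mc{C}$ (applied with common neighbour $u = a$) forces one of $x, \tau x, \tau^2 x$ to $D$-dominate the other two, say $Dx(\tau x)(\tau^2 x)$, and applying the automorphism $\tau$ yields $D(\tau x)(\tau^2 x)x$, directly contradicting the antisymmetry axiom $Dxyz \Rightarrow \neg Dyzx$. So the whole proof reduces to producing such an orbit.

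Since $\tau \neq \id$ and $\tau^3 = \id$, every moved point has orbit of size exactly $3$; fix one such point $b$, with orbit $O = \{b, \tau b, \tau^2 b\}$. Because $a$ is fixed, its $E$-adjacency to $O$ is $\tau$-invariant, so $Eab \Leftrightarrow Ea\tau b \Leftrightarrow Ea\tau^2 b$; that is, $a$ is $E$-adjacent either to all of $O$ or to none of it. If $a$ is adjacent to all of $O$, then $O$ itself is the desired orbit and we are done immediately.

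The remaining case is that $a$ is $E$-adjacent to no member of $O$, and here I would manufacture the orbit using the extension property of $M$. Over the finite substructure $Q = \{a, b, \tau b, \tau^2 b\}$ I prescribe a new vertex $x$ with $Eax$, with no $E$-edge between $x$ and any member of $O$, and with a deliberately $\tau$-asymmetric tournament orientation toward $O$, namely $x \ra b$ but $\tau b \ra x$ (orienting the remaining pairs arbitrarily). The point of the asymmetry is that if $\tau$ fixed $x$, then applying $\tau$ to $x \ra b$ would give $x \ra \tau b$, contradicting $\tau b \ra x$; hence any realisation of this type is moved by $\tau$. The one thing that needs checking is that $Q \cup \{x\}$ lies in $\mc{C}$, and the only nontrivial axiom is the third (domination-forcing) one. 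Here the key observation is that the sole $E$-neighbour of $x$ is $a$, and $a$ has no $E$-neighbour inside $Q$ (since $a$ avoids $O$); thus no triple containing $x$ has a common $E$-neighbour, and we may add $x$ with no $D$-relations at all, keeping the structure in $\mc{C}$. The extension property then realises $x$ in $M$; it is moved, its orbit has size three, and all three of its members are $E$-adjacent to the fixed point $a$, completing the orbit we sought.

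The main obstacle is the consistency check in the last paragraph: one must confirm that introducing the $E$-edge $ax$ does not activate the third axiom and thereby force $D$-relations that could clash with $\tau$-invariance. This is exactly why the construction is arranged so that $a$ has no other $E$-neighbours in $Q$ — it guarantees no forced domination involving $x$. The asymmetry device forcing $x$ to be moved, and the cyclic application of $\tau$ to the domination relation, are then routine.
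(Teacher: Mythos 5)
Your proof is correct, and it rests on the same core mechanism as the paper's: a $\tau$-orbit of size three all $E$-adjacent to a fixed point forces, via the third axiom, a $D$-domination that $\tau$ cyclically permutes, contradicting $Dxyz \Rightarrow \neg Dyzx$. The organisation differs, though. The paper turns this mechanism into a propagation lemma -- every $E$-neighbour of a $\tau$-fixed point is itself fixed -- and then uses the extension property to join the fixed point $u$ to an arbitrary $w$ by an $E$-path of length two, concluding that $\tau = \id_M$ and contradicting the order-$3$ hypothesis. You instead argue contrapositively at the fixed point itself: either the orbit of some already-moved point is entirely $E$-adjacent to $a$ (done), or it is entirely non-adjacent, in which case you build a fresh moved $E$-neighbour of $a$ by a one-point extension with an asymmetric tournament orientation toward that orbit. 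Your route requires the explicit consistency check that the extension lies in $\mc{C}$ (which you carry out correctly: since $a$ has no other $E$-neighbours in the finite configuration, no triple containing the new vertex acquires a common $E$-neighbour, so no $D$-relations are forced), whereas the paper's route avoids any such check by only ever invoking the extension property for a configuration whose membership in $\mc{C}$ is immediate. The paper's version is slightly slicker and also yields the stronger intermediate fact that the fixed-point set is $E$-closed; yours is more direct but case-split. Both are valid.
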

\begin{proof}
    First observe that for each $\tau$-fixed point $u \in M$ and each $v \in M$ with $Euv$, we have that $v$ is also $\tau$-fixed: if not then we have $Euv, Eu\tau{v}, Eu\tau^2{v}$, so some vertex of $v, \tau(v), \tau^2(v)$ must $D$-dominate the other two, which is impossible as $\tau$ preserves $D$. Suppose for a contradiction that $\tau$ has a fixed point $u$, and let $w \in M$, $w \neq u$. Then by the extension property of $M$ there is $v \in M$ with $Euv, Evw$, so $w$ is fixed and $\tau = \id_M$, contradiction.
\end{proof}

\begin{prop}
    $\Aut(M)$ is not universal. In particular, $M$ does not have $\circ$-extensible $\omega$-age.
\end{prop}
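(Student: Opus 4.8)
The plan is to prove the stronger statement that $\Aut(M)$ contains no element of order $3$ whatsoever, and then to exhibit a finite structure in $\Age(M)$ whose automorphism group is $\Z/3$. For the latter, let $C$ be the directed $3$-cycle: three vertices carrying a cyclically oriented tournament, with no $E$-edges and no instances of $D$. Then $C \in \mc{C}$, since the third domination axiom is vacuous in the absence of any common $E$-neighbour, and clearly $\Aut(C) = \Z/3$ (an orientation-reversing map is not an automorphism of the tournament). Hence if $\Aut(M)$ has no order-$3$ element there can be no group embedding $\Aut(C) \to \Aut(M)$, so $\Aut(M)$ is not universal; and since extensibility of $\Ao(M)$ implies universality, this also yields that $\Ao(M)$ is non-extensible.

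So suppose towards a contradiction that $\sigma \in \Aut(M)$ has order $3$. As $\sigma \neq \id_M$ it has at least one orbit of size $3$ (by \Cref{l: peculiar str} in fact \emph{every} orbit has size $3$, but one suffices). Fix such an orbit $O = \{a, b, c\}$ with $\sigma(a) = b$, $\sigma(b) = c$, $\sigma(c) = a$. Since $\sigma$ restricts to a fixed-point-free permutation of $O$ that preserves the tournament relation, the induced tournament on $O$ must be a directed $3$-cycle: a transitive orientation has a unique source, which would have to be $\sigma$-fixed, and this is impossible.

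The key step is to produce, via the extension property of $M$, a vertex $u \in M$ that is $E$-adjacent to all three points of $O$. For this I would check that the induced $\mc{L}$-structure $B$ on $O$ admits a valid one-point extension $B \cup \{u\}$ in $\mc{C}$ with $Eua \wedge Eub \wedge Euc$. Because $O$ is a $\sigma$-orbit, the $E$-relation restricted to $O$ is either empty or complete; in each case one assigns $D$ on precisely those triples that acquire a common $E$-neighbour in $B \cup \{u\}$ so as to satisfy the three axioms (for instance, letting $u$ dominate in every triple containing it, and letting $a$ dominate in $O$ itself). The extension property then yields the desired $u$ inside $M$.

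Finally, the third axiom applied to the common neighbour $u$ of $O$ forces one vertex of $O$ to $D$-dominate the other two, the dominator being unique by the second axiom; say $Dabc$ holds in $M$. Applying the automorphism $\sigma$ gives $Dbca$, which together with $Dabc$ contradicts the second axiom. This contradiction shows that no order-$3$ automorphism of $M$ exists, completing the argument. I expect the main obstacle to be the extension step: one must verify that a common-neighbour one-point extension of $B$ can always be completed to a member of $\mc{C}$, and it is here — in the bookkeeping of the ternary relation $D$ rather than in the fixed-point lemma — that the combinatorial content of the example really lies.
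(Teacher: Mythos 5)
There is a genuine gap at exactly the step you flagged as the main obstacle: the common-neighbour extension does not exist, and cannot be made to exist. Your orbit $O = \{a,b,c\}$ is a substructure of $M$, so its $D$-relation is already determined and is not yours to assign when forming a one-point extension; "letting $a$ dominate in $O$ itself" is not available. Moreover, your own closing argument shows what that $D$-relation must be: if any of $Dabc$, $Dbca$, $Dcab$ held in $M$, applying $\sigma$ would already contradict the second axiom, so \emph{none} of them holds on any size-$3$ orbit of an order-$3$ automorphism. But then the third axiom, read contrapositively, says that $a,b,c$ have \emph{no} common $E$-neighbour in $M$, and any proposed one-point extension $B \cup \{u\}$ with $Eua \wedge Eub \wedge Euc$ fails the third axiom and so is not in $\mc{C}$. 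The extension property cannot produce $u$. Consequently your stronger claim --- that $\Aut(M)$ has no element of order $3$ at all --- is not established by this argument, and the paper gives no reason to believe it is true; the structure $M$ is engineered precisely so that fixed-point-free order-$3$ automorphisms are not visibly obstructed.

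The paper's proof takes a weaker local statement and compensates with a cardinality argument, and you will need both ingredients. \Cref{l: peculiar str} (which you cite but do not use) shows only that an order-$3$ automorphism has no \emph{fixed points}: if $u$ is fixed and $Euv$ with $v$ moved, then $v, \tau(v), \tau^2(v)$ are three distinct common $E$-neighbours of $u$ cyclically permuted by $\tau$, so the forced dominator among them is not well defined; hence $E$-neighbours of fixed points are fixed, and an $E$-path argument propagates this to all of $M$. This is exactly the argument you tried to run on the orbit $O$, but it only works when the triple genuinely has a common $E$-neighbour, which is guaranteed for $\{v,\tau(v),\tau^2(v)\}$ with $v$ an $E$-neighbour of a fixed point, not for an arbitrary orbit. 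One then cannot use your finite witness $C$ with $\Aut(C) = \Z/3$; instead the paper takes the infinite $\omega$-partite lexicographic structure $A$ with $\Aut(A) = C_3^\omega$ (recall that universality quantifies over all of $\Ao(M)$, including infinite structures). The image $H$ of a hypothetical embedding $C_3^\omega \to \Aut(M)$ contains continuum-many elements of order $3$, while each $H$-orbit in the countable set $M$ is countable, so two distinct $h, h' \in H$ agree at some $v$; then $h^{-1}h'$ has order $3$ and fixes $v$, contradicting the lemma. Your approach, if it worked, would be strictly stronger and simpler, but the obstruction above is fatal to it.
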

\begin{proof}
    Let $A$ have vertex set $\{a_{n, i} \mid n \in \N, i \in \Z/3\Z\}$, empty graph and $D$-relations, and tournament relation given by: $a_{n, i} \ra a_{n, i + 1}$ for all $n, i$; $a_{n, i} \ra a_{n', i'}$ for all $i, i'$ and all $n, n'$ with $n < n'$. (Concisely, the tournament structure is the lexicographic product of $\N$ and a directed $3$-cycle.) We have $A \in \mc{A}_\omega(M)$ and $\Aut(A) = C_3^\omega$. Suppose for a contradiction that there is an embedding $\Aut(A) \to \Aut(M)$, and denote its image by $H$. Observe that $H$ contains continuum-many elements of order $3$. Let $v \in M$. Then as the $H$-orbit of $v$ is countable, we have $hv = h'v$ for some distinct $h, h' \in H$. But then $h^{-1}h'$ is of order $3$ and has a fixed point, contradicting \Cref{l: peculiar str}.
\end{proof}

\section{Examples} \label{s: exs}

In this section, we consider the existence of a \Ka functor and a SWIR expansion for a range of structures, including all countably infinite ultrahomogeneous oriented graphs in Cherlin's catalogue (\cite{Che98}), with the exception of $P(3)$. (See \cite{PS20} for another exposition of this catalogue.)

\begin{thm} \label{t: big list of exs}
    Each of the following structures has a \Ka functor (and hence $\circ$-extensible $\omega$-age and universal automorphism group):
    \begin{itemize}
        \item the generic $3$-hypertournament (\ref{ex: 3-ht});
        \item the generic two-graph (\ref{p three key exs});
        \item the generic bipartite tournament and generic $\omega$-partite tournament (\ref{ex: n-partite tournament});
        \item the circular order on $\Q$ (\ref{betweenness and friends});
        \item the dense local order $S(2)$ and the related oriented graphs $S(3)$, $\widehat{\Q}$ (\ref{S(2) and friends});
        \item the generic $\vec{C}_4$-enlarged tournament $\widehat{T^\omega}$ (\ref{ex: C_4-enlarged});
        \item the dense meet-tree (\ref{meet-trees and meet-tree exps});
    \end{itemize}

    The following structures have $\circ$-extensible $\omega$-age, but no \Ka functor:
    \begin{itemize}
        \item the generic $n$-partite tournament for $2 < n < \omega$ (\ref{ex: n-partite tournament});
        \item the betweenness and separation relations on $\Q$ (\ref{betweenness and friends});
    \end{itemize}
    
    The following structures have non-universal automorphism groups (and hence their $\omega$-ages are not $\circ$-extensible and they have no \Ka functor):
    \begin{itemize}
        \item for $n \geq 3$, the generic $n$-anticlique-free oriented graph (\ref{p three key exs});
        \item the semigeneric $\omega$-partite tournament (\ref{ex: semigeneric}).
    \end{itemize}

    Each of the structures listed above has a finite SWIR expansion.
\end{thm}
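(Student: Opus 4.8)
The plan is to prove this compilation theorem by treating each listed structure separately in the subsection indicated by its parenthetical cross-reference, together with the three examples already handled in \Cref{s: key exs}. The structures split into three groups, and within each group a common technique applies. For the structures claimed to possess a \Ka functor, I would follow the \textbf{key idea} of \Cref{s: key exs}: to construct $K(A)$ I place in it several labelled copies of each one-point extension $(A, e) \in \mc{E}_A$, the labels recording enough data (enumerations of in/out-neighbourhoods, the trace of $e$ on the base, induced orders, and so on) to determine the isomorphism type of the extension, and I then fix all relations \emph{between} distinct extension vertices by comparing their labels --- usually lexicographically through an auxiliary order or tournament relation induced by $A$, exactly as in \Cref{ex: Ka for tourn} and \Cref{p: 3-ht}. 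Functoriality reduces to verifying that an embedding $f : A \to B$ carries labels to labels order-compatibly, so that the canonical extension $K(f) \supseteq f$ preserves the induced relations; this is the routine part of each such argument.

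For the structures with non-universal automorphism group, the template is the $I_n$-free argument of \Cref{l: tau fixed pts}--\Cref{l: anticlique in cover} and the hypertournament sign argument of \Cref{p: 2timesodd-ht not univ}. I would exhibit a finite or countable $A \in \Ao(M)$ whose automorphism group contains a configuration that cannot sit inside $\Aut(M)$: either a global parity/sign invariant that every element of $\Aut(M)$ must preserve but the relevant finite automorphism violates, or a family of pairwise-commuting involutions together with a fixed-point/Ramsey dichotomy (finitely many fixed-point sets of involutions cannot cover $M$, by a Folkman-type or \NR vertex-Ramsey argument) forcing a forbidden anticlique-like configuration. For the semigeneric $\omega$-partite tournament this requires careful parity bookkeeping across the parts.

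The intermediate group --- extensible $\omega$-age but no \Ka functor --- I would split into two independent halves. For extensibility I would give a direct \Ka-tower construction producing an extensive embedding of each $A \in \Ao(M)$; note that this does \emph{not} follow formally from extensibility of a finite SWIR expansion, since an automorphism of a reduct need not lift to the expansion, so the construction must be carried out on $M$ itself. The non-existence of a \Ka functor is the subtler half, and here the obstruction cannot be mere non-universality (these structures have universal $\Aut$): instead I would show that functoriality of any candidate $K$ is incompatible with the rigid finite constraint of the structure --- the bounded number of parts for the $n$-partite tournaments, or the cyclic/separation symmetry for the betweenness and separation relations on $\Q$ --- in the spirit of the invariance obstruction appearing in the proof that the generic two-graph has no local SWIR.

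Finally, the \emph{finite SWIR expansion} assertion holds uniformly for every structure listed, and for each $M$ the plan is to name an ultrahomogeneous expansion $M^+$ in a finite relational language and exhibit a SWIR on it; by the equivalence of \Cref{SAO induces SWIR} and \Cref{SWIR induces SAO} it suffices to produce a SAO, which in practice one does by writing the relation $\ind$ down explicitly and checking (Inv), (Ex), (Sta), (Mon), as in \Cref{p: T32 SWIR}. I expect the main obstacle to lie exactly here, in tandem with the no-\Ka-functor proofs of the intermediate group: there is no single general theorem covering all catalogue entries, so each demands a bespoke choice of the minimal extra structure (a compatible order, a distinguished partition, or a refined relation) that breaks the problematic base symmetries enough to make amalgamation canonical, while keeping the language finite and the expansion ultrahomogeneous.
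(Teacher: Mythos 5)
Your overall architecture matches the paper's: the theorem is a compilation whose proof is distributed over the cited subsections, and your three-way split (labelled-copy \Ka functors compared lexicographically; involution/parity/Ramsey obstructions to universality; explicit finite expansions with a hand-checked SWIR) is exactly how the paper proceeds for the first, third and fourth groups. Two remarks on the first group: for the reducts of $(\Q,<)$, for $S(2)$, $S(3)$, $\widehat{\Q}$ and for the dense meet-tree the paper does not need labelled copies at all --- one-point extensions are in bijection with \emph{cuts} and the amalgam is essentially forced, so the construction is simpler than the general key-idea template you describe; and your correct observation that extensibility of an expansion does not formally transfer to the reduct is precisely why the paper works on $M$ itself.

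The genuine gap is in the middle group (extensible $\omega$-age but no \Ka functor). You propose ``a direct \Ka-tower construction producing an extensive embedding of each $A \in \Ao(M)$'', but this is exactly the step that needs a mechanism, since the whole point is that the tower cannot be run functorially on all of $\mc{A}(M)$. The paper's device is \Cref{l: cofinal} (and its corollary \Cref{l: subcl at least n with Ka}): if a subclass $\mc{B} \sub \mc{A}(M)$ closed under extensions, through which all embeddings into $\mc{B}_\omega$ factor, carries a \Ka functor $K : \mc{B} \to \mc{B}_\omega$, and every $C \in \mc{A}_\omega(M)$ embeds extensively into some $D \in \mc{B}_\omega$, then $\Ao(M)$ is extensible --- one reruns the Kubi\'s--Ma\v{s}ulovi\'c argument relative to $\mc{B}$ and composes $C \to D \to K^\omega(D) \cong M$. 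The obstruction to a global \Ka functor in these examples lives only at small structures (a single point for betweenness and the circular symmetry, two points for separation) or at structures with too few parts (for the $n$-partite tournament one takes $\mc{B}$ to be the tournaments with exactly $n$ parts), and restricting to the cofinal subclass kills the problematic base automorphisms. Without this lemma, or an equivalent relativisation of the tower construction, your plan does not establish extensibility for the $n$-partite tournament with $2 < n < \omega$ or for the betweenness and separation relations; carrying out your ``direct construction'' honestly would amount to rediscovering it.
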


We also show the following structural results:

\begin{thm} \label{t: products}
    We have the following regarding products of structures:
    \begin{itemize}
        \item Let $M, N$ be relational \Fr structures with $M$ transitive, and suppose that $M, N$ have \Ka functors. Then the lexicographic product $M[N]^s$ has a \Ka functor (\ref{ex: lex prod}).
        \item Let $M, N$ be relational \Fr structures such that $M$ is transitive and has strong amalgamation, and suppose that $M, N$ have (local) SWIRs. Then the lexicographic product $M[N]^s$ has a (local) SWIR (\ref{ex: lex prod}).
        \item There are relational \Fr structures $M, N$ with \Ka functors such that their free superposition $M \ast N$ does not have a \Ka functor (\ref{ex: free superposition}).
        \item Let $N$ be the generic meet-tree expansion of a transitive \Fr structure $M$ with free amalgamation (satisfying mild non-triviality conditions). Then $\Aut(N)$ is non-universal (\ref{meet-trees and meet-tree exps}).
    \end{itemize}
\end{thm}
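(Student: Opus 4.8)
The plan is to handle the four assertions separately, constructing the positive results for the lexicographic product directly out of the factors' data and proving the two negative results by exhibiting obstructions.

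\textbf{The lexicographic product (first two bullets).} Write $\pi \colon M[N]^s \to M$ for the projection onto the first coordinate, and for finite $A$ and $m \in \pi(A)$ let $A_m = A \cap \pi^{-1}(m)$ be the fibre of $A$ over $m$, an $N$-structure. The key observation is that every one-point extension of $A$ is of exactly one of two kinds: either the new vertex $e$ lies in a new fibre, in which case it relates to all of $A$ by the $M$-structure and so corresponds to an $M$-one-point extension of $\pi(A)$; or $\pi(e) \in \pi(A)$, in which case $e$ is an $N$-one-point extension of the fibre $A_{\pi(e)}$. I would therefore define $K(A)$ by taking $K_M(\pi(A))$ as the underlying ``shape'' in $M$, placing $K_N(A_m)$ over each old point $m \in \pi(A)$ and a single point over each new point of $K_M(\pi(A))$, with cross-fibre relations read off from $K_M(\pi(A))$. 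Transitivity of $M$ is what makes this blow-up coherent and guarantees that $M[N]^s$ is itself a \Fr structure with the expected one-point extensions; an embedding $f \colon A \to B$ induces data $K_M(\pi(f))$ and $(K_N(f_m))_{m \in \pi(A)}$ which assemble into $K(f)$, and the \Ka functor axioms for $K$ follow from those for $K_M$ and $K_N$. For the SWIR statement I would argue in parallel, or equivalently via the translation of \Cref{SAO induces SWIR} and \Cref{SWIR induces SAO}: declare $B \ind_A C$ to hold iff $\pi(B) \ind^{M}_{\pi(A)} \pi(C)$ and $B_m \ind^{N}_{A_m} C_m$ for every fibre $m$, then verify (Inv), (Ex), (Sta), (Mon) coordinate-wise, where strong amalgamation and transitivity of $M$ ensure the projected and fibrewise conditions never clash.

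\textbf{Free superposition (third bullet).} Here the task is to produce a witnessing pair $M, N$ and show $M \ast N$ has no \Ka functor. I would exhibit concrete relational $M, N$, each with a \Ka functor (for instance via free amalgamation, or the examples of \Cref{s: key exs}), and then argue most directly that $\Aut(M \ast N)$ is not universal, which by \Cref{Ka implies extensible} together with the implication ``extensible $\Rightarrow$ universal'' rules out a \Ka functor. The mechanism is that an automorphism of $M \ast N$ must respect both languages simultaneously, so the superposition admits far fewer automorphisms than either factor; I would select $M, N$ so that a finite configuration realising a torsion or commuting-involution group in the joint age cannot be realised in $M \ast N$, by a fixed-point argument in the spirit of \Cref{s: peculiar str}. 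Failing a non-universality witness, the fallback is a direct invariance obstruction to functoriality of any candidate amalgam.

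\textbf{Meet-tree expansions (fourth bullet).} This is the substantial case, and it rests on the \emph{meet-rigidity} of finite-order automorphisms: if $\tau \in \Aut(N)$ has finite order $k$ and $a \in N$, then the orbit-meet $m = a \wedge \tau(a) \wedge \cdots \wedge \tau^{k-1}(a)$ satisfies $\tau(m) = m$, so below every point sits a $\tau$-fixed point, and when $\tau(a) \neq a$ the fixed point $a \wedge \tau(a)$ has $\tau$ nontrivially permuting the branches above it. I would then run an argument modelled on \Cref{l: tau fixed pts}--\Cref{l: anticlique in cover}: using the non-trivial free-amalgamation relation of $M$, build a finite $A \in \Ao(N)$ whose automorphism group contains many commuting involutions (as in the $I_n$-free construction), suppose for contradiction that $\Aut(N)$ contains the image group $H$, and analyse the orbit of a generic $v$ under $H$. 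Above the $H$-fixed orbit-meet the orbit points fall into distinct branches carrying a freely-amalgamated, hence generically unconstrained, $M$-structure on which $H$ acts by automorphisms; choosing $v$ so that this cross-branch configuration cannot support such a group action yields the contradiction, and the ``mild non-triviality'' hypothesis is exactly what secures the existence of the required configuration.

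\textbf{Main obstacle.} The genuinely delicate point is the fourth bullet. Meet-rigidity makes fixed-point sets \emph{large} (lower-cofinal) rather than small, so the covering step of the $I_n$-free argument cannot be transported verbatim; one must instead combine the branch-permutation forced by the tree order with the generic $M$-structure across branches, and pin down precisely which finite configuration in the joint age fails to admit the relevant group action. Locating the correct witnessing pair for the third bullet is a secondary difficulty.
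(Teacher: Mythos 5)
Your treatment of the first two bullets matches the paper's: the SWIR on $M[N]^s$ is declared fibrewise-and-in-projection and the axioms are checked coordinate-wise (with strong amalgamation of $M$ used, via \Cref{SWIR strong amalg}, to see that off-base fibres of $B$ and $C$ cannot collide), and the \Ka functor is $P$ applied to the $M$-shadow with $Q$ applied to each fibre and a single point over each new $M$-vertex. No complaints there beyond the verifications you defer.

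The third bullet is an existential statement, so a proof must exhibit the pair, and you explicitly defer this ("locating the correct witnessing pair\ldots is a secondary difficulty"). Your strategy — non-universality of $\Aut(M\ast N)$ via a fixed-point/torsion obstruction — is exactly the paper's, but the content is in the choice: the paper takes $M_0=(\Q,\gamma)$ the circular order and $M_1$ the structure of $<$-linearly-ordered red points together with unstructured blue points. An involution of $(\Q,\gamma)$ can have no fixed point (a fixed point $a$ and a moved point $v$ would force both $\gamma(a,v,\tau v)$ and $\gamma(a,\tau v,v)$), while an involution of $M_1$ must fix every red point (its restriction to the reds is a torsion element of $\Aut(\Q,<)$), so $\Aut(M_0\ast M_1)$ is involution-free; yet two blue points give an age element with automorphism group $C_2$. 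Without some such pair the bullet is unproved.

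The genuine gap is in the fourth bullet. You correctly observe that the orbit-meet $a\wedge\tau(a)\wedge\cdots\wedge\tau^{k-1}(a)$ is $\tau$-fixed, so every finite-order automorphism has fixed points — but you then treat this as an obstacle ("fixed-point sets are large, so the covering step cannot be transported") and head toward a commuting-involutions/orbit-analysis argument that you acknowledge you cannot pin down. The missing idea is the complementary half: an involution of $N$ can have \emph{no} fixed points at all. If $\tau(a)=a$ then the linearly ordered set $\{b\in N: b\le a\}$ is fixed pointwise by $\tau$; taking any $c$ with $\tau(c)\neq c$, non-triviality of the free amalgamation class, the absence of unary predicates, and genericity of the free superposition produce $b'<a$ that is $R$-related to $c$ but not to $\tau(c)$, whence $\tau(b')\neq b'$ — contradiction. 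Combined with your orbit-meet observation (for $k=2$: $v=a\wedge\tau(a)$ and $\tau(v)$ are comparable and distinct, which an involution cannot permit), this shows $\Aut(N)$ has no involutions whatsoever. One then needs only a \emph{single} involution in the age — e.g.\ the three-point meet-tree $\{a,b,a\wedge b\}$ with no $\mc{L}$-relations — not the elaborate many-commuting-involutions configuration you propose. As written, your argument for this bullet does not close.
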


In the course of proving the above theorems, we will freely switch between SWIRs and SAOs, as each one induces the other -- see \Cref{s: SWIR SAO equivalence}. Also note for the examples in this section that if a \Fr structure $M$ has a local SWIR, then it has a global SWIR in the expansion given by adding a constant to fix a point.

Moreover, in \Cref{ordered strs no SWIR} we give two particularly notable examples of failure of the existence of a local SWIR: in all other negative examples, we demonstrate failure of automorphism-invariance, but here we show that in fact there are classes of ordered structures without a local SWIR. 

\begin{term*}
    A number of examples below discuss involutions -- when we mention involutions we will always assume they are not equal to the identity.
\end{term*}

\subsection{The generic \texorpdfstring{$n$}{n}-partite tournament; \Ka functors for cofinal subclasses} \label{ex: n-partite tournament}

We first show that for a structure to have $\circ$-extensible $\omega$-age, it suffices to have $\circ$-extensive embeddings into a particular subclass with a \Ka functor. We then apply this to the example of the generic $n$-partite tournament.

\begin{lem} \label{l: cofinal}
    Let $M$ be a \Fr structure. Let $\mc{B}$ be a subclass of $\mc{A}(M)$, and let $\mc{B}_\omega$ be the class of colimits of structures in $\mc{B}$. Suppose that $\mc{B}$, $\mc{B}_\omega$ have the following properties:
    \begin{enumerate}[label=(\roman*)]
        \item \label{cof closed under embs} for each embedding $B \to A$ with $B \in \mc{B}$, $A \in \mc{A}(M)$, we have $A \in \mc{B}$;
        \item \label{cof factor embs} each embedding $A \to D$ with $A \in \mc{A}(M)$, $D \in \mc{B}_\omega$ factors into a pair of embeddings $A \to B \to D$ for some $B \in \mc{B}$;
        \item \label{cof B Ka} $\mc{B}$ has a \Ka functor $K : \mc{B} \to \mc{B}_\omega$;
        \item \label{cof ext emb into B_omega} for each $C \in \mc{A}_\omega(M)$, there is an $\circ$-extensive embedding $C \to D$ for some $D \in \mc{B}_\omega$.
    \end{enumerate}
    Then $M$ has $\circ$-extensible $\omega$-age.
\end{lem}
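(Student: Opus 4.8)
The plan is to reduce the statement to the single claim that every $D \in \mc{B}_\omega$ admits an extensive embedding $D \to M$, and then to combine this with \ref{cof ext emb into B_omega} using the fact that extensive embeddings compose. For the composition I would record the following elementary observation: if $j : C \to D$ and $k : D \to M$ are extensive, witnessed by group embeddings $\theta_j : \Aut(j(C)) \to \Aut(D)$ and $\theta_k : \Aut(k(D)) \to \Aut(M)$, then an automorphism of $k(j(C))$ can be pulled back along the isomorphism $k : j(C) \to k(j(C))$ to an automorphism of $j(C)$, extended by $\theta_j$ into $\Aut(D)$, transported by conjugation with $k$ into $\Aut(k(D))$, and finally extended by $\theta_k$ into $\Aut(M)$; a short check shows the resulting injective homomorphism $\Aut(k(j(C))) \to \Aut(M)$ extends each automorphism of $k(j(C))$, so $k \circ j$ is extensive. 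Granting the claim, any $C \in \mc{A}_\omega(M)$ is handled by taking the extensive embedding $C \to D$, $D \in \mc{B}_\omega$, from \ref{cof ext emb into B_omega}, composing with the extensive embedding $D \to M$ from the claim, and concluding that $\Ao(M)$ is extensible.

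The bulk of the work is the claim, which I would obtain by running the Katětov tower construction of \cite{KM17} for the subclass $\mc{B}$ in place of $\mc{A}(M)$. Following \cite[Theorem 2.2]{KM17} I would extend the Katětov functor $K : \mc{B} \to \mc{B}_\omega$ of \ref{cof B Ka}, together with its natural transformation $\eta$, to a functor $\hat K : \mc{B}_\omega \to \mc{B}_\omega$ and a natural transformation $\hat\eta$; this lands in $\mc{B}_\omega$ because $\mc{B}_\omega$ is closed under $\omega$-colimits. For $D \in \mc{B}_\omega$ I would then form $U = \dlim_n \hat K^n(D)$ along the maps $\hat\eta$, with colimit embedding $e : D \to U$, noting $U \in \mc{B}_\omega \sub \mc{A}_\omega(M)$, so that $U$ is countable and $\Age(U) \sub \Age(M)$.

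I would then show $U \cong M$. By \Frthm it suffices to check that $U$ is ultrahomogeneous with $\Age(U) = \Age(M)$, and for this (as in \cite[Theorems 3.2, 3.3]{KM17}) it is enough to realise in $U$ every one-point extension of every finite substructure $A \sub U$. This is where \ref{cof factor embs} enters: the inclusion of $A$ into some stage $\hat K^n(D) \in \mc{B}_\omega$ factors through an embedding $A \to B$ with $B \in \mc{B}$. Given a one-point extension $A \to A'$ in $\Age(M)$, I would amalgamate $A'$ and $B$ over $A$ in $\Age(M)$ and pass to the substructure generated by $B$ and the new point, obtaining a one-point extension $B \to B'$ with $A' \hookrightarrow B'$ over $A$; since $B \in \mc{B}$, upward closure \ref{cof closed under embs} forces $B' \in \mc{B}$, so $B \to B'$ is a one-point extension within $\mc{B}$, and the defining property \ref{Ka ope} of $K$ realises $B'$, hence $A'$ over $A$, inside $K(B) \to \hat K^{n+1}(D) \sub U$. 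This yields the extension property, whence $\Age(U) = \Age(M)$ and $U$ is the Fraïssé limit, i.e.\ $U \cong M$. Finally, functoriality makes $e$ extensive: each $g \in \Aut(D)$ gives the family $(\hat K^n(g))_n$, compatible with the colimit maps because $\hat\eta$ is natural, hence an automorphism $\bar g \in \Aut(U)$ with $\bar g \circ e = e \circ g$; functoriality of $\hat K$ makes $g \mapsto \bar g$ an injective homomorphism, so $e$ (composed with $U \cong M$) is an extensive embedding $D \to M$.

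The main obstacle is the proof that $U \cong M$, and within it the extension-property step, since $\mc{B}$ is not closed under substructures and so \ref{Ka ope} only speaks about one-point extensions of objects already in $\mc{B}$; the two hypotheses \ref{cof factor embs} and \ref{cof closed under embs}, together with amalgamation in $\Age(M)$, are precisely what reduces the general extension problem to that case. The remaining ingredients — existence of the extended functor $\hat K$, closure of $\mc{B}_\omega$ under colimits, and the functorial behaviour of the induced automorphisms — are routine adaptations of \cite{KM17} that I would cite rather than reprove.
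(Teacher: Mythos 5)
Your proposal is correct and follows essentially the same route as the paper: both run the Katětov tower of \cite{KM17} over the subclass $\mc{B}$, use \ref{cof factor embs} to factor a finite substructure through some $B \in \mc{B}$, amalgamate the one-point extension with $B$ and invoke \ref{cof closed under embs} to stay in $\mc{B}$, and finally compose the resulting extensive embedding $D \to K^\omega(D) \cong M$ with the one from \ref{cof ext emb into B_omega}. The extra detail you supply (the explicit check that extensive embeddings compose, and the conjugation bookkeeping) is a welcome elaboration of steps the paper leaves implicit, but it does not change the argument.
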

(Note that here we permit non-hereditary $\mc{B}$.)

\begin{proof}
    This is a straightforward adaptation of Theorem 3.3 and Corollary 3.9 of \cite{KM17} -- we sketch the adjustments needed. Specifically: Lemmas 1.1--1.3 of \cite{KM17} hold for $\mc{B}$, Lemma 1.4 holds for $\mc{A}(M)$, Lemma 2.3 holds for $\mc{B}$. In Theorem 3.3: one shows that $K^\omega(D) \cong M$ for each $D \in \mc{B}_\omega$. Let $L = K^\omega(D)$ (we use the notation of the proof in \cite{KM17}). To show that $\mc{A}(M) \sub \Age(L)$, use JEP and \ref{cof closed under embs}. For the proof of the extension property, use \ref{cof factor embs}, amalgamate the one-point extension of $A$ with $A \to B$ and use \ref{cof closed under embs} to get a one-point extension of $B$ in $\mc{B}$. Properties \ref{cof ext emb into B_omega}, \ref{cof B Ka} then give Corollary 3.9: compose the $\circ$-extensive embeddings $C \to D$ and $D \to K^\omega(D)$ to obtain a $\circ$-extensive embedding $C \to K^\omega(D) \cong M$.  
\end{proof}

\begin{defn}
    Let $2 \leq n \leq \omega$. Let $A$ be an oriented graph. For distinct $x, y \in A$, we write $x \ic y$ if there is no oriented edge between $x$ and $y$. We say that $A$ is an \emph{$n$-partite tournament} if $\ic$ is an equivalence relation with $\leq n$ equivalence classes, which we call \emph{parts}. Let $\mc{C}$ be the class of finite $n$-partite tournaments. It is straightforward to show that $\mc{C}$ is an amalgamation class; we let $M = \FrLim(\mc{C})$. We call $M$ the \emph{generic $n$-partite tournament}.
\end{defn}

\begin{prop}
    Let $2 \leq n \leq \omega$, and let $M$ be the generic $n$-partite tournament.
    \begin{enumerate}[label=(\roman*)]
        \item \label{3pt no SAO} For $2 < n < \omega$, the structure $M$ does not have a local SWIR, but has a finite SIR expansion.
        \item \label{2pt local SAO} For $n = 2$, the structure $M$ has a local SWIR but not a global SWIR.
        \item \label{omegapt global SAO} For $n = \omega$, the structure $M$ has a global SWIR.
    \end{enumerate}
\end{prop}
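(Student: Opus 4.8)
The plan is to work throughout with the equivalent language of standard amalgamation operators and the SWIR axioms (Inv), (Ex), (Sta), (Mon) from \Cref{s: SWIR SAO equivalence}, and to isolate a single combinatorial obstruction that drives both non-existence statements at once. The obstruction is: \emph{if over some base $A \fg M$ there is a single type $t$ over $A$ realised in $k \geq 2$ distinct parts which together exhaust the parts available to $t$, then no SWIR can exist over $A$.} Concretely, choose $b_1, \dots, b_k$ pairwise $\equiv_A$-equivalent, all of type $t$, lying one in each of these $k$ parts. Right-(Ex) yields $c'$ of type $t$ with $\{b_1, \dots, b_k\} \ind_A c'$, and peeling off each $b_i$ in turn via (Mon) (singling out $b_i$ as the left factor of the block) gives $b_i \ind_A c'$ for every $i$. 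Since $c'$ also has type $t$ it lies in one of the $k$ parts, so $c' \ic b_{i_0}$ for exactly one index $i_0$ and $c' \not\ic b_i$ otherwise. But $b_i \equiv_A b_j$ together with $b_i \ind_A c'$ and $b_j \ind_A c'$ forces $b_i \equiv_{Ac'} b_j$ by (Sta), contradicting that $c'$ is $\ic$-related to exactly one of the $b_i$.

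For part \ref{omegapt global SAO} I would define the global SWIR directly: $B \ind_A C$ iff for all $b \in B \setminus A$, $c \in C \setminus A$ we have (a) $b \ic c$ only when forced, i.e.\ only when $b \ic a \ic c$ for some $a \in A$, and (b) $b \ra c$ whenever $b \not\ic c$. Over $A = \varnothing$ this places the elements of $B \setminus A$ and $C \setminus A$ in pairwise distinct parts with all cross-edges oriented from $B$ to $C$, which is realisable precisely because $\omega$ parts are available. Verifying (Inv), (Ex), (Sta) and (Mon) is then routine and parallel to the random-tournament SWIR of \Cref{ex: structures with SWIRS}, with the equivalence-relation bookkeeping handled by transitivity of $\ic$.

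For part \ref{3pt no SAO}, non-existence of a local SWIR is the obstruction applied to $A = \{a\}$: over a single point the type ``$a \ra x$'' is realised in all $n-1 \geq 2$ non-$a$ parts, and these are exactly the parts available to that type, so $b_1, \dots, b_{n-1}$ (one per non-$a$ part) yield the contradiction. Here I must check these points are genuinely mutually $\equiv_a$, which holds because an automorphism fixing $a$ may permute the remaining parts. For the finite SIR expansion I would name the $n$ parts by unary predicates $P_1, \dots, P_n$: the resulting class has strong amalgamation in which the only freedom is the orientation of edges between \emph{distinct named} parts, so the symmetric rule ``the lower-indexed part beats the higher-indexed part'' defines a SIR, and forgetting the predicates recovers $M$.

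For part \ref{2pt local SAO}, the local SWIR exists because a nonempty base pins down the bipartition: for any $a \in A$ a vertex lies in $a$'s part iff it is $\ic a$, so both parts are determined by $A$ and the only remaining freedom is orienting cross-part edges, which I fix by $b \ra c$ for $b \in B \setminus A$, $c \in C \setminus A$ in different parts. The key point is that over a nonempty base two elements of the same type now lie in the \emph{same} part, so the obstruction cannot be run and (Inv), (Ex), (Sta), (Mon) all go through. That the global SWIR fails is the obstruction applied to $A = \varnothing$, where the single type ``vertex'' is realised in both parts, so any $b_1 \not\ic b_2$ with a suitable $c'$ give the contradiction. I expect the main obstacle to be the bookkeeping in the $n=2$ positive direction — confirming the bipartition is determined over \emph{every} nonempty base, including bases contained in a single part, and that monotonicity and stationarity survive — together with verifying in each negative case that the chosen representatives really do share a type over the base, so that (Sta) genuinely applies.
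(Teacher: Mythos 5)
Your proof is correct, and the positive halves (the local SWIR for $n=2$, the global SWIR for $n=\omega$, and the SIR expansion by unary part-labels) coincide with the paper's constructions essentially verbatim. Where you genuinely diverge is in the two non-existence arguments. The paper exhibits an explicit failure of (Inv) for the induced amalgamation operator: it amalgamates a directed $4$-cycle $B$ over a two-point anticlique $A$ with an $(n-1)$-part tournament $C$, notes that the part-count forces some $u_i \in B$ to merge into a part of $C$, and then applies the rotation automorphism of $B$ over $A$ to force $u_{i+1}$ into the same part, contradicting $u_i \not\ic u_{i+1}$; the same $4$-cycle over $\varnothing$ handles the global $n=2$ case. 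You instead isolate a single reusable obstruction that violates (Sta): a type over $A$ realised in $k \geq 2$ parts exhausting its possibilities yields, via right-(Ex) and left-(Mon), a point $c'$ with $b_i \ind_A c'$ for all $i$, whence (Sta) forces all $b_i$ to have the same quantifier-free type over $Ac'$, contradicting the pigeonhole fact that $c'$ is $\ic$-related to exactly one of them. Your argument works over a singleton base for \ref{3pt no SAO} and over $\varnothing$ for \ref{2pt local SAO}, needs no auxiliary structure with a nontrivial automorphism, and handles both negative statements uniformly; the paper's version is a more concrete minimal counterexample to canonical amalgamation. Both are sound. Two small points: your verification that the $b_i$ are mutually $\equiv_A$ and that disjointness of $c'$ from the $b_i$ is harmless are fine (the latter also follows from \Cref{SWIR strong amalg} since the class has strong amalgamation); and in part \ref{omegapt global SAO} the phrase ``places the elements of $B \setminus A$ and $C \setminus A$ in pairwise distinct parts'' should be read as applying only to cross-pairs $(b,c)$ with $b \in B \setminus A$, $c \in C \setminus A$ — elements within $B$ (or within $C$) of course retain their own part structure — but this is a matter of phrasing, not a gap.
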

\begin{proof} 
    \ref{3pt no SAO}: Let $A = \{a_0, a_1\}$ with $a_0 \ic a_1$. Let $B = A \cup \{u_i \mid i \in \Z/4\Z\}$ with $u_0 u_1 u_2 u_3$ a directed $4$-cycle and $a_0 \ra u_i, a_1 \ra u_i$ for all $i$. We have $A, B \in \mc{C}$. Let $C$ be an $n-1$-partite tournament containing $A$ with exactly $n-1$ parts. Suppose there exists a local SAO $\otimes$ for $\mc{C}$. Then as $B \otimes_A C$ is $n$-partite, for some $u_i$ and $v \in C \setminus A$ we must have $u_i \ic v$ in the amalgam. But then by applying (Inv) to the automorphism $u_k \mapsto u_{k+1}$ of $B$ we have $u_{i + 1} \ic v$, and so $u_i \ic u_{i + 1}$, contradiction.

    We now briefly sketch an expansion with a global symmetric SAO: expand the tournament language by $n$ unary predicates $P_0, \cdots, P_{n-1}$, and let $\mc{C}'$ be the class of finite structures in this expanded language obtained by expanding each structure in $\mc{C}$ by all possible labellings of its parts using $P_0, \cdots, P_{n-1}$ (interpretations of some $P_i$ may be empty, and each part has exactly one label). For $B, C \in \mc{C}'$ with $B \cap C = A$, we define $B \otimes_A C$ by: for all $b \in B \setminus A$, $c \in C \setminus A$ with $P_i^B(b), P_j^C(c)$, set $b \ic c$ if $i = j$, $b \ra c$ if $i < j$ and $b \la c$ if $i > j$.

    \ref{2pt local SAO}: A similar argument to \ref{3pt no SAO} shows that $\mc{C}$ does not have a global SAO: take $B$ a directed $4$-cycle and $C$ a vertex. We define a local SAO for $\mc{C}$ as follows. Let $A, B, C \in \mc{C}$ with $B \cap C = A \neq \varnothing$. We now define the amalgam $B \otimes_A C$. Let $b \in B \setminus A$, $c \in C \setminus A$. If there exists $a \in A$ with $b \ic a, c \ic a$ or with edges $ab, ac$ (with some orientation) then set $b \ic c$ in the amalgam. Otherwise set $b \ra c$. Checking that $\otimes$ is a SAO is straightforward.

    \ref{omegapt global SAO}: We show that $\mc{C}$ has a global SAO. Let $A, B, C \in \mc{C}$ with $B \cap C = A$. We define $B \otimes_A C$: for $b \in B \setminus A$, $c \in C \setminus A$, if there exists $a \in A$ with $b \ic a, c \ic a$, set $b \ic c$, and otherwise set $b \ra c$.
\end{proof}

\begin{prop} \label{p: Ka npartite}
    Let $2 \leq n \leq \omega$, and let $M$ be the generic $n$-partite tournament.
    \begin{enumerate}[label=(\roman*)]
        \item \label{Ka npartite} If $2 < n < \omega$ then $M$ does not have a \Ka functor, but has $\circ$-extensible $\omega$-age.
        \item \label{Ka 2partite} If $n = 2$ then $M$ has a \Ka functor.
        \item \label{Ka omegapartite} If $n = \omega$ then $M$ has a \Ka functor.
    \end{enumerate}
\end{prop}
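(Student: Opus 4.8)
The plan is to construct explicit \Ka functors for $n=2$ and $n=\omega$ by the labelling-and-lexicographic-comparison technique of \Cref{ex: Ka for tourn}, and to deduce extensibility in the range $2<n<\omega$ from a \Ka functor on a cofinal subclass via \Cref{l: cofinal}, while treating the non-existence of a \Ka functor as a separate matter. For \ref{Ka 2partite} ($n=2$) I would mirror the random-tournament construction almost verbatim. Given a finite bipartite tournament $A$ with parts $P,Q$, each one-point extension $(A,e)$ either places $e$ in an existing part or (if a part is empty) creates the second part; in every case $e$ is $\ic$ to its own part and adjacent to each vertex of the opposite part, so its in-neighbourhood $e^-$ lies entirely in the part opposite to $e$. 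For each extension and each enumeration $\bar v$ of $e^-$ I place a labelled copy $e_{\bar v}$ in $K(A)$, declare $e^{}_{\bar v}\ic e'_{\bar v'}$ whenever the two lie in the same part, and otherwise orient them by length and then lexicographically via $[e^{}_{\bar v}e'_{\bar v'}]:=[v^{}_i v'_i]$ at the first index $i$ of disagreement. The point that makes this succeed for $n=2$, and the precise reason it fails to extend to larger finite $n$, is that whenever $e^{}_{\bar v}$ and $e'_{\bar v'}$ must be oriented they lie in opposite parts, so every label entry $v^{}_i$ lies in the part opposite to every $v'_i$; hence $v^{}_i,v'_i$ are adjacent, $[v^{}_i v'_i]$ is a well-defined orientation preserved by embeddings, and in fact the first disagreement occurs already at $i=0$. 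The only ties are between empty labels of distinct extension types, which I break by a fixed enumeration of types, and setting $K(f)(e_{\bar v})=e_{f(\bar v)}$ gives a \Ka functor exactly as in \Cref{ex: Ka for tourn}.

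For \ref{Ka omegapartite} ($n=\omega$) I would use the same template, taking the global SWIR (equivalently SAO) of the preceding proposition as the backbone amalgam: two extension vertices are $\ic$ precisely when they share an $\ic$-neighbour in $A$, that is, when they join a common existing part, and otherwise must be oriented. The difficulty here, which I regard as the \textbf{main obstacle} of the whole proposition, is that two extension vertices lying in \emph{distinct new} parts may have in-neighbourhoods meeting a common part of $A$, so the first disagreeing label entries $v^{}_i,v'_i$ can be non-adjacent and $[v^{}_i v'_i]$ is undefined. I would resolve this by enriching the labels to full enumerations of $A$ and replacing the entrywise comparison by a purely combinatorial comparison of the two enumerations (for instance, comparing the positions at which the first disagreeing elements reappear in the opposite enumeration), so that the resulting orientation depends only on data manifestly transported by the induced label map $\dot\theta$ of an embedding $\theta:A\to B$. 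This plays the role of the structural order $<_{\sigma_A}$ of \Cref{str order}, but carried out in the absence of any linear order on $M$; the crux is to verify that the relation so defined is a genuine, automorphism-invariant tournament between each ordered pair of parts and that it is functorial.

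For \ref{Ka npartite} ($2<n<\omega$) there are two tasks. Extensibility of $\Ao(M)$ I would obtain from \Cref{l: cofinal} applied to the subclass $\mc{B}$ of finite $n$-partite tournaments with \emph{exactly} $n$ parts: $\mc{B}$ is closed upwards in $\mc{A}(M)$ and any embedding of a finite structure into a colimit of members of $\mc{B}$ factors through $\mc{B}$, giving conditions \ref{cof closed under embs} and \ref{cof factor embs}, while condition \ref{cof ext emb into B_omega} is arranged by freely adjoining the missing parts so as to reach a fully generic member of $\mc{B}_\omega$. The remaining condition \ref{cof B Ka}, a \Ka functor for $\mc{B}$, I would build by the labelled-lexicographic method of the $n=\omega$ case: now no new part is ever created, but the incomparability phenomenon still arises (two extensions joining different parts may have in-neighbourhoods meeting a common third part), so it is handled by the same enriched comparison.

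For the non-existence of a \Ka functor on all of $\mc{A}(M)$ I would adapt the argument of the preceding proposition that $M$ has no local SWIR, where the directed $4$-cycle $B$ together with its cyclic automorphism and an auxiliary $(n-1)$-partite structure $C$ forces, through invariance and the cap of $n$ on the number of parts, two cycle-vertices in distinct parts to become $\ic$. The essential subtlety — and the reason this step resists a direct citation, since a \Ka functor need not induce a SAO (as the generic two-graph shows) — is that a \Ka functor permits labelled multiplicities, so one must show that \emph{no} labelling of the functorial amalgam can simultaneously respect the cyclic symmetry of the $4$-cycle and keep the number of parts bounded by $n$. I expect the finiteness of $n$ to drive the contradiction exactly as the cap does in the SWIR argument, and I expect this negative step, rather than the positive constructions, to demand the most care.
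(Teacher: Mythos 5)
Your overall architecture matches the paper's (labelled copies of one\hyp{}point extensions for \ref{Ka 2partite} and \ref{Ka omegapartite}, extensibility for $2<n<\omega$ via \Cref{l: cofinal} applied to the exactly-$n$-parts subclass $\mc{B}$, a symmetry-plus-part-cap argument for non-existence), but there is a concrete gap in the positive constructions: the in-neighbourhood is not enough information to label by. Already for $n=2$ your construction fails. Take $A=\{a\}$ and its two extensions $e^{(0)}$ with $e^{(0)}\ic a$ and $e^{(1)}$ with $e^{(1)}\ra a$; both have empty in-neighbourhood, so each gets the single label $()$, and they must be oriented in $K(A)$ since they lie in different parts. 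Now let $B=\{a,b\}$ with $a\ra b$ and consider the embeddings $f\colon a\mapsto a$ and $g\colon a\mapsto b$. Writing $X$ for the extension of $B$ with $X\ic a$, $X\ra b$ and $Y$ for the one with $Y\ic b$, $Y\ra a$ (the only two with empty in-neighbourhood), your rule gives $K(f)(e^{(0)}_{()})=X^{}_{()}$, $K(f)(e^{(1)}_{()})=Y^{}_{()}$ but $K(g)(e^{(0)}_{()})=Y^{}_{()}$, $K(g)(e^{(1)}_{()})=X^{}_{()}$; since both maps must be embeddings, the orientation of $X^{}_{()}Y^{}_{()}$ would have to equal $[e^{(0)}_{()}e^{(1)}_{()}]$ and its reverse simultaneously. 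No ``fixed enumeration of types'' can break this tie. The paper's fix is to label each extension by a vertex $a\in A$ (giving $|A|$ copies of each extension) and to orient $e^{}_a e'_{a'}$ using the relation of each extension vertex to \emph{its own} label (whether $e_a\ic a$) and, when $a\not\ic a'$, the orientation of $aa'$ --- data visibly transported by $K(f)(e^{}_a)=e'_{f(a)}$.

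The same defect infects your plans for $n=\omega$ and for $\mc{B}$, and your self-described ``main obstacle'' is self-inflicted: two extension vertices starting distinct new parts need \emph{not} be oriented, and sometimes cannot be. For $A=\{a,b\}$ an anticlique, the extensions $e$ with $b\ra e\ra a$ and $e'$ with $a\ra e'\ra b$ carry labels $(b)$ and $(a)$ and are swapped by $K(\tau)$ for the automorphism $\tau\colon a\leftrightarrow b$, which forces them to be $\ic$, not oriented. The paper avoids all of this: for $n=\omega$ it places every new-part extension into one common unlabelled new part (all mutually $\ic$, dominated by the labelled extensions), and in $\mc{B}$, where no new part can arise, it labels by a vertex $b\ic e$ and orients $e^{}_b e'_{b'}$ as $bb'$ whenever $b\not\ic b'$; representatives of distinct parts are always adjacent, so the undefined-orientation problem never occurs and no ``enriched combinatorial comparison'' is needed. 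Finally, your non-existence argument is only an expectation, and the SWIR configuration you cite does not transfer (the $4$-cycle sits over a $2$-element base and gives $5$ parts, not $n$). The paper instead takes $A=\{a\}$ and $B=A\cup\bigcup_{i<n-1}\{b_i,b'_i\}$ with $b_i\ic b'_i$, all $b_i,b'_i\ra a$, and each $b^{}_ib^{}_jb'_ib'_j$ a directed $4$-cycle, equipped with an automorphism $\sigma$ fixing $a$ and moving every other part; any $e\in K(A)$ with $e\ra a$ has $K(f)(e)$ forced into one of those $n-1$ parts, yet $K(\sigma)\circ K(f)=K(f)$ forces $K(f)(e)\ic b_i$ and $K(f)(e)\ic\sigma(b_i)$ with $b_i,\sigma(b_i)$ in different parts --- a contradiction. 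You should adopt these constructions rather than the ones sketched.
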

\begin{proof}
    \ref{Ka npartite}: We first show that $M$ has no \Ka functor. Let $A = \{a\}$ and let $B = A \cup \bigcup_{i < n-1}\{b_i, b'_i\}$ with $b_i \ic b'_i$, $b_i \ra a$, $b'_i \ra a$ for all $i$ and $b_i b_j b'_i b'_j$ a directed $4$-cycle for $i < j < n-1$. Then $A, B \in \mc{A}(M)$. Define $\sigma \in \Aut(B)$ by $\sigma(a) = a$; $\sigma(b_i) = b_{i+1}$, $\sigma(b'_i) = b'_{i+1}$ for $i < n - 2$; $\sigma(b_{n-2}) = b'_0$, $\sigma(b'_{n-2}) = b_0$. Suppose for a contradiction that $M$ has \Ka functor $K$. Let $e \in K(A)$ with $e \ra a$. Let $f : A \to B$ be the inclusion map. Let $e' = K(f)(e)$. Then as $B$ has $n$ parts and $e' \ra a$, we have $e' \ic b_i$ for some $i$. As $\sigma \circ f = f$ we have $K(\sigma) \circ K(f) = K(f)$, so $e' = K(\sigma)(e') \ic K(\sigma)(b_i) = \sigma(b_i)$, but $\sigma(b_i), b_i$ are not in the same part -- contradiction.

    We now use \Cref{l: cofinal} to show that $M$ has $\circ$-extensible $\omega$-age. Let $\mc{B} \sub \mc{A}_\omega(M)$ consist of the finite $n$-partite tournaments with $n$ parts. Properties \ref{cof closed under embs}, \ref{cof factor embs} in \Cref{l: cofinal} are straightforward, and \ref{cof ext emb into B_omega} follows for $C \in \mc{A}_\omega(M)$ with $k < n$ parts by adding $n-k$ parts and out-edges from each vertex of $C$ to each vertex of the new parts. We now show property \ref{cof B Ka}, which states that $\mc{B}$ has a \Ka functor, and then by \Cref{l: cofinal} it will follow that $\Ao(M)$ is $\circ$-extensible. Let $B \in \mc{B}$. For each $(B, e) \in \mc{E}_B$ and each $b \in B$ with $b \ic e$, add to $K(B)$ a labelled copy $(B, e_b)$ of $(B, e)$ over $B$. To specify the structure between labelled extension vertices $e^{}_b, e'_{b'}$: if $b \ic b'$, set $e^{}_b \ic e'_{b'}$; otherwise give $e^{}_b e'_{b'}$ the same orientation as $b b'$. For $f : B \to B'$ in $\mc{B}$, extend $f$ to $K(f)$ via $K(f)(e^{}_{b\vphantom{f(b)}}) = e'_{f(b)}$, where $(B', e'_{f(b)})$ is the extension with $(f(B), e'_{f(b)}) \cong (B, e^{}_b)$ and $e'_{f(b)} \ra b'$ for $b' \in B' \setminus f(B)$ with $b'$, $f(b)$ in different parts.
    
    \ref{Ka 2partite}: Let $A \in \mc{A}(M)$. For $(A, e) \in \mc{E}_A$ and $a \in A$, add to $K(A)$ a labelled copy $(A, e_a)$ of $(A, e)$ over $A$. For labelled extension vertices $e^{}_{a\vphantom{a'}}, e'_{a'\vphantom{a'}}$, if there is $u \in A$ with $u, e^{}_{a\vphantom{a'}}, e'_{a'\vphantom{a'}}$ in the same part or with edges $e^{}_{a\vphantom{a'}} u$, $e'_{a'\vphantom{a'}}u$, then $e^{}_{a\vphantom{a'}}, e'_{a'\vphantom{a'}}$ are in the same part in any amalgam, so set $e^{}_{a\vphantom{a'}} \ic e'_{a'\vphantom{a'}}$. Otherwise we must have $e^{}_{a\vphantom{a'}}, e'_{a'\vphantom{a'}}$ in different parts in any amalgam, and we now specify the edge relation between them. If $a \ic e^{}_{a\vphantom{a'}}$ and $e'_{a'\vphantom{a'}}, a'$ are in different parts, set $e^{}_{a\vphantom{a'}} \ra e'_{a'\vphantom{a'}}$ (and vice versa). The remaining case is where $a, a'$ are in different parts: give $e^{}_{a\vphantom{a'}} e'_{a'\vphantom{a'}}$ the same orientation as $a a'$. For each embedding $f : A \to B$ in $\mc{A}(M)$, we define $K(f) \supseteq f$ as follows: for $e^{}_a \in K(A)$, define $K(f)(e^{}_{a\vphantom{f(a)}}) = e'_{f(a)}$, where $(B, e'_{f(a)})$ is the extension with $(A, e^{}_{a\vphantom{f(a)}}) \cong (f(A), e'_{f(a)})$ and $e'_{f(a)} \ra b$ for all $b \in B \setminus f(A)$ such that there is no $u \in f(A)$ with $e'_{f(a)} \ic u \ic b$ or with edges $e'_{f(a)}u$, $bu$. 

    \ref{Ka omegapartite}: Let $A \in \mc{A}(M)$. For $(A, e) \in \mc{E}_A$ and $a \in A$ with $a \ic e$, add to $K(A)$ a labelled copy $(A, e_a)$ of $(A, e)$ over $A$ and define the structure between labelled extensions as in \ref{Ka npartite}. For $(A, e) \in \mc{E}_A$ such that $e$ does not lie in any part of $A$, add $(A, e)$ without a label to $K(A)$. For $e, e'$ not lying in any part of $A$, we set $e \ic e'$ in $K(A)$, and for each labelled extension $e_a$ and unlabelled $e'$ we set $e_a \ra e'$. For $f : A \to B$ in $\mc{A}(M)$, define $K(f)$ for labelled extensions as in \ref{Ka npartite}, and for unlabelled $e \in K(A)$ we define $K(f)(e)$ to be the unlabelled extension $(B, e')$ such that $(A, e) \cong (f(A), e')$ and $e' \ra b$ for all $b \in B \setminus f(A)$. 
\end{proof}

\subsection{Reducts of \texorpdfstring{$(\Q, <)$}{(Q, <)}} \label{betweenness and friends}

We now discuss the non-trivial proper reducts of $(\Q, <)$: these are the \Fr structures given by the betweenness relation, the circular order and the separation relation (see \cite[Theorem 6.2.1, Example 2.3.1]{Mac11}, \cite{Cam76}). 

For $(\Q, <)$ itself, \Cref{ex: structures with SWIRS} gives a SWIR, and we define a \Ka functor as follows. For each non-empty finite linear order $A$, there is a unique amalgam $K(A)$ of the one-point extensions of $A$, which correspond to cuts $(u, v)$ with $u, v \in A$ and the endpoint cuts $(-\infty, \min A)$, $(\max A, \infty)$. Given an embedding $f : A \to B$, for each cut $(u, v) \in K(A)$ with $u \in A$, define $K(f)$ to send $(u,v)$ to the cut of $B$ with first coordinate $f(u)$, and define $K(f)$ to send the cut $(-\infty, \min A)$ to the cut of $B$ with second coordinate $f(\min A)$. (See \cite[Example 2.8]{KM17}, which is for monotone maps but can be easily adapted.)

Also note that it is immediate that each reduct of $(\Q, <)$ has a finite SWIR expansion, as $(\Q, <)$ itself has a SWIR. No proper reduct will have a \Ka functor in the strict sense, but this is for a not particularly striking reason: it is impossible to functorially map the extension of a single point. Each proper reduct will have a \Ka functor for structures of large enough size, and this suffices to give $\circ$-extensible $\omega$-age by the below lemma:

\begin{lem} \label{l: subcl at least n with Ka}
    Let $M$ be a \Fr structure. Let $n \geq 1$, and let $\mc{B}$ be the subclass of $\mc{A}(M)$ consisting of the structures in $\mc{A}(M)$ with size $\geq n$. Suppose that $\mc{B}$ has a \Ka functor and that for all $A \in \mc{A}(M)$ there is a $\circ$-extensive embedding $A \to B$  for some $B \in \mc{B}$. Then $M$ has $\circ$-extensible $\omega$-age. 
\end{lem}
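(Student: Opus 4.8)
The plan is to deduce this directly from \Cref{l: cofinal}, taking $\mc{B}$ to be exactly the given subclass of $\mc{A}(M)$ consisting of the structures of size $\geq n$, and then verifying the four hypotheses \ref{cof closed under embs}--\ref{cof ext emb into B_omega}. Two of these are essentially free: \ref{cof B Ka} (that $\mc{B}$ has a \Ka functor) is assumed outright, and \ref{cof closed under embs} is immediate, since any embedding $B \to A$ with $B \in \mc{B}$ forces $|A| \geq |B| \geq n$, hence $A \in \mc{B}$. Note also that $\mc{B} \sub \mc{B}_\omega$, as each $B \in \mc{B}$ is the colimit of the constant diagram on itself.

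For the factorisation property \ref{cof factor embs}, I would take an embedding $A \to D$ with $A \in \mc{A}(M)$ and $D \in \mc{B}_\omega$ and split on whether $D$ is finite. If $D$ is finite, then as $D$ is a colimit of structures of size $\geq n$ (the canonical maps into a directed colimit of embeddings being embeddings) we have $|D| \geq n$, so $D \in \mc{B}$ and the factorisation is trivial with $B = D$. If $D$ is infinite, let $B_0 \sub D$ be the image of $A$; enlarging $B_0$ by finitely many further elements of $D$ if necessary, I obtain a finitely generated $B$ with $B_0 \sub B \sub D$ and $|B| \geq n$. Since $B$ embeds in $M$ via $D$, we have $B \in \mc{A}(M)$, and $|B| \geq n$ gives $B \in \mc{B}$, so $A \to B \to D$ factors the given embedding.

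The one genuinely bootstrapping step is \ref{cof ext emb into B_omega}, where $C$ ranges over all of $\mc{A}_\omega(M)$, whereas the hypothesis of the present lemma supplies extensive embeddings only for finite (finitely generated) structures. Here I would again split into two cases. If $C$ is finitely generated, then $C \in \mc{A}(M)$ and the hypothesis provides an extensive embedding $C \to B$ with $B \in \mc{B} \sub \mc{B}_\omega$, as required. If $C$ is not finitely generated, hence infinite, then $C$ is the directed union of its finitely generated substructures, and because $C$ is infinite those of size $\geq n$ are cofinal among them; thus $C$ is itself a colimit of $\mc{B}$-structures, i.e.\ $C \in \mc{B}_\omega$, and the identity embedding $C \to C$ is trivially extensive. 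In both cases \ref{cof ext emb into B_omega} holds, and \Cref{l: cofinal} then yields that $M$ has extensible $\omega$-age.

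I expect the main obstacle to be conceptual rather than computational: condition \ref{cof ext emb into B_omega} of \Cref{l: cofinal} asks for extensive embeddings across the whole $\omega$-age, while the hypothesis only covers finite structures. The resolution is the simple but crucial observation that every infinite member of $\mc{A}_\omega(M)$ already lies in $\mc{B}_\omega$, since the constraint of having size $\geq n$ is cofinal among its finitely generated substructures, so nothing needs to be extended in that case. Everything else reduces to routine verification of the cofinality conditions.
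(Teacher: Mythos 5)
Your proof is correct and follows exactly the paper's route: the paper's own proof is a one-line appeal to \Cref{l: cofinal}, asserting that conditions \ref{cof closed under embs} and \ref{cof factor embs} are immediate and that \ref{cof B Ka} and \ref{cof ext emb into B_omega} follow from the hypotheses. Your write-up simply makes explicit the details the paper leaves implicit, most usefully the observation that every infinite member of $\mc{A}_\omega(M)$ already lies in $\mc{B}_\omega$ (since finitely generated substructures of size $\geq n$ are cofinal), which is exactly what bridges the gap between the hypothesis on finite structures and condition \ref{cof ext emb into B_omega}.
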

\begin{proof}
    This follows from \Cref{l: cofinal}: properties \ref{cof closed under embs}, \ref{cof factor embs} are immediate, and properties \ref{cof B Ka}, \ref{cof ext emb into B_omega} follow by assumption.
\end{proof}

\subsubsection{The betweenness relation on \texorpdfstring{$\Q$}{Q}} \label{ex: betweenness}

Let $\beta$ be the ternary betweenness relation on $\Q$: that is,
$\beta(a, b, c) \Leftrightarrow (a < b < c) \,\vee\, (a > b > c)$.

\begin{prop}
    The betweenness structure $(\Q, \beta)$ does not have a local SWIR or a \Ka functor, but has $\circ$-extensible $\omega$-age.
\end{prop}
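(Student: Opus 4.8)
The plan is to prove the three assertions separately, using throughout the description of finite betweenness structures as finite linear orders taken up to reversal: each $A \in \mc{A}(\Q, \beta)$ carries exactly two compatible linear orders, interchanged by the reversal map, and for $|A| \geq 2$ we have $\Aut(A) \cong \Z/2$, generated by this reversal. The two negative parts will be clean symmetry arguments exploiting the reversal, and the positive part will route through \Cref{l: subcl at least n with Ka}.

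For the absence of a local SWIR, by the equivalence of (local) SWIRs and (local) SAOs (\Cref{SWIR induces SAO}) it suffices to show $\mc{A}(\Q,\beta)$ has no local SAO $\otimes$. I would take $A = \{a, b\}$ (which carries no betweenness relations) with its reversal $\tau : a \leftrightarrow b$, and let $B = \{a, b, e_B\}$, $C = \{a, b, e_C\}$ be the one-point extensions in which the new vertex lies strictly between $a$ and $b$, so $\beta(a, e_B, b)$ and $\beta(a, e_C, b)$ hold. This extension type is reversal-invariant, so $\tau$ lifts to $\hat\tau_B \in \Aut(B)$ and $\hat\tau_C \in \Aut(C)$ fixing $e_B$, $e_C$ respectively, yielding an automorphism $(\tau, \hat\tau_B, \hat\tau_C)$ of the pair $A \to (B, C)$. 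By $\otimes$-minimality the amalgam $B \otimes_A C$ lives on $\{a, b, e_B, e_C\}$ and restricts to both $B$ and $C$, so $e_B$ and $e_C$ both lie between $a$ and $b$; as a linear order up to reversal it is therefore $a < e_B < e_C < b$ or $a < e_C < e_B < b$. Now $(\otimes$-Inv$)$ forces the map swapping $a, b$ and fixing $e_B, e_C$ to be an automorphism of $B \otimes_A C$, which neither of these two orders admits. This contradiction rules out a local SAO.

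For the absence of a \Ka functor, suppose $K$ is one, with natural transformation $\eta$. Property \ref{Ka ope} applied to the one-point extension of a single point shows $K(\{a\})$ contains a vertex $e \neq \eta_{\{a\}}(a)$. I would put $B = \{p, a, q\}$ with $a$ the midpoint, $\beta(p, a, q)$, so $\sigma : p \leftrightarrow q$ (fixing $a$) lies in $\Aut(B)$, and let $f : \{a\} \to B$ send $a$ to the midpoint. Then $\sigma \circ f = f$, so functoriality gives $K(\sigma) \circ K(f) = K(f)$, whence $K(\sigma)$ fixes $e' := K(f)(e)$; meanwhile property \ref{Ka nat trans} makes $K(\sigma)$ an automorphism of $K(B)$ restricting to $\sigma$ on $B$, i.e. swapping $p, q$ and fixing $a$. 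If $e' \in \{p, q\}$ this is immediately contradictory; otherwise $e' \notin \{a, p, q\}$ (note $e' \neq a$ since $K(f)$ is an embedding and $e \neq \eta_{\{a\}}(a)$), and a short check of the finitely many positions of $e'$ relative to $p < a < q$ shows no placement is fixed by the map "swap $p, q$, fix $a, e'$". This contradiction shows there is no \Ka functor.

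Finally, for extensibility of the $\omega$-age I would apply \Cref{l: subcl at least n with Ka} with $n = 2$: let $\mc{B} \subseteq \mc{A}(\Q, \beta)$ consist of the betweenness structures of size $\geq 2$. Every structure of size $< 2$ is a single point with trivial automorphism group, so any embedding of it is extensive; it thus remains to equip $\mc{B}$ with a \Ka functor. For this I would fix, for each $A \in \mc{B}$, one of its two compatible linear orders and transport the \Ka functor of $(\Q, <)$ described earlier: set $K(A)$ to be the betweenness reduct of the linear output (the chosen order with one new vertex inserted in each cut), which already realises all one-point extensions of $A$. For an embedding $f : A \to B$, record via $\mathrm{or}(f) \in \Z/2$ whether $f$ matches or reverses the chosen orders, and let $K(f)$ be the linear $K(f)$ when $\mathrm{or}(f)$ is trivial and its composite with reversal otherwise. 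I expect the main obstacle to be exactly this orientation bookkeeping: one must verify that $\mathrm{or}$ is multiplicative under composition and that reversal, as an involutive betweenness-automorphism commuting suitably with the linear functor, makes $K$ a genuine functor satisfying properties \ref{Ka nat trans} and \ref{Ka ope}. Granting this, \Cref{l: subcl at least n with Ka} delivers extensible $\omega$-age.
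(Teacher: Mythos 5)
Your two negative arguments are essentially the paper's. The no-\Ka-functor argument is the same involution trick the paper uses (with $A=\{a\}$, the extension $\{a,b\}$, the embedding into $\{c_0,a,c_1\}$ with $\beta(c_0,a,c_1)$, and the swap $c_0\leftrightarrow c_1$). Your no-SWIR argument uses a size-$2$ base instead of the paper's size-$1$ base and has one unhandled case: nothing you say rules out that the amalgam identifies $e_B$ with $e_C$, and if it does, the resulting three-point structure \emph{is} invariant under the swap of $a,b$, so no contradiction arises. This is repairable in one line -- $(\Q,\beta)$ has strong amalgamation, so by \Cref{SWIR strong amalg} (or \Cref{M strong implies canon strong}) any local SWIR would force $e_B\neq e_C$ -- but as written the case is missing.

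The genuine gap is in the positive part, and it is exactly the ``orientation bookkeeping'' you flag as an expected obstacle: it is not a verification you postponed but an obstruction, and your construction does not give a functor. The linear-order \Ka functor sends the \emph{single} new point in a cut $(u,v)$ of $A$ to the cut of $B$ adjacent to $f(u)$, i.e.\ it anchors each new point to the lower endpoint of its cut; when embeddings may reverse the chosen orientations, no rule for choosing the anchor survives composition. Concretely, take $A=\{u,v\}$ with $u<_A v$ and no betweenness relations, $B=\{a_1,a_2,a_3\}$ with $a_1<_B a_2<_B a_3$, $C=\{c_1,\dots,c_5\}$ with $c_1<_C\cdots<_C c_5$, $f:u\mapsto a_3,\ v\mapsto a_1$ (orientation-reversing) and $g:a_1\mapsto c_1,\ a_2\mapsto c_3,\ a_3\mapsto c_5$ (orientation-preserving). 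A separate composite (an orientation-reversing embedding followed by the reversal automorphism of a $3$-chain) forces the rule for reversing maps to be ``anchor at the image of the $<_A$-smaller endpoint''; with that rule $K(f)$ sends the cut-point of $\{u,v\}$ to the cut-point of $\{a_2,a_3\}$, whence $K(g)(K(f)(p))$ is the cut-point of $\{c_3,c_4\}$, while $K(g\circ f)(p)$ is the cut-point of $\{c_4,c_5\}$; the opposite anchoring convention fails symmetrically. So $K(g)\circ K(f)\neq K(g\circ f)$, and no single-new-point-per-cut construction transported from $(\Q,<)$ can work. The paper instead adds \emph{two} new points per cut $\{u,v\}$, labelled $(u,v)$ and $(v,u)$ and placed adjacent to $u$ and to $v$ respectively, and sends $(u,v)$ under $f$ to the cut-point of $B'$ adjacent to $f(u)$ on the $f(v)$-side; this anchoring to a named vertex rather than to a chosen orientation is what makes the morphism action compose (it is an instance of the paper's ``key idea'' of duplicating extensions with labels). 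You would need to replace your construction with one of this kind; as it stands the extensibility claim is not established.
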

\begin{proof}
    Let $\mc{C} = \Age(\Q, \beta)$. Let $A = \{a\}$, $B = \{a, b\}$, $C = \{a, c_0, c_1\}$ with $\beta (c_0, a, c_1)$. Then any amalgam $B \otimes_A C$ does not satisfy (Inv) (using the automorphism of $C$ fixing $a$ and swapping $c_0, c_1$), so $\mc{C}$ does not have a local SAO. Similarly, $(\Q, \beta)$ does not have a \Ka functor: with the same $A, B, C$ as above, consider the embedding $A \to C$ and $B$ as an extension of $A$, and use a similar argument to \Cref{p: Ka npartite}\ref{Ka npartite} with the involution $\sigma : c_0 \leftrightarrow c_1$ of $C$.

    Let $\mc{B}$ be the subclass of $\mc{A}((\Q, \beta))$ consisting of structures of size $\geq 2$. By \Cref{l: subcl at least n with Ka}, to show that $(\Q, \beta)$ has $\circ$-extensible $\omega$-age, it suffices to show that $\mc{B}$ has a \Ka functor (the condition on $\circ$-extensive embeddings is trivial for $|A| = 1$). Let $B \in \mc{B}$. For distinct $u, v \in B$, we say that $\{u, v\}$ is a \emph{cut} of $B$ if there is no $b \in B$ between $u, v$. We say that $b \in B$ is an \emph{endpoint} of $B$ if there do not exist distinct $u, v \in B$ with $b$ between $u, v$. We now define $K(B) \supseteq B$: for each cut $\{u, v\}$ of $B$, add points $(u, v), (v, u)$ to $K(B)$ with $\beta(u, (u, v), (v, u))$ and $\beta(v, (v, u), (u, v))$, and for each endpoint $b \in B$, letting $u \in B$ be the unique element of $B$ with $\{b, u\}$ a cut, add a point $b_\ast$ to $K(B)$ with $\beta(b_\ast, b, u)$. 
    
    Let $f : B \to B'$ be an embedding in $\mc{B}$. We define $K(f) \supseteq f$ as follows. For $(u, v) \in K(B)$ resulting from a cut $\{u, v\}$ of $B$, let $b' \in B'$ be such that $\{f(u), b'\}$ is a cut and $\beta(f(u), b', f(v))$, and define $K(f)((u, v)) = (u, b')$. For $b_\ast \in K(B)$ resulting from an endpoint $b \in B$ with cut $\{b, u\}$: if there is $b' \in B'$ with $\{f(b), b'\}$ a cut and $\beta(b', f(b), f(u))$ define $K(f)(b_\ast) = (f(b), b')$; otherwise, define $K(f)(b_\ast) = f(b)_\ast$.
\end{proof}

\subsubsection{The circular order on \texorpdfstring{$\Q$}{Q}} \label{ex: circular order}

Let $\gamma$ be the circular order on $\Q$. That is, \[\gamma(a, b, c) \Leftrightarrow (a < b < c) \vee (b < c < a) \vee (c < a < b).\]

\begin{prop}
    The circular order $(\Q, \gamma)$ has a local SWIR and a \Ka functor.
\end{prop}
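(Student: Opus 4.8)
The plan is to realise $(\Q,\gamma)$ as a ``cut'' copy of $(\Q,<)$ for the independence relation, and to use the orientation carried by $\gamma$ to make canonical choices for the \Ka functor. For a non-empty finite base $A$ and points $b,c\notin A$, say that $b,c$ \emph{share an arc of $A$} if they lie in the same connected component of the circle (on which $\Q$ sits densely) after the points of $A$ are removed. For such $b,c$ one checks that the truth value of $\gamma(a,b,c)$ does not depend on the choice of $a\in A$; write $b<^A c$ when it holds. I then define $B\ind_A C$ to mean that $b<^A c$ for all $b\in B\setminus A$ and $c\in C\setminus A$ sharing an arc of $A$. Invariance (Inv) is immediate, since arcs and $<^A$ are defined purely from $\gamma$ and $A$.

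To verify the remaining axioms I reduce to $(\Q,<)$. Fixing the smallest base $A$ occurring in a given axiom and a point $a_0\in A$, cut the circle at $a_0$ by setting $x<_{a_0}y\Leftrightarrow\gamma(a_0,x,y)$ on $\Q\setminus\{a_0\}$; this is a dense linear order without endpoints, so $(\Q\setminus\{a_0\},<_{a_0})\cong(\Q,<)$. For any base $A'\supseteq\{a_0\}$ the arcs of $A'$ correspond exactly to the gaps of $A'\setminus\{a_0\}$ in $<_{a_0}$ (the two arcs adjacent to $a_0$ becoming the two unbounded gaps), and $<^{A'}$ agrees with $<_{a_0}$ on each; hence $B\ind_{A'}C$ translates, after deleting $a_0$ from all three arguments, into the statement that $B\setminus\{a_0\}$ is independent from $C\setminus\{a_0\}$ over $A'\setminus\{a_0\}$ for the global linear SWIR of \Cref{ex: structures with SWIRS}. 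Every base appearing in (Ex), (Sta) and (Mon) contains the smallest base $A\ni a_0$, so a single cut handles all instances of a given axiom, and the axiom for $\ind$ follows from the corresponding one for the linear SWIR. This yields a local SWIR (it is only local because, over the empty base, no point is available to orient the arcs).

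For the \Ka functor, given non-empty finite $A$ let $K(A)$ be the circular order obtained by inserting one new point $e_\alpha$ into each arc $\alpha$ of $A$, with $\eta_A$ the inclusion. The one-point extension property is clear: the one-point extensions of $A$ are exactly the $|A|$ placements of a point into an arc, and each embeds into $K(A)$ via $e\mapsto e_\alpha$. To define $K$ on morphisms, label each arc by its \emph{start-point}, the element of the base immediately clockwise from it, so that reading counterclockwise one passes the start-point before entering the arc. For an embedding $f:A\to B$ let $K(f)$ extend $f$ and send the new point of the arc with start-point $a$ to the new point of $K(B)$ in the arc of $B$ with start-point $f(a)$; this is well defined since each point of $B$ is the start-point of a unique arc, carrying a unique new point.

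I expect the main work to be checking that $K(f)$ is an embedding and that $K$ is functorial, and both reduce to bookkeeping with start-points. The new point attached to $f(a)$ sits immediately after $f(a)$ and before the next point of $B$, hence inside the image of the arc with start-point $a$; reading counterclockwise one meets $f(a_0),e'_{f(a_0)},f(a_1),e'_{f(a_1)},\dots$ in cyclic order, matching $K(A)$, so $K(f)$ is a circular-order embedding. For functoriality the key point is that $K(g)$ sends the new point in the arc with start-point $p$ to the new point in the arc with start-point $g(p)$; applying this to $f$ and then $g$ yields the new point with start-point $g(f(a))=(gf)(a)$, which is exactly $K(gf)(e_\alpha)$, while on $A$ both composites agree with $gf$. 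This is precisely where orientation is essential: in the betweenness and separation reducts the two ends of a gap cannot be distinguished, which obstructs a strict \Ka functor there, whereas $\gamma$ breaks this symmetry, so the construction really is a \Ka functor. Extensibility of $\Ao(\Q,\gamma)$ then follows from \Cref{Ka implies extensible}.
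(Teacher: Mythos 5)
Your proposal is correct and follows essentially the same route as the paper: your arcs with their start-points are exactly the paper's cuts $(u,v)$, your relation $B\ind_A C$ coincides with the paper's (including the special case $|A|=1$), and your \Ka functor (one new point per arc, with $K(f)$ matching start-points along $f$) is the paper's construction verbatim. The only addition is that you spell out the verification of the SWIR axioms by cutting the circle at a base point and transporting everything to the linear SWIR on $(\Q,<)$, a check the paper leaves to the reader; this reduction is sound since every base occurring in each axiom contains the chosen point.
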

\begin{proof}
    For $A \in \mc{A}((\Q, \gamma))$ and distinct $u, v \in A$, we say that $(u, v)$ is a \emph{cut} of $A$ if there does not exist $a \in A$ with $\gamma(u, a, v)$. We define a local SWIR for $(\Q, \gamma)$: for $A, B, C \fin (\Q, \gamma)$ with $A \neq \varnothing$, in the case $|A| \geq 2$ we define $B \ind_A C$ if, for each cut $(u, v)$ of $A$, each pair $b \in B \setminus A$, $c \in C \setminus A$ with $\gamma(u, b, v) \wedge \gamma(u, c, v)$ satisfies $\gamma(u, b, c)$, and in the case $A = \{u\}$, we define $B \ind_A C$ if $\gamma(u, b, c)$ for all $b \in B \setminus A$, $c \in C \setminus A$. We leave it to the reader to check the SWIR axioms. (To see that no global SAO can exist, consider the amalgam over $\varnothing$ of $B = \{b\}$, $C = \{c_0, c_1\}$.)
    
    We now show that $(\Q, \gamma)$ has a \Ka functor. Let $A \in \mc{A}((\Q, \gamma))$. If $|A| \geq 2$, we define $K(A) \supseteq A$ by adding all cuts $(u, v)$, where we define $\gamma(u, (u, v), v)$. For $A = \{u\}$, we define $K(A) \supseteq A$ by adding a single extra point $(u, \varnothing)$. Given an embedding $f : A \to B$ in $\mc{A}((\Q, \gamma))$ (where we assume $|B| \geq 2$ as the case $|B| = 1$ is trivial), we define $K(f) \supseteq f$ as follows. For $(u, v) \in K(A) \setminus A$ (where possibly $v = \varnothing$), let $b \in B$ be such that $(f(u), b)$ is a cut of $B$, and define $K(f)((u, v)) = (f(u), b)$.
\end{proof}

\subsubsection{The separation relation on \texorpdfstring{$\Q$}{Q}} \label{ex: separation}

Let $\sigma$ be the separation relation on $\Q$: that is,
\[\sigma(a, b; c, d) \Leftrightarrow (\gamma(a, c, b) \wedge \gamma(b, d, a)) \vee (\gamma(a, d, b) \wedge \gamma(b, c, a)),\]
where $\gamma$ is the circular order on $\Q$ (see \Cref{ex: circular order}). If $\sigma(a, b; c, d)$, we say that $c, d$ \emph{separate} $a, b$. Note that the substructure induced on any four points of $\Q$ has automorphism group $D_4$. (We picture $a, b, c, d$ on the unit circle, and $\sigma(a, b; c, d)$ holds iff the chords $ab$, $cd$ intersect.)

\begin{prop}
    The separation structure $(\Q, \sigma)$ does not have a local SWIR or a \Ka functor, but has $\circ$-extensible $\omega$-age.
\end{prop}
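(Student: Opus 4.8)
The plan is to follow the template of the betweenness case (\Cref{ex: betweenness}), adapted to the extra reflection symmetry carried by the separation relation. Throughout write $M = (\Q, \sigma)$ and use the classical fact (the $m = 4$ instance of which is recorded just above the proposition) that a separation structure on $m$ distinct points of $\Q$ is their cyclic arrangement taken up to reversal, so its automorphism group is the dihedral group $D_m$; in particular for $m = 5$ no non-identity automorphism fixes three points. For the failure of a local SWIR I would argue through the equivalent standard amalgamation operator (see \Cref{s: SWIR SAO equivalence}). Take $A = \{a_0, a_1\}$, $B = A \cup \{b\}$, and $C = \{a_0, c_0, a_1, c_1\}$ read in this cyclic order, so that the separating pair of $C$ is $\{c_0, c_1\}$; the reflection $\iota$ through the $a_0a_1$-axis fixes $a_0, a_1$ and swaps $c_0, c_1$. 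If a local SAO $\otimes$ existed, then (Inv) applied to the self-isomorphism of the pair $(A \hookrightarrow B,\ A \hookrightarrow C)$ acting as the identity on $A$ and on $B$ and as $\iota$ on $C$ (valid because $\iota$ fixes $A$) would yield an automorphism $j$ of $B \otimes_A C$ fixing $B$ pointwise and restricting to $\iota$ on $C$. Since the language is purely relational, minimality forces the underlying set of $B \otimes_A C$ to be $\{a_0, a_1, b, c_0, c_1\}$, and as $j$ swaps $c_0, c_1$ while fixing $b$ we get $b \notin \{c_0, c_1\}$, so these are five distinct points. Then $j$ is a non-identity automorphism of a five-point separation structure fixing the three points $a_0, a_1, b$, contradicting the $D_5$ remark.

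The non-existence of a \Ka functor is the same obstruction phrased functorially, in direct analogy with \Cref{p: Ka npartite}\ref{Ka npartite}. Keep $A, C, \iota$ as above, let $f : A \hookrightarrow C$, and suppose a \Ka functor $K$ exists. Choose a one-point extension vertex $e \in K(A) \setminus \eta_A(A)$ and set $e' = K(f)(e)$. Since $\iota \circ f = f$, functoriality gives $K(\iota)(e') = e'$, while \ref{Ka nat trans} makes $K(\iota)$ an automorphism of $K(C)$ acting as $\iota$ on $\eta_C(C)$. A short check (using that $K(\iota)$ fixes $e'$ but swaps $c_0, c_1$, together with injectivity of $K(f)$) shows $e' \notin \eta_C(C)$, so $\{a_0, a_1, c_0, c_1, e'\}$ is a five-point separation substructure on which $K(\iota)$ restricts to a non-identity automorphism fixing the three points $a_0, a_1, e'$, giving the same contradiction.

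For extensibility of $\Ao(M)$ I would invoke \Cref{l: subcl at least n with Ka} with $\mc{B}$ the subclass of $\mc{A}(M)$ of structures of size $\geq 3$. The extensive-embedding conditions for structures of size $\leq 2$ are routine (a point is rigid, and the swap of two points extends along an embedding into any three-point structure, on which every permutation is an automorphism), so the real content is a \Ka functor on $\mc{B}$. Here I would copy the betweenness construction almost verbatim: for $B \in \mc{B}$, whose separation structure recovers the cyclic order of $B$ up to reversal, add for each cut $\{u, v\}$ (a pair of cyclically adjacent points) two labelled extension vertices placed in the intervening arc with cyclic order $u, p_{(u,v)}, p_{(v,u)}, v$; given $f : B \to B'$, send $p_{(u,v)}$ to the extension vertex adjacent to $f(u)$ in the unique arc between $f(u)$ and $f(v)$ of $B'$ containing no other point of $f(B)$. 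The two labelled copies per cut are exactly what break the $u \leftrightarrow v$ symmetry and make $K(f)$ well-defined without an orientation — this is the ``key idea'' of \Cref{s: key exs} — and the same diagram-chase as for betweenness shows $K$ is a functor satisfying \ref{Ka nat trans} and \ref{Ka ope}.

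The main obstacle is isolating the correct threshold $n = 3$. One must confirm both that a size-$2$ base genuinely breaks the construction — which is precisely the no-SWIR / no-\Ka obstruction, since a four-point $C$ carries a reflection fixing a two-point base while moving a vertex — and that passing to bases of size $\geq 3$ destroys all such obstructions. The latter holds because a non-identity dihedral automorphism of any finite $C$ fixes at most two points, so it can never fix a base of size $\geq 3$ pointwise while moving a vertex; this is exactly what lets the functorial choice of sub-arc in the construction be made consistently. Once this threshold is pinned down, the remaining work is the routine, if slightly fiddly, functoriality verification inherited from the betweenness case.
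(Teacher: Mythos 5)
Your proposal is correct and follows essentially the same route as the paper: the same reflection-of-a-four-point-configuration counterexample for the local SWIR and the \Ka functor (the paper uses the one-point base $A=\{a\}$ with $C=\{a,c_0,c_1,c_2\}$ and the swap $c_0\leftrightarrow c_2$, you use a two-point base, but the mechanism — an automorphism of $C$ over $A$ forcing a non-rigid five-point separation structure — is identical), the same reduction via \Cref{l: subcl at least n with Ka} to the subclass of structures of size $\geq 3$, and the same \Ka functor adding an ordered pair of extension vertices per cut. You also correctly fill in the details the paper leaves to the reader (the $D_5$ fixed-point count and the non-functoriality argument), so there is nothing to flag.
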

\begin{proof}
    To see that $(\Q. \sigma)$ does not have a local SAO, take $A = \{a\}$, $B = \{a, b\}$ and $C = \{a, c_0, c_1, c_2\}$ with $\sigma(a, c_1; c_0, c_2)$. Via the automorphism of $C$ fixing $a, c_1$ which swaps $c_0, c_2$, we see that any amalgam $B \otimes_A C$ does not satisfy (Inv). The argument that shows that $(\Q, \sigma)$ does not have a \Ka functor is similar to that for $(\Q, \beta)$ -- we leave this to the reader.

    Let $\mc{B}$ consist of the elements of $\mc{A}((\Q, \sigma))$ with size $\geq 3$. By \Cref{l: subcl at least n with Ka}, to show that $(\Q, \sigma)$ has $\circ$-extensible $\omega$-age it suffices to show that $\mc{B}$ has a \Ka functor (note that each $A$ of size $2$ embeds $\circ$-extensively in $B$ of size $3$). The \Ka functor is similar to that for $(\Q, \beta)$ in \Cref{ex: betweenness} -- we give a brief description. For $B \in \mc{B}$ and distinct $u, v \in B$, we say that $\{u, v\}$ is a \emph{cut} if no pair of elements of $B$ separates $u, v$. Let $B \in \mc{B}$. We define $K(B) \supseteq B$ by adding $(u, v), (v, u)$ for each cut $\{u, v\}$ of $B$, defining $\sigma(u, (v, u); v, (u, v))$ and $\sigma(b, (u, v); u, v)$ for all $b \in B \setminus \{u, v\}$ (note that this implies $\sigma(b, (v, u); u, v)$ for all $b \in B \setminus \{u, v\}$). Let $f : B \to B'$ be an embedding in $\mc{B}$. We define $K(f) \supseteq f$ as follows. For $(u, v) \in K(B) \setminus B$, if $\{f(u), f(v)\}$ is a cut of $B'$, let $K(f)((u, v)) = (f(u), f(v))$; otherwise, there exists unique $b' \in B'$ with $\{f(u), b'\}$ a cut and $\sigma(f(b), b'; f(u), f(v))$ for all $b \in B \setminus \{u, v\}$, so define $K(f)((u, v)) = (f(u), b')$.    
\end{proof}

\subsection{The dense local order \texorpdfstring{$S(2)$}{S(2)} and related structures} \label{S(2) and friends}

\subsubsection{The dense local order \texorpdfstring{$S(2)$}{S(2)}} \label{ex: S(2)}

Let $\mb{T}$ denote the unit circle in $\mathbb{C}$. For $x \in \mb{T}$, we denote $-x$ by $x'$ and call $x'$ the \emph{antipode} of $x$. We call $x, x'$ an \emph{antipodal pair}. We define an oriented graph on $\mb{T}$: for distinct $x, y \in \mb{T}$, we define $x \ra y$ if the angle subtended at the origin by the anticlockwise arc from $x$ to $y$ is $< \pi$. Let $S(2)$ be the tournament given by the substructure of $\mb{T}$ whose domain consists of the points with rational argument. The structure $S(2)$ is usually called the \emph{dense local order}. (See \cite{Cam90}, \cite{Lac84} for more background.)

\begin{notn} \label{open interval notn}
    Let $(A, \ra)$ be an oriented graph. Let $a, b \in A$ with $a \ra b$. We define the \emph{open interval} $\langle a, b \rangle = \{v \in A \mid a \ra v \ra b\}$, and use analogous notation ($[a, b], [a, b \rangle$ etc.) for other intervals.
\end{notn}
The proof of the below lemma is immediate:
\begin{lem} \label{forced ors in T}
    Let $u, v \in \mb{T}$ with $u \ra v$. Let $a \in \langle u, v \rangle$. Then $a \ra b$ for all $b \in [v, u']$ and $a \la b$ for all $b \in [v', u]$.
\end{lem}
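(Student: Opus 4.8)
The plan is to reduce the statement to elementary arc-length arithmetic, using the rotational symmetry of $\mb{T}$. For $x \in \mb{T}$ write $\arg(x) \in [0, 2\pi)$ for its argument, so that by definition $x \ra y$ holds precisely when the anticlockwise arc from $x$ to $y$, of length $\arg(y) - \arg(x) \bmod 2\pi$, lies in $(0, \pi)$. Since any rotation about the origin preserves arc lengths (and commutes with taking antipodes), it is an automorphism of the oriented graph $\mb{T}$, so I may rotate to assume $\arg(u) = 0$; then $\arg(u') = \pi$. First I would record the positions forced by the hypotheses: $u \ra v$ gives $\arg(v) =: \beta \in (0, \pi)$; the assumption $a \in \langle u, v \rangle$ gives $\arg(a) =: \alpha \in (0, \beta)$; and the antipode satisfies $\arg(v') = \beta + \pi \in (\pi, 2\pi)$. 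A quick check confirms $v \ra u'$ and $v' \ra u$ (each arc has length $\pi - \beta$), so the intervals $[v, u']$ and $[v', u]$ are well-defined and given in arguments by $[\beta, \pi]$ and $[\beta + \pi, 2\pi]$ respectively (reading $u$ as $2\pi$).

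The two claims then follow by bounding a single arc length in each case. For $b \in [v, u']$ we have $\arg(b) \in [\beta, \pi]$, so the anticlockwise arc from $a$ to $b$ has length $\arg(b) - \alpha$, and the bounds $0 < \beta - \alpha \leq \arg(b) - \alpha \leq \pi - \alpha < \pi$ give $a \ra b$. For $b \in [v', u]$ we have $\arg(b) \in [\beta + \pi, 2\pi]$, and since $\alpha < \arg(b)$ the anticlockwise arc from $b$ to $a$ wraps past the basepoint and has length $\alpha + (2\pi - \arg(b))$; here $\alpha \in (0, \beta)$ and $2\pi - \arg(b) \in [0, \pi - \beta]$, so this length lies in $(0, \pi)$, giving $b \ra a$, i.e.\ $a \la b$.

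There is essentially no obstacle, matching the remark that the proof is immediate; the only point needing a little care is the modular bookkeeping in the second claim, where the arc from $b$ back to $a$ crosses the basepoint $u$ and one must add $2\pi$ before comparing with $\pi$. The endpoint cases ($b \in \{v, u', v', u\}$) are already covered by the closed-interval bounds above, and degeneracy of the intervals cannot occur because $u \ra v$ forces $0 < \beta < \pi$.
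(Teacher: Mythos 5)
Your argument is correct: the paper offers no proof (it declares the lemma immediate), and your arc-length computation is exactly the direct verification that claim intends, with the interval identifications $[v,u'] \leftrightarrow [\beta,\pi]$ and $[v',u] \leftrightarrow [\beta+\pi,2\pi]$ and the modular bookkeeping all handled properly. Nothing further is needed.
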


Let $\mc{C} = \Age(S(2))$. For $A \sub S(2)$, let $A' = \{a' \mid a \in A\}$ and $\hat{A} = A \cup A'$. It is immediate that, given $A_0, A_1 \sub S(2)$ and an isomorphism $f : A_0 \to A_1$, we can extend $f$ to an isomorphism $\hat{A}_0 \to \hat{A}_1$ via $a' \mapsto f(a)'$. Thus, given $A \in \mc{C}$, we may define $\hat{A}$ by embedding $A$ in $S(2)$, and the oriented graph $\hat{A}$ is independent of the embedding we take. (We could alternatively define $\hat{A}$ directly: for $a, b \in A$ with $a \ra b$, define $a' \ra b', b \ra a', b' \ra a$. Note that for each $a \in \hat{A}$, the antipodal point $a' \in \hat{A}$ is the only point such that $aa'$ is not an edge.)

\begin{defn}
    Let $A \in \mc{C}$. For each $u, v \in \hat{A}$ with $u \ra v$ and such that there does not exist $w \in \hat{A}$ with $u \ra w \ra v$, we call $(u, v)$ a \emph{cut} of $\hat{A}$. In the case $A = \{a\}$, we define the cuts of $A$ to be $(a, a')$ and $(a', a)$. 
\end{defn}

\begin{prop} \label{p: S(2) props}
    Let $A \in \mc{C}$, and let $(A, e) \in \mc{C}$ be an extension.
    \begin{enumerate}[label=(\roman*)]
        \item \label{S(2) unique cut} There exists a unique cut $(u, v)$ of $\hat{A}$ with $e \in \langle u, v \rangle$ in $\widehat{(A, e)}$.
        \item \label{S(2) cuts determine} Let $f : A \to S(2)$ be an embedding, and extend $f$ to $\hat{f} : \hat{A} \to \mb{T}$. Let $(u, v)$ be the cut of $\hat{A}$ with $e \in \langle u, v \rangle$. Then for any $b \in \langle f(u), f(v) \rangle$, we have $\tp(b / f(A)) = \tp(e / A)$ (where we identify the parameters via $f$).
        \item \label{S(2) ultrahomog} The tournament $S(2)$ is ultrahomogeneous.
    \end{enumerate}
\end{prop}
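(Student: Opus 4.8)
The plan is to prove all three parts by passing to the concrete embedding into the circle $\mb{T}$ and exploiting the antipodal symmetry of $\hat{A}$ together with \Cref{forced ors in T}; parts \ref{S(2) unique cut} and \ref{S(2) cuts determine} are then essentially geometric, and part \ref{S(2) ultrahomog} is a routine back-and-forth built on them. Throughout I use that $S(2)$ is a tournament, so $A$ contains no antipodal pair and $\hat{A}$ consists of $|A|$ antipodal pairs.

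For \ref{S(2) unique cut}, embed $\widehat{(A,e)}$ into $\mb{T}$. The key point is that, when $|A| \geq 2$, every gap (maximal anticlockwise arc between consecutive points of $\hat{A}$) has arc-length $< \pi$: if the gap from $x$ to $y$ had length $> \pi$ then the antipode $x'$ would lie strictly inside it, contradicting emptiness; and if the length were exactly $\pi$ then $y = x'$, so picking any further antipodal pair $\{b, b'\}$ (which exists as $|A| \geq 2$), exactly one of $b, b'$ lies in the open semicircle $\langle x, x' \rangle$, again a contradiction. The case $|A| = 1$ is handled directly, since $a, a'$ cut $\mb{T}$ into two semicircles which are precisely the declared cuts $(a,a')$, $(a',a)$. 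Now $e$ lands in exactly one gap, the anticlockwise arc from some $u$ to some $v$; as this arc has length $< \pi$ we get $u \ra e \ra v$ and $u \ra v$, so $(u,v)$ is a cut with $e \in \langle u, v \rangle$. Uniqueness is immediate: the gaps partition $\mb{T}$, and $\langle u, v \rangle$ meets $\hat{A}$ in nothing, so $e$ determines its gap.

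For \ref{S(2) cuts determine}, fix $\hat{f}$ and the cut $(u,v)$ with $e \in \langle u, v \rangle$. By \Cref{forced ors in T} the orientation of $e$ to every point of $[v, u'] \cup [v', u]$ is forced by the cut, with $e \ra p$ on $[v,u']$ and $e \la p$ on $[v',u]$. The crucial observation is that the only parts of $\mb{T}$ \emph{not} covered by $[v,u'] \cup [v',u]$ are the gap $\langle u, v \rangle$ itself and its antipodal gap, and both of these are empty of $\hat{A}$-points, since the antipode of a gap is again a gap by antipodal symmetry of $\hat A$. Hence the single cut $(u,v)$ determines the orientation of $e$ to \emph{every} point of $\hat{A}$, in particular to $A$. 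Applying the identical analysis to any $b \in \langle f(u), f(v) \rangle$, and using that $\hat{f}$ is an order-embedding commuting with antipodes (so $\hat f(u') = f(u)'$, etc.), shows that $b$ has exactly the same orientations to $f(A)$ that $e$ has to $A$; that is, $\tp(b / f(A)) = \tp(e / A)$ under the identification by $f$.

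For \ref{S(2) ultrahomog}, since $S(2)$ is countable it suffices to establish the one-point extension property and run a back-and-forth. Given an isomorphism $g : A_0 \to A_1$ of finite substructures and $p \in S(2) \setminus A_0$, extend $g$ to $\hat{g} : \hat{A}_0 \to \hat{A}_1$ by $a' \mapsto g(a)'$, and let $(u,v)$ be the cut of $\hat{A}_0$ with $p \in \langle u, v \rangle$ supplied by \ref{S(2) unique cut}. Then $(\hat{g}(u), \hat{g}(v))$ is a cut of $\hat{A}_1$, and since the rational-argument points are dense on $\mb{T}$ the nonempty open arc $\langle \hat{g}(u), \hat{g}(v) \rangle$ contains some $q \in S(2)$; this $q$ avoids $A_1$ because the arc is disjoint from $\hat{A}_1$. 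By \ref{S(2) cuts determine} applied with $f = g$ we obtain $\tp(q / A_1) = \tp(p / A_0)$, so $g \cup \{p \mapsto q\}$ is again a partial isomorphism. The symmetric statement (prescribing a range point) follows by applying the same to $g^{-1}$, and alternating the two over an enumeration of $S(2)$ produces an automorphism extending $g$.

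The main obstacle is \ref{S(2) cuts determine}: one must verify precisely that the region left undetermined by \Cref{forced ors in T} is exactly the union of the gap containing $e$ and its antipodal gap, and that antipodal symmetry guarantees no point of $\hat{A}$ lies there — this is exactly what lets a single cut pin down the entire quantifier-free type. The geometry in \ref{S(2) unique cut} (the $|A| = 1$ boundary case and the ``all gaps $< \pi$'' claim) is fiddly but elementary, and \ref{S(2) ultrahomog} is then standard.
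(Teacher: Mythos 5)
Your proposal is correct and follows essentially the same route as the paper, which treats \ref{S(2) unique cut} as straightforward upon embedding $A$ into $S(2)$, derives \ref{S(2) cuts determine} from the fact that $(f(u),f(v))$ is a cut of $\hat f(\hat A)$ together with \Cref{forced ors in T}, and obtains \ref{S(2) ultrahomog} from the resulting extension property. You have simply written out the geometric details (all gaps of $\hat A$ have length $<\pi$, and the region left unforced by \Cref{forced ors in T} is the antipodally symmetric pair of gaps, hence disjoint from $\hat A$) that the paper leaves implicit.
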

\begin{proof}
    \ref{S(2) unique cut} is straightforward, considering $A$ as embedded in $S(2)$. \ref{S(2) cuts determine} follows by noting that $(f(u), f(v))$ is a cut of $\hat{f}(\hat{A})$ and using \Cref{forced ors in T}. \ref{S(2) cuts determine} shows that $\mc{C}$ has the extension property, implying \ref{S(2) ultrahomog}.
\end{proof}

The above proposition shows that extensions $(A, e) \in \mc{E}_A$ are in bijection with cuts of $\hat{A}$.

\begin{prop} \label{p: S(2) SWIR Ka}
    The dense local order $S(2)$ has a local SWIR and a \Ka functor.
\end{prop}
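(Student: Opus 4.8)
The plan is to build both objects on the cut description established in \Cref{p: S(2) props}: one-point extensions of $A$ correspond to cuts of $\hat A$, and by \Cref{forced ors in T} the orientation between two extension points $b, c$ (lying in cuts $\kappa_b, \kappa_c$ of $\hat A$) is completely forced by $A$ unless either $\kappa_b = \kappa_c$ (the two points lie in a common arc of $\hat A$, where $S(2)$ restricts to a linear order, as such an arc has length $< \pi$) or $\kappa_c$ is the antipodal cut of $\kappa_b$. These two \emph{free} cases are the only places where a choice has to be made, and the whole construction reduces to making that choice coherently.

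For the SWIR I would work over nonempty $A$ (as with \Cref{ex: circular order}, no global SWIR can exist: over $\varnothing$ the set $\hat\varnothing$ is empty and there is nothing to break the symmetry) and declare $B \ind_A C$ to hold precisely when, for all $b \in B \setminus A$ and $c \in C \setminus A$, the orientation of $bc$ agrees with the forced orientation whenever one exists, and in the two free cases follows a fixed convention making the $B$-points ``precede'' the $C$-points: within a common cut $\langle u, v \rangle$ we place $b$ nearer $u$ (so $b \ra c$), and for antipodal cuts we transport this convention across the circle using the identity $x \ra y \Leftrightarrow y' \ra x$. The axioms of \Cref{d:swir} then follow from the cut analysis: (Inv) because the hat-operation $A \mapsto \hat A$ and the cuts are isomorphism-invariant; (Ex) and (Sta) because by \Cref{p: S(2) props}\ref{S(2) cuts determine} the cut of a point determines its type over $A$, so the convention pins down a unique amalgam; and (Mon) by chasing the forced/free dichotomy through a third factor. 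The delicate point is base monotonicity, where one must check that the antipodal convention stays consistent when the base is enlarged.

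For the \Ka functor I would mimic the cut constructions of \Cref{ex: circular order} and \Cref{ex: betweenness}. Take $K(A)$ to consist of $A$ together with points realizing each cut of $\hat A$; the orientations to $A$ and between non-antipodal cuts are forced by \Cref{forced ors in T}, so the only freedom is the orientation within each antipodal pair of cuts. Here I would use the labelling/lexicographic device that is the key idea of the paper, exactly as in \Cref{ex: Ka for tourn}: label each extension point by an enumeration of its neighbourhood in $\hat A$ and orient an antipodal pair by comparing labels (first by length, then lexicographically via the forced relations to $\hat A$). Because antipodal cuts induce complementary neighbourhoods in $\hat A$, this breaks the tie; and since all non-antipodal orientations are forced while distinct antipodal pairs sit in pairwise-disjoint arc-pairs of $\mb{T}$, the resulting finite tournament is realizable by a choice of circle positions, hence lies in $\Age(S(2))$. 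An embedding $f : A \to B$ sends each cut of $\hat A$ into the unique cut of $\hat B$ containing its image, preserving antipodal pairs, forced orientations and labels, so defining $K(f)$ to map extension points accordingly yields a functor with the required natural transformation.

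The main obstacle is the antipodal coupling. Unlike the circular order, where distinct cuts never interact, two antipodal cuts of $\hat A$ genuinely leave the orientation of their extension points free, and there is no symmetry-invariant way to orient the pair directly; this is exactly what forces the antipode-flip identity into the SWIR and the lexicographic labelling into the \Ka functor. The second point requiring care is verifying that the hybrid assignment -- forced orientations together with the lexicographically chosen antipodal orientations -- is simultaneously realizable on the circle, i.e.\ that $K(A) \in \mc{A}_\omega(M)$; this holds because the free choices live in pairwise-disjoint arcs, but it should be checked rather than assumed.
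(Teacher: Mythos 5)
Your SWIR is essentially the paper's: the paper defines $B \ind_A C$ by requiring, for each antipodal pair of cuts $\{(u,v),(u',v')\}$ of $\hat A$, that $B \cap \langle u,v\rangle \ra C \cap \langle u,v\rangle$, $B\cap\langle u',v'\rangle \ra C\cap\langle u',v'\rangle$, and $B\cap\langle u,v\rangle \la C\cap\langle u',v'\rangle$ (and symmetrically), which is exactly your ``$B$ precedes $C$ within a cut, transported to antipodal cuts by the antipode flip.'' That half is fine.

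The \Ka functor half has a genuine gap in the antipodal tie-breaker. With one extension point per cut (which is the right setup, since by \Cref{p: S(2) props} extensions biject with cuts), labelling a point by ``an enumeration of its neighbourhood'' is not canonical -- there is no preferred enumeration to attach to the single point -- and the recipe is not stable under embeddings. Concretely: for antipodal cuts $(u,v)$ and $(u',v')$ of $\hat A$ the two in-neighbourhoods are complementary subsets of $\hat A$ of equal size $|A|$, so length never decides; and if you instead take the canonical $\ra$-increasing enumerations, then after applying an embedding $f : A \to B$ the two extension points land in antipodal cuts $(f(u),w)$ and $(f(u)',w')$ of $\hat B$ whose in-neighbourhoods are $[w', f(u)]\cap\hat B$ and $[w, f(u)']\cap\hat B$; these begin with the antipodal pair $w', w$, between which $\ra$ is undefined, so the recomputed lexicographic comparison breaks down (and a transported label is no longer an enumeration of a full neighbourhood in $\hat B$, so it is not a label of any extension of $B$). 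The paper's solution is much simpler and is the observation you are missing: $S(2)$ contains no antipodal pairs, so $\hat A = A \sqcup A'$ and exactly one of the two left endpoints $u, u'$ of an antipodal pair of cuts is a \emph{real} point of $A$. Declaring $e_{(u_i,v_i)} \la e_{(u_{1-i},v_{1-i})}$ for the index $i$ with $u_i \in A$ is automorphism-invariant and preserved by $K(f)$ (since $(f(u)', w')$ is again a cut and $f(u) \in B$ iff $u \in A$), and no labels or multiple copies are needed. Your realizability worry is then vacuous: each antipodal arc-pair receives one point per arc and a single orientation, which is trivially realizable on the circle.
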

\begin{proof}
    Let $A, B, C \fin S(2)$ with $A \neq \varnothing$. For each pair $P = \{(u, v)$, $(u', v')\}$ of opposite cuts of $\hat{A}$, let $B_P = B \cap (\langle u, v \rangle \cup \langle u', v' \rangle)$, $C_P = C \cap (\langle u, v \rangle \cup \langle u', v' \rangle)$. We define $B \ind_A C$ if, for each pair $P$ of opposite cuts of $\hat{A}$, we have that \emph{$C_P$ lies anticlockwise from $B_P$}: that is, 
    \begin{alignat*}{2}
        B \cap \langle u, v \rangle &\ra C \cap \langle u, v \rangle, \quad & B \cap \langle u, v \rangle &\la C \cap \langle u', v' \rangle,\\ B \cap \langle u', v' \rangle &\ra C \cap \langle u', v' \rangle, \quad & B \cap \langle u', v' \rangle &\la C \cap \langle u, v \rangle.
    \end{alignat*}
    It is straightforward to check that $\ind$ is a local SWIR (use \Cref{p: S(2) props}\ref{S(2) cuts determine} for (Ex) and (St)), and we leave this to the reader.

    We now define a \Ka functor for $S(2)$. We denote the extension $(A, e) \in \mc{E}_A$ corresponding to the cut $(u, v)$ of $\hat{A}$ by $(A, e_{(u, v)})$. Let $A \in \mc{A}(S(2))$. To define $K(A) \supseteq A$, take the disjoint union of the extensions $(A, e_{(u, v)})$ for each cut $(u, v)$ of $\hat{A}$. We now define the tournament relation between extension vertices. If the cuts $(u_0, v_0), (u_1, v_1)$ are not an opposite pair, then by \Cref{forced ors in T} the orientation of $e_{(u_0, v_0)}, e_{(u_1, v_1)}$ is already determined. Otherwise, we have $(u_1, v_1) = (u'_0, v'_0)$. Let $i \in \{0, 1\}$ with $u_i \in A$ (note that exactly one of $u_0, u_1 = u_0'$ lies in $A$). We set $e_{(u_i, v_i)} \la e_{(u_{1-i}, v_{1-i})}$. (Informally, one can visualise this as follows: on the anticlockwise arc from $u_i$ to $v_i$ beginning with the ``real" point $u_i$, the ``real" point $e_{(u_i, v_i)}$ occurs before the ``phantom" point $e'_{(u_{1-i}, v_{1-i})}$.)

    For each embedding $f : A \to B$ in $\mc{A}(S(2))$, we define $K(f) \supseteq f$ as follows. For each $e_{(u, v)} \in K(A)$, let $w \in \tld{B}$ be such that $(f(u), w)$ is a cut (note that $w$ is unique and that $(f(u)', w')$ is also a cut), and define $K(f)(e_{(u, v)})$ to be the extension vertex corresponding to the cut $(f(u), w)$ of $B$. It is straightforward to verify that $K$ is a \Ka functor.    
\end{proof}

\subsubsection{The oriented graph \texorpdfstring{$\widehat{\Q}$}{Q-hat}} \label{ex: Q-hat}

We use the notation of \Cref{ex: S(2)}. Let $\widehat{\Q}$ be the oriented graph $S(2) \cup S(2)' \sub \mb{T}$; that is, the oriented graph $\widehat{\Q}$ consists of $S(2)$ together with its antipodal points in the unit circle. (The notation $\widehat{\Q}$ is from \cite{Che87}. The structure $\widehat{T^\omega}$ of \Cref{ex: C_4-enlarged} and the structure $\widehat{\Q}$ can be produced from the random tournament $T^\omega$ and $\Q$ as follows: for $T$ an infinite tournament, the oriented graph $\widehat{T}$ consists of two disjoint copies $T \times \{0\}, T \times \{1\}$ of $T$ together with the additional directed edges $(u, i) \ra (v, j)$ for $i \neq j$ and $v \ra u$.)

\begin{prop}
    The oriented graph $\widehat{\Q}$ has a local SWIR and a \Ka functor.
\end{prop}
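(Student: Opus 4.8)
The plan is to follow the template of \Cref{p: S(2) SWIR Ka} for the dense local order $S(2)$, re-using the same cut structure but exploiting the fact that $\widehat{\Q}$, unlike the tournament $S(2)$, possesses non-edges. First I would record the analogue of \Cref{p: S(2) props}: for $A \in \Age(\widehat{\Q})$, writing $\hat A = A \cup A'$ for the antipodal closure, the one-point extensions in $\mc{E}_A$ split into two families. The \emph{antipodal} extensions adjoin $a'$ for some $a \in A$ with $a' \notin A$; each is unique and already occurs inside $\hat A$. The \emph{generic} extensions (those adjoining a point $e$ with $e' \notin \hat A$) are, exactly as for $S(2)$, in bijection with the cuts $(u,v)$ of $\hat A$, since by \Cref{forced ors in T} the cut containing $e$ determines the quantifier-free type of $e$ over $\hat A$. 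Note that every point of $\hat A$ is the left endpoint of a unique cut, so cuts correspond bijectively to points of $\hat A$, and opposite cuts $(u,v)$, $(u',v')$ correspond to antipodal left endpoints $u$, $u'$.

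For the local SWIR I would copy the $S(2)$ definition: for $A,B,C \fin \widehat{\Q}$ with $A \neq \varnothing$ and each pair $P$ of opposite cuts of $\hat A$, declare $B \ind_A C$ when the generic points of $C\setminus A$ lying in $P$ lie anticlockwise from those of $B\setminus A$. The only new feature is the possible presence of antipodal points $a' \in B \setminus A$ (or $C \setminus A$); these lie in $\hat A$ and so have forced relations to everything, contributing no freedom. In particular the anticlockwise condition never creates a spurious non-edge across $B\setminus A$ and $C\setminus A$: such a non-edge would be an antipodal pair $c = b'$, which for generic $b,c$ forces them into opposite arcs of a common cut-pair, where the anticlockwise ordering makes them adjacent. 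The SWIR axioms then go through exactly as in \Cref{p: S(2) SWIR Ka}, using the $\widehat{\Q}$-form of \Cref{p: S(2) props}\ref{S(2) cuts determine} for (Ex) and (Sta).

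For the \Ka functor I would set $K(A) = \hat A$ together with one generic extension vertex $e_{(u,v)}$ for each cut $(u,v)$ of $\hat A$, with $\eta_A$ the inclusion $A \hookrightarrow K(A)$; the antipodal extensions are then realised inside $\hat A$ and the generic ones by the $e_{(u,v)}$, giving property \ref{Ka ope} of \Cref{def Ka functor}. All relations are forced except between generic vertices in opposite cuts, and here $\widehat{\Q}$ is genuinely easier than $S(2)$: instead of choosing an orientation (the ``real versus phantom endpoint'' rule, which is unavailable once $\hat A$ is antipodally closed) I declare $e_{(u,v)}$ and $e_{(u',v')}$ to be \emph{antipodes} of one another, equivalently placing $e_{(u,v)}$ at the midpoint of the arc $\langle u,v\rangle$ so that opposite midpoints are genuinely antipodal. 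This introduces no orientation choice and no invariance problem even under an automorphism of $A$ swapping a pair of opposite cuts (e.g.\ $p \leftrightarrow p'$ on $A=\{p,p'\}$), since a non-edge is symmetric. For an embedding $f : A \to B$ I would let $K(f)$ extend $\hat f : \hat A \to \hat B$ and send $e_{(u,v)}$ to the generic vertex of the unique cut of $\hat B$ with left endpoint $\hat f(u)$, exactly as in the $S(2)$ functor map.

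The remaining verification is routine diagram-chasing, but two points form the main obstacle. First, one must check that $K(f)$ is well defined and structure-preserving; the crucial fact is that opposite cuts of $\hat A$ map to opposite cuts of $\hat B$, which holds because $\hat B$ is antipodally closed, so the successor of $\hat f(u)'$ is the antipode of the successor of $\hat f(u)$. This guarantees that the antipodal relation between opposite generic vertices is preserved, while all other orientations are preserved by \Cref{forced ors in T} as for $S(2)$. Second, one must check injectivity and functoriality in the case where an antipodal extension point $a' \in \hat A \setminus A$ is sent to an actual vertex $f(a)' \in B$ (when $B$ contains the antipode of $f(a)$): here $K(f)(a') = f(a)'$ is forced, and injectivity survives because generic vertices map to fresh midpoints and $\hat f$ is injective on $\hat A$. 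The relation $K(g \circ f) = K(g) \circ K(f)$ is then immediate from the compatibility of successors of left endpoints.
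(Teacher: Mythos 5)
Your proposal is correct and follows essentially the same route as the paper: the same anticlockwise-per-opposite-cut-pair condition for the local SWIR (with antipodal points of $A'\setminus A$ having forced relations), and the same \Ka functor built from cut-extensions plus antipodal-extensions, with the key observation that opposite-cut extension vertices must simply be declared antipodal (a non-edge) since the ``real versus phantom'' distinction used for $S(2)$ is unavailable. The only differences are presentational (you realise the antipodal extensions inside $\hat A$ rather than listing them as separate extension vertices, and you spell out a few of the routine checks the paper leaves to the reader).
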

\begin{proof}
    We define a local SWIR for $\widehat{\Q}$ analogously to $S(2)$, and only sketch the details: cuts are defined similarly, and for $A \in \Age(\widehat{\Q})$, extensions $(A, e)$ of $A$ are in bijection with cuts of $\hat{A}$ together with $A' \setminus A$. Let $A, B, C \fin \widehat{\Q}$ with $A \neq \varnothing$. We define $B \ind_A C$ if for each pair $P = \{\langle u, v \rangle, \langle u', v' \rangle\}$ of opposite cuts of $\hat{A}$, as in \Cref{p: S(2) SWIR Ka}, we have $C_P$ anticlockwise from $B_P$ (note that $B_P$, $C_P$ may contain antipodal pairs of points). Note that this completely describes the structure on $A \cup B \cup C$: for $b \in B \setminus A$, $c \in C \setminus A$ such that at least one of $b, c$ lies in $A' \setminus A$, there is only one possibility for the relation between $b$ and $c$.

    A \Ka functor for $\widehat{\Q}$ can also be defined analogously to $S(2)$. Let $A \in \mc{A}(\widehat{\Q})$, and to define $K(A)$ take the union of extensions corresponding to cuts and to elements of $A' \setminus A$. The only relation that is not already determined is that between extensions corresponding to opposite pairs of cuts: here, we cannot distinguish between ``real" and ``phantom" points as for $S(2)$, and we simply put no edge between these extensions (so they form an antipodal pair).    
\end{proof}

\subsubsection{The oriented graph \texorpdfstring{$S(3)$}{S(3)}} \label{ex: S(3)}

The structure $S(3)$ is defined in a very similar manner to $S(2)$, and our definition of a local SWIR and \Ka functor will also be similar -- we only sketch the details. We define an oriented graph structure on $\mb{T}$ (different to that in \Cref{ex: S(2)}) as follows. For distinct $x, y \in \mb{T}$, let $x \ra y$ if the angle subtended at the origin by the anticlockwise arc from $x$ to $y$ is $< 2 \pi / 3$. Let $S(3)$ be the substructure of $\mb{T}$ consisting of the points with rational argument. For $a \in \mb{T}$, we define $x', x''$ to be the anticlockwise rotations of $x$ about the origin by $2\pi / 3, 4\pi/3$ respectively. We use the same notation for subsets of $\mb{T}$.

We use the same open interval notation $\langle u, v \rangle$ as in \Cref{ex: S(2)}. Analogously to \Cref{forced ors in T}, we have:

\begin{lem} \label{forced ors in T for S(3)}
    Let $u, v \in \mb{T}$ with $u \ra v$. Let $a \in \langle u, v \rangle$. Then $a \ra b$ for $b \in [v, u']$, $a \ic b$ for $b \in [v', u'']$ and $a \la b$ for $b \in [v'', u]$.
\end{lem}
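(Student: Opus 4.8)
The plan is to reduce the lemma to elementary angular arithmetic on $\mb{T}$, exactly as for the two-sector version \Cref{forced ors in T}. First I would identify each point of $\mb{T}$ with its argument in $[0, 2\pi)$ and, using that rotation is an automorphism of the oriented graph, normalise so that $u$ lies at angle $0$; write $\Delta(x,y)$ for the anticlockwise angle $(\theta_y - \theta_x) \bmod 2\pi$ from $x$ to $y$. The defining relation of $S(3)$ then reads off directly: $x \ra y$ iff $\Delta(x,y) \in (0, 2\pi/3)$, $x \ic y$ iff $\Delta(x,y) \in [2\pi/3, 4\pi/3]$, and $x \la y$ iff $\Delta(x,y) \in (4\pi/3, 2\pi)$; passing from $x$ to $x', x''$ adds $2\pi/3, 4\pi/3$ to the argument.

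Next I would record the positions of the relevant points. From $u \ra v$ I get $\theta_v = \delta$ with $\delta \in (0, 2\pi/3)$, whence $\theta_{u'} = 2\pi/3$, $\theta_{v'} = \delta + 2\pi/3$, $\theta_{u''} = 4\pi/3$ and $\theta_{v''} = \delta + 4\pi/3$, so that in increasing order the six angles are $u, v, u', v', u'', v''$. Since $2\pi/3 - \delta \in (0, 2\pi/3)$ we have $v \ra u'$, $v' \ra u''$ and $v'' \ra u$, so the three closed intervals appearing in the statement are genuine arcs, namely $[v, u'] = [\delta, 2\pi/3]$, $[v', u''] = [\delta + 2\pi/3, 4\pi/3]$ and $[v'', u] = [\delta + 4\pi/3, 2\pi]$ (with $2\pi \equiv 0 = \theta_u$). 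Finally the hypothesis $a \in \langle u, v\rangle$, i.e.\ $u \ra a \ra v$, pins $a$ down to $\theta_a \in (0, \delta)$: the left relation gives $\theta_a \in (0, 2\pi/3)$, and the right relation together with $\delta < 2\pi/3$ excludes $\theta_a \geq \delta$.

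With the positions fixed, each of the three conclusions becomes a one-line estimate of $\Delta(a, b) = (\theta_b - \theta_a) \bmod 2\pi$. For $b \in [v, u']$ one has $\theta_b - \theta_a \in (0, 2\pi/3)$, giving $a \ra b$; for $b \in [v', u'']$ one has $\theta_b - \theta_a \in (2\pi/3, 4\pi/3)$, giving $a \ic b$; and for $b \in [v'', u]$ one has $\Delta(a,b) \in (4\pi/3, 2\pi)$, giving $a \la b$ (the endpoint $b = u$ is covered by $\Delta(a, u) = 2\pi - \theta_a \in (4\pi/3, 2\pi)$). There is no real obstacle here, the computation being entirely routine; the only subtlety is the bookkeeping of open versus closed boundaries of the three sectors. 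Because the inequalities $0 < \theta_a < \delta$ are strict, each $\Delta(a,b)$ lands strictly inside its sector, so no boundary value of $\Delta$ can force the wrong relation, which is what keeps the estimates clean. (Note that the three arcs deliberately omit $\langle u, v\rangle$, $\langle u', v'\rangle$ and $\langle u'', v''\rangle$, where the relation with $a$ genuinely depends on the exact value of $\theta_a$ and hence is not forced.)
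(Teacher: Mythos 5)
Your proof is correct and is exactly the routine angular computation that the paper treats as immediate (it states this lemma without proof, as the analogue of \Cref{forced ors in T}, whose proof is likewise declared immediate). Your normalisation $\theta_u = 0$, the determination $\theta_a \in (0,\delta)$, and the three sector estimates for $\Delta(a,b)$ all check out, including the endpoint bookkeeping.
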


For $A \sub S(3)$, we define $\hat{A} = A \cup A' \cup A''$. As before, we may also define $\hat{A}$ for $A \in \Age(S(3))$ by embedding $A$ in $S(3)$ (the oriented graph structure does not depend on the embedding). We define cuts of $\hat{A}$ as before, and using \Cref{forced ors in T for S(3)} we have that extensions $(A, e) \in \mc{E}_A$ are in bijection with cuts of $\hat{A}$. To define a local SWIR, for $A, B, C \fin S(3)$ with $A \neq \varnothing$, we consider each triple $X = \langle u, v \rangle \cup \langle u', v' \rangle \cup \langle u'', v'' \rangle$ of $2\pi/3$-rotated cuts of $\hat{A}$, letting $B_X, C_X$ denote the intersections of $B, C$ with $X$, and we define $B \ind_A C$ if for each cut-triple $X$ of $\hat{A}$ we have $C_X$ anticlockwise from $B_X$. The definition of a \Ka functor is also analogous to \Cref{ex: S(2)}. For extensions $e_{(u, v)}, e_{(u', v')}, e_{(u'', v'')}$ of $A \in \mc{A}(S(3))$ corresponding to a triple of $2\pi / 3$-rotated cuts where $u \in A$, in $K(A)$ we set $e_{(u, v)} \ic e_{(u', v')}$, $e_{(u, v)} \la e_{(u'', v'')}$, $e_{(u', v')} \ic e_{(u'', v'')}$ (so on the anticlockwise arc from $u$ to $v$ in $\mb{T}$ we have $e^{}_{(u, v)} \ra e''_{(u', v')} \ra e'_{(u'', v'')}$).

\subsection{The generic \texorpdfstring{$\vec{C}_4$}{C4}-enlarged tournament} \label{ex: C_4-enlarged}

Let $\mc{C}$ be the class of finite oriented graphs $A$ such that $\ic$ is an equivalence relation with all parts (equivalence classes) of size $\leq 2$, and such that for every two pairs $\{u_0, u_1\}$, $\{v_0, v_1\}$ of vertices in different parts, the substructure on $\{u_0, u_1, v_0, v_1\}$ is a directed $4$-cycle. It is easy to see that $\mc{C}$ is an amalgamation class; we call its \Fr limit the \emph{generic $\vec{C}_4$-enlarged tournament}.

\begin{notn} \label{C4 part notn}
    For $A \in \mc{C}$, we write $\mc{P}_A$ for the set of parts of $A$. Let $A \in \mc{C}$ with all parts of size $2$. For $P \in \mc{P}_A$ and $a \in A \setminus P$, let $P(a)$ denote the vertex of $P$ with $a \ra P(a)$. For $a \in A$, let $\tld{a}$ be the vertex of $A$ with $a \ic \tld{a}$. For $A \in \mc{C}$ and $(A, e) \in \mc{E}_A$, let $(A, \tld{e}) \in \mc{E}_A$ be be the extension resulting by reversing all directed edges between $e$ and $A$. 
\end{notn}

\begin{prop} \hfill
    \begin{enumerate}[label=(\roman*)]
        \item \label{C4enlarged SWIR exp} The generic $\vec{C}_4$-enlarged tournament has a finite SWIR expansion.
        \item \label{C4enlarged Ka} The generic $\vec{C}_4$-enlarged tournament has a \Ka functor.
    \end{enumerate}
\end{prop}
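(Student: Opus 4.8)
The plan is to exploit the description of $\widehat{T^\omega}$ from \Cref{ex: Q-hat} as the double cover of the random tournament $T^\omega$: its vertex set is $T^\omega \times \{0,1\}$, the $\ic$-classes (parts) are the fibres $\{(u,0),(u,1)\}$, and $(u,i) \ra (v,j)$ iff ($i = j$ and $u \ra v$) or ($i \neq j$ and $v \ra u$). Write $\pi : \widehat{T^\omega} \ra T^\omega$, $(u,i) \mapsto u$, and call $\pi(x)$ the \emph{shadow} of $x$. Recall from \Cref{ex: structures with SWIRS} that $T^\omega$ carries the SWIR given by $B \ind_A C \Leftrightarrow (b \ra c$ for all $b \in B\setminus A$, $c \in C\setminus A)$, and from \Cref{ex: Ka for tourn} that $T^\omega$ has a \Ka functor. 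The single source of difficulty in both parts is the antipodal symmetry $e \leftrightarrow \tld{e}$ of \Cref{C4 part notn}: an extension $(A,e)$ and its reversal $(A,\tld{e})$ are the two lifts of one one-point extension of the shadow, and the bare oriented-graph data of $\widehat{T^\omega}$ does not canonically distinguish them. The two constructions below resolve this ambiguity — for (i) by adding structure, for (ii) by adding labels.

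For part \ref{C4enlarged SWIR exp}, I would expand $\widehat{T^\omega}$ by a binary ``same copy'' relation $\approx$ (equivalently, two unary predicates naming the copies), interpreted so that $x \approx y$ iff $x,y$ lie in the same fibre $T^\omega \times \{i\}$. The first thing to verify is that the reduct of this expansion is again $\widehat{T^\omega}$: every member of $\Age(\widehat{T^\omega})$ admits a proper $2$-colouring of its parts compatible with the double cover (any straddling colouring of a single directed $4$-cycle is consistent, and pairwise consistency forces global consistency), and conversely any such $2$-coloured structure embeds in $T^\omega \times \{0,1\}$. Hence the expanded class is a strong amalgamation class whose \Fr limit is the generic labelled double cover of $T^\omega$ and reducts onto $\widehat{T^\omega}$. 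With $\approx$ present the shadow becomes canonical — it may be read off from either colour class — and I would lift the random-tournament SWIR: for $A,B,C \fin$ the expansion, set $B \ind_A C$ iff $\pi(B) \ind_{\pi(A)} \pi(C)$ holds in $T^\omega$ and the new parts of $B$ and $C$ have disjoint shadows. The SWIR axioms then reduce directly to those of the SWIR on $T^\omega$; the key point is that $\approx$ destroys the antipodal swap (which no longer preserves the structure), so (Inv) and (Sta) now hold.

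For part \ref{C4enlarged Ka}, I would build the \Ka functor by the labelled-copies method of \Cref{ex: Ka for tourn} and \Cref{p: S(2) SWIR Ka}. An extension $(A,e)$ adjoining a new singleton part is determined by its out-neighbourhood $e^+ \cap A$, which meets each part of $A$ in exactly one vertex (a \emph{selector}), and $(A,\tld{e})$ corresponds to the complementary selector; there are in addition the unique extensions completing an existing singleton part. To build $K(A)$ I would, for each such extension and each enumeration $\bar{v}$ of $e^+\cap A$, place a labelled copy of $(A,e)$ over $A$, pair each labelled vertex with the antipode carrying the complementary selector so as to create the parts of $K(A)$, and orient the edges between two non-antipodal extension vertices by comparing their (selector, label) data lexicographically, exactly as in-neighbourhood enumerations are compared in \Cref{ex: Ka for tourn}. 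On an embedding $f : A \ra B$ one defines $K(f) \supseteq f$ by sending a labelled vertex to the one in $K(B)$ whose selector and label are the $f$-images, and checks that this is an embedding respecting composition.

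The step I expect to be the main obstacle is making the orientation between extension vertices of $K(A)$ simultaneously functorial and compatible with the $\vec{C}_4$-enlarged constraints. Unlike the pure tournament case, one cannot send a labelled vertex to ``the extension of $B$ beating exactly $f(e^+\cap A)$ and losing to everything else'', since a new part $\{b,\tld{b}\} \subseteq B \setminus f(A)$ must be beaten in exactly one vertex, not both; so the target extension requires a canonical selector on the new parts of $B$, and orienting one antipodal edge forces the orientations of its three companions through the directed $4$-cycle. The labels must therefore be chosen so that lexicographic comparison descends to antipodal pairs of parts and propagates consistently around every forced $4$-cycle, while remaining invariant under the copy-swapping automorphisms of $A$. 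Verifying this coherence — that the induced relation on extension vertices is always a genuine $\vec{C}_4$-enlarged tournament and that $K$ is a functor — is the technical heart of the argument, after which properties \ref{Ka nat trans} and \ref{Ka ope} of \Cref{def Ka functor} are routine.
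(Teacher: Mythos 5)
Part \ref{C4enlarged SWIR exp} of your proposal is essentially the paper's argument: the paper expands by a single unary predicate $R$ holding on exactly one vertex of each size-two part (your ``two unary predicates naming the copies''), and its SWIR is precisely your shadow SWIR read through that colouring ($b \ra c$ for new vertices in the same colour class, the rest forced). One caveat: the variant of your expansion using only the binary relation $\approx$ gives at most a \emph{local} SWIR, since over $A = \varnothing$ the quantifier-free type of a single new vertex does not record which class it lies in, so stationarity over the empty set fails; for a global SWIR you need the named colour classes, i.e.\ the parenthetical version.

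Part \ref{C4enlarged Ka} has a genuine gap, located exactly where you flag the ``technical heart''. Labelling the copies of $(A,e)$ by enumerations of the selector $e^+ \cap A$ does not determine $K(f)$: for an embedding $f : A \to B$, the image of $e$ must beat exactly one vertex of each part of $B$ not meeting $f(A)$, and nothing in your label picks that vertex out; moreover the copies of $(A,e)$ and of $(A,\tld{e})$ carry incompatible label sets (enumerations of complementary selectors, generally of different cardinalities), so the antipodal pairing you use to form the parts of $K(A)$ is also undefined. The paper resolves both problems by anchoring each copy to a distinguished part: in the notation of \Cref{C4 part notn}, the label is a pair $(P, \bar{Q})$ with $P \in \mc{P}_A$ and $\bar{Q}$ an enumeration of the set of parts $Q$ on which $e$ and the vertex $P(e)$ beat different elements. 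Given $P$ and this disagreement set, $e$ is determined up to the binary choice of $P(e) \in P$, so exactly the two antipodal extensions share a label and form a part of $K(A)$; orientations between copies with distinct anchors are inherited from $P(e)$ versus $P'(e')$, and between copies with the same anchor by lexicographic comparison of the $\bar{Q}$'s. Crucially, $K(f)(e^{}_{P,\bar{Q}})$ is defined to be the extension of $B$ that agrees with $f(P(e))$ on every new part of $B$ — this is the canonical selector you were missing — and this choice keeps the disagreement set equal to $f(\bar{Q})$, which is what makes $K$ respect composition. (The paper also first reduces to $A$ with all parts of size two via the unique part-completion, so that $P(e)$ is everywhere defined; your remark about extensions completing a singleton part is subsumed by that reduction.)
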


Note that it is open whether the generic $\vec{C}_4$-enlarged tournament itself has a SWIR.

\begin{proof}
    \ref{C4enlarged SWIR exp}: Expand the digraph language by a unary predicate $R$, and let $\mc{C}'$ be the class of finite structures $A$ in the expanded language such that the digraph reduct of $A$ is in $\mc{C}$ and $R^A$ holds for exactly one vertex of each part of $A$ of size $2$. It is easy to see that $\mc{C}'$ is an amalgamation class; let $M' = \FrLim(\mc{C}')$. We define a SWIR for $M'$. For $A, B, C \fin M'$, define $B \ind_A C$ if, for all $b \in B \setminus A$, $c \in C \setminus A$ with $b$ in no part of $A$, $c$ in no part of $A$ and $(R(b) \wedge R(c)) \vee (\neg R(b) \wedge \neg R(c))$, we have $b \ra c$. It is straightforward to check that $\ind$ is a SWIR.
    
    \ref{C4enlarged Ka}: Let $M$ be the generic $\vec{C}_4$-enlarged tournament. Let $A \in \mc{A}(M)$. We first observe that there is a unique ``part-completion" $\hat{A}$ of $A$ in $\mc{A}(M)$: that is, there is unique $\hat{A} \in \mc{A}(M)$ with $A \sub \hat{A}$ such that for each $v \in \hat{A}$, there exists $a \in A$ with $v \ic a$ (where uniqueness is up to isomorphism fixing $A$ pointwise). Also observe that there is a bijection between $\mc{E}_{\hat{A}}$ and the set of extensions $(A, e) \in \mc{E}_A$ with no $a \in A$ with $e \ic a$, and observe that, given a \Ka functor $K$ for $M$, if $f : A \to B$ is an embedding in $\mc{A}(M)$ and $(A, e) \in \mc{E}_A$ with $e \ic a$ for some $a \in A$, then $K(f)(e) \ic f(a)$. It therefore suffices to define $K$ for $A \in \mc{A}(M)$ with all parts of size $2$, and then $K$ is uniquely defined for all $A \in \mc{A}(M)$ by the above.

    Let $A \in \mc{A}(M)$ with all parts of size $2$. Recall \Cref{C4 part notn}. For each $(A, e) \in \mc{E}_A$, each $P \in \mc{P}_A$ and each enumeration $\bar{Q}$ of the set $\{Q \in \mc{P}_A \mid Q(e) \neq Q(P(e))\}$, add to $K(A)$ a labelled copy $(A, e_{P, \bar{Q}})$ of $(A, e)$ over $A$. We define the oriented graph structure between distinct labelled extension vertices $e^{}_{P, \bar{Q}}, e'_{P', \bar{Q}'}$ as follows. If $P \neq P'$, orient $e^{}_{P, \bar{Q}} e'_{P', \bar{Q}'}$ as $P(e^{}_{P, \bar{Q}}) P'(e'_{P', \bar{Q}'})$. If $P = P'$, next compare $\bar{Q}, \bar{Q}'$ lexicographically: if $|\bar{Q}| < |\bar{Q}'|$ then set $e^{}_{P, \bar{Q}} \ra e'_{P', \bar{Q}'}$, and if $\bar{Q}, \bar{Q}'$ have the same length and first differ at index $i$, orient $e^{}_{P, \bar{Q}} e'_{P', \bar{Q}'}$ as $Q_i(e^{}_{P, \bar{Q}}) Q'_i(e'_{P', \bar{Q}'})$. If $(P, \bar{Q}) = (P', \bar{Q}')$, then necessarily $(A, e'_{P', \bar{Q}'}) = (A, \tld{e}^{}_{P, \bar{Q}})$, so set $e^{}_{P, \bar{Q}} \ic e'_{P', \bar{Q}'}$.

    Let $f : A \to B$ be an embedding in $\mc{A}(M)$, where again we assume that all parts of $A, B$ have size $2$. To define $K(f)$, we send each $e^{}_{P, \bar{Q}}$ to $e'_{f(P), f(\bar{Q})} \in K(B)$, where $(f(A), e'_{f(P), f(\bar{Q})}) \cong (A, e^{}_{P, \bar{Q}})$ and the oriented graph structure between $e'_{f(P), f(\bar{Q})}$ and each $P' \in \mc{P}_{B \setminus A}$ is given by $P'(e'_{f(P), f(\bar{Q})}) = P'(f(P)(e'_{f(P), f(\bar{Q})}))$.
\end{proof}

\subsection{The semigeneric \texorpdfstring{$\omega$}{omega}-partite tournament} \label{ex: semigeneric}

Let $\mc{C}$ be the class of finite $\omega$-partite tournaments $A$ satisfying the following \emph{parity condition}:
\begin{itemize}
    \item[$(\ast)$] for all $U_0, U_1 \sub A$ with $|U_0| = |U_1| = 2$ and with $U_0, U_1$ in different parts, the number of out-edges from $U_0$ to $U_1$ is even.
\end{itemize}

It is easy to show that $\mc{C}$ is an amalgamation class; we call its \Fr limit the \emph{semigeneric $\omega$-partite tournament}.

\begin{prop} \label{p: semigeneric}
    Let $M$ be the semigeneric $\omega$-partite tournament.
    \begin{enumerate}[label=(\roman*)]
        \item \label{semigeneric no SWIR} $M$ does not have a local SWIR.
        \item \label{semigeneric SWIR exp} $M$ has a finite SWIR expansion.
        \item \label{semigeneric non-univ} $\Aut(M)$ is non-universal.
    \end{enumerate}
\end{prop}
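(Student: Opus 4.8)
The plan is to handle the three parts separately, with \ref{semigeneric no SWIR} and \ref{semigeneric non-univ} both driven by the parity condition $(\ast)$ and \ref{semigeneric SWIR exp} obtained by adding enough labelling data to destroy the ``gauge freedom'' that $(\ast)$ leaves. Throughout I would record orientations between two parts $P,Q$ by setting $\epsilon(u,v)=1$ iff $u \ra v$ (for $u\in P$, $v \in Q$); then $(\ast)$ says precisely that the $\Z/2\Z$-matrix $(\epsilon(u,v))$ has \emph{potential form} $\epsilon(u,v)=\mu(u)+\nu(v)$, i.e.\ all its $2\times 2$ sums vanish. This $\mathbb{F}_2$-linearity is the governing feature of $M$ and is what makes each part genuinely delicate.

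For \ref{semigeneric no SWIR} I would argue by contradiction in the equivalent language of standard amalgamation operators: by \Cref{SAO induces SWIR} a local SWIR yields a local SAO $\otimes$ on $\Age(M)$, and I would seek a configuration on which ($\otimes$-Inv) fails, following the template of the two-graph proposition and of \Cref{p: Ka npartite}\ref{Ka npartite}. Concretely one fixes a base $A$ carrying a non-trivial automorphism $\sigma$ (a swap of two vertices of a common part, or a cyclic permutation of a large part as in the $n$-partite case) together with $\sigma$-invariant one-point extensions $B,C$, and shows that $(\ast)$ forces some orientation in $B \otimes_A C$ between vertices that $\sigma$ must interchange, so that no automorphism of $B \otimes_A C$ extends $\sigma$. \textbf{The main obstacle here is exactly the linearity of $(\ast)$:} since a potential matrix sends symmetric inputs to symmetric outputs, the naive analogues of the two-graph configuration produce forced orientations that \emph{are} $\sigma$-invariant. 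The crux is therefore to locate a configuration — I expect one using a part of size $\ge 3$ together with several instances of $(\ast)$ to over-determine a single edge — in which $\sigma$ acts \emph{without a fixed point} on the set of parity-forced orientations; this is the genuinely new input needed relative to the earlier examples.

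For \ref{semigeneric non-univ} I would follow the orbit-counting and covering strategy used for the peculiar structure in \Cref{s: peculiar str} and for the $I_n$-free oriented graphs. First build $A \in \Ao(M)$ with a large group of commuting involutions: let $A$ consist of $\omega$ parts each of size $2$, all \emph{internally symmetric} (the two vertices of each part have identical orientations to every other vertex), so that $(\ast)$ holds automatically and swapping the two vertices of each part independently gives $\Aut(A)\supseteq (\Z/2\Z)^\omega$. Assuming $\Aut(M)$ universal, fix an embedding with uncountable image $H\le\Aut(M)$; as every $H$-orbit is countable, two distinct elements of $H$ agree on some vertex, so a non-identity involution $\tau\in H$ fixes a point. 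Unlike the order-$3$ situation of \Cref{l: peculiar str}, an involution of $M$ \emph{can} fix points (its fixed set is a union of parts), so the contradiction must come from a sharper parity constraint. The key lemma to establish is that a vertex swapped by $\tau$ within a part must have $\delta=0$ (be symmetric) towards every $\tau$-fixed part; I would then feed these constraints into a \NR/Folkman covering argument in the spirit of \Cref{l: anticlique in cover} to produce vertices $v,\tau_0 v,\dots$ forming a configuration violating $(\ast)$. Proving this parity constraint on fixed points and moved parts, and arranging the combinatorics so that the surviving configuration is genuinely $(\ast)$-forbidden, is where I expect the real work to lie.

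Finally, for \ref{semigeneric SWIR exp} the potential description of $(\ast)$ tells us what to add: enough data to fix the per-pair gauge of $\mu,\nu$. I would expand the tournament language by finitely much relational data recording a compatible linear order together with a colouring, taking $\mc{C}'$ to be the generic class in which cross-part orientations become a definable function of the new data; because any such prescription automatically has potential form, every member of $\mc{C}'$ satisfies $(\ast)$, and $\mc{C}'$ is a strong amalgamation class whose \Fr limit $M'$ reducts to $M$. In $M'$ the gauge freedom responsible for \ref{semigeneric no SWIR} is eliminated, so one can read off a SWIR directly (``$b \ind_A c$ when the prescribed orientation holds between the new vertices''); being linearly ordered with strong amalgamation, $M'$ should in fact fall under \Cref{t: order exp}. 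Verifying ultrahomogeneity of $M'$ and the SWIR axioms is routine; the one point requiring care — and the reason this is subtler than the unary labelling used for the generic $\vec{C}_4$-enlarged tournament in \ref{C4enlarged SWIR exp} — is that the ``twist'' $\delta_P^Q$ of $(\ast)$ depends on the \emph{pair} of parts, so the added data must genuinely trivialise this cocycle while still having reduct equal to all of $M$.
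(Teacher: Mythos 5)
Your plan correctly identifies the mechanism driving all three parts (the $\mathbb{F}_2$-potential form of $(\ast)$), but in each part the decisive step is left unexecuted, and in \ref{semigeneric non-univ} you miss a much stronger and simpler fact. For \ref{semigeneric no SWIR} you reduce the problem to exhibiting a configuration on which $(\ast)$ forces a non-$\sigma$-invariant orientation and then explicitly defer finding it (``the crux is therefore to locate a configuration \ldots the genuinely new input needed''). The paper's witness is small: $A=\{a_0,a_1\}$ in one part, $B=A\cup\{u\}$ with $u$ in the \emph{same} part, and $C=A\cup\{v_0,v_1\}$ a new part with $a_i\ra v_i$ and $v_{1-i}\ra a_i$. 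The double swap $a_0\leftrightarrow a_1$, $v_0\leftrightarrow v_1$ is an automorphism of this pair of embeddings fixing $u$, yet $(\ast)$ applied to $\{a_0,u\}$ versus $\{v_0,v_1\}$ forces $u$ to have exactly one out-edge into $\{v_0,v_1\}$, which cannot be invariant under $v_0\leftrightarrow v_1$. So your heuristic worry that ``potential matrices send symmetric inputs to symmetric outputs'' is circumvented exactly by invoking $(\ast)$ for a \emph{second} pair inside the same part (and your guess that a part of size $\geq 3$ is needed is correct), but without the configuration part \ref{semigeneric no SWIR} is not proved.

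For \ref{semigeneric non-univ} the relevant fact is that $\Aut(M)$ contains \emph{no} involutions at all, which makes your orbit-counting reduction and the subsequent machinery unnecessary: since a two-element anticlique lies in $\Age(M)$, it suffices to rule out involutions, and this is a few lines. If $\tau(u)\neq u$ then $u\ic\tau(u)$ (an edge would be reversed by $\tau$); picking a suitable third vertex $v$ of that part and $e$ with $u,v\ra e$ and $\tau(u)\la e$ (and $\tau(v)\ra e$ when $v$ is moved), one counts an odd number of out-edges from $\{u,v\}$ to $\{e,\tau(e)\}$, so $(\ast)$ forces an edge between $e$ and $\tau(e)$, which $\tau$ would have to reverse -- contradiction. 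Your proposed route rests on an unproven ``key lemma'' about moved vertices being symmetric towards $\tau$-fixed parts plus a Folkman-type covering argument, and your assertion that the fixed set of an involution is a union of parts is unjustified. For \ref{semigeneric SWIR exp}, the paper's expansion is a single binary predicate $R$ selecting, for each ordered pair of parts $(P,Q)$, one class of the equivalence relation ``same out-neighbourhood in $Q$'' (i.e.\ one level set of your $\mu$) -- exactly the per-pair gauge-fixing you describe -- and the SWIR is then read off from $R$; your alternative of superposing a linear order and a colouring is not shown to be ultrahomogeneous, to have reduct all of $M$, or to trivialise the pair-dependent twist, so as written it remains a plausible plan rather than a proof.
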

\begin{proof}
    \ref{semigeneric no SWIR}: We show that $\mc{C} = \Age(M)$ does not have a local SAO. Let $A = \{a_0, a_1\}$, $B = A \cup \{u\}$, $C = A \cup \{v_0, v_1\}$ with $a_0 \ic a_1 \ic u$, $v_0 \ic v_1$ and $a_i \ra v_i, a_i \la v_{1-i}$ for $i = 0, 1$. We have $A, B, C \in \mc{C}$. Suppose for a contradiction that $\otimes$ is a local SAO for $\mc{C}$. Consider the amalgam $B \otimes_A C$. By the parity condition for $\{a_0, u\}$ and $\{v_0, v_1\}$, we have that there exists $i \in \{0, 1\}$ with $u \ra v_i$ and $u \la v_{1-i}$. But then taking the automorphisms $a_0 \leftrightarrow a_1$ of $A$, $a_0 \leftrightarrow a_1$ of $B$ and $a_0 \leftrightarrow a_1$, $v_0 \leftrightarrow v_1$ of $C$ and applying (Inv), we have that $u \ra v_{1-i}$, contradiction. 

    \ref{semigeneric SWIR exp}: We first define a family of equivalence relations used to define the expansion. For $A \in \mc{C}$, let $\mc{P}_A = \{(P, Q) \mid P, Q \text{ distinct parts of } A\}$, and for $(P, Q) \in \mc{P}_A$ define an equivalence relation $\sim_{P, Q}$ on $P$ by: $u \sim_{P, Q} v$ if $u, v$ have the same out-neighbourhood in $Q$. By the parity condition, we see that $\sim_{P, Q}$ has at most two equivalence classes. If $\sim_{P, Q}$ has two equivalence classes, we let $E_{P, Q}$ be the set of equivalence classes, and if $\sim_{P, Q}$ has one equivalence class $\eps$, we let $E_{P, Q} = \{\eps, \varnothing\}$. Let $\mc{L}'$ be the expansion of the oriented graph language by a binary predicate $R$. Let $\mc{C}'$ be the class of $\mc{L}'$-structures $A'$ where $A := A'|_{\mc{L}} \in \mc{C}$ and where there exists a family of sets $(\delta_{P, Q} \mid (P, Q) \in \mc{P}_A)$ with $\delta_{P, Q} \in E_{P, Q}$ for $(P, Q) \in \mc{P}_A$ and $R^{A'} = \bigcup_{(P, Q) \in \mc{P}_A} (\delta_{P, Q} \times Q)$. It is not difficult to see that $\mc{C}'$ has strong amalgamation (also see the description of the SWIR below); let $M' = \FrLim(\mc{C}')$. We define a SWIR $\ind$ for $M'$. Let $A', B', C' \fin M'$. We define $B' \ind_{A'} C'$ if, for each $b \in B' \setminus A'$, $c \in C' \setminus A'$:
    \begin{itemize}
        \item if each of $b, c$ is not in the same part as any vertex of $A'$, then $b \ra c \wedge R(b, c) \wedge R(c, b)$;
        \item if $b$ is in the same part as some vertex of $A'$ and $c$ is not, then $R(b, c)$;
        \item if $c$ is in the same part as some vertex of $A'$ and $b$ is not, then $R(c, b)$.
    \end{itemize}
    (Note that if $b, c$ lie in the same part of $A'$ or in different parts of $A'$, then we do not need to specify which relations between $b, c$ hold, as there is only one possibility. Also note that in cases (ii) and (iii), specifying the $R$-relation forces the orientation of $bc$.) We leave it to the reader to check that $\ind$ is a SWIR.
    
    \ref{semigeneric non-univ}: It suffices to show that $\Aut(M)$ does not contain any involutions, as the structure consisting of two vertices and no edges lies in $\mc{C}$. For a contradiction, suppose there is an involution $\tau \in \Aut(M)$. Let $u \in M$ with $\tau(u) \neq u$. We have $\tau(u) \ic u$. If the $u$-part contains a $\tau$-fixed point $v$, take $e \in M$ with $u, v \ra e$ and $\tau(u) \la e$. Then $e \neq \tau(e)$ and the number of out-edges from $\{u, v\}$ to $\{e, \tau(e)\}$ is odd, so there is an edge between $e$ and $\tau(e)$, contradiction. If the $u$-part does not contain a $\tau$-fixed point, then there is $v \in M$ with $v \ic u$, $v \neq \tau(u)$ and $\tau(v) \neq v$. Take $e \in M$ with $u, v, \tau(v) \ra e$ and $\tau(u) \la e$, and we obtain a contradiction as before.
\end{proof}

\subsection{Ordered structures without a local SWIR} \label{ordered strs no SWIR}

We now give two examples showing that failure of automorphism-invariance is not the only obstacle to the presence of a SWIR on a \Fr structure:

\begin{prop} \label{p: ordered two-g no SWIR}
    The generic order expansion of the generic two-graph does not have a local SWIR.
\end{prop}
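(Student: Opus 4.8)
The plan is to argue by contradiction. Assume the generic order expansion $M$ of the generic two-graph carries a local SWIR $\ind$, and pass via \Cref{SWIR induces SAO} to a local standard amalgamation operator $\otimes$ on $\Age(M)$. Since the class of finite two-graphs and the class of finite linear orders both have strong amalgamation, their free superposition does too (cf.\ \Cref{free sup}), so $M$ has strong amalgamation and by \Cref{M strong implies canon strong} the operator $\otimes$ is strong; in particular amalgams are taken along disjoint copies. The central point, and the reason this example is qualitatively different from the unordered two-graph, is that we may \emph{not} invoke automorphism-invariance: every finite ordered structure is rigid, so the base of any amalgam has no nontrivial automorphisms and the swap $a \leftrightarrow a'$ used in the proof that the plain two-graph has no local SAO is unavailable here. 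The contradiction must therefore be extracted from stationarity and transitivity alone.

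First I would record the parity bookkeeping forced by the two-graph axiom. Fix a base $A = \{a_1 < a_2\}$ and one-point extensions $B = A \cup \{b\}$, $C = A \cup \{c\}$ whose types prescribe the values $[b a_1 a_2]$ and $[c a_1 a_2]$. In any two-graph the $4$-set $\{a_1, a_2, b, c\}$ carries an even number of $3$-edges, so $[b c a_1] + [b c a_2] \equiv [b a_1 a_2] + [c a_1 a_2] \pmod{2}$; hence $B \otimes_A C$ is determined by the order of $b, c$ together with a single free bit, which I take to be $[b c a_1]$, the $3$-edge anchored at the minimum of the base. By stationarity this bit is a function of the two extension types alone. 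I would then compute how the anchored bit transforms under enlargement of the base: if $a_0 < a_1$ is adjoined as a new minimum and $b, c$ are made independent from the enlarged base, so that by (Mon) they remain independent over $\{a_1, a_2\}$ and become independent over $\{a_0, a_1, a_2\}$, then the \emph{same} structure is described over two bases with different minima, and parity on $\{a_0, a_1, b, c\}$ forces the two anchored bits to differ by $[b a_0 a_1] + [c a_0 a_1]$. This is the mechanism by which the two-graph's switching structure interacts with the (only apparently harmless) choice of anchor.

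The main work, and the step I expect to be the genuine obstacle, is to convert these local parity relations into a \emph{global} inconsistency using only stationarity and transitivity. The difficulty is precisely that the rigid order does supply a consistent anchor (the current minimum) for each \emph{single} amalgam, so the obstruction is invisible at the level of one amalgam and at the level of any single base-enlargement; it must instead emerge from the incompatibility of the stationarity-forced bits across several overlapping independent configurations. Concretely, I would look for a finite configuration of a base together with several one-point extensions carrying suitable $3$-edges to the base, in which one pair of extension vertices is realised as independent over a family of bases whose minima vary, and then sum the forced parity shifts so that they cannot be reconciled with the well-definedness of a single $3$-edge. Verifying that such a configuration lies in $\Age(M)$ and that the monotonicity/transitivity axioms genuinely force the required independences is the crux: establishing that the switching class structure of two-graphs cannot be trivialised compatibly with the SWIR axioms over a varying base—rather than any symmetry of the base, which the order has destroyed—is where the real content lies.
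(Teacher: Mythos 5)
Your overall strategy is the right one and matches the paper's in spirit: since finite linearly ordered structures are rigid, the swap $a \leftrightarrow a'$ that kills the unordered two-graph is unavailable, and the contradiction must be manufactured from (Ex), (Sta), (Tr) and (Mon) alone; your parity bookkeeping over a two-element base (one free ``anchored'' bit, all remaining triples determined by the two-graph condition) and your observation that moving the anchor shifts this bit by $[b a_0 a_1] + [c a_0 a_1]$ are both correct and are exactly the mechanism at work. The problem is that the proposal stops precisely where the proof has to start: you never exhibit a concrete finite configuration, nor the chain of forced independences, that converts the anchor-shift into an actual inconsistency, and you explicitly defer this step (``I would look for a finite configuration\dots'', ``is where the real content lies''). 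That step is the entire content of the proposition --- everything preceding it would apply verbatim to ordered structures that \emph{do} carry a local SWIR --- so as written this is a plan rather than a proof. One technical caution for when you carry it out: (Mon) for a SWIR splits the left or right argument, never the base, so ``independent over the enlarged base, hence by (Mon) independent over $\{a_1,a_2\}$'' is not available for free; passing between bases has to be routed through (Tr) and \Cref{base_up} using an auxiliary independence such as $a_0 \ind_{a_1 a_2} c$, and arranging that auxiliary independence is part of the work.

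For comparison, the paper realises your mechanism on four points. By (Sta) over a singleton base, the value $[abc]$ for $b < a < c$ with $b \ind_a c$ is a constant, assumed without loss of generality to be $0$. One picks $v_0 < v_1 < v_2$ with $[v_0 v_1 v_2] = 1$ and uses (Ex) to find $w > v_2$ with $[v_1 v_2 w] = 0$ and $v_0 \ind_{v_1 v_2} w$. The normalisation forces $v_1 \ind_{v_2} w$, so (Tr) gives $v_0 v_1 \ind_{v_2} w$ and (Mon) gives $v_0 \ind_{v_2} w$, whence $[v_0 v_2 w] = 0$ as well; this transport of independence between the bases $\{v_2\}$ and $\{v_1, v_2\}$ is exactly the varying-anchor effect you anticipated, and the paper concludes by playing the derived values $[v_0 v_1 v_2] = 1$, $[v_0 v_2 w] = [v_1 v_2 w] = 0$ against the parity condition on the four-element set $\{v_0, v_1, v_2, w\}$. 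Producing such an explicit configuration and verifying each forced independence is what your write-up needs to become a proof.
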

\begin{proof}
    Let $M$ be the generic order expansion of the generic two-graph. Suppose for a contradiction that $M$ has a local SWIR $\ind$. For $a, b, c \in M$ with $b < a$, $a < c$ and $b \ind_a c$, we have $b < c$ by transitivity, and we shall assume that $[abc] = 0$ (the argument for the case $[abc] = 1$ is analogous). Let $v_0, v_1, v_2 \in M$ with $v_0 < v_1 < v_2$ and $[v_0 v_1 v_2] = 1$. By (Ex) there is $w \in M$ with $w > v_2 > v_1$, $[w v_1 v_2] = 0$ and $v_0 \ind_{v_1 v_2} w$. As $v_1 < v_2 < w$ and $[w v_1v _2] = 0$ we have $v_1 \ind_{v_2} w$, so by (Tr) we have $v_0 v_1 \ind_{v_2} w$, and thus $v_0 \ind_{v_2} w \wedge v_1 \ind_{v_2} w$ by (Mon). So $[v_0 v_2 w] = [v_1 v_2 w] = 0$, but then $\{v_0, v_1, v_2, w\}$ only has one $3$-hyperedge, contradiction.
\end{proof}

\begin{prop} \label{ordered semigen no SWIR}
    The generic order expansion of the semigeneric $\omega$-partite tournament does not have a local SWIR.
\end{prop}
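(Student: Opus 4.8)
The plan is to assume for contradiction that the generic order expansion $M$ of the semigeneric $\omega$-partite tournament carries a local SWIR $\ind$, and to derive a violation of the parity condition $(\ast)$, in close parallel with the proof of \Cref{p: ordered two-g no SWIR}. The governing idea is that $(\ast)$ is a \emph{global} parity constraint on the four cross-orientations between two pairs lying in distinct parts, whereas a SWIR must resolve each such orientation ``locally'' from the quantifier-free type over its base. Passing to the induced standard amalgamation operator, the orientations produced by $\ind$ are then rigidly pinned down by the linear order (which destroys the part-swapping automorphism exploited in \Cref{p: semigeneric}\ref{semigeneric no SWIR}), and the aim is to engineer a configuration in which these order-forced canonical orientations contradict $(\ast)$.

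First I would record the behaviour of $\ind$ on the order reduct: for a single base point $a$ and $b \ind_a c$ with $b,c\neq a$, stationarity forces the relative order of $b$ and $c$, and I fix the resulting convention (the opposite convention being handled symmetrically). The key preliminary is a canonical-orientation statement for cross-edges: if $x,y$ lie in distinct parts, both distinct from the part of a single base point $a$, and $x \ind_a y$, then the orientation of the (necessarily present) edge $xy$ is determined by stationarity from the finite data $\tp(x/a)$ and $\tp(y/a)$. From this I would extract an ``equal-type'' principle: if $y_0,y_1$ have the same type over $a$ and $x \ind_a y_0y_1$, then right-stationarity gives $\tp(y_0/ax)=\tp(y_1/ax)$, so $x\ra y_0 \Leftrightarrow x\ra y_1$; hence any single point independent over such a base has \emph{even} out-count to $\{y_0,y_1\}$. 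This observation shows exactly why the two members of a pair must be separated by the base: to make an odd out-count possible I must include base points lying strictly between the pair members in the order (so they acquire distinct types), just as $\{v_1,v_2\}$ separates the chain in \Cref{p: ordered two-g no SWIR}.

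With these tools I would build the gadget. Choose a two-point base $\{m,m'\}$ and two pairs $\{p_0,p_1\}\subseteq P$, $\{q_0,q_1\}\subseteq Q$ with $P\neq Q$, arranged along the order so that $m'$ separates $p_0,p_1$ and $m$ separates $q_0,q_1$, and seed the configuration with a prescribed orientation of one pair relative to $\{m,m'\}$ so as to introduce a parity defect. Using (Ex) to adjoin the remaining point as an independent amalgam over $\{m,m'\}$, and then peeling the base down with (Tr) and (Mon) combined with the single-base canonical orientations — exactly as $v_0v_1 \ind_{v_2} w$ is split and reduced in \Cref{p: ordered two-g no SWIR} — I would compute all four orientations $p_i q_j$ and show that the number of $i,j$ with $p_i \ra q_j$ is odd. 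Since $\{p_0,p_1,q_0,q_1\}$ is a substructure of $M$, this contradicts $(\ast)$, so no local SWIR can exist.

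The main obstacle is precisely the delicate step already present in the two-graph case: forcing the ``diagonal'' cross-orientation, whose natural base is not one of the separating points directly at hand. Handling it requires knowing that the single-base canonical orientation is consistent across the possible order-positions of the base (smallest, middle, or largest relative to its two neighbours) and then chaining the independences correctly through (Tr) and (Mon). The genuine work lies in choosing the seed orientation and the order-interleaving so that exactly an odd number of the four edges point outward while \emph{every} edge is actually forced by $\ind$ rather than merely permitted; one must in particular ensure the parity defect cannot be absorbed by a consistent completion, which is the trap one falls into if one instead tries to adjoin an auxiliary vertex whose required orientations $(\ast)$ already forbids.
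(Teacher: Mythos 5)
Your overall strategy --- assume a local SWIR, use (Ex), (Tr), (Mon) and stationarity to force cross-orientations between two pairs in distinct parts, and contradict the parity condition $(\ast)$ --- is the right one, and it is the same strategy the paper uses. But what you have written is a plan rather than a proof: the configuration is never actually exhibited, the orientations are never actually forced, and the parity count is never actually performed. You yourself flag the decisive step as ``the genuine work,'' and that work is the entire content of the proposition. In particular, your gadget (two fresh pairs $\{p_0,p_1\}$, $\{q_0,q_1\}$ over a two-point base, with all four cross-edges to be pinned down by $\ind$) faces a real obstruction that your sketch does not resolve: you cannot have all four independences $p_i \ind_{mm'} q_j$ simultaneously, since monotonicity applied to $p_0p_1 \ind_{mm'} q_0q_1$ only yields $p_1 \ind_{mm'p_0} q_0q_1$, so the orientation of $p_1q_j$ is a priori governed by the type over the enlarged base $mm'p_0$ rather than over $mm'$. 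Whether ``peeling the base down'' through (Tr) and (Mon) actually lands you in a contradiction depends entirely on the specific interleaving, which is exactly what is missing.

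The paper avoids this difficulty with a leaner configuration: it starts from a directed $4$-cycle $a_0b_0a_1b_1$ already present in $M$ (so the edges $b_0a_0$, $b_1a_0$ are given for free, one in each direction), adjoins a \emph{single} new point $c$ in the part of $a_0,a_1$ with $b_0b_1 \ind_{a_0a_1} c$, and reduces via (Tr) and (Mon) to $b_0 \ind_{a_1} c$ and $b_1 \ind_{a_0} c$. The contradiction then comes not from an ``equal type over the same base'' principle but from an isomorphism between configurations over \emph{different} one-point bases: since $a_0c \cong a_1c$ and $a_0b_1 \cong a_1b_0$ (as ordered oriented structures), stationarity and invariance force $b_0c$ and $b_1c$ to have the same orientation, so $\{b_0,b_1\}$ has an odd number of out-edges to $\{a_0,c\}$. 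If you want to salvage your version, the minimal fix is to replace your two-fresh-pairs gadget with this asymmetric one (three old points plus one new point), so that only two cross-edges need to be forced and the cancellation comes from the isomorphism of the two triangles rather than from a full four-edge computation.
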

\begin{proof}
    Let $M$ be the generic order expansion of the semigeneric $\omega$-partite tournament, and suppose for a contradiction that $M$ has a local SWIR $\ind$. For all $a, b, c \in M$ in the same part with $a < b, c$ and $b \ind_a c$, we assume $b < c$; the argument for $b > c$ is analogous. Using (Ex), there is a directed $4$-cycle $a_0 b_0 a_1 b_1 \sub M$ and $c \in M$ with $c, a_0, a_1$ in the same part, $b_0 < b_1 < a_0 < a_1 < c$ and $b_0 b_1 \ind_{a_0 a_1} c$. As $a_1 a_0$, $a_1 c$ have a unique amalgam over $a_1$ by transitivity of $<$ and $\ic$, by (Ex) and (Inv) we have $a_0 \ind_{a_1} c$. We also have $a_1 \ind_{a_0} c$ by assumption. By (Mon) applied to $b_0 b_1 \ind_{a_0 a_1} c$ we have $b_0 \ind_{a_0 a_1} c$, and by (Tr) applied to $(b_0 \ind_{a_0 a_1} c) \wedge (a_0 \ind_{a_1} c)$ and then (Mon) we have $b_0 \ind_{a_1} c$. Similarly $b_1 \ind_{a_0} c$. As $a_0 c \cong a_1 c$ and $a_0 b_1 \cong a_1 b_0$, by (St) and (Inv) we have $a_0 b_1 c \cong a_1 b_0 c$, so the edges $b_0 c$, $b_1 c$ have the same orientation. But then there is an odd number of out-edges from $\{b_0, b_1\}$ to $\{a_0, c\}$, contradiction.
\end{proof}

\subsection{The lexicographic product} \label{ex: lex prod}

Throughout this section on the lexicographic product, we let $\mc{L}$ be a relational language.

\begin{defn}
    Let $M$ be an $\mc{L}$-structure. We say that $M$ is \emph{transitive} if all vertices of $M$ are isomorphic as substructures.
\end{defn}
\begin{defn}[{\cite[Definition 1.8]{Mei16}}]
    Let $M, N$ be $\mc{L}$-structures with $M$ transitive. Let $\mc{L}^s$ be an expansion of $\mc{L}$ by a binary relation symbol $s$. The \emph{lexicographic product} of $M$ and $N$ is the $\mc{L}^s$-structure $M[N]^s$ with domain $M \times N$ where:
    \begin{itemize}
        \item for each $R \in \mc{L}$ of arity $k$ and each $(u_i, v_i)_{i < k} \in (M \times N)^k$, we have that $M[N]^s \models R((u_i, v_i)_{i < k})$ iff:
        \begin{itemize}
            \item the $u_i$ are not all equal and $M \models R((u_i)_{i < k})$, or
            \item the $u_i$ are all equal and $N \models R((v_i)_{i < k})$; 
        \end{itemize}
        \item $M[N]^s \models s((u, v), (u', v'))$ iff $u = u'$.
    \end{itemize} 
\end{defn}

\begin{notn}
    For $A \sub M[N]^s$, we write $A^M = \pi_M(A) = \{u \in M \mid \ex v \in N, (u, v) \in A\}$. For $u \in A^M$, we write $A_u = \{v \in N \mid (u, v) \in A\}$.
\end{notn}

The proof of the following two lemmas is straightforward.

\begin{lem}[adapted from {\cite[Lemma 2.13]{Mei16}}] \label{lex emb lem}
    Let $M, N$ be $\mc{L}$-structures with $M$ transitive. Let $A, B \sub M[N]^s$. Then there is a one-to-one correspondence between embeddings $f : A \to B$ and tuples $(f^M, (f_u)_{u \in A^M})$ of embeddings $f^M : A^M \to B^M$ and $f_u : A_u \to B_{f^M(u)}$, $u \in A^M$, given by \[f(u, v) = (f^M(u), f_u(v)).\] We also have that $f$ is an isomorphism iff each element of the tuple $(f^M, (f_u)_{u \in A^M})$ is an isomorphism.
\end{lem}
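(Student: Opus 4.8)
The plan is to exploit the relation $s$, which in $M[N]^s$ holds between $(u,v)$ and $(u',v')$ precisely when $u=u'$ and is therefore preserved and reflected by any embedding. Given an embedding $f : A \to B$, since $s$ partitions $A$ and $B$ into their fibres, $f$ must carry each fibre of $A$ into a single fibre of $B$ and send distinct fibres to distinct fibres. Concretely, for $u \in A^M$ I would pick any $v$ with $(u,v) \in A$ and set $f^M(u) = \pi_M(f(u,v))$; preservation and reflection of $s$ make this independent of the choice of $v$ and injective. For $u \in A^M$ and $v \in A_u$ I would then set $f_u(v)$ to be the $N$-coordinate of $f(u,v)$, which lies in $B_{f^M(u)}$ since $f(u,v) \in B$. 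This manifestly gives $f(u,v) = (f^M(u), f_u(v))$.

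The substance is in checking that $f^M$ and the $f_u$ are themselves embeddings, and conversely that any such tuple assembles into an embedding of the product. For a tuple $(v_i)_{i<k}$ from a single fibre $A_u$, the product relation $R$ on $((u,v_i))_{i<k}$ is by definition governed by $N \models R((v_i))$, and the images $((f^M(u), f_u(v_i)))_i$ again share an $M$-coordinate, so $f$ being an embedding translates directly into $f_u$ preserving and reflecting $R$; no transitivity is needed here. For a tuple $(u_i)_{i<k}$ from $A^M$ that is \emph{not} constant, the product relation is governed by $M \models R((u_i))$, and the images have $M$-coordinates that are likewise not all equal (by injectivity of $f^M$), so $f$ being an embedding gives exactly that $f^M$ preserves and reflects $R$ on non-constant tuples. \textbf{The one genuinely delicate point}, and the place where transitivity of $M$ is essential, is the behaviour of $f^M$ on \emph{constant} tuples $(u,\dots,u)$: here the product relation is overwritten by $N$, so $f$ carries no information about whether $M \models R(u,\dots,u)$. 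Transitivity says all one-element substructures of $M$ are isomorphic, so each such diagonal relation has constant truth value across $M$; hence any injection between subsets of $M$ automatically preserves and reflects these relations, and $f^M$ is a genuine $\mc{L}$-embedding.

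For the reverse direction I would take a tuple $(f^M, (f_u)_{u \in A^M})$ and define $f(u,v) = (f^M(u), f_u(v))$, checking that it lands in $B$, is injective, and preserves and reflects $s$ and each $R$ — the latter splitting into the same constant/non-constant case analysis as above, now read in the opposite direction. The two assignments $f \mapsto (f^M,(f_u)_u)$ and $(f^M,(f_u)_u) \mapsto f$ are visibly mutually inverse, which yields the claimed bijection. Finally, for the isomorphism statement I would show that $f$ is surjective onto $B$ iff $f^M$ is onto $B^M$ and each $f_u$ is onto $B_{f^M(u)}$: surjectivity of $f^M$ recovers every $M$-coordinate occurring in $B$, and surjectivity of the $f_u$ then recovers every fibre, while the converse is immediate from $f(u,v) = (f^M(u), f_u(v))$. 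Since a surjective embedding of relational structures is an isomorphism, $f$ is an isomorphism exactly when every component of the tuple is.
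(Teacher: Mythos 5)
Your proof is correct and is exactly the direct verification that the paper omits (it merely declares the lemma ``straightforward'' and cites \cite[Lemma 2.13]{Mei16}). You have also correctly isolated the one non-obvious point, namely that transitivity of $M$ is what makes $f^M$ preserve and reflect relations on constant tuples, where the product structure carries no information about $M$.
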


\begin{lem}
    Let $M, N$ be \Fr $\mc{L}$-structures with $M$ transitive. Then $M[N]^s$ is a \Fr structure.
\end{lem}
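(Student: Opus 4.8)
The plan is to verify the two defining properties of a \Fr structure — being countably infinite and being ultrahomogeneous — with the second being the only substantial point. Countability and infiniteness are immediate: the domain $M \times N$ is a product of two countably infinite sets, hence countably infinite. Since $\mc{L}$ is relational, the finitely generated substructures of $M[N]^s$ are exactly its finite subsets with the induced relations, so ultrahomogeneity amounts to showing that every isomorphism $f \colon A \to B$ between finite substructures $A, B \sub M[N]^s$ extends to an automorphism of $M[N]^s$.

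First I would decompose $f$ using \Cref{lex emb lem}: the isomorphism $f$ corresponds to a tuple $(f^M, (f_u)_{u \in A^M})$, where $f^M \colon A^M \to B^M$ is an isomorphism of finite substructures of $M$ and each $f_u \colon A_u \to B_{f^M(u)}$ is an isomorphism of finite substructures of $N$, satisfying $f(u, v) = (f^M(u), f_u(v))$.

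Next I would lift each component to an automorphism using ultrahomogeneity of the factors. Since $M$ is ultrahomogeneous, $f^M$ extends to some $g^M \in \Aut(M)$. Since $N$ is ultrahomogeneous, each $f_u$ (for $u \in A^M$) extends to some $g_u \in \Aut(N)$; for $u \in M \setminus A^M$ I set $g_u = \id_N$. I then assemble these into the map $g \colon M[N]^s \to M[N]^s$ given by $g(u, v) = (g^M(u), g_u(v))$. Applying the isomorphism clause of \Cref{lex emb lem} with $A = B = M[N]^s$ (so that $(M[N]^s)^M = M$ and every fibre $(M[N]^s)_u = N$), the fact that $g^M$ and every $g_u$ is an automorphism shows that $g$ is an automorphism of $M[N]^s$. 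Finally $g$ extends $f$, since for $(u, v) \in A$ we have $g(u, v) = (g^M(u), g_u(v)) = (f^M(u), f_u(v)) = f(u, v)$.

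I do not expect a genuine obstacle here; the content is carried entirely by \Cref{lex emb lem}, with transitivity of $M$ needed only so that that lemma (and the lexicographic product itself) is available. The one place that calls for a little care is checking, either directly or via the lemma, that the assembled $g$ really is an automorphism: one verifies that $s$ is preserved because $g^M$ is injective, and that each relation $R$ is preserved by splitting into the two cases of the definition of $M[N]^s$ — the case where the first coordinates are not all equal, handled by $g^M \in \Aut(M)$ together with injectivity of $g^M$ keeping them distinct, and the case where they all coincide at some $u$, handled by $g_u \in \Aut(N)$.
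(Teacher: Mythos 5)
Your proof is correct and is exactly the intended argument: the paper leaves this lemma unproved ("the proof of the following two lemmas is straightforward"), and the natural route is precisely the one you take, namely decomposing a finite partial isomorphism via the preceding correspondence lemma, lifting $f^M$ and each $f_u$ by ultrahomogeneity of $M$ and $N$ respectively (with $g_u = \id_N$ off $A^M$), and reassembling into an automorphism using the isomorphism clause of that lemma. No gaps.
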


We first consider the existence of a SWIR for $M[N]^s$.

\begin{prop}
    Let $M, N$ be \Fr $\mc{L}$-structures such that $M$ is transitive and has strong amalgamation, and let $\ind^M, \ind^N$ be (local) SWIRs for $M$, $N$. Then the following ternary relation $\ind$ defined on the finite substructures of $M[N]^s$ is a (local) SWIR:
    \[B \ind_A C \,\text{ if }\, (B^M \ind^M_{A^M} C^M \text{ and, for all } u \in A^M, \text{ we have } B_u \ind^N_{A_u} C_u).\]
\end{prop}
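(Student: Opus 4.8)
The plan is to verify the four SWIR axioms (Inv), (Ex), (Sta) and (Mon) for $\ind$, reducing each one to the corresponding axiom for $\ind^M$ on the $M$-part together with the axiom for $\ind^N$ on each fibre. The whole argument rests on \Cref{lex emb lem}: since $M$ is transitive, every isomorphism between finite substructures of $M[N]^s$ decomposes as a pair $(f^M, (f_u)_u)$, and conversely compatible such pairs assemble. First I would record the two structural facts that make the reduction run. As $\mc{L}$ is relational, the generated substructure $AB$ satisfies $(AB)^M = A^M \cup B^M$ and $(AB)_u = A_u \cup B_u$ (reading $A_u = \varnothing$ when $u \notin A^M$). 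Second, I would establish that $\equiv_A$ decomposes: $B \equiv_A C$ holds in $M[N]^s$ if and only if there is an isomorphism $\phi^M : B^M \to C^M$ witnessing $B^M \equiv_{A^M} C^M$, together with isomorphisms $\phi_u : B_u \to C_{\phi^M(u)}$ witnessing $B_u \equiv_{A_u} C_{\phi^M(u)}$ for each $u \in B^M$. The forward direction follows by decomposing, via \Cref{lex emb lem}, an automorphism of $M[N]^s$ fixing $A$ pointwise; the reverse direction assembles $\phi^M$ and the $\phi_u$ into an isomorphism $B \to C$ fixing $A$ pointwise, which extends to an automorphism by ultrahomogeneity of $M[N]^s$.

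With these in hand, (Inv) is immediate: an automorphism $g$ decomposes as $(g^M, (g_u)_u)$, and one applies (Inv) for $\ind^M$ to the $M$-parts and (Inv) for $\ind^N$ to each fibre, using $(gB)_{g^M(u)} = g_u(B_u)$. For (Ex), to produce $B'$ with $B \equiv_A B'$ and $B' \ind_A C$, I would first apply (Ex) for $\ind^M$ to obtain $(B')^M \equiv_{A^M} B^M$ with $(B')^M \ind^M_{A^M} C^M$; by \Cref{SWIR strong amalg} applied to $\ind^M$ (this is where strong amalgamation of $M$ is used), $(B')^M \setminus A^M$ is disjoint from $C^M \setminus A^M$. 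Over each $u \in A^M$ I would apply (Ex) for $\ind^N$ to get $B'_u \equiv_{A_u} B_u$ with $B'_u \ind^N_{A_u} C_u$, while over the new base points $u \in (B')^M \setminus A^M$ I simply realise an isomorphic copy of the corresponding fibre of $B$. By the $\equiv_A$-decomposition these pieces assemble to a substructure $B' \equiv_A B$ with $B' \ind_A C$.

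For (Sta) and (Mon) the same bookkeeping applies. For (Sta), given $B \ind_A C$, $B' \ind_A C$ and $B \equiv_A B'$, the $\equiv_A$-decomposition supplies $(\phi^M, (\phi_u)_u)$; applying (Sta) for $\ind^M$ upgrades $\phi^M$ to fix $A^M \cup C^M = (AC)^M$, and applying (Sta) for $\ind^N$ over each $u \in A^M$ upgrades $\phi_u$ to fix $A_u \cup C_u = (AC)_u$. Over base points outside $A^M$ the relevant fibres of $C$ are empty by the disjointness above, so there is nothing further to fix, and reassembly gives $B \equiv_{AC} B'$. For (Mon), from $BD \ind_A C$ one reads off $(BD)^M \ind^M_{A^M} C^M$ and $(BD)_u \ind^N_{A_u} C_u$ for $u \in A^M$; (Mon) for $\ind^M$ and for $\ind^N$ then yields $B^M \ind^M_{A^M} C^M$, $D^M \ind^M_{(AB)^M} C^M$ and the matching fibre statements, giving $B \ind_A C$ and $D \ind_{AB} C$. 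The one point needing care is the fibre condition for $D \ind_{AB} C$ over $u \in B^M \setminus A^M$: here \Cref{SWIR strong amalg} forces $C_u = \varnothing$, so independence over $u$ holds trivially.

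The main obstacle, and the only genuinely non-formal step, is the $\equiv_A$-decomposition together with the repeated assembly arguments: one must check that the separately constructed $M$-part and fibres really glue to a substructure of $M[N]^s$ realising the intended type, and that no two base points carry conflicting fibre data. This is precisely where strong amalgamation of $M$ enters, through \Cref{SWIR strong amalg} for $\ind^M$, guaranteeing that the off-base points of the two sides are disjoint, so that fibres never clash and every off-base fibre of the opposite side is empty (hence trivially independent). Transitivity of $M$ is used throughout via \Cref{lex emb lem}. Finally, in the local case one takes all base points to be non-empty and adopts the convention that an empty side is trivially independent (consistent with (Ex) and (Mon)); the global case is identical, allowing $A^M$ or some $A_u$ to be empty.
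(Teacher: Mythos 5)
Your proposal is correct and follows essentially the same route as the paper: decompose everything through \Cref{lex emb lem} into an $M$-part and fibres, verify each axiom by invoking the corresponding axiom for $\ind^M$ and $\ind^N$, and use \Cref{SWIR strong amalg} (via strong amalgamation of $M$) to see that fibres over base points outside $A^M$ on the opposite side are empty. The only cosmetic difference is that the paper handles (Mon) by reducing to a singleton $B$ and justifies independence over an empty fibre explicitly via \Cref{base triviality} plus (Mon), rather than declaring it trivial, but the substance is identical.
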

\begin{proof}
    (Inv): Let $A, B, C \fin M[N]^s$ with $B \ind_A C$. Let $g \in \Aut(M[N]^s)$. By \Cref{lex emb lem} there are $g^M \in \Aut(M)$ and $g_u \in \Aut(N)$, for $u \in M$, such that $g(u, v) = (g^M(u), g_u(v))$. By ($\ind^M$-Inv) we have $g^M B^M \ind^M_{g^M A^M} g^M C^M$. By ($\ind^N$-Inv), for all $u \in A^M$ we have $g_u B_u \ind^N_{g_u A_u} g_u C_u$. It is easy to see that for $D \sub M[N]^s$ and for $u \in M$ we have $(gD)^M = g^M D^M$ and $(gD)_{g^M(u)} = g_u D_u$. So $(gB)^M \ind^M_{(gA)^M} (gC)^M$ and, as $(gA)^M = g^M A^M$, for each $u' \in (gA)^M$ we have $u' = g^M(u)$ for some $u \in A^M$, giving $(gB)_{u'} \ind^N_{(gA)_{u'}} (gC)_{u'}$. Thus $gB \ind_{gA} gC$.

    We now show left existence, stationarity and monotonicity; the proof of the corresponding right-hand properties is similar.
    
    (Ex): Let $A, B, C \fin M[N]^s$. By ($\ind^M$-Ex) there exists $B'^M \sub M$ such that $B'^M \ind^M_{A^M} C^M$ and there is an isomorphism $f^M : B^M \cup A^M \to B'^M \cup A^M$ extending $\id_{A^M}$. Likewise, by ($\ind^N$-Ex), for each $u \in A^M$ there exists $B'_u \sub N$ such that $B'_u \ind^N_{A_u} C_u$ and there is an isomorphism $f_u : B_u \cup A_u \to B'_u \cup A_u$ extending $\id_{A_u}$. For $u \in B^M \setminus A^M$, let $B'_{f^M(u)} = B_u$ and let $f_u = B_u \to B'_{f^M(u)}$ be the identity. Let $B' = \{(u', v') \in M[N]^s \mid u' \in B'^M \wedge v' \in B'_{u'}\}$. Then by the definition of $\ind$, we have $B' \ind_A C$, and by \Cref{lex emb lem} the map $f : B \cup A \to B' \cup A$, $f(u, v) = (f^M(u), f_u(v))$ is an isomorphism extending $\id_A$.
    
    (Sta): suppose $B \ind_A C$, $B' \ind_A C$ and $\tp(B/A) = \tp(B'/A)$. To show that there is an isomorphism taking $B$ to $B'$ and fixing $AC$, by \Cref{lex emb lem} it suffices to show that (i) there is an isomorphism $f^M$ taking $B^M$ to $B'^M$ and fixing $A^M C^M$, and that (ii) for all $u \in B^M A^M C^M$ there is an isomorphism taking $B_u$ to $B'_{f^M(u)}$ fixing $A_u C_u$. (i) follows from ($\ind^M$-Sta), and by ($\ind^N$-Sta) we have (ii) for all $u \in A^M$. As $B^M \ind^M_{A^M} C^M$ and $B' \ind^M_{A^M} C^M$, by \Cref{SWIR strong amalg} we have that $C_u = \varnothing$ for $u \in B^M \setminus A^M$ and $B_u = B'_{f^M(u)} = \varnothing$ for $u \in C^M \setminus A^M$, so we have (ii) for all $u \in B^M A^M C^M$. 
    
    (Mon): Suppose $BD \ind_A C$. By the definition of $\ind$, it is immediate that $B \ind_A C$. To show that $D \ind_{AB} C$, it suffices to consider the case where $B$ is a singleton $\{(s, t)\}$  (as then the general case follows by induction). We have $(s, t)D \ind_A C$, so $sD^M \ind^M_{A^M} C^M$ and $(s, t)_u D_u \ind^N_{A_u} C_u$ for all $u \in A^M$. By ($\ind^M$-Mon) we have $D^M \ind^M_{sA^M} C^M$, and thus $D^M \ind^M_{((s, t)A)^M} C^M$. By ($\ind^N$-Mon) we have $D_u \ind^N_{((s, t)A)_u} C_u$ for all $u \in A^M$. If $s \in A^M$, we are done. If not, then we also must show that $D_s \ind^N_t C_s$. As $M$ has strong amalgamation and $sD^M \ind^M_{A^M} C^M$, by \Cref{SWIR strong amalg} we have $s \notin C^M$, so $C_s = \varnothing$. By \Cref{base triviality}, we have $D_s \ind^N_t t$, so by ($\ind^N$-Mon) we have $D_s \ind^N_t \varnothing$ as required. 
\end{proof}

\begin{prop}
    Let $M, N$ be \Fr $\mc{L}$-structures with \Ka functors and with $M$ transitive. Then $M[N]^s$ has a \Ka functor.
\end{prop}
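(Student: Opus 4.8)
The plan is to build $K$ fibrewise, reducing every question about $M[N]^s$ to its $M$-skeleton together with its $N$-fibres via \Cref{lex emb lem}. Write $K^M,\eta^M$ and $K^N,\eta^N$ for the given Katětov functors of $M$ and $N$ and their natural transformations. For $A\in\mc{A}(M[N]^s)$ I would take the $M$-skeleton of $K(A)$ to be $K^M(A^M)$, and attach an $N$-fibre over each of its vertices: over an old vertex $\eta^M_{A^M}(u)$ with $u\in A^M$ place the fibre $K^N(A_u)$, and over each new vertex $w\in K^M(A^M)\setminus\eta^M_{A^M}(A^M)$ place a fixed fibre realizing every one-vertex type of $N$ (for instance a copy of $K^N(\{n_0\})$ for a fixed $n_0\in N$, which by joint embedding and the one-point extension property \ref{Ka ope} of $K^N$ does realize every one-vertex type). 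The two families of inclusions $\eta^M,\eta^N$ assemble, via the $\omega$-age version of \Cref{lex emb lem}, into an embedding $\eta_A\colon A\to K(A)$, and the glued object lies in $\mc{A}_\omega(M[N]^s)$.

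For the one-point extension property \ref{Ka ope} I would split a one-point extension $\zeta\colon A\to B$ adding a vertex $(s,t)$ into two cases. If $s\notin A^M$ then $A^M\to B^M$ is a one-point extension, which by \ref{Ka ope} for $K^M$ embeds over $A^M$ into $K^M(A^M)$ sending $s$ to some new vertex $w$ realizing the $M$-type of $s$; placing $t$ in the fibre over $w$ (possible as that fibre realizes every one-vertex type) and using that the relations between distinct $M$-fibres are read off from the skeleton, we obtain an embedding $B\to K(A)$ over $A$. If $s\in A^M$ then $A^M=B^M$ and $A_s\to B_s$ is a one-point extension inside a single fibre, handled by \ref{Ka ope} for $K^N$ applied to $K^N(A_s)$. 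Naturality \ref{Ka nat trans} of $\eta$ is immediate from naturality of $\eta^M$ and $\eta^N$.

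To define $K(\theta)$ for an embedding $\theta\colon A\to B$ I would use $K^M(\theta^M)$ on the skeleton, $K^N(\theta_u)$ on the fibre over each old vertex $u\in A^M$ (noting that naturality of $\eta^M$ sends old vertices to old vertices so the fibres match), and on the fibre over a new vertex $w$ the canonical map to the fibre over $K^M(\theta^M)(w)$. Functoriality of $K$ (identity and composition) would then follow from functoriality of $K^M$ and $K^N$ together with the one-to-one correspondence of \Cref{lex emb lem}.

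The hard part is precisely the treatment of the fibres over newly created $M$-vertices. For the fibre map over a new vertex $w$ to be well defined and coherent one needs $K^M(\theta^M)$ to send $w$ to another new vertex, so that new fibres map to new fibres; a priori $K^M(\theta^M)(w)$ could land on an old vertex $\eta^M_{B^M}(b)$, in which case the fixed new fibre would instead have to embed into $K^N(B_b)$. I expect to resolve this in one of two ways: either by arranging that the Katětov functor of $M$ sends new vertices to new vertices — a property of the canonical minimal construction, to which one reduces — or by exploiting that, again by joint embedding and \ref{Ka ope}, $K^N$ applied to any nonempty structure (and $B_b$ is nonempty since $b\in B^M$) already realizes every one-vertex type of $N$, so that even in the problematic case the fixed new fibre still embeds into $K^N(B_b)$. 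Verifying that such a choice is genuinely compatible with composition, and hence that $K$ really is a functor, is the main point that will require care.
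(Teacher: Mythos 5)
Your overall strategy --- decompose everything through \Cref{lex emb lem}, run $K^M$ on the skeleton and $K^N$ on the fibres over old vertices, and attach a fixed fibre over each newly created skeleton vertex --- is exactly the paper's. The one place you diverge is the choice of that fixed fibre, and it is precisely this choice that creates the gap you flag at the end. The paper attaches a \emph{single point} $\{v\}$ (one fixed $v \in N$) over each new skeleton vertex, setting
\[ K(A) = \bigl(P_{\ext}(A^M) \times \{v\}\bigr) \cup \bigcup_{u \in A^M} \bigl(\{u\} \times Q(A_u)\bigr), \]
where $P, Q$ are the \Ka functors of $M, N$ and $P_{\ext}(A^M) = P(A^M)\setminus A^M$. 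The observation you are missing is that a singleton already suffices for the one-point extension property: if $s \notin A^M$ then the new vertex $(s,t)$ is alone in its fibre, so by the definition of the lexicographic product its relations to $A$ are read off from the skeleton alone, and $(s,t) \mapsto (w,v)$ works for any new skeleton vertex $w$ realising the type of $s$ over $A^M$. No richness of the new fibre is needed.

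Your richer fibre $K^N(\{n_0\})$ then turns the coherence problem you identify into a genuine obstruction. If $K^M(\theta^M)$ sends a new skeleton vertex $w$ to an old vertex $\eta^M_{B^M}(b)$, your construction requires an embedding of the entire infinite structure $K^N(\{n_0\})$ into $K^N(B_b)$, chosen compatibly with composition. Your proposed fix (2) does not deliver this: $K^N(B_b)$ realising every one-vertex type of $N$ gives images for single points, not an embedding of the whole fibre --- a single application of a \Ka functor need not be universal for $\mc{A}_\omega(N)$, and even where an embedding exists there is no canonical choice of one. Fix (1) (arranging that $K^M$ sends new vertices to new vertices) is not a consequence of the definition of \Ka functor and would need a separate argument. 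With the singleton fibre the issue essentially evaporates: the fibre map over a new vertex is a map out of a one-point set whose relations to other fibres are governed entirely by the skeleton, and the paper simply takes it to be $\id_{\{v\}}$. I would rewrite your construction with the singleton new fibre; the rest of your argument (the case split for \ref{Ka ope}, naturality from that of $\eta^M, \eta^N$, functoriality via \Cref{lex emb lem}) then goes through as in the paper.
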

\begin{proof}
    It suffices to define a \Ka functor for embeddings of finite substructures of $M[N]^s$ -- we do this for ease of exposition (working with an abstract definition of $\mc{A}(M[N]^s)$ is straightforward but less concise). Let $P, Q$ be \Ka functors for $M, N$ (where we assume that if $A \fin M$, then $A \sub P(A) \sub M$, and likewise for $Q$ and $N$). For $A \fin M$, write $P_{\ext}(A) = P(A) \setminus A$. Let $v \in N$ be arbitrary.
    
    Let $A \fin M[N]^s$, $A \neq \varnothing$. We define
    $K(A) = ((P_{\ext}(A^M) \times \{v\}) \,\cup \bigcup_{u \in A^M} (\{u\} \times Q(A_u))$.
    
    We have $K(A)^M = P(A^M)$ and $K(A)_u = Q(A_u)$ for each $u \in A$. Let $A \cup \{(s, t)\} \sub M[N]^s$ be an extension of $A$. By \Cref{lex emb lem}, if $s \in A^M$ then as $Q$ is a \Ka functor we have that $A \cup \{(s, t)\}$ embeds into $K(A)$, and if $s \notin A^M$ we use the fact that $P$ is a \Ka functor to obtain the embedding. Let $f : A \to B$ be an embedding of non-empty finite substructures of $M[N]^s$. To define $K(f)$, we again use \Cref{lex emb lem} with $K(f)^M = P(f^M)$ and $K(f)_u = Q(f_u)$ for $u \in A^M$, $K(f)_u = \id_{\{v\}}$ for $u \in K(A)^M \setminus A^M$. We leave the routine verification that $K$ is indeed a functor to the reader. 
\end{proof}

By the previous two propositions (and the results in \Cref{ex: Ka for tourn}, \Cref{betweenness and friends}, \Cref{ex: S(2)}), we immediately have the following regarding the infinite structures from Cherlin's catalogue of oriented graphs (\cite{Che98}) which result from lexicographic products of other structures in the catalogue:

\begin{prop}
    Let $n \leq \omega$, and let $I_n$ denote an anticlique of size $n$.
    \begin{enumerate}[label=(\roman*)]
        \item Let $T = \Q$ or the random tournament. Then the structures $T[I_n]$, $I_n[T]$ each have a global SWIR and a \Ka functor.
        \item The structures $S(2)[I_n]$, $I_n[S(2)]$ each have a local SWIR and a \Ka functor.
        \item Let $\oa{C}_3$ denote the directed $3$-cycle. Then the structures $\oa{C}_3[I_n]$, $I_n[\oa{C}_3]$ each have a local SWIR and a \Ka functor.
    \end{enumerate}
\end{prop}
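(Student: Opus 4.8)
The plan is to derive every item directly from the two preceding propositions on lexicographic products, after assembling the relevant data for each factor. By \Cref{ex: structures with SWIRS}, $\Q$ and the random tournament each carry a global SWIR; by the first bullet of \Cref{ex: structures with SWIRS} together with \Cref{p: quick Mue}, the anticlique $I_n$ (which has free, hence strong, amalgamation) carries a global SWIR, in fact a SIR, and a \Ka functor. The dense local order $S(2)$ carries a local SWIR and a \Ka functor by \Cref{p: S(2) SWIR Ka}, and the directed $3$-cycle $\oa{C}_3$, being finite and rigid over any non-empty base, carries a (local, essentially trivial) SWIR and a correspondingly trivial \Ka functor. The \Ka functors for $\Q$ and the random tournament are the ones recalled in \Cref{betweenness and friends} and \Cref{ex: Ka for tourn}. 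Finally, $\Q$ (a linear order), the random tournament, $S(2)$ and $I_n$ are all transitive, and all have strong amalgamation; for $S(2)$ this is the short observation that distinct vertices of a tournament can never be identified, while cross-edges in an amalgam can always be oriented consistently using the cut description of \Cref{p: S(2) props}.

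With this in hand, the \Ka-functor claims follow at once from the preceding proposition on \Ka functors of lexicographic products, whose only hypotheses are that both factors have \Ka functors and that the left factor is transitive; this covers $T[I_n]$, $I_n[T]$, $S(2)[I_n]$, $I_n[S(2)]$, $\oa{C}_3[I_n]$ and $I_n[\oa{C}_3]$ uniformly. For the SWIR claims I would apply the preceding proposition on SWIRs of lexicographic products, which additionally requires the \emph{left} factor to have strong amalgamation. This applies verbatim whenever the left factor is $\Q$, the random tournament, $S(2)$ or $I_n$, yielding SWIRs on $T[I_n]$, $I_n[T]$, $S(2)[I_n]$, $I_n[S(2)]$ and $I_n[\oa{C}_3]$. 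The resulting SWIR is global exactly when both factors carry global SWIRs — i.e.\ for $T[I_n]$ and $I_n[T]$ with $T \in \{\Q,\text{ random tournament}\}$ — and only local when $S(2)$ or $\oa{C}_3$ is involved, matching the statement; this local/global bookkeeping is the routine part of the argument.

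The one genuine obstacle is the case $\oa{C}_3[I_n]$, since its left factor $\oa{C}_3$ is finite and \emph{fails} strong amalgamation (each vertex of $\oa{C}_3$ has out-degree one, so two distinct out-neighbours of a vertex are not realised in its age), and hence the SWIR proposition does not apply as the left factor. The \Ka functor is unaffected, as noted above. For the SWIR I would instead build $\ind$ directly on $\oa{C}_3[I_n]$: in the relevant infinite case $n=\omega$ the blow-up supplies infinitely many vertices in each of the three parts, so $\oa{C}_3[I_\omega]$ itself has strong amalgamation, and one defines $B \ind_A C$ by the cyclic rule used for tournament-type structures — placing $C \setminus A$ ``forward'' of $B \setminus A$ around the $3$-cycle, and independently inside each part — then verifies the SWIR axioms directly as in \Cref{ex: structures with SWIRS} (the finite $n$ cases reduce to the degenerate SWIR on a finite structure). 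I expect the verification of the $S(2)$ strong-amalgamation claim and this direct construction for $\oa{C}_3[I_n]$ to be the only places needing genuine (if short) work; everything else is an immediate instance of the two lexicographic-product propositions.
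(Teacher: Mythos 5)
Your overall strategy is the same as the paper's: the paper derives all of these claims by citing the two preceding lexicographic-product propositions together with the earlier examples, explicitly leaving the ``routine verifications'' to the reader. Your treatment of $S(2)$ is fine (it does have strong amalgamation, since $\acl$ is trivial in a homogeneous tournament), and you are right to flag that $\Age(\oa{C}_3)$ fails strong amalgamation, so the SWIR product proposition cannot be invoked with $\oa{C}_3$ on the left; that case does need a short direct argument. (Your description of that direct argument is slightly off, though: over a non-empty base in $\oa{C}_3[I_\omega]$ the part of every new vertex is already determined by its quantifier-free type, so there is nothing to place ``forward'' around the cycle --- the local SWIR is simply $(B\setminus A)\cap(C\setminus A)=\varnothing$.)

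The genuine gap is in your treatment of $I_n$ for $2 \le n < \omega$. You assert that $I_n$ ``has free, hence strong, amalgamation'' and ``carries a global SWIR, in fact a SIR, and a \Ka functor''; all of this is only true for $n=\omega$. A finite $I_n$ is not a \Fr structure in the paper's sense, its age (sets of size $\le n$) does not have strong amalgamation, and it has no global SWIR: by (Ex) one is forced to have $I_n \ind_\varnothing \{0\}$, whence by (Mon) both $\{0\}\ind_\varnothing\{0\}$ and $\{1\}\ind_\varnothing\{0\}$, and then (Sta) would give $\{0\}\equiv_{\{0\}}\{1\}$, which is false. So neither product proposition applies verbatim whenever a finite $I_n$ (or $\oa{C}_3$) is a factor, and these cases need the same kind of separate treatment you give $\oa{C}_3[I_n]$ rather than a citation. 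For $T[I_n]$ with $n$ finite one can still verify a global SWIR directly (a new point is always placed in a fresh fibre, of which there are infinitely many). But for $I_n[T]$ with $2\le n<\omega$ the situation is worse: taking $C$ to meet all $n$ parts, any new singleton $b$ with $b\ind_\varnothing C$ must lie in the part of some $c_i$, and an automorphism of $M$ permuting the $c_i$ cyclically produces $g(b)\equiv_\varnothing b$ with $g(b)\ind_\varnothing C$ but $b\not\equiv_C g(b)$, so stationarity over $\varnothing$ fails for \emph{any} invariant relation satisfying existence. Hence for those structures only a local SWIR is available, and your ``routine local/global bookkeeping'' cannot be completed as described; only the $n=\omega$ instances of $I_n[T]$ genuinely follow from the product proposition.
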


We leave the routine verifications to the reader. Note that in the above, as $I_n$ has no edges and the other factor in the product is a tournament, the binary relation $s$ is definable and it is not necessary to add it to the language.

\subsection{Free superpositions} \label{ex: free superposition}

We give an example showing that the free superposition of \Fr classes with \Ka functors need not have a \Ka functor. In fact, we show the stronger statement that the free superposition need not have a universal automorphism group.

Let $\mc{L}_0 = \{\gamma\}$ with $\gamma$ ternary, and let $\mc{A}_0 = \mc{A}(\Q, \gamma)$ be the class of non-empty finite circularly ordered $\mc{L}_0$-structures (see \Cref{ex: circular order}). Let $\mc{L}_1 = \{R, B, <\}$ be a relational language with $R, B$ unary and $<$ binary, and let $\mc{A}_1$ consist of non-empty finite $\mc{L}_1$-structures whose points are coloured red/blue by $R$/$B$, where the red points are linearly ordered by $<$ and the blue points have no additional structure. In \Cref{ex: circular order}, we saw that $\mc{A}_0$ has a \Ka functor, and it is straightforward to see that $\mc{A}_1$ has a \Ka functor. Let $\mc{A}$ be the free superposition of $\mc{A}_0, \mc{A}_1$, and let $M = \FrLim(\mc{A})$. Let $M_i$ be the $\mc{L}_i$-reduct of $M$, $i = 0, 1$ (so $M_i = \FrLim(\mc{A}_i)$). Then $\Aut(M)$ has no involutions: suppose otherwise, and let $\tau \in \Aut(M)$ be an involution. Then $\tau$ is an involution of $M_0$, so has no fixed points (if $a$ is fixed and $v$ is moved by $\tau$, then we have $\gamma(a, v, \tau(v)) \wedge \gamma(a, \tau(v), v)$, which is impossible), but also $\tau$ is an involution of $M_1$, so has a fixed point -- contradiction. So the cyclic group $C_2$ does not embed into $\Aut(M)$. But there exists $A \in \mc{A}$ with $\Aut(A) = C_2$: take the structure consisting of two blue points. So $\Aut(M)$ is not universal.

\subsection{Meet-trees and meet-tree expansions} \label{meet-trees and meet-tree exps}

\subsubsection{Meet trees} \label{ex: meet-tree}
Let $\mc{L} = \{<, \wedge\}$ with $\leq$ a binary relation symbol and $\wedge$ a binary function symbol. We say that an $\mc{L}$-structure $A$ is a \emph{meet-tree} if:
\begin{itemize}
    \item $(A, <)$ is a poset;
    \item for each $a \in A$, $\{b \in A \mid b \leq a\}$ is linearly ordered by $<$;
    \item for $a, b \in A$, the set $\{c \in A \mid c \leq a, b\}$ is non-empty and has greatest element $a \wedge b$.
\end{itemize}

Let $\mc{C}$ be the class of finite meet-trees. It is straightforward to see that $\mc{C}$ is a \Fr class with strong amalgamation. Let $M = \FrLim(\mc{C})$. We call $M$ the \emph{dense meet-tree}. We show that $M$ has a local SWIR and a \Ka functor -- we give a brief proof, leaving some straightforward checking to the reader.

We call the least vertex of a finite meet tree the \emph{root}. Let $A$ be a finite meet tree, and let $B$ be a meet tree with $A \sub B$. Let $a_0, a_1 \in A$. If $a_0 < a_1$ and there is no $a \in A$ with $a_0 < a < a_1$, then we call $(a_0, a_1)$ a \emph{cut} of $A$, and we say that each vertex $v \in B$ with $a_0 < v < a_1$ lies in the cut $(a_0, a_1)$. We also consider $(-\infty, \rt(A))$ to be a cut of $A$. For $a \in A$, we say that $v \in B$ \emph{branches from $A$ at $a$} if $a < v$ and for each $a' \in A$ with $a' > a$ we have $a' \wedge v = a$. For a cut $(a_0, a_1)$ of $A$, we say that $v \in B$ \emph{branches from $A$ at the cut $(a_0, a_1)$} if $v \wedge a_1$ lies in $(a_0, a_1)$. 

\begin{prop}
    The dense meet-tree has a local SWIR.
\end{prop}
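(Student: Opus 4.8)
The plan is to produce the SWIR by exhibiting the canonical amalgam explicitly and then appeal to the SAO--SWIR equivalence (\Cref{SAO induces SWIR}); equivalently one may describe the induced relation $\ind$ directly, which is how I will phrase it. Throughout, for a finite meet-tree $A$ and a point $v$ in a meet-tree extending $A$, let $\operatorname{foot}_A(v)$ denote the greatest element of $A$ that is $\leq v$ (taken to be $-\infty$ if $v < \rt(A)$), and recall from the definitions preceding the proposition that $v$ attaches to $A$ at a \emph{unique} location: either $v$ branches from $A$ at a vertex $a \in A$, or $v$ lies in or branches from $A$ at a unique cut $(a_0, a_1)$, with the convention that $(-\infty, \rt(A))$ counts as a cut. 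In the cut case the element $v \wedge a_1$ lies in $[a_0, a_1)$; I call it the \emph{spine-foot} of $v$. The key structural observation is that all spine-feet of points attaching in a fixed cut $(a_0, a_1)$ are $< a_1$, hence lie in the chain below $a_1$ and are therefore pairwise comparable; so comparability there is \emph{forced}, and it is precisely in cuts that an asymmetric choice must be made.

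With this language I will define, for $A, B, C \fin M$ with $A \neq \varnothing$, that $B \ind_A C$ holds exactly when the following two conditions are met for all $b \in B \setminus A$ and $c \in C \setminus A$. First, if $b$ and $c$ attach to a common cut $(a_0, a_1)$ of $A$, then their spine-feet satisfy $b \wedge a_1 < c \wedge a_1$; informally, inside each shared cut the whole $B$-spine is placed below the whole $C$-spine, exactly mirroring the $\Q$-SWIR of \Cref{ex: structures with SWIRS}. Second, in every other case (points attaching at a common vertex, or at distinct locations) one has $b \wedge c \in A$, equivalently $b \wedge c = \operatorname{foot}_A(b) \wedge \operatorname{foot}_A(c)$; that is, points whose comparability is not forced are amalgamated as independently as possible. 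This is the relation induced by the amalgamation operator that interleaves the two spines of each shared cut by putting all of $B$ below all of $C$, and otherwise computes cross-meets in $A$.

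The verification then proceeds along standard lines. First I would confirm that the prescription really defines a meet-tree structure on $B \cup C$: the only new comparabilities are those forced within shared cuts, where the rule linearly orders the combined spine, and one checks that the displayed formula for $b \wedge c$ is consistent with down-sets remaining chains and all meets existing. Invariance is immediate, since the two clauses refer only to the attachment data of $b$ and $c$ over $A$, which is automorphism-invariant. Existence follows by realizing the amalgam just described inside $M$ using ultrahomogeneity, and stationarity follows because the clauses determine every cross-meet $b \wedge c$ from the quantifier-free types of $b, c$ over $A$, so any two realizations agree over $AC$. For monotonicity I would verify the two defining clauses directly, using transitivity of $<$ and of the spine-ordering.

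I expect the main obstacle to be the bookkeeping around shared cuts, and specifically the proof of base-monotonicity and transitivity: one must check that the asymmetric ``$B$ below $C$'' ordering of spines remains coherent when the base is enlarged, for instance in passing from $B \ind_A CD$ to $B \ind_{AC} D$, where cuts of $A$ can refine into finer cuts of $AC$ and new spine-feet must be fitted consistently into the existing chains without creating a conflict among the forced comparabilities. This is exactly the point at which the argument departs from the symmetric (SIR-like) behaviour of free tree amalgamation, and it is where I would concentrate the care; the remaining steps are the routine diagram-chasing that the promised ``brief proof'' leaves to the reader.
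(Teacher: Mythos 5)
Your overall strategy is the paper's: define $\ind$ by placing all $B$-spine-feet below all $C$-spine-feet inside each shared cut, handle points branching at a common vertex separately, and leave the axiom-checking to routine verification. Your first clause agrees verbatim with the paper's. But your second clause is wrong as stated, and the error is not cosmetic: you require $b \wedge c \in A$ whenever $b$ and $c$ do not attach at a common cut, including when they attach at \emph{distinct} locations, and in that situation the cross-meet is already forced and need not lie in $A$. Concretely, take $A = \{a_0, a_1\}$ with $a_0 < a_1$, let $b$ attach at the cut $(a_0, a_1)$, so that $w := b \wedge a_1$ is a new point of $(a_0, a_1)$, and let $c$ branch from $A$ at the vertex $a_1$, so $c > a_1$. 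In every amalgam, $b \wedge c$ is comparable with $a_1$ (both lie below $c$), and $a_1 \leq b \wedge c$ would force $a_1 \leq b$, contradicting $b \wedge a_1 = w < a_1$; hence $b \wedge c \leq b \wedge a_1 = w$, while $w \leq b$ and $w < a_1 < c$ give $w \leq b \wedge c$. So $b \wedge c = w \notin A$ is forced, your condition is unsatisfiable for this pair of types over $A$, and your relation violates (Ex). Your ``equivalent'' reformulation $b \wedge c = \operatorname{foot}_A(b) \wedge \operatorname{foot}_A(c)$ fails in the same example, evaluating to $a_0 \neq w$.

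The repair is exactly the paper's definition: impose the second clause only in the one configuration where the cross-meet is genuinely free, namely when $b$ and $c$ branch from $A$ at the same vertex $a$, in which case demand $b \wedge c = a$. For pairs attaching at distinct locations the meet is determined by the quantifier-free types of $b$ and $c$ over $A$ (sometimes an element of $A$, sometimes a spine-foot of one of the two points), so no condition should be imposed there. A further minor slip: the spine-foot of a point attaching at a cut $(a_0, a_1)$ lies in the open interval $(a_0, a_1)$, not $[a_0, a_1)$; if $v \wedge a_1 = a_0$ then $v$ branches from $A$ at the vertex $a_0$ rather than at the cut.
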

\begin{proof}
    Let $A, B, C \fin M$ with $A \neq \varnothing$. Define $B \ind_A C$ if for all $b \in B \setminus A$, $c \in C \setminus A$ the following hold:
    \begin{itemize}
        \item if there is a cut $(a, a')$ of $A$ with $b \wedge a'$, $c \wedge a'$ inside it, then $b \wedge a' < c \wedge a'$;
        \item if $b, c$ branch from $A$ at the same point $a \in A$, then $b \wedge c = a$.
    \end{itemize}
    It is not difficult to check that $\ind$ is a SWIR, and we leave this to the reader.
\end{proof}
\begin{rem}
    This is essentially identical to the definition of cone-independence in \cite[Definition 2.1]{KRR25}, but we work with a local SWIR rather than fixing a point. See also \cite[Lemma 2.6]{KRR25}, where independence is checked in detail.
\end{rem}

\begin{prop}
    The dense meet-tree has a \Ka functor.
\end{prop}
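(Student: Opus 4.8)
The plan is to construct the \Ka functor directly, following the \textbf{key idea} of the paper and mirroring the treatment of $(\Q, <)$ in \Cref{betweenness and friends} and of $S(2)$ in \Cref{ex: S(2)}: identify the one-point extensions of a finite meet-tree with combinatorial \emph{attachment data}, build $K(A)$ by gluing one labelled representative of each extension type along the local SAO $\otimes$ induced by the SWIR established above, and define $K(f)$ via the induced map on attachment data.

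First I would classify the one-point extensions. Let $A$ be a finite meet-tree and $(A,e)$ a one-point extension. Using the branching terminology defined before this proposition, I would show that the quantifier-free type of $e$ over $A$ is pinned down by exactly one of the following: (a) $e$ branches from $A$ at a point $a \in A$; or (b) $e$ branches from $A$ within a cut $(a_0, a_1)$ of $A$, where we include the root-cut $(-\infty, \rt(A))$, in which case $\langle A, e \rangle$ also contains the new meet-point $m = e \wedge a_1$ lying inside the cut, together with the datum of whether $e = m$ (so $e$ lies on the chain of the cut) or $m < e$ (so $e$ branches off the chain at $m$). Thus each extension type is determined by a cut or a branch-point of $A$, exactly as cuts determined the extensions of $\Q$ and $S(2)$.

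Next I would define $K(A)$. For each $a \in A$, place one leaf-extension branching at $a$; for each cut $(a_0, a_1)$, place one chain-point inside the cut together with one leaf branching off the chain at a fresh meet-point inside the cut. This realizes every extension type, giving the one-point extension property \ref{Ka ope}. The meet of any two extension points is then forced by the attachment data and the asymmetry of $\ind$: extensions attached at distinct points or in distinct cuts meet at the corresponding meet in $A$; two attached at the same point meet there; and for the chain-point and the branch-point inside a single cut, the SWIR rule ``$b \wedge a_1 < c \wedge a_1$'' dictates the order in which they attach to the chain from $a_0$ to $a_1$. Where this still leaves a choice among several extensions competing for the same position, I would break ties with an enumeration label compared lexicographically, as in \Cref{ex: Ka for tourn}. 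I would then verify the three meet-tree axioms for $K(A)$ — that each downward chain remains linearly ordered and that all binary meets exist — which reduces to checking that the inserted chain-points and branch-points slot consistently into the order inherited from $A$.

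Finally I would define $K(f)$ for an embedding $f : A \to B$, extending $f$ and sending each extension of $A$ to the corresponding extension of $B$: a leaf branching at $a$ goes to the leaf branching at $f(a)$, and an extension attached in a cut $(a_0, a_1)$ of $A$ goes to the extension attached in the cut of $B$ whose lower endpoint is determined by $f(a_0)$ — the same device used for $(\Q, <)$, where a cut $(u,v)$ maps to the cut of the target with first coordinate $f(u)$. Carrying the labels along makes this assignment respect the attachment order, and one checks that $K(f)$ is a meet-tree embedding with $K(\id_A) = \id_{K(A)}$ and $K(g \circ f) = K(g) \circ K(f)$, together with naturality \ref{Ka nat trans}. \textbf{The hard part} will be functoriality when $f$ sends a cut of $A$ to a \emph{subdivided} region of $B$ (because $B$ already contains points inside that cut): an extension branching in a cut of $A$ must then be routed to one specific finer cut of $B$, and I must verify that the canonical choice forced by the asymmetry of the SWIR, together with the meet assignments, is preserved, so that $K(f)$ is genuinely a meet-tree embedding and composites agree. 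The root-cut and extensions below the root need a small separate check but behave identically.
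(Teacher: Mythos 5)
Your proposal is correct and follows essentially the same route as the paper: classify the one-point extensions into branchings at a vertex, chain-points in a cut, and branchings off a new meet-point in a cut, amalgamate one copy of each over $A$ with a fixed convention for the relative position of the two cut-extensions, and define $K(f)$ by sending each extension to the one of $B$ determined by the local SAO $(A,e) \otimes_A B$ (which is exactly what resolves your ``hard part'' about subdivided cuts). The only simplification worth noting is that no lexicographic labels are needed here, since each cut contributes just two extension types and a single convention (the paper takes the chain-point below the branch-meet) already pins down the amalgam completely.
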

\begin{proof}
    Let $A \in \mc{A}(M)$. Each one-point extension $(A, e)$ of $A$ is exactly one of the following three types:
    \begin{itemize}
        \item there is a unique vertex $a \in A$ such that $e$ branches from $A$ at $a$;
        \item there is a unique cut $(u, v)$ of $A$ such that $e$ branches from $A$ at $(u, v)$, and $e \wedge v = e$;
        \item there is a unique cut $(u, v)$ of $A$ such that $e$ branches from $A$ at $(u, v)$, and $e \wedge v \neq e$.
    \end{itemize}

    We define $K(A)$ to be the amalgam of $\mc{E}_A$ over $A$ where, for each cut $(u, v)$ of $A$, the extension vertex $e$ of the $(u, v)$-branching extension $(A, e)$ with $e \wedge v = e$ is less than $e' \wedge v$, where $e'$ is the extension vertex of the $(u, v)$-branching extension $(A, e')$ with $e' \wedge v \neq e'$ -- note that this completely specifies the amalgam. For $f : A \to B$ in $\mc{A}(M)$, we define $K(f)$ by sending each extension vertex $e$ of $(A, e)$ in $K(A)$ to the extension vertex in $K(B)$ corresponding to $(A, e) \otimes_A B$, where $\otimes$ is the local SAO induced by the local SWIR for $M$ defined in the previous proposition. It is straightforward to check that $K$ is a \Ka functor.
\end{proof}

\subsubsection{Meet-tree expansions of free amalgamation classes} \label{ex: meet-tree exp of free amalg}

Let $\mc{L}^\ri$ be a relational language without unary predicates, and let $\mc{A}^\ri$ be a non-trivial free amalgamation class of finite $\mc{L}^\ri$-structures, where \emph{non-trivial} means that the \Fr limit is not an indiscernible set. Let $M^\ri = \FrLim(\mc{A}^\ri)$, and let $M$ be the free superposition of $M^\ri$ with the dense meet-tree.

\begin{lem}
    The structure $M$ defined above has no involutions.
\end{lem}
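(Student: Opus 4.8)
The plan is to argue by contradiction: from a putative involution I extract a pair of cones at a fixed point that $\tau$ swaps, and then the whole problem reduces to showing that the \emph{cross-relation} between these two cones cannot be symmetrised — and this is exactly where non-triviality and the absence of unary predicates enter. First I would record the rigidity coming from the tree. Since the meet $\wedge$ is definable from $\leq$, every $\sigma \in \Aut(M)$ commutes with $\wedge$; so if $\tau$ is an involution and $a$ is any moved point, then for $b := \tau(a) \neq a$ the meet $c := a \wedge b$ satisfies $\tau(c) = \tau(a) \wedge \tau(b) = b \wedge a = c$, with $c < a$, $c < b$ and $a, b$ incomparable. The down-set $\{x : x \leq c\}$ is a linearly ordered, $\tau$-invariant chain, and an order-preserving involution of a linear order is the identity, so $\tau$ fixes every $x \leq c$ pointwise. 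Writing $\Gamma_a = \{x : x \wedge a > c\}$ and $\Gamma_b = \{x : x \wedge b > c\}$ for the cones at $c$ through $a$ and $b$, these are disjoint and $\tau(\Gamma_a) = \Gamma_b$, so $\tau$ restricts to an isomorphism $\phi := \tau|_{\Gamma_a} : \Gamma_a \to \Gamma_b$ over the now-fixed base $\{x \leq c\}$. I would also first choose $a$ so that $c = a \wedge \tau(a)$ is as low as possible, which rules out the degenerate possibility that $\tau$ merely acts non-trivially inside a $\tau$-fixed cone.

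Next I would use that $\tau$ is simultaneously an automorphism and its own inverse to extract a symmetry constraint on the relations running \emph{between} $\Gamma_a$ and $\Gamma_b$. For an $\mc{L}^\circ$-relation $R$ (say binary, for exposition) and $x, y \in \Gamma_a$ we have $R(x, \phi(y))$ iff $R(\tau x, \tau\phi(y)) = R(\phi(x), y)$; setting $C(x,y) = 1$ if $R(x, \phi(y))$ holds and $0$ otherwise, this says precisely that the cross-pattern $C$ must satisfy $C(x,y) = C(y,x)$. Now non-triviality and the absence of unary predicates supply a relation $R$ of arity at least $2$ that genuinely distinguishes configurations, and free amalgamation — hence the extension property of $M$, the \Fr limit of the free superposition class — guarantees that the cross-relation between the two infinite generic cones $\Gamma_a, \Gamma_b$ over the fixed base is itself generic, so that $C$ takes both values. (In the excluded indiscernible case $C$ is constant and the constraint is vacuous, which is consistent with the fact that the pure dense meet-tree does have cone-swapping involutions.) The contradiction I am aiming for is that no cone-isomorphism $\phi$ over the fixed base can make this generic cross-pattern symmetric.

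The main obstacle is turning this symmetry constraint into an honest contradiction, and it is genuinely global. The difficulty is that \emph{every} $\tau$-invariant finite configuration trivially admits the swap $\tau$ itself as an automorphism, so symmetric completions always exist locally and the contradiction cannot be read off a single small substructure; moreover I do not control the images $\phi(y) = \tau(y)$, so I cannot simply prescribe an asymmetric cross-pattern by hand. The real content is to prove that the three demands on $\phi$ — that it be a cone-isomorphism over the fixed base, that it symmetrise the cross-relation ($C(x,y) = C(y,x)$), and that it be globally consistent as an involution of all of $M$ — are jointly unsatisfiable. The route I would take is a genericity/back-and-forth (or Ramsey-type) argument showing that a generic bipartite $R$-pattern between two generic cones sharing a common fixed base admits no base-preserving isomorphism reversing its two sides, with non-triviality ensuring the pattern is rich enough to obstruct every candidate pairing; the definability of $\wedge$ is what keeps the relevant auxiliary meets inside the fixed region and thereby rigidifies the orbit structure enough for such an argument to bite. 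Once the symmetry constraint is shown to be unsatisfiable, no involution can exist. Non-universality of $\Aut(M)$ then follows at once, since the age contains structures whose automorphism group has an involution (for instance a two-leaf $\vee$-configuration $c < a$, $c < b$ with $a, b$ incomparable and trivial $\mc{L}^\circ$-structure, swapped by $a \leftrightarrow b$).
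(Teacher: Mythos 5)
There is a genuine gap: the step you yourself flag as ``the main obstacle'' --- showing that no cone-isomorphism $\phi$ can symmetrise the generic cross-pattern --- is the entire content of the lemma, and you only sketch a plan for it (``a genericity/back-and-forth (or Ramsey-type) argument\dots''). Nothing in the proposal actually rules out a base-preserving involution swapping the two cones, and the route you chose makes this needlessly hard, because relations \emph{between} $\Gamma_a$ and $\Gamma_b = \tau(\Gamma_a)$ are exactly the ones an involution constrains only up to symmetry, so the obstruction there is delicate and global, as you observe.

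The contradiction is available much more cheaply from ingredients you already have, and this is the paper's route: it comes from relations between the \emph{pointwise-fixed} part and a moved point, not from relations between the two swapped cones. You correctly establish that $c = a \wedge \tau(a)$ is fixed and that the linearly ordered down-set $\{x : x \leq c\}$ is fixed pointwise. Now use the free superposition directly: since $\mc{A}^\ri$ is non-trivial and has no unary predicates, and the tree structure and the $\mc{L}^\ri$-structure are superposed freely, the extension property of $M$ produces a point $b' < c$ (hence $\tau$-fixed) which is $R$-related to $a$ but not to $\tau(a)$ for some $R \in \mc{L}^\ri$ --- one builds the required finite configuration by freely amalgamating an $\mc{L}^\ri$-structure witnessing non-triviality over $\{a\}$ with $\{a, \tau(a)\}$, and placing the new point below $c$ in the tree. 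Applying $\tau$ gives $R$-relatedness of $\tau(b') = b'$ and $\tau(a)$, a contradiction. (The paper organises this slightly differently --- first showing an involution has no fixed points by this argument, then observing that $a \wedge \tau(a)$ would be a fixed point --- but the mechanism is the same.) No analysis of the cross-relation between the two cones is needed.
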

\begin{proof}
    Let $\tau \in \Aut(M)$ be an involution. We first show that $\tau$ has no fixed points. Suppose for a contradiction that $a \in M$ is a $\tau$-fixed point. Then, as the set of predecessors of $a$ is linearly ordered, it is fixed pointwise by $\tau$. Let $c \in M$ with $\tau(c) \neq c$. As $\mc{A}^\ri$ is a non-trivial free amalgamation class and $\mc{L}^\ri$ has no unary predicates, there is $B_0^\ri \in \mc{A}^\ri$ with $\{c\} \sub B_0^\ri$ such that there is $b \in B_0^\ri$, $b \neq c$, with $b, c$ $R$-related for some $R \in \mc{L}^\ri$. Take an amalgam $B_1^\ri$ of $B_0^\ri$ and $\{a, c\}^\ri$ over $\{c\}$, and then take the free amalgam $B_2^\ri$ of $B_1^\ri$ and $\{a, c, \tau(c)\}^\ri$ over $\{a, c\}^\ri$. In $B_2^\ri$, we have that $b, c$ are $R$-related but $b, \tau(c)$ are not. As $\mc{A}$ is a free superposition, using the extension property of $M$ we thus have that there exists $b' \in M$, $b' < a$, such that $b', c$ are $R$-related but $b', \tau(c)$ are not. Then $\tau(b') \neq b'$, contradiction. So each point of $M$ is moved by $\tau$.

    Now take $a \in M$, and let $v = a \wedge \tau(a)$. Then $v < a, \tau(a)$, so $\tau(v) < a$. But then $v, \tau(v)$ are $<$-comparable, and $\tau(v) \neq v$. As $\tau$ is an involution, this gives a contradiction.
\end{proof}
\begin{rem}
    \cite[Lemma 5.13]{KRR25} states that the only automorphism of $M$ fixing an interval pointwise is the identity, which immediately gives that involutions of $M$ have no fixed points. We give a direct proof here for the convenience of the reader.
\end{rem}

\begin{prop}
    Let $M$ be the \Fr limit of the free superposition of the class of finite meet trees and a non-trivial free amalgamation class in a language without unary predicates. Then $\Aut(M)$ is not universal.
\end{prop}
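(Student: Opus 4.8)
The plan is to deduce non-universality directly from the preceding lemma, which establishes that $M$ has no (non-identity) involutions. The key general observation is that an \emph{abstract} group embedding $\theta : \Aut(A) \to \Aut(M)$ must send an element of order two to an element of order two: if $\sigma \in \Aut(A)$ satisfies $\sigma \neq \id$ and $\sigma^2 = \id$, then injectivity gives $\theta(\sigma) \neq \id_M$, while $\theta(\sigma)^2 = \theta(\sigma^2) = \id_M$, so $\theta(\sigma)$ is an involution of $M$. Hence, to prove that $\Aut(M)$ is not universal, it suffices to exhibit a single structure $A \in \Ao(M)$ whose automorphism group contains an involution; no such embedding $\Aut(A) \to \Aut(M)$ can then exist, since $M$ has no involutions by the lemma.

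The concrete step is to produce such an $A$ using the smallest meet-tree with a non-trivial symmetry. I would take $A$ to have three points $r, a, b$, with meet-tree reduct given by $r < a$, $r < b$, with $a, b$ incomparable and $a \wedge b = r$ (so $r$ is the root and $a, b$ are two branches above it), and I would let the $\mc{L}^\ri$-reduct of $A$ carry \emph{no} relations at all. Since $\mc{A}^\ri$ is a free amalgamation class, the relation-free structure on any finite set lies in $\mc{A}^\ri$ (it is the free amalgam of singletons over the empty set), and the three-point meet-tree lies in $\mc{C}$; as the free superposition consists precisely of those structures whose two reducts lie in the respective classes, we get $A \in \Age(M) \sub \Ao(M)$. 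The map $\sigma$ fixing $r$ and interchanging $a$ and $b$ preserves the meet-tree order (it swaps the two branches above the root) and trivially preserves the empty $\mc{L}^\ri$-structure, so $\sigma$ is an order-two element of $\Aut(A)$.

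Combining the two steps yields the proposition: $\Aut(A)$ contains an involution but $\Aut(M)$ does not, so there is no abstract group embedding $\Aut(A) \to \Aut(M)$, and $\Aut(M)$ is not universal. The substantive content has already been discharged in the preceding lemma — the absence of involutions in $M$, which relies on the rigidity of the meet-tree order (via the linear ordering of predecessors) together with the non-triviality of $\mc{A}^\ri$ and the hypothesis that $\mc{L}^\ri$ has no unary predicates. Consequently I do not expect any genuine obstacle in this final step; the only things to verify are routine, namely that the three-point structure genuinely lies in the age of the free superposition and that the branch-swap is an automorphism. The main point is simply to recognise that non-universality is immediate once one has any structure in the $\omega$-age carrying an involution in its automorphism group.
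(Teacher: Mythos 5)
Your proposal is correct and matches the paper's own proof essentially verbatim: both reduce to the preceding lemma (no involutions in $\Aut(M)$) and exhibit the three-point meet-tree $\{a, b, a \wedge b\}$ with empty $\mc{L}^\ri$-structure, whose automorphism group contains the branch-swapping involution. The only addition you make is to spell out the routine fact that an abstract group embedding preserves the property of having order two, which the paper leaves implicit.
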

\begin{proof}
    By the previous lemma, it suffices to produce $A \in \Age(M)$ such that $\Aut(A)$ contains an involution. Take $A \in \Age(M)$ to consist of three distinct vertices $a, b, a \wedge b$ with no $\mc{L}^\ri$-relations.
\end{proof}

\section{Unique extensibility} \label{s: unique ext}

We now briefly consider unique extensibility. For $M$ a \Fr structure, we say that the class $\Ainf(M)$ of infinite structures embeddable in $M$ is \emph{uniquely extensible} if each $A \in \Ainf(M)$ admits a \emph{uniquely extensive embedding} into $M$: an embedding $f : A \to M$ such that each automorphism of $f(A)$ extends to a unique automorphism of $M$. Note that by uniqueness, group composition is respected, so a uniquely extensive embedding $f : A \to M$ induces a group embedding $\Aut(f(A)) \to \Aut(M)$. 

Henson (\cite[Theorem 3.1]{Hen71}) showed that $\Ainf(M)$ is uniquely extensible for $M$ the random graph (also shown independently by Macpherson and Woodrow in \cite{MW92}), and Bilge (\cite{Bil12}) extended this result to any transitive \Fr structure $M$ with free amalgamation in a finite relational language (and satisfying the non-triviality condition that $M$ is not an indiscernible set). For the particular case of the generic $K_n$-free graph, see \cite{BJ12}. In \cite{KSW25} the present authors showed via new techniques that $\Ainf(M)$ is uniquely extensible for $M$ equal to the random poset.

Bilge observed (\cite[Proposition 8.33]{Bil12}) that $\Ainf((\Q, <))$ is not uniquely extensible. Indeed, no countably infinite linear order $A$ admits a uniquely extensive embedding $f$ into $(\Q, <)$: if such an $f$ were to exist, then by uniqueness $f(A)$ must be dense in $\Q$, and one can construct an automorphism of $f(A)$ for which any extension would be forced to send a rational to an irrational. We show in this section that this failure of unique extensibility for $(\Q, <)$ does not transfer to free superpositions: 

\begin{prop} \label{p: ordered random graph uniq ext}
    Let $M$ be the generic ordered graph. Then $\Ainf(M)$ is uniquely extensible.
\end{prop}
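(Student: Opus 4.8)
The plan is to reduce unique extensibility to a back-and-forth criterion and then build the embedding $f : A \to M$ by a single back-and-forth argument that treats the graph and order coordinates separately, grafting Henson's construction for the random graph onto the lexicographic cut bookkeeping developed in \Cref{s: order and tourn exps}. Writing $X = f(A)$, I would first record the standard reformulation: $f$ is uniquely extensive provided that (a) \emph{[separation]} any two distinct vertices of $M$ are distinguished by some $u \in X$, either because $u$ is adjacent to exactly one of them or because $u$ lies strictly between them in the order; and (b) \emph{[invariance]} the family of quantifier-free types over $X$ realised by finite tuples from $M$ is $\Aut(X)$-invariant, and a back-and-forth over $X$ can be carried out. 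Condition (a) forces any two extensions of a given $g \in \Aut(X)$ to agree pointwise, giving uniqueness; condition (b) lets one build an extension of each $g$ by the usual back-and-forth, since every $g$-image of a realised type is again realised. Thus the whole problem becomes the construction of a copy $X \cong A$ inside $M$ satisfying (a) and (b).

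For the graph coordinate I would import Henson's argument (\cite[Theorem 3.1]{Hen71}) essentially verbatim. Since $M$ is the free superposition of the random graph and $(\Q, <)$, the edge relation is independent of the order, and Henson's method produces a copy of the graph reduct of $A$ inside the random-graph reduct of $M$ for which: distinct external vertices have distinct neighbourhoods on $X$; the adjacency between two external vertices is a function of their $X$-neighbourhoods; and the realised neighbourhood-types form an $\Aut$-invariant family. These are exactly the properties needed to run the back-and-forth in the edge relation, and they are unaffected by the presence of the order.

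The order coordinate is where the real work lies, and where I would use the cut machinery of \Cref{s: ordered strs with SWIR}. The order-type of an external vertex over $X$ is a cut of $(X, <)$, and the obstruction identified by Bilge for $(\Q, <)$ alone is that $\Aut(X)$ may carry a realised cut to an unrealised one, so that (b) fails in the order. The key point is that in the superposition the graph already supplies separation: two vertices lying in a common order-gap of $X$ are distinguished by adjacency to some $u \in X$, so $X$ need not be order-dense in $M$. I would therefore choose the embedding following the structural-order bookkeeping of \Cref{str order}, which labels one-point extensions by their cut together with extra data, so that the realised cuts form an $\Aut(X)$-invariant family while the graph-neighbourhoods carry the separating information. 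Concretely I would interleave Henson's graph construction with a \Ka tower-style amalgamation along cuts, adding at each stage, for each relevant cut, vertices whose graph-neighbourhood on the part already built is prescribed by Henson's coding; the lexicographic structural order guarantees the resulting choices are canonical, and hence respected by $\Aut(X)$.

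The main obstacle will be verifying that a single construction delivers (a) and (b) simultaneously: uniqueness in the pure order requires density and then fails existence (Bilge), so one must check that the graph neighbourhoods genuinely separate all cohabitants of every order-gap while the family of realised cuts is kept $\Aut(X)$-invariant, i.e.\ that Henson's coding is compatible with, and does not disturb, the cut bookkeeping. Once this compatibility is established, conditions (a) and (b) hold and the back-and-forth produces, for each $g \in \Aut(f(A))$, a unique $\hat{g} \in \Aut(M)$ extending it, so that $\Ainf(M)$ is uniquely extensible.
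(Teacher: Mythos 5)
Your overall strategy --- build $f$ so that each $g \in \Aut(f(A))$ admits at least one extension (by some form of invariance) and at most one (by some form of separation) --- has the right shape, and the paper's proof is indeed a single Henson-style tower onto which the order is grafted. The gap is in your condition (a) and in the claim that Henson's method delivers it. In Henson's construction (and in the paper's adaptation), the vertices added at stage $i$ are indexed by a graph base $S \sub M_{i-1}$ with $|S \cap A| = i$, where $M_{i-1}$ is the previous stage of the tower, \emph{not} by a subset of $A$. Two stage-$i$ vertices whose graph bases have the same trace on $A$ but differ in external vertices of earlier stages have identical adjacency to $A$, and in the ordered version they can also occupy the same cut of $(A,<)$; so distinct external vertices need not be separated by $X = f(A)$, and condition (a) fails for this construction. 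Likewise, adjacency between two external vertices is governed by membership in graph bases that contain external vertices, so it is not a function of their $X$-neighbourhoods. Your separation claim is therefore unsupported; if you insist on (a) you need a genuinely different construction in which every point of $M$ is determined by its quantifier-free type over $f(A)$ and the structure between external points is an $\Aut(f(A))$-equivariant function of those types, and it is far from clear this can be done (consider $A$ an anticlique ordered like $\Q$, so that $\Aut(f(A)) \cong \Aut(\Q,<)$ acts on finite subsets of $f(A)$ with orbits determined only by cardinality, leaving very little room for an equivariant coding).

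What actually replaces (a) in the paper is a stratified uniqueness argument: the stage of an external vertex is definable as its number of neighbours in $A$, so any automorphism of $M_\omega$ extending $g$ stabilises each $M_i$ setwise, and one shows by induction on $i$ that its restriction to $M_i$ is forced, because each stage-$i$ vertex is determined by its adjacency to $M_{i-1}$ together with its order position relative to $M_{i-1}$. Your treatment of the order is also too vague at the decisive point: you must say how two new vertices lying in the same order-gap are compared. The paper does this by attaching to each new vertex an order base $v \in M_{i-1}$ (the new vertex sits immediately below or immediately above $v$ relative to $M_{i-1}$) and, when the order bases and signs agree, comparing the graph bases lexicographically in the order on $M_{i-1}$; equivariance of this rule under order-automorphisms is what makes the extension $\hat{g}(u_{S, v^{\pm}}) = u_{gS, gv^{\pm}}$ well defined. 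Without these two ingredients --- the definable stratification for uniqueness and the explicit tie-breaking rule for the order --- your outline does not close.
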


The proof is via a simple generalisation of Henson's original construction, and we only provide a sketch.

\begin{proof}[Proof sketch.]
    Let $A$ be a countably infinite ordered graph. Let $M_0 = A$. We inductively construct a chain $M_0 \sub M_1 \sub \cdots$ of ordered graphs. Suppose $M_{i-1}$ is already given; we now define $M_i$. For each $S \fin M_{i-1}$ with $|S \cap A| = i$ and each $v \in M_{i-1}$, add two new vertices $u_{S, v^-}$, $u_{S, v^+}$, each adjacent precisely to $S$, with the order relation between new vertices and $M_{i-1}$ defined by:
    \begin{itemize}
        \item $u_{S, v^-} < v$ and $u_{S, v^-} > w$ for all $w \in M_{i-1}$ with $w < v$;
        \item $u_{S, v^+} > v$ and $u_{S, v^+} < w$ for all $w \in M_{i-1}$ with $w > v$.
    \end{itemize}
    We call $S$ the \emph{graph base} and $v$ the \emph{order base} of $u_{S, v^-}$, $u_{S, v^+}$.
    
    We define the ordered graph structure between new verties $u, u'$ as follows. We define that $u$, $u'$ are not adjacent, and letting $v, v'$ be the order bases of $u, u'$:
    \begin{itemize}
        \item if $v < v'$ we define $u < u'$ and if $v > v'$ we define $u > u'$;
        \item if $v = v'$ then, letting $S, S'$ be the graph bases of $u, u'$, we compare the underlying finite sets $S, S'$ lexicographically in the order on $M_{i-1}$ and use this to determine the order relation between $u, u'$.
    \end{itemize}

    This completes the inductive construction. Let $M_\omega = \bigcup_{i < \omega} M_i$. It is then straightforward to check (in a manner similar to the constructions of Henson and Bilge) that:
    \begin{itemize}
        \item each element of $\Aut(A)$ extends to $\Aut(M_\omega)$: given $g \in \Aut(M_{i-1})$, extend to $\hat{g} \in \Aut(M_i)$ by defining $\hat{g}(u_{S, v^\pm}) = u_{gS, gv^{\pm}}$ (note that as $g$ is in particular an order-automorphism, it preserves the lexicographic order on finite sets);
        \item for each $g \in \Aut(A)$ there is a unique $h \in \Aut(M_\omega)$ extending $g$: this follows from the observation that any such $h$ must stabilise each $M_i$ setwise, as for each vertex of $M_\omega$ we have that $h$ preserves the number of points in $A$ adjacent to that vertex;
        \item $M_\omega$ is isomorphic to the random ordered graph: one checks the one-point extension property (witnessing the one-point extension using vertices $u_{S, v^\pm} \in M_j$ for $j$ sufficiently large). \qedhere
    \end{itemize}
\end{proof}

We conjecture the following generalisation of \Cref{p: ordered random graph uniq ext}:

\begin{conj} \label{c: LO free}
    Let $M^<$ be the generic order expansion of a transitive \Fr structure $M$ with free amalgamation in a finite relational language, where $M$ is not an indiscernible set. We conjecture that $\Ainf(M^<)$ is uniquely extensible. (See \Cref{d: gen order exp} for the definition of the generic order expansion.)
\end{conj}

As a potential proof approach, we suggest that it may be possible to combine the idea of the proof of \Cref{p: ordered random graph uniq ext} with the proof of Bilge's result \cite[Theorem 8.30]{Bil12} which gives unique extensibility for non-trivial free amalgamation structures. (We note that a step of the proof of \cite[Theorem 8.30]{Bil12} involves building infinite structures with trivial automorphism groups -- see \cite[Proposition 8.24]{Bil12}. With a linear order available, this is straightforward: one just orders the infinite structure as $(\N, <)$.)

\section{Further questions}

For two structures considered in this paper, we found a finite expansion with a SWIR, but the existence of a SWIR for the structure itself remains open:

\begin{qn}
    Do the generic $3$-hypertournament (\ref{ex: 3-ht}) or the generic $C_4$-enlarged tournament (\ref{ex: C_4-enlarged}) have a SWIR?
\end{qn}

We were also not able to find a \Ka functor for $P(3)$:

\begin{qn}
    Does the ultrahomogeneous oriented graph $P(3)$ have a \Ka functor? (See \cite{Che98} for a description of this structure.)
\end{qn}

The second and third author, together in a project with Shujie Yang, have found a finite SWIR expansion of $P(3)$ (given by adding a constant to fix a point), and used this to show that $\Aut(P(3))$ is simple.

It would also be interesting to investigate generalisations of \Ka functors to continuous logic. In \cite{BY14}, Ben Yaacov constructs the Gurarij space via an analogue of the \Ka tower construction, and perhaps similar results for a range of continuous structures are achievable. In the conclusion of \cite{KM17}, the authors mention that they have successfully obtained such a generalisation, but we could not find any further references in the literature. We would also be interested in analogues of unique extensibility in this context:

\begin{qn}
    Does the Gurarij space satisfy a suitable analogue of unique extensibility?
\end{qn}

It would also be nice to consider unique extensibility more generally. In \cite{KSW25} the authors considered the case of the generic poset, unique extensibility for which was obtained via a relatively technical construction -- it seemed that the presence of a SIR did not lead to a straightforward generalisation of the results by Bilge for free amalgamation structures.

Our final question concerns the two papers \cite{GK11}, \cite{HKO11}, which define that $A \sub M$ is \emph{symmetrically embedded} in $M$ if each element of $\Aut(A)$ extends to $\Aut(M)$ (a weaker definition than that of $\circ$-extensive embedding, as the map $\Aut(A) \to \Aut(M)$ need not be a homomorphism), and which give Ramsey-type results regarding \emph{symmetric indivisibility}: they show for several \Fr structures $M$ that for any finite colouring of $M$ there is a monochromatic symmetrically-embedded copy of $M$. Also, the lexicographic product is used to give a quick proof that each countable linear order has a symmetrically-embedded copy in $\Q$ (\cite[Lemma 2.8]{HKO11}). We ask:
\begin{qn}
    Can some of the results from \cite{GK11}, \cite{HKO11} be unified with the \Ka functor framework?
\end{qn}

\bibliographystyle{alpha}
\bibliography{references}

@article{ABH25,
  title={Ramsey expansions of metrically homogeneous graphs},
  author={Aranda, Andres and Bradley-Williams, David and Hubi{\v{c}}ka, Jan and Karamanlis, Miltiadis and Kompatscher, Michael and Kone{\v{c}}n{\'y}, Mat{\v{e}}j and Pawliuk, Micheal},
  journal={European Journal of Combinatorics},
  pages={104255},
  year={2025},
  publisher={Elsevier}
}

@article{AL95,
  title={On countable homogeneous 3-hypergraphs},
  author={Akhtar, Reza and Lachlan, Alistair H.},
  journal={Archive for Mathematical Logic},
  volume={34},
  number={5},
  pages={331--344},
  year={1995},
  publisher={Springer}
}

@article{Bau16,
    title={Free amalgamation and automorphism groups},
    author={Baudisch, Andreas},
    journal={The Journal of Symbolic Logic},
    volume={81},
    number={3},
    pages={936--947},
    year={2016},
    publisher={Cambridge University Press}
}

@phdthesis{Bil12,
    title={Automorphisms groups of homogeneous structures},
    author={Bilge, Do{\u{g}}an},
    year={2012},
    school={Universit{\'e} Claude Bernard-Lyon I}
}

@article{BJ12,
  title={Some rigid moieties of homogeneous graphs},
  author={Bilge, Do{\u{g}}an and Jaligot, \'{E}ric},
  journal={Contributions to Discrete Mathematics},
  volume={7},
  number={2},
  year={2012}
}

@article{BK26,
title = {Uncountable homogeneous structures},
journal = {Annals of Pure and Applied Logic},
volume = {177},
number = {1},
pages = {103649},
year = {2026},
author = {Adam Barto{\v{s}} and Wies{\l}aw Kubi{\'{s}}},
}

@article{BM13,
  title={Elements of finite order in automorphism groups of homogeneous structures},
  author={Bilge, Do{\u{g}}an and Melleray, Julien},
  journal={Contributions to Discrete Mathematics},
  volume={8},
  number={2},
  year={2013}
}

@article{Bod15,
    title={Ramsey classes: examples and constructions.},
    author={Bodirsky, Manuel},
    journal={Surveys in Combinatorics},
    volume={424},
    year={2015}
}

@article{BY14,
  title={The linear isometry group of the {G}urarij space is universal},
  author={Ben Yaacov, Ita{\"\i}},
  journal={Proceedings of the American Mathematical Society},
  volume={142},
  number={7},
  pages={2459--2467},
  year={2014}
}

@article{Cam76,
  title={Transitivity of permutation groups on unordered sets},
  author={Cameron, Peter J.},
  journal={Mathematische Zeitschrift},
  volume={148},
  pages={127--139},
  year={1976},
  publisher={Springer}
}

@book{Cam90, 
    series={London Mathematical Society Lecture Note Series}, 
    title={Oligomorphic Permutation Groups}, 
    publisher={Cambridge University Press}, 
    author={Cameron, Peter J.}, 
    year={1990}, 
    collection={London Mathematical Society Lecture Note Series}
}

@article{Che87,
    title = {Homogeneous directed graphs: the imprimitive case},
    series = {Studies in Logic and the Foundations of Mathematics},
    publisher = {Elsevier},
    volume = {122},
    pages = {67-88},
    year = {1987},
    journal = {Logic Colloquium '85},
    url = {https://sites.math.rutgers.edu/~cherlin/Paper/43HDG-Imprimitive.pdf},
    author = {Gregory L. Cherlin}
}

@article {Che98,
    AUTHOR = {Cherlin, Gregory L.},
    TITLE = {The classification of countable homogeneous directed graphs
              and countable homogeneous {$n$}-tournaments},
    JOURNAL = {Mem. Amer. Math. Soc.},
    FJOURNAL = {Memoirs of the American Mathematical Society},
    VOLUME = {131},
    YEAR = {1998},
    NUMBER = {621},
    PAGES = {xiv+161},
    ISSN = {0065-9266},
}

@article{CKT21,
    title={Simplicity of the automorphism groups of order and tournament expansions of homogeneous structures},
    author={Calderoni, Filippo and Kwiatkowska, Aleksandra and Tent, Katrin},
    journal={Journal of Algebra},
    volume={580},
    pages={43--62},
    year={2021},
    publisher={Elsevier}
}

@article{EHKLZ21,
  title={Simplicity of the automorphism groups of generalised metric spaces},
  author={Evans, David M. and Hubi{\v{c}}ka, Jan and Kone{\v{c}}n{\'y}, Mat{\v{e}}j and Li, Yibei and Ziegler, Martin},
  journal={Journal of Algebra},
  volume={584},
  pages={163--179},
  year={2021},
  publisher={Elsevier}
}

@article {Fol70,
	AUTHOR = {Folkman, Jon},
	TITLE = {Graphs with monochromatic complete subgraphs in every edge
	coloring},
	JOURNAL = {SIAM J. Appl. Math.},
	FJOURNAL = {SIAM Journal on Applied Mathematics},
	VOLUME = {18},
	YEAR = {1970},
	PAGES = {19--24}
}

@article{GK11,
    title = "Symmetrized induced {R}amsey theory",
    author = "Geschke, Stefan and Kojman, Menachem",
    year = "2011",
    volume = "27",
    pages = "851--864",
    journal = "Graphs and Combinatorics",
    publisher = "Springer Japan",
    number = "6"
}

@article{Hen71,
    title={A family of countable homogeneous graphs},
    author={Henson, C. Ward},
    journal={Pacific Journal of Mathematics},
    volume={38},
    number={1},
    pages={69--83},
    year={1971},
    publisher={Mathematical Sciences Publishers}
}

@incollection{HKO11,
    author = {Hasson, Assaf and Kojman, Menachem and Onshuus, Alf},
    title = {On symmetric indivisibility of countable structures},
    booktitle = {Model Theoretic Methods in Finite Combinatorics},
    year = {2011},
    publisher = {Contemporary Mathematics, AMS},
    pages = {417--452},
    volume = {558}
}

@book{Hod93,
    AUTHOR = {Hodges, Wilfrid},
    TITLE = {Model Theory},
    SERIES = {Encyclopedia of Mathematics and its Applications},
    VOLUME = {42},
    PUBLISHER = {Cambridge University Press, Cambridge},
    YEAR = {1993}
}

@article{Jal07,
    title={On stabilizers of some moieties of the random tournament},
    author={Jaligot, \'{E}ric},
    journal={Combinatorica},
    volume={27},
    number={1},
    pages={129--133},
    year={2007},
    publisher={Springer-Verlag Berlin, Heidelberg}
}

@inproceedings{Kat88,
    AUTHOR = {Kat\v{e}tov, Miroslav},
    TITLE = {On universal metric spaces},
    BOOKTITLE = {Proc. 6th Prague Topological Symposium, 1986},
    YEAR = {1988},
    PUBLISHER = {Heldermann Verlag, Berlin},
    PAGES = {323--330}
}

@article{KM17,
  title={Kat{\v{e}}tov functors},
  author={Kubi{\'s}, Wies{\l}aw and Ma{\v{s}}ulovi{\'c}, Dragan},
  journal={Applied Categorical Structures},
  volume={25},
  number={4},
  pages={569--602},
  year={2017},
  publisher={Springer}
}

@article {Kom86,
	AUTHOR = {Komj\'{a}th, P\'{e}ter and R\"{o}dl, Vojt\v{e}ch},
	TITLE = {Coloring of universal graphs},
	JOURNAL = {Graphs Combin.},
	FJOURNAL = {Graphs and Combinatorics},
	VOLUME = {2},
	YEAR = {1986},
	NUMBER = {1},
	PAGES = {55--60}
}

@article{KRR25,
  title={Automorphisms of the {R}ado meet-tree},
  author={Kaplan, Itay and Riahi, Binyamin and {Rodr{\'\i}guez Fanlo}, Arturo},
  journal={Journal of Algebra},
  volume={677},
  pages={13--60},
  year={2025},
  publisher={Elsevier}
}

@article{KS95,
    author = {Kojman, Menachem and Shelah, Saharon},
    title = {Homogeneous Families and their Automorphism Groups},
    journal = {Journal of the London Mathematical Society},
    volume = {52},
    number = {2},
    pages = {303-317},
    year = {1995}
}

@article{KS19,
    title={Automorphism groups of finite topological rank},
    author={Kaplan, Itay and Simon, Pierre},
    journal={Transactions of the American Mathematical Society},
    volume={372},
    number={3},
    pages={2011--2043},
    year={2019}
}

@article{KS20,
    title={Homogeneous structures with nonuniversal automorphism groups},
    author={Kubi{\'{s}}, Wies{\l{}}aw and Shelah, Saharon},
    journal={Journal of Symbolic Logic},
    volume={85},
    number={2},
    pages={817--827},
    year={2020}
}

@unpublished{KSW25,
      title={Embeddings into the random poset}, 
      author={Aleksandra Kwiatkowska and Rob Sullivan and Jeroen Winkel},
      year={2025},
      note={arXiv preprint arXiv:2502.16695},
}

@article {Lac84,
    AUTHOR = {Lachlan, Alistair H.},
     TITLE = {Countable homogeneous tournaments},
   JOURNAL = {Trans. Amer. Math. Soc.},
  FJOURNAL = {Transactions of the American Mathematical Society},
    VOLUME = {284},
      YEAR = {1984},
    NUMBER = {2},
     PAGES = {431--461}
}

@unpublished{Li19,
    title={Automorphism groups of homogeneous structures with stationary weak independence relations},
    author={Li, Yibei},
    note={arXiv preprint arXiv:1911.08540},
    year={2019}
}

@phdthesis{Li21,
    author = {Li, Yibei},
    title = {Normal subgroups of the automorphism groups of some homogeneous structures},
    year = {2021},
    school = {Imperial College London},
    url = {http://hdl.handle.net/10044/1/89167}
}

@article{Mac11,
    title={A survey of homogeneous structures},
    author={Macpherson, Dugald},
    journal={Discrete Mathematics},
    volume={311},
    number={15},
    pages={1599--1634},
    year={2011},
    publisher={Elsevier}
}

@article{Mei16,
        AUTHOR = {Meir, Nadav},
        TITLE = {On products of elementarily indivisible structures},
        JOURNAL = {The Journal of Symbolic Logic},
        VOLUME = {81},
        YEAR = {2016},
        NUMBER = {3},
        PAGES = {951--971},
}

@article {Mul16,
    AUTHOR = {M{\"{u}}ller, Isabel},
     TITLE = {Fra\"{i}ss\'{e} structures with universal automorphism groups},
   JOURNAL = {Journal of Algebra},
    VOLUME = {463},
      YEAR = {2016},
     PAGES = {134--151}
}

@article{MW92,
    title={The permutation group induced on a moiety},
    author={Macpherson, Dugald and Woodrow, Robert},
    journal={Forum Mathematicum},
    year={1992},
    pages = {243--256},
    volume = {4},
    publisher={Walter de Gruyter, Berlin/New York}
}

@article{PS17,
  title={The strong small index property for free homogeneous structures},
  author={Paolini, Gianluca and Shelah, Saharon},
  journal={arXiv preprint arXiv:1703.10517},
  year={2017}
}

@book{Pro13,
  title={Ramsey theory for discrete structures},
  author={Pr{\"o}mel, Hans J{\"u}rgen},
  year={2013},
  publisher={Springer}
}

@article{PS20,
  title={Amenability and unique ergodicity of automorphism groups of countable homogeneous directed graphs},
  author={Pawliuk, Micheal and Soki{\'c}, Miodrag},
  journal={Ergodic Theory and Dynamical Systems},
  volume={40},
  number={5},
  pages={1351--1401},
  year={2020},
  publisher={Cambridge University Press}
}

@article{Sei91,
  title={A survey of two-graphs},
  author={Seidel, Johan Jacob},
  journal={Geometry and Combinatorics},
  pages={146--176},
  year={1991},
  publisher={Elsevier}
}

@article{TZ13,
  title={On the isometry group of the {U}rysohn space},
  author={Tent, Katrin and Ziegler, Martin},
  journal={Journal of the London Mathematical Society},
  volume={87},
  number={1},
  pages={289--303},
  year={2013},
  publisher={Oxford University Press}
}

@article{Usp90,
  title={On the group of isometries of the {U}rysohn universal metric space},
  author={Uspenskij, Vladimir Vladimirovich},
  journal={Commentationes Mathematicae Universitatis Carolinae},
  volume={31},
  number={1},
  pages={181--182},
  year={1990},
  publisher={Charles University in Prague, Faculty of Mathematics and Physics}
}

\end{document}